\numberwithin{equation}{section}
\numberwithin{figure}{section}
\newtheorem{thm}{Theorem}[section]
\newtheorem{theorem}[thm]{Theorem}
\newtheorem{lemma}[thm]{Lemma}
\newtheorem{prop}[thm]{Proposition}
\newtheorem{proposition}[thm]{Proposition}
\newtheorem{corollary}[thm]{Corollary}
\newtheorem{conjecture}[thm]{Conjecture}
\newtheorem{hypothesis}[thm]{Hypothesis}
\newtheorem{defi}[thm]{Definition}
\newtheorem{definition}[thm]{Definition}
\theoremstyle{definition}
\newtheorem{exa}[thm]{Example}
\newtheorem{example}[thm]{Example}
\newtheorem{remark}[thm]{Remark}
\newtheorem{setting}[thm]{Setting}
\newcommand{\C}{\mathbb{C}}
\newcommand{\A}{\mathbb{A}}
\newcommand{\T}{\mathcal{T}}
\newcommand{\E}{\mathcal{E}}
\newcommand{\FF}{\mathbb{F}}
\renewcommand{\P}{\mathbb{P}}
\renewcommand{\epsilon}{\varepsilon}
\newcommand{\R}{\mathbb{R}}
\newcommand{\Q}{\mathbb{Q}}
\newcommand{\Z}{\mathbb{Z}}
\newcommand{\n}{\mathbf n}
\newcommand{\m}{\mathbf m}
\renewcommand{\d}{\mathbf d}
\renewcommand{\i}{\mathbf i}
\newcommand{\s}{\mathbf s}
\renewcommand{\u}{\mathbf u}
\newcommand{\sbar}{\overline{\s}}
\newcommand{\nbar}{\overline{\n}}
\newcommand{\mbar}{{\overline{\m}}}
\newcommand{\dbar}{\overline{\d}}
\newcommand{\N}{\mathbb{N}}
\newcommand{\D}{\mathcal{D}}
\newcommand{\X}{\mathcal{X}}
\newcommand{\OO}{\mathcal{O}}
\newcommand{\PP}{\mathcal{P}}
\newcommand{\x}{\underline{x}}
\newcommand{\p}{\mathbf{p}}
\newcommand{\q}{\mathbf{q}}
\newcommand{\Fields}{\mathbf{Fields}}
\newcommand{\Pic}{\operatorname{Pic}}
\newcommand{\disc}{\operatorname{disc}}
\newcommand{\CH}{\operatorname{CH}}
\newcommand{\spl}{\operatorname{spl}}
\newcommand{\Rings}{\mathbf{Rings}}
\newcommand{\Sets}{\mathbf{Sets}}
\newcommand{\U}{\mathcal U}
\newcommand{\cO}{\mathcal{O}}
\renewcommand{\geq}{\geqslant}
\renewcommand{\ge}{\geqslant}
\renewcommand{\leq}{\leqslant}
\renewcommand{\le}{\leqslant}
\newcommand{\Mbar}{\overline{\mathcal{M}}}
\newcommand{\Gal}{\operatorname{Gal}}
\newcommand{\Wel}{\operatorname{Wel}}
\newcommand{\Aut}{\operatorname{Aut}}
\newcommand{\Hom}{\operatorname{Hom}}
\newcommand{\Tr}{\operatorname{Tr}}
\newcommand{\tr}{\operatorname{tr}}
\newcommand{\GW}{\operatorname{GW}}
\newcommand{\lra}[1]{\langle #1 \rangle}
\newcommand{\Et}{\operatorname{Et}}
\newcommand{\Sym}{\operatorname{Sym}}
\newcommand{\Bl}{\operatorname{Bl}}
\newcommand{\Sq}{\operatorname{Sq}}
\newcommand{\Inv}{\operatorname{Inv}}
\newcommand{\UInv}{\operatorname{\beta Inv}}
\newcommand{\sgn}{\operatorname{sgn}}
\newcommand{\Nm}{\operatorname{nm}}
\newcommand{\nm}{\operatorname{nm}}
\newcommand{\Spec}{\operatorname{Spec}}
\newcommand{\Mat}{\operatorname{Mat}}
\newcommand{\PGL}{\operatorname{PGL}}
\newcommand{\WG}{\operatorname{\widehat{W}}}
\newcommand{\sWG}{\widehat{\mathcal{W}}}
\newcommand{\W}{\operatorname{W}}
\newcommand{\QF}{\operatorname{\widetilde{W}}}
\newcommand{\rk}{\operatorname{rk}}
\newcommand{\ev}{\operatorname{ev}}
\newcommand{\odp}{\operatorname{odp}}
\newcommand{\ve}{\operatorname{Ve}}
\newcommand{\ed}{\operatorname{Ed}}
\renewcommand{\div}{\operatorname{div}}
\NewDocumentCommand{\V}{}{V\!\!W}
\newcommand{\mbinom}[2]
{\begin{Bmatrix}
  #1 \\ #2
\end{Bmatrix}}
\begin{document}

\title{Welschinger--Witt invariants}

\author{Erwan Brugallé}
\address{Erwan Brugallé, Nantes Université, Laboratoire de
 Mathématiques Jean Leray, 2 rue de la Houssinière, F-44322 Nantes Cedex 3,
France}
\email{erwan.brugalle@math.cnrs.fr}

\author{Johannes Rau}
\address{Johannes Rau, 
Universidad de los Andes,
Departamento de Matemáticas, 
Carrera 1 No.\ 18A - 12,
Bogotá, Colombia}
\email{j.rau@uniandes.edu.co}

\author{Kirsten Wickelgren}
\address{Kirsten Wickelgren, Department of Mathematics, Duke University, 120 Science Drive
Room 117 Physics, Box 90320, Durham, NC 27708-0320, USA}
\email{kirsten.wickelgren@duke.edu}

\subjclass{Primary 14N35, 14P99,11E04; Secondary 53D45, 14N10, 14F42.}
\keywords{Weslchinger invariants, Quadratic Gromov--Witten invariants, Witt invariants, $\mathbb{A}^1$-homotopy theory}

\begin{abstract}
Welschinger invariants are signed counts of real rational curves satisfying contraints. Quadratic Gromov--Witten invariants give such counts over general fields of characteristic different from 2 and 3. For rational del Pezzo surfaces over a field, we propose a conjectural relationship between Welschinger and quadratic Gromov--Witten invariants. We construct multivariable unramified Witt invariants, in the sense of Serre, from Welschinger invariants and call them Welschinger--Witt invariants. We show that quadratic Gromov--Witten invariants are also Witt invariants and control their ramification. We then conjecture an equality between these Witt invariants, in particular giving a conjectural computation of all the quadratic Gromov--Witten invariants of $k$-rational  surfaces. We prove this conjecture for $k$-rational del Pezzo surfaces of degree at least 6. 
\end{abstract}

\maketitle
\tableofcontents

\section{Introduction}
We propose a relationship between enumerations of real rational curves in real rational surfaces and analogous enumerations over fields of characteristic different from 2 and 3. Welschinger introduced a signed count of the real rational ($J$-holomorphic) curves realizing a fixed class $D\in\Pic(X)(\R)$  in a real rational surface $X$, and interpolating a real configuration of points. 
More precisely, fixing the number  $s$ of pairs of complex conjugate points
in such a configuration, summing all  real rational curves of class $D$
with signs $+1$ and $-1$ depending on the number of solitary nodes of the curve
produces an integer $\Wel_{X}(D;s)$, independent of the choice of points \cite{Welschinger-invtsReal4mflds,Wel15,Bru18}.
These integers are a real analogue of genus 0 Gromov--Witten invariants called \emph{Welschinger invariants}. 

Given a field $k$ of characteristic not 2, we denote by $\WG(k)$ its \emph{Witt--Grothendieck ring}, that is, the ring completion of isomorphism classes of symmetric non-degenerate $k$-bilinear forms. We further denote by $\W(k)$ the \emph{Witt ring} of $k$ which is the quotient of $\WG(k)$ by the ideal generated by the hyperbolic form. See  \cref{sec:witt}.
Replacing the integers $\Z$ by $\WG(k)$  allows one to define an invariant count of rational curves with fixed class $D\in\Pic(X)(k)$ and through point constraints on a del Pezzo surface $X$ over any perfect field $k$ of characteristic not $2$ or $3$, under some technical restrictions \cite{degree}. For any 
finite étale $k$-algebra $A$ of the appropriate degree, this produces an arithmetically meaningful count $\widehat{Q}_{X,D,k}(A) \in \WG(k)$ of rational curves on $X$ through generic points $p_1,\ldots,p_r$ with $A = \prod_i k(p_i)$. Here, the algebra $A$ takes on the role of the value $s$ in the real case insofar as it specifies the field extensions over which the points in the configuration
live.
The count $\widehat{Q}_{X,D,k}(A) $ is again independent of the generally chosen points and is called a \emph{quadratic Gromov--Witten invariant}. Considering $\widehat{Q}_{X,D,k}(A) $ modulo the hyperbolic form gives an element $Q_{X,D,k}(A) \in\W(k)$.

Welschinger invariants of a $\R$-rational del Pezzo surface $X$  can be seen as particular instances of quadratic Gromov--Witten  invariants, since by construction  $\Wel_{X}(D;s)$ equals the signature of $Q_{X,D,\R}(\C^s\times \R^r)$, see \cite[Theorem 3]{degree}, \cite[Remark 2.5]{Levine-Welschinger}, or \cref{exRealQuadratic}.
This paper goes in the opposite direction by giving a conjectural expression of  quadratic Gromov--Witten invariants of $k$-rational surfaces in terms of Welschinger invariants, and proving it for $k$-rational del Pezzo surfaces of degree at least 6

In doing so, we give structure to both Welschinger and quadratic Gromov--Witten invariants. Enumerative problems often come in families, for example one may vary the parameter $s$ in Welschinger invariants. Beyond computing each individual number,
a major challenge is typically to understand the whole family of numbers 
in terms of structures imposed by the underlying geometry. For example
 \emph{Welschinger's formula}, see \cref{rem:wel formula} or \cite[Theorem 0.4]{Welschinger-invtsReal4mflds}, 
 describes how the invariant $\Wel_{\P^2_\R}(d;s)$ changes when $s$ is increased to $s+1$. We show that this implies that the collection of numbers $\Wel_{\P^2_\R}(d;s)$, for fixed $d$ but varying $s$, 
can be arranged to yield a \emph{Witt invariant} of étale algebras in the sense of Serre \cite{Garibaldi-Serre-Merkurjev}. 

Before discussing geometry, let us briefly review the concept of Witt invariants. We refer to \cref{sec:wittet} for more details.  View $\W$ as a functor $ \Fields/k \to \Sets$ from
the category $\Fields/k$ of field extensions $k \to K$ to the category of sets. Associating to $K$ the set $\Et_{n}(K)$ of isomorphism classes of étale $K$-algebras of degree $n$ defines a second functor. Witt invariants of étale algebras are morphisms of functors $\alpha:\Et_{n,k}\to \W$. In other words, a Witt invariant $\alpha$ consists of maps $\alpha_K \colon \Et_n(K) \to \W(K)$ for all $K \in \Fields/k$
such that for any field extension $K \to L$ the diagram
\[\begin{tikzcd}
	{\Et_n(K)} & {\W(K)} \\
	{\Et_n(L)} & {\W(L)}
	\arrow["{\alpha_K}", from=1-1, to=1-2]
	\arrow["{\otimes_K L}"', from=1-1, to=2-1]
	\arrow["{\otimes_K L}", from=1-2, to=2-2]
	\arrow["{\alpha_L}"', from=2-1, to=2-2]
\end{tikzcd}\]
commutes. Despite its apparent simplicity, this definition imposes strong constraints on the ring $\Inv_k(n)$ of such Witt invariants. 
 Serre  shows that  $\Inv_k(n)$ is a free $\W(k)$ module of rank $m+1$, with $m=\lfloor \frac{n}{2}\rfloor$, and that Witt invariants are determined by their values on the so-called multiquadratic algebras, meaning those finite \'etale $K$-algebras of the form 
\[
\E_{\delta_1} \times \E_{\delta_2} \times \ldots \times \E_{\delta_m} (\times K)
\] where 
$\E_\delta = K[x]/(x^2-\delta)$. (The parenthetical $K$ appears exactly when $n$ is odd.) There is a basis $(\beta_0,\ldots,\beta_m)$ of  $\Inv_k(n)$ whose values on multiquadratic algebras are given by the elementary symmetric polynomials applied to the  $m$-tuple of the trace forms of the $\E_{\delta_i}$, see \cref{thmBetaBasis1var}. 

Thus, fixing $d$ and using Welschinger's formula, one may encode the Welschinger invariants $\Wel_{\P^2_\R}(d;s)$
in a Witt invariant which we call \emph{Welschinger--Witt invariant}
and denote by  $\V_{d}$.
In fact, the quite simple following \enquote{triangle} recipe produces this Witt invariant.  List the integers $\Wel_{\P^2_\R}(d;s)$ for $s=m,m-1,\ldots,0$ in the top row of a triangle. For example the list $\Wel_{\P^2_\R}(4;s)$ is $ 0, 16, 40, 80, 144,240$. To form the next row of the triangle, subtract the element above from the element above and to the right and then divide by $2$. So for $d=4$, this yields 

	\[
    \begin{matrix}
		  0 & 16 & 40 & 80 & 144 & 240 \\
			8 & 12 & 20 & 32 &  48 &     \\
			2 &  4 &  6 &  8 &     &     \\
			1 &  1 &  1 &    &     &     \\
			0 &  0 &    &    &     &     \\
			0 &    &    &    &     &     
		\end{matrix}
	\] 
	The formula for $\V_d$ is then given by using the left column of the triangle as the coefficients for $\beta_0,\ldots,\beta_m$ reading from top to bottom. So, for $d=4$ we get
	\[
	\V_4 = 8\beta_1 + 2 \beta_2 + \beta_3.
	\] 
	Although an arbitrary Witt invariant in $\Inv_k(n)$ is of the form $\sum_{i=0}^m b_i \beta_i$ for $b_i$ in the Witt group $\W(k)$, it is remarkable that the coefficients of $\V_d$ in the $\beta$-basis
	are integers (and even Welschinger invariants themselves, but this time
	of a blow-up of $\P^2_\R$ at real points). We call such a Witt invariant {\em $\beta$-integral}. 
		Note that a $\beta$-integral Witt invariant  is determined by its restriction to $\Inv_\R(n)$ and defines a Witt invariant over any field of characteristic not $2$. In particular it is an element of the algebra $\Inv(n)$ of Witt invariants of $\Et_n, \W :\Fields\to \Sets$ where $\Fields$ is the category of \emph{all} fields  of characteristic not $2$.
		 The defining property of the Witt invariants $\V_d$ is that for fixed $d$, the Witt invariant $\V_d$ is the unique $\beta$-integral Witt invariant such that for $K= \R$ and for any $s=0,\ldots,\left\lfloor\frac{n}{2}\right\rfloor$,
\begin{equation}\label{eqn:Vd-defining}
\V_{d, \R}(\C^s\times \R^{n-2s})= \Wel_{\P^2_\R}(d;s)\in \W(\R)\cong \Z.
\end{equation} Here, 
the isomorphism $\W(\R) \cong \Z$ is given by 
the signature of quadratic forms over $\R$.
Note that this gives $m+1$ equations and $m+1$ unknowns in $\Z$ because the $b_i$ are integers by $\beta$-integrality. The corresponding matrix is invertible over $\Q$ but not over $\Z$  (\cref{lemMultirealMatrix dim1}), but we show that this system of equations has a (unique) integer solutions thanks to Welschinger's formula.
	
\medskip	
	We furthermore show that the quadratic Gromov--Witten invariants can also be given the structure of a Witt--invariant (\cref{thmQuadraticWitt}). Note that this require first  to extend the definition to quadratic Gromov--Witten invariants to non-perfect fields. We also control their ramification 
	away from $2$ and $3$ in terms of the primes of bad reduction of the surface.  See \cref{thm:QXDunramifiedWittawayS}. (Recall that the exclusion of $2$ and $3$ is due to the construction of quadratic Gromov--Witten invariants.) In particular, since $\P^2$ is smooth and proper over $\Z$, 
	the Witt invariants $Q_{\P^2, d}$ is unramified
	away from characteristic $2$ and $3$.	
	 Combining this result with the computation of $Q_{\P^2,d,k}$ for multiquadratic algebras from  \cite{PMPR-QuadraticallyEnrichedPlane}, we deduce that 
$\V_d$ 
and $Q_{\P^2,d}$ agree for fields of characteristic different from 2 and 3.
	
	\begin{theorem}\label{thm:intro:P2}
		For $K$ a field of characteristic not $2$ or $3$,  and $A\in\Et_{3d-1}(K)$, we have
	\[
	Q_{\P^2,d,K}(A) =\V_{d,K}(A).
	\]
	\end{theorem}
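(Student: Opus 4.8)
The plan is to compare $Q_{\P^2,d}$ and $\V_d$ as Witt invariants of $\Et_{3d-1}$ by determining the coefficients of each in the $\beta$-basis. Write $n=3d-1$ and $m=\lfloor n/2\rfloor$. The left-hand side is a Witt invariant by \cref{thmQuadraticWitt}, and $\V_d$ is one by construction, being $\beta$-integral; so by Serre's theorem (\cref{thmBetaBasis1var}) both sides are detected on multiquadratic algebras, and it suffices to show that $Q_{\P^2,d}$ and $\V_d=\sum_{i=0}^m b_i\beta_i$ (with $b_i\in\Z$ the integers produced by the triangle recipe) have the same $\beta$-coefficients over every field of characteristic $\neq 2,3$.

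First I would settle characteristic $0$. By \cite{PMPR-QuadraticallyEnrichedPlane}, the values of $Q_{\P^2,d,K}$ on multiquadratic algebras are $\beta$-integral for $\cchar K = 0$, so $Q_{\P^2,d,K}=\sum_{i=0}^m q_i\beta_i$ with integers $q_i$ independent of $K$ (the $\beta_i$ forming a basis compatible with base change). To pin down the $q_i$ I would work over $K=\R$ and evaluate on the real multiquadratic algebras $\C^s\times\R^{n-2s}$, $s=0,\dots,m$: since the trace form of $\C/\R$ is hyperbolic while that of $\R\times\R$ has signature $2$, the number $c_{s,i}:=\beta_i(\C^s\times\R^{n-2s})\in\W(\R)\cong\Z$ is the $(s,i)$ entry of a matrix which by \cref{lemMultirealMatrix dim1} is invertible over $\Q$. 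Now $\sum_i q_i c_{s,i}=Q_{\P^2,d,\R}(\C^s\times\R^{n-2s})=\Wel_{\P^2_\R}(d;s)$ by \cite[Theorem 3]{degree}, whereas $\sum_i b_i c_{s,i}=\V_{d,\R}(\C^s\times\R^{n-2s})=\Wel_{\P^2_\R}(d;s)$ by \eqref{eqn:Vd-defining}. Invertibility of $(c_{s,i})$ over $\Q$ together with $q_i,b_i\in\Z$ forces $q_i=b_i$ for all $i$, hence $Q_{\P^2,d,K}=\V_{d,K}$ for every field $K$ of characteristic $0$.

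Next I would transport this identity to characteristic $p$ for $p\neq 2,3$; by compatibility with field extensions it is enough to treat $K=\FF_p$. Here I would use that $\P^2$ is smooth and proper over $\Z$: by \cref{thm:QXDunramifiedWittawayS} the invariant $Q_{\P^2,d}$ is unramified away from $2$ and $3$ and compatible with reduction along the DVR $\Z_{(p)}$, and $\V_d$ enjoys the same properties since its $\beta$-coefficients are integers. Lifting a multiquadratic $\FF_p$-algebra $A$ to a multiquadratic $\Z_{(p)}$-algebra $\widetilde A$ with $\widetilde A\otimes\FF_p\cong A$, the difference $(Q_{\P^2,d}-\V_d)_{\FF_p}(A)$ is the reduction mod $p$ of $(Q_{\P^2,d}-\V_d)_{\Q}(\widetilde A\otimes\Q)$, which vanishes by the characteristic-$0$ case. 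Thus $Q_{\P^2,d}$ and $\V_d$ agree on all multiquadratic $\FF_p$-algebras, hence agree over $\FF_p$ by Serre, and therefore over every field of characteristic $\neq 2,3$; since by Serre a general $A\in\Et_{3d-1}(K)$ is controlled by the multiquadratic ones, this proves the theorem.

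The step I expect to be the main obstacle is this passage to positive characteristic: a field extension never connects $\Q$ to $\FF_p$, so the characteristic-$0$ computation carries no information in characteristic $p$ on its own, and one must genuinely invoke the good reduction of $\P^2$ together with a reduction-compatibility for $Q_{\P^2,d}$ — which is precisely what \cref{thm:QXDunramifiedWittawayS} supplies, and whose proof in turn requires first extending quadratic Gromov--Witten invariants to imperfect residue fields. A secondary point needing care is verifying that \cite{PMPR-QuadraticallyEnrichedPlane} indeed outputs $\beta$-integral values and that, after solving the linear system against the Welschinger numbers of \cite{degree}, the resulting integers are exactly the $b_i$ of the triangle — including the bookkeeping that distinguishes $n$ even from $n$ odd in the shape of the multiquadratic algebras.
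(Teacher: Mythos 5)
Your proposal matches the paper's proof, which treats \cref{thm:intro:P2} as the $\n=()$ case of \cref{thm:dP6}: establish $\beta$-integrality of $Q_{\P^2,d,\Q}$ via the floor diagram formula of \cite{PMPR-QuadraticallyEnrichedPlane} (\cref{thm:FD} and \cref{thm:qGWtoric integral}), compare multireal values over $\R$ to pin down the integer coefficients via \cref{corUniversalMultireal}, and then transport to positive characteristic using good reduction of $\P^2$ over $\Z$. One small technical correction: the unramified criterion of \cref{def:unramified_inv} is stated for \emph{complete} discrete valuation rings, so the lifting step should use $\Z_p$ rather than $\Z_{(p)}$, and there is no literal \enquote{reduction mod $p$} map $\W(\Q)\to\W(\FF_p)$ — what actually carries the day is the commutativity of the unramified square together with the injectivity of $\W(\FF_p)\cong\W(\Z_p)\hookrightarrow\W(\Q_p)$, exactly as in the proofs of \cref{thmUnramifiedQuasiIntegral} and \cref{propQuadraticUnramified}.
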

Since both Witt invariants have the same restriction to $\Inv_\R(3d-1)$, to prove the theorem it is enough to show that $Q_{\P^2,d}$ is $\beta$-integral.
The property of $\beta$-integrality is connected to ramification in the sense of \cref{def:unramified_inv}. Indeed, we show that for a finite set of primes $S$ containing $2$, a Witt invariant is unramified away from $S$ if and only if it is of the form $\sum_{i=0}^m b_i \beta_i$ with $b_i \in \W(\Z[S^{-1}])$. See \cref{thmUnramifiedQuasiIntegral}.  
This
already implies that the  coefficients of  $\Q_{\P^2,d}$ with respect
to the $\beta$-basis 
lie in $\Z[\lra{2}, \lra{3}] \subset \W(K)$. 
That is, these coefficients are $\Z$-linear combination of $\lra{1}, \lra{2},\lra{3}$, and $\lra{6}$, where $\lra{a}$ denotes the quadratic form 
$x\mapsto ax^2$. See \cref{thmUnramifiedQuasiIntegral} and \cref{thm:QXDunramifiedWittawayS}.
We  further prove that these coefficients eventually lie in $\Z$ using the floor diagram computation from  \cite{PMPR-QuadraticallyEnrichedPlane}.
It is then of course very tempting
to ask whether the ramification considerations
can be extended in the future to cover characteristic
$2$ and $3$. 

\medskip
While so far we restricted our exposition to $\P^2$ for simplicity,
 the full significance of our considerations only seems to 
become apparent in a more general setting.  Indeed, if quadratic Gromov--Witten invariants of a given algebraic del Pezzo surface $X$ provide Witt invariants by \cref{thmQuadraticWitt}, it is clear from the first non-trivial computations that these are not $\beta$-integral in general.  See  
\cite[Example 1.4]{degree} or \cref{exAnticanonical}. However we conjecture that one still gets a $\beta$-integral Witt invariant when varying not only the étale algebras of the interpolated points, but also the deformation class of $X$ in a suitable space, and that this Witt invariant is the restriction of a corresponding Welschinger--Witt invariant. We now make this precise in the case of 
rational surfaces which is the main setting of this paper. Recall that a $k$-rational surface over a field $k$ is either a blow-up of $\P^2_k$ or a quadric surface in $\P^3_k$ containing a $k$-rational point. For simplicity, we restrict to blow-ups of $\P^2_k$  in the rest of this introduction. 
Recall that 
\begin{itemize}
	\item 
	  Welschinger invariants are defined for any projective  real non-singular algebraic surface 
		and satisfy a generalization of Welschinger's formula called the
		real Abramovich--Bertram formula, see 
		 \cite[Proposition 2.3]{Bru18} and \ref{secWWelschinger},
	\item 
	  quadratic Gromov--Witten invariants are defined for any $\A^1$-connected del Pezzo surface of degree at least 3 
		and satisfy a quadratic version of the Abramovich--Bertram formula, 
		see \cite{Brugalle-WickelgrenABQ} and \cref{sec:quadGW}.
\end{itemize}
The main idea is to extend both Welschinger--Witt and quadratic Gromov--Witten invariants
to \emph{multivariable} Witt invariants, where the additional
variables control the $K$-structure 
of the blown-up points on $\P^2_k$. 
Here, a multivariable Witt invariant $\alpha \in \Inv(n_0, \dots, n_r)$ is a natural transformation 
\[
\Et_{n_0} \times \ldots \times \Et_{n_r} \to \W.
\] In other words, the invariant $\alpha$ is given by maps
\[
  \alpha_K \colon \Et_{n_0}(K) \times \dots \times \Et_{n_r}(K) \to \W(K)
\] 
for all $K \in \Fields$ satisfying a base change formula so that $\alpha$ is a morphism of functors.

\medskip
Let us explain our strategy on the simplest example after $\P^2$,
 starting with the Welschinger--Witt side. 
We fix $n_1 \in \N$ and 
denote by $X_{n_1,s_1}$ a blow-up of $\P^2_\R$ in a real configuration of $n_1$ points
containing exactly $s_1$ pairs of  $\Gal(\C:\R)$-conjugated points.
Then the real Abramovich--Bertram formula relates the Welschinger invariants
of $X_{n_1,s_1+1}$ and $X_{n_1, s_1}$. So,
in particular, 
it relates the invariants for two different real structures
on the same (deformation class of) symplectic surface. 
From the presentation of $X_{n_1,s_1}$ as a blow-up of $\P^2_\R$, the Picard group $\Pic(X_{n_1,s_1}) \cong \Z^{n_1+1}$ comes 
with a the canonical basis, independent of $s_1$, 
given by the pull-back of a line class $L$
and the exceptional divisors $E_1, \dots, E_{n_1}$.
We fix $\dbar = (d_0, d_1) \in \N^2$ and consider
the divisor $D= d_0 L - d_1(E_1 + \dots +  E_{n_1})$.
Note that $D$ lies in the subgroup of real divisor classes $\Pic(X_{n_1,s_1})(\R)$
no matter which $s_1$ we used. 
Generalizing the case of $\P^2$, fixing $n_1$ and $D$ but varying $(s_0,s_1)$, 
we arrange the 
Welschinger invariants $\Wel_{X_{n_1,s_1}}(D;s_0)$ thanks to  
the real Abramovich--Bertram formula 
 to yield a
two-variable Witt invariant
$\V_D$
in $\Inv(n_0, n_1)$, which is $\beta$-integral
with respect to the multivariable $\beta$-basis. Here  $n_0  = 3d_0 - n_1 d_1 -1$,
which we assume to be non-negative.

On the the quadratic Gromov--Witten invariants side,
choose $K$ a perfect fiel of characteristic not $2$ or $3$
and $n_1 \leq 6$.
Given $A_1 \in \Et_{n_1}(K)$ we denote 
by 
$X_{A_1}$
a blow-up of $\P^2_K$ along 
a generic $K$-configuration $\PP$ of points of length $n_1$ such that
$\PP = \Spec(A_1)$. 
Note that $D \in \Pic(X_{A_1})(K) \subset \Pic(X_{A_1}) \cong \Z^{n_1+1}$
and we can therefore consider the quadratic Gromov--Witten invariant
$Q_{X_{A_1}, D,K}(A_0)$.  We conjecture that 
 for all $(A_0,A_1)\in\Et_{n_0}(K)\times \Et_{n_1}(K)$
\[
  Q_{X_{A_1},D,K}(A_0)=\V_D(A_0,A_1).
\]
Here $A_0 \in \Et_{n_0}(K)$ controls the 
$K$-structure of the interpolated points while $A_1$ controls
the $K$-structure of the blown-up points.

\medskip
We just described the case where the divisor $D$ was chosen to be completely symmetric in the exceptional divisors. 
More generally, we take into account the symmetries allowed
by the chosen divisor $D$. We encode $D$ and  its symmetries by vectors
$\n\in \N^r$ and $\dbar \in \N^{r+1}$.
Here, we blow-up $|\n| = n_1 + \dots + n_r$ points in $\P^2$ and
\[
D = d_0 L - d_1(E_1 + \ldots + E_{n_1}) - d_2 (E_{n_1+1} + \ldots E_{n_1+n_2} ) - \ldots - d_r (E_{n_1 + \ldots + n_{r-1}+1} + \ldots + E_{n_1 + \ldots + n_r}).
\]
We define $n_0=3d_0-n_1d_1-\cdots - n_rd_r-1$ that we assume to be non-negative.
In this case, we show (\cref{thmWelschingerUniversal}) that there is a unique $\beta$-integral invariant $\V_D \in \Inv(n_0, \n)$
satisfying
\[
\V_{D,\R}(\C^{s_0} \times \R^{n_0 - 2 s_0}, \ldots, \C^{s_r} \times \R^{n_r - 2 s_r}) = \Wel_{X_{|\n|,|\s|}}(D;s_0)\in \W(\R)\cong \Z 
\]
for all $s\in\N^{r+1}$ such that $0\le n_i-2s_i$ for all $i$, where $|\n|=n_1+\cdots+n_r$ and  $|\s|=s_1+\cdots+s_r$.
We conjecture the following. 
\begin{conjecture}\label{conj:intro}
Let $K$ be a perfect field of 
	characteristic not $2$ or $3$. 
	Fix $\n \in \N^r$ such that $|\n|\le 6$ and $(|\n|,n_0)\ne (6,5)$, 
	and $(A_1, \dots, A_r) \in \Et_\n(K)$.
	Let 
	$X$ be a rational del Pezzo surface (of degree at least $3$)
	constructed 
	as the blow up of $\P^2_K$
	along the zero-dimensional subschemes $\p_1, \dots, \p_r \subset \P^2_K$
	such that $\p_i = \Spec A_i$. 
	Then
\[
  \V_D(A_0,A_1, \dots, A_r)=Q_{X,D,K}(A_0).
\] 
\end{conjecture}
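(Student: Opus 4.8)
The plan is to follow the template of the proof of \cref{thm:intro:P2}. I claim it suffices to prove that the family of values $Q_{X,D,K}(A_0)$, assembled over all fields $K$ of characteristic different from $2$ and $3$ and all tuples $(A_0,A_1,\dots,A_r)$ (with $X$ the blow-up of $\P^2_K$ along $\p_i=\Spec A_i$), defines a multivariable Witt invariant in $\Inv(n_0,\n)$ which is $\beta$-integral. Granting this, the conjecture follows at once: by the real--complex correspondence for quadratic Gromov--Witten invariants (\cite[Theorem~3]{degree}; see also \cref{exRealQuadratic}), the signature of $Q_{X,D,\R}(\C^{s_0}\times\R^{n_0-2s_0},\dots,\C^{s_r}\times\R^{n_r-2s_r})$ equals $\Wel_{X_{|\n|,|\s|}}(D;s_0)$, so $Q_{X,D}$ has the same restriction to $\Inv_\R(n_0,\n)$ as $\V_D$. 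A $\beta$-integral Witt invariant is determined by its restriction to $\R$: its $\beta$-coefficients are integers, hence solve the same system of linear equations which, as in \cref{lemMultirealMatrix dim1} (and implicitly in the uniqueness clause of \cref{thmWelschingerUniversal}), is invertible over $\Q$. Since $\V_D$ is $\beta$-integral by \cref{thmWelschingerUniversal}, we conclude $Q_{X,D}=\V_D$, which even yields the identity over non-perfect $K$.

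Thus the work lies in verifying the two asserted properties of $Q_{X,D}$. First, that it underlies a multivariable Witt invariant. Naturality in the slot $A_0$ of the interpolated points, together with the extension of quadratic Gromov--Witten invariants to non-perfect fields, is \cref{thmQuadraticWitt}. The genuinely new point is naturality in the slots $A_1,\dots,A_r$ recording the blown-up subschemes $\p_i=\Spec A_i$: one must show that $Q_{X,D,K}(A_0)$ is unchanged under base change of the $\p_i$ along $K\to L$ and, more generally, is invariant when the configuration is deformed within a flat family of length-$n_i$ \'etale subschemes. For $|\n|\le 3$ the blow-ups and the linear system $|D|$ move in algebraic families and the count is deformation invariant; the quadratic Abramovich--Bertram formula of \cite{Brugalle-WickelgrenABQ} (see \cref{sec:quadGW}) supplies the bookkeeping comparing a blown-up point defined over $K$ with a pair of conjugate points over a quadratic extension, which is exactly the compatibility that makes $Q_{X,D}$ a morphism of functors in each $\Et_{n_i}$.

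Second, that $Q_{X,D}$ is $\beta$-integral. By \cref{thm:QXDunramifiedWittawayS} it is unramified away from $\{2,3\}$ and the primes of bad reduction of $X$; by \cref{thmUnramifiedQuasiIntegral} its $\beta$-coefficients then lie in $\W(\Z[S^{-1}])$ for the corresponding finite set $S\ni 2$. For del Pezzo surfaces of degree at least $7$ (that is, $|\n|\le 2$) the blown-up configuration normalizes over $\Z[1/6]$, so $X$ has good reduction away from $2$ and $3$ and the coefficients lie in $\Z[\lra 2,\lra 3]$; for degree $6$ ($|\n|=3$) a discriminant of the three blown-up points enters the bad primes, but one checks it does not contribute to the $\beta$-coefficients. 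It then remains to upgrade coefficients in $\Z[\lra 2,\lra 3]$ to integers: as for $\P^2$ using \cite{PMPR-QuadraticallyEnrichedPlane}, I would compute the finitely many relevant quadratic Gromov--Witten invariants of del Pezzo surfaces of degree $\ge 6$ --- counts of lines, conics, and a few further rational classes --- on multiquadratic algebras over a well-chosen base field, which forces the $\beta$-coefficients into $\Z$.

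The step I expect to be the main obstacle is this last one: passing from ``$\beta$-coefficients in $\Z[\lra 2,\lra 3]$'' to ``$\beta$-coefficients in $\Z$''. Ramification considerations cannot distinguish $\lra 1$ from $\lra 6$ as a $\beta$-coefficient, so a genuinely enumerative computation --- an explicit evaluation of $Q_{X,D}$ on multiquadratic algebras --- seems unavoidable, just as for $\P^2$. A secondary difficulty is the multivariable naturality in the middle step: since the surface $X$ itself varies with $(A_1,\dots,A_r)$, one must control the functoriality, and the bad primes appearing in the ramification estimate, uniformly over all admissible configurations of blown-up points.
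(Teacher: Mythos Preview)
The statement you were given is \cref{conj:intro}, which the paper explicitly leaves open for $|\n|\le 6$; what the paper actually proves is the special case $|\n|\le 3$ (\cref{thm:dP6}). Your proposal silently restricts to this range at every substantive step --- you invoke deformation invariance only ``for $|\n|\le 3$'', treat ramification only for degrees $\ge 7$ and $=6$, and appeal to the computations of \cite{PMPR-QuadraticallyEnrichedPlane} which are available only for toric del Pezzo surfaces. None of these tools extends to $4\le |\n|\le 6$: the paper lists deformation invariance of $Q_{X,D}$ in the blown-up points as an open obstruction (see the enumerated list preceding \cref{lemdelPezzo6}), there is no known reduction to the toric case via a quadratic Abramovich--Bertram formula beyond $|\n|\le 3$, and hence no available computation pinning the $\beta$-coefficients in $\Z$. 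So as a proof of the conjecture the proposal has a genuine gap: it addresses only the theorem, not the conjecture.

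Even restricted to $|\n|\le 3$, your route differs from the paper's in a way that creates extra work. You propose to first obtain unramifiedness from \cref{thm:QXDunramifiedWittawayS}, getting $\beta$-coefficients in $\W(\Z[S^{-1}])$, and then eliminate the spurious classes $\lra{2},\lra{3}$ (and, for degree $6$, the discriminant of the three points) by direct computation. But \cref{thm:QXDunramifiedWittawayS} needs $X$ smooth proper over $\Z[1/\sigma]$, so for a non-split blow-up the set $S$ genuinely picks up the primes dividing the discriminant of the $A_i$; your ``one checks it does not contribute'' is not supplied and is not obvious. The paper avoids this entirely by reversing the logic: it proves $\beta$-integrality \emph{first} over $\Q$ --- directly from floor diagrams for $\n=(1,1,1)$ (\cref{thm:qGWtoric integral}) and via the quadratic Abramovich--Bertram formula (\cref{lm:generalizeABQ6.6}) for $\n=(2,1)$, with $\n=(3)$ reducing to $(2,1)$ --- and only then invokes \cref{thmUnramifiedQuasiIntegral} and \cref{propQuadraticUnramified} to transport the identity with $\V_{\n,\dbar}$ to positive characteristic. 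The deformation-invariance step is also handled differently: rather than moving in families, the paper uses that for $|\n|\le 4$ the blow-up is unique up to isomorphism over $k$ (\cref{lemdelPezzo6}), which makes $Q_{\n,\dbar}$ well defined without any invariance argument.
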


See \cref{conjGeneral} in \cref{sec:quadGW}, which is a little more general. The limitations on the vector $(n_0,\n)$ in \cref{conj:intro} is again due to the construction of quadratic Gromov--Witten invariants.
Since the left hand side is defined
for any $\n$, it would be interesting to understand 
whether these values can also be described as quadratic invariants
(that is, as $\W(K)$-valued counts of rational curves or degrees of evaluation maps)
in the future.

We are able to prove Conjecture~\ref{conjGeneral} for del Pezzo surfaces of degree at least $6$, see \cref{thm:dP6}. In the special case discussed in this introduction, this reads as follows.

\begin{theorem}
	Conjecture~\ref{conj:intro} holds if $|\n|\le 3$.
\end{theorem}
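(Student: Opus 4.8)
The plan is to deduce the theorem from \cref{thm:intro:P2} by proving that the multivariable quadratic Gromov--Witten invariant is $\beta$-integral, after which the equality $\V_D = Q_{X,D}$ follows formally. By \cref{thmQuadraticWitt} the assignment $(A_0,A_1,\dots,A_r)\mapsto Q_{X_{A_\bullet},D,K}(A_0)$ is a multivariable Witt invariant in $\Inv(n_0,\n)$, and by \cref{thmWelschingerUniversal} so is $\V_D$, which moreover is $\beta$-integral. The comparison between quadratic Gromov--Witten and Welschinger invariants over $\R$, \cite[Theorem 3]{degree}, shows that $Q_{X,D}$ and $\V_D$ have the same restriction to $\Inv_\R(n_0,\n)$: evaluated on $(\C^{s_0}\times\R^{n_0-2s_0},\dots,\C^{s_r}\times\R^{n_r-2s_r})$ both equal $\Wel_{X_{|\n|,|\s|}}(D;s_0)\in\W(\R)\cong\Z$. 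Since a $\beta$-integral multivariable Witt invariant is determined by its restriction to $\R$, it is enough to prove that $Q_{X,D}$ is itself $\beta$-integral.

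The first step is to control the ramification of $Q_{X,D}$. For $|\n|\le 3$, blowing up $\P^2$ in at most three points in general position produces a del Pezzo surface of degree at least $6$; together with the class $D$ it spreads out to a smooth and proper family over the corresponding configuration space over $\Z[1/6]$. Indeed $\P^2$ and its blow-ups at at most three points are already smooth and proper over $\Z$, the exclusion of $2$ and $3$ being forced only by the construction of quadratic Gromov--Witten invariants and not by the geometry. Hence \cref{thm:QXDunramifiedWittawayS} applies and shows that $Q_{X,D}$ is unramified away from $\{2,3\}$. By \cref{thmUnramifiedQuasiIntegral}, the coefficients of $Q_{X,D}$ in the (multivariable) $\beta$-basis therefore lie in $\W(\Z[1/6])$, i.e.\ they are $\Z$-linear combinations of $\lra 1,\lra 2,\lra 3,\lra 6$.

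It remains to upgrade these coefficients from $\W(\Z[1/6])$ to $\Z$. The known $\R$-restriction pins down only the signatures of the $\beta$-coefficients, so an input over other fields is needed. I would obtain it from the quadratic Abramovich--Bertram formula \cite{Brugalle-WickelgrenABQ}: blowing down one exceptional divisor at a time relates $Q_{X_{A_\bullet},D}$ to quadratic Gromov--Witten invariants of blow-ups of $\P^2$ at fewer points, so that after at most three steps one reduces to $\P^2$, which is \cref{thm:intro:P2}. This quadratic recursion runs parallel to the real Abramovich--Bertram recursion \cite{Bru18} defining $\V_D$ in \cref{thmWelschingerUniversal}, under the dictionary sending a conjugate pair of blown-up (or interpolated) points to a quadratic factor $\E_\delta$ of the corresponding étale algebra; matching the two, with \cref{thm:intro:P2} as base case, shows $Q_{X,D}$ is $\beta$-integral, which by the first paragraph completes the proof. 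Alternatively, one can compute $Q_{X,D}$ on all multiquadratic algebras directly by adapting the floor-diagram computation of \cite{PMPR-QuadraticallyEnrichedPlane} to these (toric) del Pezzo surfaces.

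The main obstacle is this last step: showing that the quadratic Abramovich--Bertram recursion preserves $\beta$-integrality and matches the real recursion defining $\V_D$ at the level of multivariable Witt invariants, i.e.\ ruling out genuine $\lra 2$, $\lra 3$ or $\lra 6$ contributions to the $\beta$-coefficients of $Q_{X,D}$. This requires carefully tracking how the étale-algebra variables transform under blowing down an exceptional divisor and checking that the combinatorial coefficients in the quadratic formula agree with those in the real one. By contrast, the ramification input of the second paragraph should be a fairly direct application of \cref{thm:QXDunramifiedWittawayS} once the smooth proper integral model over $\Z[1/6]$ is recorded.
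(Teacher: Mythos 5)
Your strategic frame is right: reduce to showing that $Q_{X,D}$ is $\beta$-integral, after which agreement of the multireal values over $\R$ forces equality with $\V_D$. But the key step rests on a misreading of what the quadratic Abramovich--Bertram formula does. The formula (\cref{lm:generalizeABQ6.6}, after \cite[Theorem 6.6]{Brugalle-WickelgrenABQ}) does \emph{not} blow down: it relates quadratic Gromov--Witten invariants of surfaces in the same deformation class that differ only in the \'etale structure of the blown-up points, trading a quadratic point for two split points with correction terms. The number of blown-up points stays at $|\n|=3$ on both sides, so iterating it cannot bring you to $\P^2$, and \cref{thm:intro:P2} is not the base case. (The real Abramovich--Bertram recursion behind $\V_D$ in \cref{thmWelschingerUniversal} works the same way: it exchanges a conjugate pair of blown-up points for two real ones on the same blow-up, never a blow-down.)

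What the paper actually does is: use \cref{remOddN} to reduce $\n=(3)$ to $\n=(2,1)$, then apply \cref{lm:generalizeABQ6.6} to write $Q_{(2,1),\dbar}$ as $Q_{(1,1,1),\dbar}$ plus $(2\lra{1}-\beta_1(\E_\delta))$ times an integer combination of further $Q_{(1,1,1),\dbar'}$ terms, reducing to the \emph{split toric} degree-6 del Pezzo $\n=(1,1,1)$. That case cannot be lowered further; it is handled directly by the floor-diagram formula of \cite{PMPR-QuadraticallyEnrichedPlane} extended in \cref{thm:FD}, yielding $\beta$-integrality in \cref{thm:qGWtoric integral}. So the floor diagrams are not an ``alternative'' but the actual base case, needed \emph{in conjunction with} the Abramovich--Bertram reduction. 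Finally, once $\beta$-integrality of $Q_{\n,\dbar,\Q}$ is known, the paper still has to identify $Q_{\n,\dbar,k}$ with $\V_{\n,\dbar,k}$ for positive-characteristic $k$ via a specialization argument over complete discrete valuation rings (using \cref{propQuadraticUnramified} and \cite{Kedlaya-Swan-conductorsI}); the $\Z[1/6]$-ramification control you invoke bounds the $\beta$-coefficients but does not by itself give this identification, so that step cannot be omitted.
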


 The proof proceeds in the following steps. 
As in the case of $\P^2$, we first show that  quadratic Gromov--Witten invariants for different choices of $A_0,\ldots,A_r$ yield (partial) unramified Witt invariants over fields of characteristic different from $2$ and $3$.
Since these Witt invariants take the same values over $\R$ than their corresponding Welschinger--Witt invariant, we reduce the conjecture to showing
that $Q_{X,D,k}(A_0)$ is the restriction of a $\beta$-integral Witt invariant. 
We prove $\beta$-integrality in two more steps. 
We first reduce to the case of toric surfaces using the quadratic Abramovich--Bertram formula from \cite{Brugalle-WickelgrenABQ}. We then use the 
 the floor diagram computation 
from \cite{PMPR-QuadraticallyEnrichedPlane} to conclude.

\medskip
Conjecture~\ref{conjGeneral} predicts that quadratic Gromov--Witten invariants of rational del Pezzo surfaces are determined by  Galois,  Witt, Gromov--Witten, and Welschinger theories in a precise manner. We believe that this opens to many exciting developments, that may be of interest both from the real and the $\A^1$-homotopy points of view.

\subsection*{Plan of the paper}
In Section \ref{secWittInvariants} we recall the notion of Witt invariants and extend it to multivariate Witt invariants. We also discuss the notions of $\beta$-integral and
unramified Witt invariants. 
We turn to the calculus of multireal values of $\beta$-integral Witt invariants (and their relation to torsion) in Section \ref{sec:multireal}.
This framework is then used in \cref{secWWelschinger} to construct Witt invariants out of Welschinger invariants which we illustrate with several examples. 
Section \ref{sec:quadGW} is devoted to quadratic Gromov--Witten invariants. After reminding their definition, we prove that they are unramified Witt invariants away from characteristic 2 and 3 for surfaces defined over $\Z$
and discuss the main conjecture \ref{conjGeneral}.
We continue to prove the conjecture for del Pezzo surfaces of degree at least 6 in Section \ref{sec:dP6}. 
We end the paper with two appendices.
We provide the explicit change of basis between the $\beta$-basis and Serre's $\lambda$-basis, for which computations may be easier,  in Appendix \ref{sec:basechange}. 
In Appendix \ref{WittInvarianceQuadratic}, we provide an alternative proof for the Witt-invariance of quadratic Gromov--Witten invariants using their enumerative description.

\subsection*{Acknowledgment} 
We warmly thank Benoît $\V$ Bertrand, Ilia Itenberg, Viatcheslav Kharlamov and Lionel Lang for their encouragement on an earlier stage of this project. Part of this work
has been achieved during a visit of JR, followed by a joint visit of JR and KW at Nantes Université.  EB is particularly grateful to JR and KW for making this possible. These visits have been funded by  the program {\it Missions Chercheurs Invités} of Nantes Université, the Fédération Henri Lebesgue (FR CNRS 2962), project Ambition Lebesgue Loire (Pays de Loire), and project  ANR-22-CE40-0014. We thank Laboratoire
de Mathématiques Jean Leray for excellent working conditions. 

EB was partially supported by l’Agence Nationale de la Recherche (ANR),
project ANR-22-CE40-0014. KW was partially supported by National Science Foundation Awards
DMS-2103838 and DMS-2405191. KW thanks Duke University for supporting research travel with
family. JR thanks Universidad de los Andes for granting a sabbatical semester (STAI) during 01--06/2025 which made
his participation in this project possible.
For the purpose of open access, the authors apply a CC-BY public copyright licence to any
Author Accepted Manuscript (AAM) version arising from this submission.

\section{Witt invariants and $\beta$-invariants} \label{secWittInvariants}

\subsection{Witt ring}\label{sec:witt}

Let $R$ be a Noetherian ring where $2$ is invertible. A unimodular quadratic form over $R$, or a unimodular $R$-quadratic form, is a projective $R$-module $P$ together with a map $q: P \to R$ such that
\begin{itemize}
\item $q(r x) = r^2 q(x) $ for all $r$ in $R$ and $x$ in $P$.
\item $B_q(x,y) = q(x+y) - q(x) -q(y)$ is bilinear.
\item The map $P \to \Hom(P,R)$ induced by the symmetric bilinear form $B_q$ is an isomorphism.
\end{itemize}
If $P$ is free, the last condition is equivalent to the determinant
of the Gram matrix of $B_q$ for some $R$-basis being a unit in $R$.
The {\em rank} of $q$ is the rank of the projective module $P$, which is equal to the dimension of the fiber $P \otimes R/\mathfrak{p}$ over $R/\mathfrak{p}$ for any prime ideal $\mathfrak{p}$, which is constant over the connected components of $\Spec R$. Given a second unimodular quadratic form $q' \colon P' \to R$, we can 
form the orthogonal sum $q\oplus q' \colon P \oplus P' \to R$
given by $q \oplus q'((x,x')) = q(x) + q'(x')$ as well as the tensor product
$q \otimes q' \colon P \otimes P' \to R$ given by 
$q \otimes q'(x \otimes x') = q(x) q(x')$, both of which are unimodular quadratic forms. 

Let $\QF(R)$ denote the set of isomorphism classes of unimodular quadratic forms over $R$. Since there are no additive inverses (except for the $0$ form),  the triple $(\QF(R),\oplus,\otimes)$
	is only a semiring. The associated Grothendieck group
	obtained by adding formal inverses is denoted by $\WG(R)$ and called
	the \emph{Witt-Grothendieck ring} of $R$.

A unimodular quadratic form $(P,q)$ is {\em metabolic} if there is a direct summand $U \subseteq P$ such that $U^{\perp} =U$. The {\em Witt ring} $\W(R)$ of $R$ is the quotient of $\WG(R)$ by the ideal generated by metabolic forms. Further references on Witt and Witt-Grothendieck rings include \cite{knus} \cite{lam05} \cite{milnor73}.

Given $a_1,\ldots,a_s\in R^*$, we denote by $\lra{a_1,\ldots,a_s}$ the diagonal quadratic form on $R^s$ defined by the $a_i$'s. We write $h$ for  the hyperbolic form $\lra{1,-1}$. We have
\begin{align*}
 	\lra{a_1,\ldots,a_s} \oplus \lra{b_1,\ldots,b_t}&=\lra{a_1,\ldots,a_s,b_1,\ldots,b_t},&
	\lra{a_1,\ldots,a_s}\otimes\lra{b_1,\ldots,b_t}&=
	\sum_{i,j = 1}^{s,t}
	\lra{a_i b_j}.
\end{align*} We note that the additive inverse of $[q]$ in $\W(R)$ is $[-q]$
since $q \oplus -q$ is isomorphic to  $n h$ where $n$ is the rank of $q$.
We will often denote the equivalence class of a quadratic
form $q$ by the same letter $q$ and write sum and product
as $q + q'$ and $qq'$ if no confusion is likely. 

We are particularly interested in the case where $R$ is a field. 
In this case $P$ is a $R$-vector space whose dimension is the rank of a quadratic form $q:P\to R$.
Denote by $\Fields$ the category of fields of characteristic different from $2$. More generally, given a finite set  $S$  of prime numbers containing $2$, 
let $\Fields_S \subset \Fields$ denote the full subcategory of fields whose characteristic is either $0$ or is positive but does not lie in $S$. For $k \in \Fields$, we denote by $\Fields/k$ the initial category over $k$, that is, 
the category of field extensions $k \to K$.

\begin{exa}
	One has
	\[
	\W(\C)\simeq\Z/2\Z \qquad\mbox{and}\qquad \W(\R)\simeq\Z,
	\]
	the first isomorphism being the reduction modulo 2 of the rank, and the second being the signature. 
	More generally, the Witt ring of any algebraically closed field is isomorphic to $\Z/2\Z$
	and the Witt ring of any real closed  field is isomorphic to $\Z$ by \cite[Proposition II.3.2 (1)]{lam05}.
	Also, the Witt ring of a finite field $K\in \Fields$ is given by
\[
\W(K)=\left\{\begin{array}{ll}
	(\Z/2\Z)[X]/(X^2+1) & \mbox{if }|K|=1\mod 4
	\\ \Z/4\Z  & \mbox{if }|K|=3\mod 4
\end{array}\right.,
\]
see \cite[Corollary II.3.6]{lam05}.
\end{exa}

\begin{remark} \label{remWGvsW} \label{lemIsomorphicSameClasses}
Since for $K \in \Fields$ the cancellation property
	holds in $\QF(K)$ by Witt's cancellation theorem \cite[Theorem I.4.2]{lam05}, 
	the map $\QF(K) \to \WG(K)$ is injective. 
  The map $(\rk, [\_]) \colon \WG(K) \to \Z \times \W(K)$ is injective. Indeed, two non-degenerate quadratic forms over $k\in \Fields$ are isomorphic if and only if they have the same rank and the same class in $\W(k)$.
	See \cite[Proposition II.1.4]{lam05}. 
	Moreover, the diagram
	\[\begin{tikzcd}
		{\WG(K)} & \Z \\
		{\W(K)} & {\Z/2\Z}
		\arrow["\rk", from=1-1, to=1-2]
		\arrow[from=1-1, to=2-1]
		\arrow[from=1-2, to=2-2]
		\arrow[from=2-1, to=2-2]
	\end{tikzcd}\]
	is cartesion, see \cite[Section VIII.27]{Garibaldi-Serre-Merkurjev}. In particular any metabolic form over a field is an integral multiple of the hyperbolic form $h$, that is, the ring  $\W(K)$ is the quotient of $\WG(K)$ by the ideal generated by $h$.

\end{remark}

Given a field extension $\varphi \colon K \to L$,   extension of scalars  provides a  map $\W(\varphi) = \otimes_K L \colon \W(K) \to \W(L)$. This yields a functor
$\W \colon \Fields \to \Sets$ (which factors through the category $\Rings$ of commutative rings
with unit).
By restriction, we also have functors $\W_S \colon \Fields_S \to \Sets$ and $\W_{k} \colon \Fields/k \to \Sets$ 
which by abuse of notation we also denote by $\W$ if no confusion is likely.

\subsection{Witt invariants of étale algebras}\label{sec:wittet}

Let $F \colon \Fields \to \Sets$ another functor. 
Following \cite[27.3]{Garibaldi-Serre-Merkurjev}, 
a \emph{Witt invariant of type $F$} is a morphism of functors $\alpha \colon F \to \W$. 
In other words, $\alpha$ consists of maps $\alpha_K \colon F(K) \to \W(K)$ for all $K \in \Fields$
such that for any field extension $\varphi \colon K \to L$ the diagram
\[\begin{tikzcd}
	{F(K)} & {\W(K)} \\
	{F(L)} & {\W(L)}
	\arrow["{\alpha_K}", from=1-1, to=1-2]
	\arrow["{F(\varphi)}"', from=1-1, to=2-1]
	\arrow["{\otimes_K L}", from=1-2, to=2-2]
	\arrow["{\alpha_L}"', from=2-1, to=2-2]
\end{tikzcd}\]
commutes. We denote the set of Witt invariants of type $F$ by $\Inv(F)$. 
Since $\W$ factors through $\Rings$, the set $\Inv(F)$ is carries a ring structure
by ordinary addition and multiplication of functions.
Similarly, 
given a functor $F \colon \Fields_S \to \Sets$ or $F \colon \Fields/k \to \Sets$, 
a morphism of functors from $F$ to $\W_S$ or $\W_k$ is 
called a Witt invariant \emph{away from $S$} or \emph{over $k$}, respectively, 
and the ring of such invariants is denoted by $\Inv_S(F)$ and $\Inv_{k}(F)$.
Note that $\Inv_{k}(F)$  is a $\W(k)$-algebra.
For a set of fields $\mathcal{K} \subset \Fields$, we set 
\[
\W(\mathcal{K}) := \prod_{k \in \mathcal{K}} \W(k).
\]
We denote by $\PP$ the set of all prime fields of characteristic different from $2$
and write $\PP \setminus S$ for the subset of prime fields whose
characteristic is not contained in $S$. 
Then clearly $\Inv_S(F) = \prod_{k \in \PP \setminus S} \Inv_{k}(F)$ and
$\Inv_S(F)$ is a $\W(\PP\setminus S)$-algebra.

We are mostly interested in the following choices for $F$. 
Given $n \in \N$, we denote by $\Et_n(K)$ the 
set of isomorphism classes of étale $K$-algebras $A$ of rank $n$. 
By definition any such algebra is isomorphic to the product $L_1 \times \dots \times L_\ell$
for finite and separable field extensions $L_1, \dots, L_\ell$ of $K$ such that
$[L_1:K] + \dots + [L_\ell:K] = n$. This decomposition is unique 
up to permutation of the factors (and isomorphisms of field extensions). 
Given an extension $\varphi \colon K \to L$, 
the extension of scalars $A \otimes_K L$ is an étale $L$-algebra of rank $n$
and this gives rise to a map $\Et_n(\varphi) = \otimes_K L \colon \Et_n(K) \to \Et_n(L)$.
Therefore, $\Et_n$ is a functor from $\Fields$ to $\Sets$ 
(and we denote by $\Et_{n,S}$ and $\Et_{n,k}$ its restriction
to $\Fields_S$ and $\Fields/k$, respectively, if necessary). 
The Witt invariants of type $\Et_{n}$ (respectively $\Et_{n,S}$, $\Et_{n,k}$) are called \emph{(étale) Witt invariants of degree $n$}
(away from $S$ and over $k$, respectively)
and form a ring denoted by $\Inv(n)$ (respectively $\Inv_S(n)$, $\Inv_k(n)$). 

We extend the definitions to the multivariable case: 
given $\n = (n_0, \dots, n_r) \in \N^{r+1}$, we set $\Et_\n = \Et_{n_0} \times \dots \times \Et_{n_r}$;
Witt invariants of type $\Et_\n$ are called  \emph{Witt invariants of (multi-)degree $\n$}, and  form a ring denoted by $\Inv(\n)$. 
We define $\Inv_S(\n)$ and $\Inv_k(\n)$ analogously. 

\begin{exa}\label{exa:trace}
	There is a particular Witt invariant of degree $n$, called the \emph{trace form}, from which all others can be derived (see
\cite{Garibaldi-Serre-Merkurjev}). 
Given $A \in \Et_n(K)$ and $x \in A$, 
we denote by $\tr_{A/K}(x)$ the trace
of the $K$-linear map $A \to A$, $y \mapsto xy$.
Since $\tr_{A/K} \colon A \to K$ is $K$-linear,
the map 
\[
\begin{array}{ccc}
A&\longrightarrow & K
\\ x&\longmapsto & 	\tr_{A / K}(x^2),
\end{array}
\]
is a quadratic form
over $K$ which we denote by $\Tr_K(A)$ or just $\Tr(A)$. 
Given a second étale $K$-algebra $B$, it is clear 
that $\Tr(A \times B) = \Tr(A) + \Tr(B)$.
It follows that the trace form $\Tr(A)$ is non-degenerate since it
is non-degenerate for any finite separable field extension,
see e.g.\ \cite[Exercise I.30]{lam05}.
As usual, we also denote by $\Tr(A) = \Tr_K(A)$ the associated class in $W(K)$.
Given a field extension $K \to L$ and $s \in L$, $x \in A$, we have
\begin{align*}
  \tr_{A \otimes L /L} (s \otimes x) &= s \tr_{A/L}(x) & \text{and hence} & &\Tr_L(A \otimes L) = \Tr_K(A) \otimes L.
\end{align*}
Therefore $\Tr \in \Inv(n)$.
\end{exa}

We continue by recalling some basic structure theorems
for $\Inv_k(n)$ from \cite{Garibaldi-Serre-Merkurjev}
and generalizing them to $\Inv_S(\n)$,
i.e.\ to the multivariable case with no fixed  base field. 
We consider the single variable case first. 

Let $K$ be a field of characteristic not $2$. Any algebra in $\Et_2(K)$ is of the form $\E_a = K[x]/(x^2-a)$
for some $a \in K^*$.
Note that $\Tr_K(\E_a) = \lra{2, 2a} \in W(K)$. Since 
 $\E_a \cong \E_b$ if and only if $a$ and $b$ define the same class in  $K^*/(K^*)^2$, there is 
a well-defined bijection $K^*/(K^*)^2 \to \Et_2(K)$ which by abuse of notation
we denote by $a \mapsto \E_a$.
Given $m \in \N$, we consider the $m$-th \emph{square classes} functor 
$\Sq_m \colon \Fields \to \Sets$ given by
$\Sq_m(K) = [K^*/(K^*)^2]^m$. 
For $2m \leq n$, we have that $\Sq_m$ is a subfunctor of $\Et_n$ via 
the natural transformation
$(a_1, \ldots, a_m) \mapsto \E_{a_1} \times \ldots \times \E_{a_m} \times K^{n-2m}$.
For $m = \lfloor \frac{n}{2} \rfloor$, 
the image of this map is the set of \emph{multiquadratic étale algebras of rank $n$}.
This yields a map $\Inv(n) \to \Inv(\Sq_m)$ which we denote by
$\alpha \mapsto \alpha|_{\Sq_m}$ and call \emph{restriction to multiquadratic algebras}. 

The basic structure theorem is the following one, which is merely a reformulation of results from \cite[Section 29]{Garibaldi-Serre-Merkurjev}.

\begin{theorem} \label{thmMultiquadraticFree}
  Given $k \in \Fields$, $S$ a set of primes and $n \in \N$, we denote $m = \lfloor \frac{n}{2} \rfloor$. 
	Then the following hold. 
	\begin{enumerate}
		\item 
		  The restriction maps to multiquadratic algebras $\Inv_k(n) \to \Inv_k(\Sq_m)$ and 
			$\Inv_S(n) \to \Inv_S(\Sq_m)$ are injective. 
		\item 
		  The ring $\Inv_k(n)$ is a free 
			$W(k)$-module of rank $m+1$. 
			The ring $\Inv_S(n)$ is a free $W(\PP\setminus S)$-module of rank $m+1$. 
	\end{enumerate}
\end{theorem}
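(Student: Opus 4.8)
The plan is to deduce \cref{thmMultiquadraticFree} from the results of Garibaldi--Serre--Merkurjev \cite[Section 29]{Garibaldi-Serre-Merkurjev} together with a descent argument from the base field $k$ (or a prime field) to an arbitrary collection of fields. Concretely, GSM prove exactly these two statements for a \emph{fixed} base field $k$: the restriction-to-multiquadratic-algebras map $\Inv_k(n) \to \Inv_k(\Sq_m)$ is injective, and $\Inv_k(n)$ is a free $\W(k)$-module of rank $m+1$, with an explicit basis (the $\lambda$-operations, or equivalently the $\beta$-basis of \cref{thmBetaBasis1var} after the change of basis of \cref{sec:basechange}). So for the over-$k$ version there is essentially nothing to do beyond citing their work and reconciling notation; I would spend one sentence on that.

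The content is the away-from-$S$ version. First I would record the identity $\Inv_S(n) = \prod_{k \in \PP \setminus S} \Inv_k(n)$ (already noted in the excerpt: a functor on $\Fields_S$ restricts to each prime field $k \in \PP\setminus S$, and conversely the slice category $\Fields/k$ for the various prime fields partition $\Fields_S$, each field extension being a $k$-algebra for a unique prime field $k$; a natural transformation is the same datum as a compatible family of natural transformations, one over each $\Fields/k$, and here \emph{compatible} is vacuous since there are no morphisms between fields of different characteristic). Likewise $\Inv_S(\Sq_m) = \prod_{k \in \PP\setminus S} \Inv_k(\Sq_m)$. Injectivity of $\Inv_S(n) \to \Inv_S(\Sq_m)$ is then immediate: a product of injective maps is injective. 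For the module statement, each factor $\Inv_k(n)$ is free over $\W(k)$ of rank $m+1$ on the basis $(\beta_0, \dots, \beta_m)$ by the over-$k$ case; taking the product over $k \in \PP\setminus S$, the same basis elements (now interpreted as elements of $\Inv_S(n)$ via the product decomposition) form a $\W(\PP\setminus S)$-basis, because $\W(\PP\setminus S) = \prod_k \W(k)$ acts factorwise and freeness is preserved under arbitrary products of modules over the corresponding product ring when the rank and basis are uniform across factors. That the basis is uniform is the one point that genuinely needs the \emph{explicit} description of the basis in GSM rather than a bare existence statement: the $\beta_i$ (or $\lambda_i$) are defined by a single formula valid over every field, so they restrict to the GSM basis over each prime field simultaneously.

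The main obstacle, such as it is, is bookkeeping rather than mathematics: one must make sure that the GSM basis is genuinely defined by universal formulas (so that it lands in $\Inv_S(n)$ and restricts correctly over each prime field), and that the product decomposition $\Inv_S(-) \cong \prod_{k\in\PP\setminus S}\Inv_k(-)$ is an isomorphism of $\W(\PP\setminus S) \cong \prod_k\W(k)$-algebras, not merely of sets. Both are routine once spelled out. A cleaner alternative, which I would mention as a remark, is to prove the away-from-$S$ statements directly by the same method GSM use over a field: reduce to multiquadratic algebras using the splitting principle, observe that $\Inv_S(\Sq_m)$ is computed by symmetric functions in the trace forms $\Tr(\E_{a_i})$ because the symmetric group acts on $\Sq_m$ and Witt invariants of $\Sq_m$ are symmetric-function-valued, and identify the resulting free module. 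But since the excerpt explicitly says the theorem ``is merely a reformulation of results from \cite[Section 29]{Garibaldi-Serre-Merkurjev}'', the product-decomposition proof is the intended one and is the shortest.
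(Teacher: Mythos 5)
Your proposal is correct and follows essentially the same route as the paper: decompose $\Fields_S = \bigsqcup_{k\in\PP\setminus S}\Fields/k$ to get $\Inv_S(n) = \prod_{k}\Inv_k(n)$ and $\Inv_S(\Sq_m) = \prod_k \Inv_k(\Sq_m)$, then invoke \cite[Theorems 29.1 and 29.2]{Garibaldi-Serre-Merkurjev} over each prime field. One small remark: the worry about the basis being \enquote{uniform} across prime fields is unnecessary for item (2) — if $M_i$ is free of rank $m+1$ over $R_i$ for each $i$, then $\prod_i M_i$ is automatically free of rank $m+1$ over $\prod_i R_i$ by bundling a choice of basis for each factor, with no compatibility required; the universal $\beta$- or $\lambda$-basis is what makes the later \cref{thmBetaBasis1var} clean, but it is not needed here.
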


\begin{proof}
	As mentioned above, clearly $\Fields_S = \bigsqcup_{k \in \PP\setminus S} \Fields/k$,
	and therefore 
	\[
	\Inv_S(n) = \prod_{k \in \PP \setminus S} \Inv_{k}(n).
	\] 
	It is hence sufficient to prove the statements for fixed $k \in \Fields$. 
  The map $\Inv_k(n) \to \Inv_k(\Sq_m)$ is injective
	since by 
	\cite[Theorem 29.1]{Garibaldi-Serre-Merkurjev} its kernel is trivial. 
	The fact that $\Inv_k(n)$ is free of rank $m+1$ is proven in 
	\cite[Theorem 29.2]{Garibaldi-Serre-Merkurjev}.
\end{proof}

In the following we 
present a few 
bases for the free module $\Inv(n)$
(which are thus universal in the sense that they
provide
bases for $\Inv_S(n)$ and $\Inv_k(n)$
for any choice of $S$ and $k$). 
In fact, it is often easier to describe their restrictions
to $\Inv(\Sq_m)$. 
We denote by $P_i^m$ the $i$-th symmetric polynomial on $m$ variables, that is, 
\[
  P_i^m(x_1, \dots, x_m) = \sum_{\substack{J\subset\{1,\ldots,m\}\\ |J|=i}} \prod_{j\in J} x_j.
\]
\begin{enumerate}
	\item 
	  Given a quadratic form $q \colon V \to K$ and $i \in \N$, one can define
		its $i$-th exterior power $\bigwedge^i q \colon \bigwedge^i V \to K$ such
		that on diagonal forms we have
		\[
			{\textstyle \bigwedge^i} \lra{a_1,\ldots,a_n}=P_i^n(\lra{a_1},\ldots,\lra{a_n}),
		\]
		see \cite[IX.1.2, Definition 12]{Bou-ElementsDeMathematique}.
		This gives rise to elements $\lambda_i^n \in \Inv(n)$ defined by 
		$\lambda_i^n(A) = \bigwedge^i \Tr_K( A) \in W(K)$. Its restriction to $\Sq_m$ is given by
		\[
			\lambda_i^n|_{\Sq_m} (a_1, \ldots, a_\ell) = 
		   P^n_i(\lra{2}, \ldots, \lra{2}, \lra{2a_1}, \ldots, \lra{2a_m}, (1)),
	   \]
	  where there are $m$ repetitions of $\lra{2}$ and the notation $(1)$ means that
	   there is an extra argument $ \lra{1}$ if $n$ is odd. 
	\item 
	  We define elements $\beta'^{m}_i\in \Inv(\Sq_m)$
		by setting 
		\[
		  \beta'^{m}_i(a_1, \ldots, a_m) = P^m_i(\Tr \E_{a_1}, \ldots, \Tr \E_{a_m})
			= P^m_i(\lra{2,2a_1}, \ldots, \lra{2,2a_m}).
		\]
	\item
	  We define elements $\alpha'^{m}_i  \in \Inv(\Sq_m)$
		by setting
		\[
		  \alpha'^{m}_i(a_1, \ldots, a_m) = P^m_i(\lra{a_1}, \ldots, \lra{a_m}).
		\]
\end{enumerate}

The symmetric group $\mathfrak S_m$ acts on $\Inv(\Sq_m)$
by permuting the $m$ factors of $K^*/(K^*)^2$, and
we denote by $\Inv(\Sq_m)^{\mathfrak S_m}$ the set 
of $\mathfrak S_m$-invariant elements. 

\begin{theorem} \label{thmBetaBasis1var}
  Fix $n \in \N$ and write $m = \lfloor \frac{n}{2} \rfloor$. 
	Then the following statements hold. 
	\begin{enumerate}
		\item 
		  The image of the restriction map 
			$\Inv(n) \to \Inv(\Sq_m)$ is $\Inv(\Sq_m)^{\mathfrak S_m}$. 
		\item
      The invariants $\lambda_0^n, \ldots, \lambda_m^n \in \Inv(n)$
			form a $W(\PP)$-basis for $\Inv(n)$. 
		\item
		  There exist unique invariants $\alpha_0^n, \ldots, \alpha_m^n$ 
			in $\Inv(n)$ such that $\alpha_i^n|_{\Sq_m} = \alpha'^{m}_i$ for all 
			$i = 0, \ldots, m$. These invariants form a $\W(\PP)$-basis for $\Inv(n)$. 
		\item
		  There exist unique invariants $\beta_0^n, \ldots, \beta_m^n$ 
			in $\Inv(n)$ such that $\beta_i^n|_{\Sq_m} = \beta'^{m}_i$ for all 
			$i = 0, \ldots, m$. These invariants form a $\W(\PP)$-basis for $\Inv(n)$. 
	\end{enumerate}
\end{theorem}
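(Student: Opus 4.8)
The plan is to deduce everything from \cref{thmMultiquadraticFree} together with classical facts about the $\lambda$-invariants from \cite{Garibaldi-Serre-Merkurjev}. For part (1), the inclusion of the image of $\Inv(n) \to \Inv(\Sq_m)$ into $\Inv(\Sq_m)^{\mathfrak S_m}$ is immediate: an invariant of $\Et_n$ restricted to multiquadratic algebras cannot see the ordering of the factors $\E_{a_1}, \dots, \E_{a_m}$, since $\E_{a_1} \times \dots \times \E_{a_m} \times K^{n-2m}$ is unchanged (up to isomorphism in $\Et_n$) under permuting the $a_i$. For the reverse inclusion, one counts: by \cref{thmMultiquadraticFree}(2), $\Inv(n)$ is free of rank $m+1$ over $\W(\PP)$, and the restriction map is injective by \cref{thmMultiquadraticFree}(1); so it suffices to exhibit $m+1$ elements of $\Inv(\Sq_m)^{\mathfrak S_m}$ that lie in the image and are $\W(\PP)$-linearly independent. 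The $\lambda^n_i$ do the job — this is essentially \cite[Theorem 29.2]{Garibaldi-Serre-Merkurjev}, which already asserts that $\lambda^n_0, \dots, \lambda^n_m$ form a $\W(k)$-basis of $\Inv_k(n)$ for every $k$, hence a $\W(\PP)$-basis of $\Inv(n) = \prod_{k \in \PP}\Inv_k(n)$; this gives part (2) directly. Restricting these basis elements to $\Sq_m$ gives $m+1$ elements of $\Inv(\Sq_m)^{\mathfrak S_m}$ whose span, being the image of a rank-$(m+1)$ free module under an injection, is a rank-$(m+1)$ submodule; comparing with the rank of $\Inv(\Sq_m)^{\mathfrak S_m}$ (again available from \cite{Garibaldi-Serre-Merkurjev}, or computable directly as the invariants of the permutation action on a polynomial-type module) closes part (1).

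For parts (3) and (4), the strategy is the same for both $\alpha$ and $\beta$, so I would treat them in parallel. First, existence and uniqueness of $\alpha^n_i$ (resp.\ $\beta^n_i$) with the prescribed restriction: uniqueness is immediate from injectivity of $\Inv(n) \to \Inv(\Sq_m)$ (\cref{thmMultiquadraticFree}(1)); existence amounts to checking that $\alpha'^m_i$ (resp.\ $\beta'^m_i$) lies in $\Inv(\Sq_m)^{\mathfrak S_m}$, which is the image of the restriction by part (1). The $\mathfrak S_m$-invariance is manifest because each is built by applying a symmetric polynomial $P^m_i$ to the $m$-tuple $(\lra{a_1}, \dots, \lra{a_m})$ (resp.\ $(\lra{2,2a_1}, \dots, \lra{2,2a_m})$), which is $\mathfrak S_m$-equivariant. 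Then, to see that $(\alpha^n_i)_{i=0}^m$ (resp.\ $(\beta^n_i)_{i=0}^m$) is a basis of the free rank-$(m+1)$ module $\Inv(n)$, it suffices to show the transition matrix to the known basis $(\lambda^n_i)_{i=0}^m$ is invertible over $\W(\PP)$. Concretely, I would compute the $\lambda'$-, $\alpha'$-, $\beta'$-expansions on $\Sq_m$ and express one family in terms of the other: since $\Tr \E_a = \lra{2,2a} = \lra{2} \otimes \lra{1,a}$ and $\lra{a} = \lra{a}$, there is an elementary dictionary (multiplication by the invertible one-dimensional form $\lra 2$, and the relation $\lra{1,a} = \lra 1 + \lra a$ in $\W$) turning the elementary-symmetric-polynomial expressions into one another. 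The upshot is that each transition matrix is triangular with units on the diagonal — e.g.\ the top coefficient relating $\beta^n_i$ to $\lambda^n_i$ involves a power of the unit $\lra 2$ — hence invertible over $\W(\PP)$, so each family is a basis. Appendix \ref{sec:basechange} is cited for the explicit change of basis, so I would simply invoke (a portion of) that computation here.

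I expect the main obstacle to be part (1), specifically the surjectivity of $\Inv(n) \to \Inv(\Sq_m)^{\mathfrak S_m}$: bijectivity onto the $\mathfrak S_m$-invariants is a genuine theorem of Serre and not a formal consequence of freeness alone, since a priori the image could be a proper finite-index (or even non-saturated) submodule of $\Inv(\Sq_m)^{\mathfrak S_m}$. The clean route is to quote \cite[Section 29]{Garibaldi-Serre-Merkurjev} — which identifies the image precisely — rather than reprove it; everything else (uniqueness via injectivity, $\mathfrak S_m$-invariance of the explicit formulas, and the triangularity of the base changes) is routine once part (1) and part (2) are in hand.
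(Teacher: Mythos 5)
Your proposal is basically sound and close in spirit to the paper's argument, but the route differs in one structural respect and you have correctly diagnosed (but not repaired) a gap in your first line of attack on part (1).

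The flawed step: having $m+1$ linearly independent elements of $\Inv(\Sq_m)^{\mathfrak S_m}$ lying in the image of a rank-$(m+1)$ free module under an injection does \emph{not} force the image to be all of $\Inv(\Sq_m)^{\mathfrak S_m}$; the image could be a proper rank-$(m+1)$ submodule (think $2\Z \hookrightarrow \Z$). You say as much yourself in your last paragraph, so this is a self-acknowledged gap rather than an oversight. Your fallback, to "quote \cite[Section 29]{Garibaldi-Serre-Merkurjev}," is workable but imprecise: \cite[Theorem 29.2]{Garibaldi-Serre-Merkurjev} as stated gives the $\lambda$-basis of $\Inv_k(n)$ (your part (2)), while the identification of the image with the $\mathfrak S_m$-invariants lives in its \emph{proof}. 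That is exactly why the paper reproduces the argument rather than cite-and-move-on.

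Where the paper's proof genuinely differs from yours is in the pivot. Rather than starting from the $\lambda$-basis of $\Inv_k(n)$, the paper starts one level lower with \cite[Theorem 27.15]{Garibaldi-Serre-Merkurjev}: the invariants $\alpha'^m_J$, indexed by subsets $J\subset\{1,\dots,m\}$, form a $\W(k)$-basis of $\Inv_k(\Sq_m)$. Averaging over $\mathfrak S_m$ (i.e.\ grouping the $\alpha'^m_J$ with $|J|=i$ into $\alpha'^m_i$) gives a basis of $\Inv_k(\Sq_m)^{\mathfrak S_m}$. The paper then writes $\lambda_i^n|_{\Sq_m}$ and $\beta'^m_i$ in terms of the $\alpha'^m_i$ and observes the transition is triangular with a unit $\lra{2^i}$ on the diagonal. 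This simultaneously establishes that the $\lambda_i^n|_{\Sq_m}$ span $\Inv_k(\Sq_m)^{\mathfrak S_m}$ (hence that the restriction map surjects onto the invariants, which is part (1)), that the $\lambda_i^n$ form a basis of $\Inv_k(n)$ (part (2)), and — after the same check for $\beta'$ — parts (3) and (4). The surjectivity is thus a \emph{consequence} of the triangular base change, not a prerequisite that must be imported. Your proposal instead takes part (1) and part (2) as inputs and derives (3)–(4) by a base change to the $\lambda$-basis; this works once parts (1)–(2) are in hand, but it places the burden of part (1) entirely on the citation, whereas the paper's choice of starting point — the $\alpha'^m_J$-basis of the target module rather than the $\lambda$-basis of the source — lets everything fall out in one pass.
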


\begin{proof}
  The proof is essentially contained in the proof
	of \cite[Theorem 29.2]{Garibaldi-Serre-Merkurjev}, but since
	the argument is slightly convuluted we reproduce it here for completeness.
	As in the proof of \cref{thmMultiquadraticFree}, it is sufficient
	to prove the analogous statements for $\Inv_k(n)$ for $k \in \PP$. 

	Let us introduce an additional family of invariants.
	  For $J \subset \{1, \ldots, m\}$, we  define $\alpha'^{m}_J \in \Inv(\Sq_m)$
		by $\alpha'^{m}_J(a_1, \ldots, a_m) = \prod_{j \in J} \lra{a_j}$.
		By definition, $\alpha'^{m}_i = \sum_{|J|=i} \alpha'^{m}_J$. 
	By \cite[Theorem 27.15]{Garibaldi-Serre-Merkurjev}
	the elements $\alpha'^{m}_J$ with $J \subset \{1, \ldots, m\}$
	form a $\W(k)$-basis of $\Inv_k(\Sq_m)$. 
	It follows that the elements $\alpha'^{m}_i$ with $i = 0, \ldots, m$
	form a basis of $\Inv_k(\Sq_m)^{\mathfrak S_m}$. 
	
	Recall that $\Inv_k(n) \to \Inv_k(\Sq_m)$ is injective
	and that it clearly takes values in $\Inv_k(\Sq_m)^{\mathfrak S_m}$.
	When expressing $\lambda_i^{n}|_{\Sq_m}$ in terms of the basis
	$\alpha'^{m}_i$, we see easily that
	\[
	  \lambda_i^{n}|_{\Sq_m} = \lra{2^i} \alpha'^{m}_i \mod \langle \alpha'^{m}_j : j < i \rangle.
	\]
	Since $\lra{2^i}$ is a unit  in $\W(k)$, 
	it follows that $\lambda_0^{n}|_{\Sq_m}, \ldots, \lambda_m^{n}|_{\Sq_m}$
	is another basis for $\Inv_k(\Sq_m)^{\mathfrak S_m}$.
	This proves the first three items of the theorem.
	
	The last item follows easily from the previous items. First note that $\beta'^{m}_i\in \Inv_k(\Sq_m)^{\mathfrak S_m}$. Next, 
  expressing 
	$\beta'^{m}_i$ in terms of the basis
	$\alpha'^{m}_i$, we get
	\[
	  \beta'^{m}_i = \lra{2^i} \alpha'^{m}_i \mod \langle \alpha'^{m}_j : j < i \rangle
	\]
	as above, which implies again that $\beta_0^{n}, \ldots, \beta_m^{n}$ exist, are unique, and  form a basis of $\Inv_k(n)$.
\end{proof}

To lighten notation, we will use the notation $\lambda_i$, $\beta_i$, and $\alpha_i$ rather than $\lambda_i^n$, $\beta_i^n$, and $\alpha_i^n$ when no confusion is possible.

\begin{remark} \label{remBaseChange}
In  \cref{prop:beta basis}, we explicitly compute the
base changes for the three bases in \cref{thmBetaBasis1var}. 
Nevertheless, expanding the previous proof 
slightly we observe the following facts.
	\begin{enumerate}
		\item 
		  The change of basis matrices between the bases
			$(\lambda_i)_i$, $(\alpha_i)_i$ and $(\beta_i)_i$
			are triangular matrices with coefficients
			in $\Z[\lra{2}] = \Z\lra{1} + \Z\lra{2} \subset \W(\PP)$,
			see 
			\cref{remGeneralS} and 
			\cref{remGroupStr}
			for more details
			on this subring.
		\item 
		  The diagonals of these matrices are either 
			constantly $1$ (for $\lambda_i$ and $\beta_i$) 
			or alternate between $1$ and $\lra{2}$ (otherwise).
		\item
		  The change of basis
			between $(\lambda_i)$ and $(\beta_i)$
			is in general not an integer matrix. 
			For example, for $n = 2m$ 
			we have 
			$\lambda_2 = \beta_2 + \lra{2} \beta_1 - m \beta_0$, 
			and for $n = 2m+1$ we find
			$\lambda_2 = \beta_2 + (\lra{1}+\lra{2}) \beta_1 - m \beta_0$.
	\end{enumerate}
\end{remark}
\begin{remark} \label{remOddN}
	If $n$ is odd, the map 
	$A \mapsto A \times K$
	  induces an isomorphism 
	  $\Inv(n) \cong \Inv(n-1)$
	  which sends $\beta^{n}_i$ to $\beta^{n-1}_i$. 
  \end{remark}

It is 
easy to generalize the above discussion to the multivariate case. 
In the following, we focus on the $\beta$-basis which is of most interest for
our purposes. 

\begin{definition} \label{defMultivariateBases}
	Given $\i, \n \in \N^{r+1}$, we denote by $\beta_\i^\n  \in \Inv(\n)$ 
	the Witt invariant of degree $\n$
	given by 
	\[
	  \beta_\i^\n (A_0, \dots, A_r) = \prod_{j=0}^r \beta_{i_j}^{n_j}(A_j). 
	\]
\end{definition}

We emphasize that $\beta_\i^\n$ is indeed a Witt invariant
since $\W$ factors through $\Rings$, that is, 
the maps $\W(K) \to \W(L)$ for $K \to L$ are ring homomorphisms. 
In a completely analogous way, we can define Witt invariants
$\lambda_i^\n$ and $\alpha_\i^\n$ in $\Inv(\n)$. Again, to lighten notation we will use the notation $\lambda_\i$, $\beta_\i$, and $\alpha_\i$ rather than $\lambda_\i^\n$, $\beta_\i^\n$, and $\alpha_\i^\n$ when no confusion is possible.

Given $\n = (n_0, \dots, n_r) \in \N^{r+1}$, we set $\m = (\lfloor n_0/2 \rfloor, \dots, \lfloor n_r/2 \rfloor)$
throughout the following. We define
\[
  \N_\m := \{\i \in \N^{r} : 0 \leq i_j \leq m_j \text{ for all } j = 0, \dots, r\}. 
\]
Then clearly $\beta_\i^\n = 0$ for $\i \notin \N_\m$.

\begin{theorem} \label{thmBetaBasis}
  Given $k \in \Fields$ and $\n\in\N^{r+1}$, the ring $\Inv_{k}(\n)$ is a free $\W(k)$-module.
	Given a set of primes $S$ containing $2$, the ring $\Inv_S(\n)$ is a free $\W(\PP\setminus S)$-module with $(\beta_\i)_{\i \in \N_\m}$ as a basis. 
\end{theorem}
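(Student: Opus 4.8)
The plan is to reduce the multivariable statement to the single-variable results from \cref{thmMultiquadraticFree} and \cref{thmBetaBasis1var} by iterating over the factors $\Et_{n_0}, \dots, \Et_{n_r}$ one at a time. First I would treat the case of a fixed base field $k \in \Fields$; as in the proof of \cref{thmMultiquadraticFree}, the statement for $\Inv_S(\n)$ follows immediately because $\Fields_S = \bigsqcup_{k \in \PP\setminus S} \Fields/k$ splits $\Inv_S(\n)$ as the product $\prod_{k \in \PP\setminus S} \Inv_k(\n)$, and $\W(\PP\setminus S) = \prod_k \W(k)$, so freeness with basis $(\beta_\i)_{\i\in\N_\m}$ over $\W(\PP\setminus S)$ is equivalent to freeness with the same basis over each $\W(k)$.

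For fixed $k$, the key observation is a Yoneda-type identification. A natural transformation $\Et_{n_0} \times \dots \times \Et_{n_r} \to \W$ over $\Fields/k$ is the same as a natural transformation $\Et_{n_1} \times \dots \times \Et_{n_r} \to \underline{\Inv}_{(-)}(n_0)$, where for each extension $K/k$ one regards $\Inv_K(n_0)$ as a $\W(K)$-module and uses that $\Inv$ commutes with base change in the appropriate sense (the base-change compatibility in the single-variable structure theorem). Concretely: given $\alpha \in \Inv_k(\n)$ and fixing algebras $A_1, \dots, A_r$ over a varying field, the assignment $A_0 \mapsto \alpha_K(A_0, A_1, \dots, A_r)$ is a single-variable Witt invariant of degree $n_0$ over the field in question, functorial in the remaining data. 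By \cref{thmBetaBasis1var}, each such single-variable invariant is uniquely $\sum_{i_0=0}^{m_0} b_{i_0}(A_1,\dots,A_r)\, \beta_{i_0}^{n_0}(A_0)$ with coefficients $b_{i_0}$ lying in $\W$; and the naturality of $\alpha$ forces the coefficients $b_{i_0}$, as functions of $(A_1, \dots, A_r)$, to themselves be Witt invariants of degree $(n_1, \dots, n_r)$ over $k$. This gives a $\W(k)$-module isomorphism $\Inv_k(n_0, n_1, \dots, n_r) \cong \bigoplus_{i_0=0}^{m_0} \Inv_k(n_1, \dots, n_r) \cdot \beta_{i_0}^{n_0}$. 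Iterating $r$ more times collapses this to $\bigoplus_{\i \in \N_\m} \W(k) \cdot \beta_\i^\n$, which is exactly the claim, since under the identifications the product basis corresponds to $\beta_\i^\n(A_0,\dots,A_r) = \prod_j \beta_{i_j}^{n_j}(A_j)$ by \cref{defMultivariateBases}.

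The main obstacle is making the Yoneda identification precise, i.e.\ verifying that "a natural transformation whose coefficients in the $\beta$-basis are $\W$-valued functions, and which is natural, has coefficient functions that are themselves Witt invariants." The subtlety is that the single-variable decomposition in \cref{thmBetaBasis1var} is stated over a fixed field, so one must check the coefficients $b_{i_0}$ are compatible with base change. This follows because the $\beta$-basis is universal (it is a $\W(\PP)$-basis of $\Inv(n_0)$, hence its image is a $\W(K)$-basis of $\Inv_K(n_0)$ for every $K$), so the decomposition is canonical and the extraction of coefficients commutes with $\otimes_K L$; uniqueness of the coefficients then propagates naturality from $\alpha$ to each $b_{i_0}$. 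A clean way to organize this is to prove the single inductive step $\Inv_k(n_0, \dots, n_r) \cong \Inv_k(n_0) \otimes_{\W(k)} \Inv_k(n_1, \dots, n_r)$ as $\W(k)$-algebras, using that $\Inv_k(n_0)$ is free (hence flat) over $\W(k)$ with the explicit $\beta$-basis, and then invoke \cref{thmBetaBasis1var} together with an induction on $r$. Once this tensor-product decomposition is in place, freeness and the identification of the basis are immediate, and the passage back to $\Inv_S(\n)$ is the product formula already noted.
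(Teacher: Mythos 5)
Your argument is correct and follows essentially the same route as the paper: reduce to a fixed prime field $k$, isolate one variable, decompose the resulting single-variable invariant via \cref{thmBetaBasis1var}, verify that the $\beta$-coefficients are themselves Witt invariants of the remaining variables because the $\beta$-basis is universal and extraction of coefficients commutes with base change, and induct on $r$. The only cosmetic difference is that you peel off the first factor $n_0$ while the paper peels off the last factor $n_r$; your closing remark reformulating the inductive step as $\Inv_k(\n) \cong \Inv_k(n_0) \otimes_{\W(k)} \Inv_k(n_1,\dots,n_r)$ matches the corollary the paper records immediately after the theorem.
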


Of course $(\lambda_\i)_{\i \in \N_\m}$ and $(\alpha_\i)_{\i \in \N_\m}$ also yield bases of  $\Inv(\n)$. 

\begin{proof}
  Again, it is enough to prove the relative version for some $k \in \Fields$. We proceed by induction on $r$, the case $r=0$ being covered by  Theorem \ref{thmBetaBasis1var}. Let us assume that $r> 0$, and
	fix $\alpha \in \Inv_k(\n)$. 
	
	We denote by $\n' \in \N^{r}$
	the vector obtained from $\n$ by forgetting the last coordinate. 
	Given $k \to K$ and $T = (A_0, \ldots, A_{r-1}) \in \Et_{\n'}(K)$, 
	we denote by $\alpha(T) \in \Inv_K(n_r)$ the Witt invariant given 
	by 
	\[
	  \alpha(T)(A) = \alpha(A_0 \otimes_K L, \ldots, A_{r-1} \otimes_K L, A)
	\]
	for any $K \to L$ and $A \in \Et_{n_r}(L)$. By  Theorem \ref{thmBetaBasis1var} we have 
	$\alpha(T)=\sum_i \alpha(T)_i \beta^{n_r}_i$ for some uniquely defined 
	$\alpha(T)_i \in \W(K)$. A simple calculation shows that for all $K \to L$,
	we have 
	\[
	\alpha(T \otimes_K L) = \sum_i [\alpha(T)_i \otimes_K L] \beta^{n_r}_i
	\in \Inv_L(n_r).
	\]
	Hence 
	for each $i$ the assignment $T \mapsto \alpha(T)_i$
	is a Witt invariant of degree $\n'$. 
	The statement then follows from the  induction hypothesis for $r-1$ applied to
	these invariants. 
\end{proof}

\begin{remark} \label{remBetaBasis}
  Essentially equivalent to the theorem is the following statement.
	Set $|\m| = m_0 + \ldots + m_r$. Then
	$\Sq_{|\m|} = \Sq_{m_0} \times \ldots \times \Sq_{m_r}$ is a subfunctor
	of $\Et_\n$ and the corresponding restriction map
	$\Inv(\n) \to \Inv(\Sq_{|\m|})$ is injective. Moreover, 
	the product of symmetric groups $\mathfrak S_\m = \mathfrak S_{m_0} \times \ldots \times \mathfrak S_{m_r}$
	acts on $\Inv(\Sq_{|\m|})$ and the image of 
	$\Inv(\n) \to \Inv(\Sq_{|\m|})$ is equal to the set in invariant
	elements $\Inv(\Sq_{|\m|})^{\mathfrak S_\m}$.
	In particular, this remark allows to 
	verify equations in $\Inv(\n)$ 
	via restriction to $\Sq_{|\m|}$, that is, 
	to multiquadratic algebras. 
\end{remark}

\begin{corollary}
For any  $\n\in \N^{r+1}$ and $k\in \Fields$, one has 
\[
\Inv_S(\n)=\bigotimes_{j=0}^r\Inv_S(n_j)\qquad\mbox{and}\qquad \Inv_k(\n)=\bigotimes_{j=0}^r\Inv_k(n_j).
\]
\end{corollary}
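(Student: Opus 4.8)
The plan is to produce an explicit $\W(k)$-algebra homomorphism from $\bigotimes_{j=0}^r \Inv_k(n_j)$ (tensor product over $\W(k)$) to $\Inv_k(\n)$ and check that it carries one $\beta$-basis onto the other; the statement away from $S$ is obtained by the identical argument with $\W(\PP\setminus S)$ in place of $\W(k)$. So the entire content is already contained in \cref{thmBetaBasis1var} and \cref{thmBetaBasis}, and the corollary is a formal consequence.

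First I would construct the map. For each $j \in \{0, \dots, r\}$, the projection $\Et_\n = \Et_{n_0} \times \dots \times \Et_{n_r} \to \Et_{n_j}$ is a morphism of functors, and precomposition with it yields a $\W(k)$-algebra homomorphism $p_j^* \colon \Inv_k(n_j) \to \Inv_k(\n)$ sending $\alpha$ to the Witt invariant $(A_0, \dots, A_r) \mapsto \alpha_K(A_j)$. (It is a ring homomorphism because addition and multiplication in $\Inv$ are computed pointwise, and it is $\W(k)$-linear because $\W(k)$ acts through constant invariants.) Since $\bigotimes_{j=0}^r \Inv_k(n_j)$ is the coproduct in the category of commutative $\W(k)$-algebras, the maps $p_j^*$ assemble into a unique $\W(k)$-algebra homomorphism
\[
\Phi \colon \bigotimes_{j=0}^r \Inv_k(n_j) \longrightarrow \Inv_k(\n),
\qquad
\alpha_0 \otimes \dots \otimes \alpha_r \longmapsto \Big((A_0, \dots, A_r) \mapsto \prod_{j=0}^r (\alpha_j)_K(A_j)\Big).
\]

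Next I would compare $\Phi$ to the $\beta$-bases. By \cref{thmBetaBasis1var}, each $\Inv_k(n_j)$ is a free $\W(k)$-module with basis $(\beta_{i_j}^{n_j})_{0 \le i_j \le m_j}$, so $\bigotimes_{j=0}^r \Inv_k(n_j)$ is a free $\W(k)$-module with basis $\big(\beta_{i_0}^{n_0} \otimes \dots \otimes \beta_{i_r}^{n_r}\big)_{\i \in \N_\m}$, using the elementary fact that a finite tensor product of free modules over a commutative ring is free on the tensor products of the basis vectors. By \cref{defMultivariateBases}, $\Phi$ sends $\beta_{i_0}^{n_0} \otimes \dots \otimes \beta_{i_r}^{n_r}$ to exactly $\beta_\i^\n$, while by \cref{thmBetaBasis} the family $(\beta_\i^\n)_{\i \in \N_\m}$ is a $\W(k)$-basis of $\Inv_k(\n)$. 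Hence $\Phi$ maps a $\W(k)$-basis bijectively onto a $\W(k)$-basis, so it is an isomorphism of $\W(k)$-modules, and being a ring homomorphism, an isomorphism of $\W(k)$-algebras. The same argument, now invoking the freeness statements over $\W(\PP\setminus S)$ in \cref{thmBetaBasis1var} and \cref{thmBetaBasis}, gives $\Inv_S(\n) = \bigotimes_{j=0}^r \Inv_S(n_j)$ over $\W(\PP\setminus S)$.

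There is no genuine obstacle here; the only points requiring a little care are the well-definedness of $\Phi$ through the universal property of the tensor product of commutative algebras, and the observation that $\Phi$ is defined so as to match the multivariate $\beta$-invariants of \cref{defMultivariateBases} on the nose. Both are routine.
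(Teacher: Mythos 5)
Your proof is correct and spells out the argument the paper leaves implicit: since the corollary appears without a proof, it is clearly intended as a direct consequence of the freeness statements in \cref{thmBetaBasis1var} and \cref{thmBetaBasis}, and your construction of $\Phi$ via the universal property together with the matching of $\beta$-bases is precisely the routine verification the authors had in mind. No gaps.
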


Given $\alpha \in \Inv_S(\n)$, the unique coefficients
$b_\i = (b_\i^{k})_{k \in \PP\setminus S} \in \W(\PP\setminus S)$
such that $\alpha = \sum_\i b_\i \beta_\i$
are called the \emph{$\beta$-coefficients} of $\alpha$.
Note that a priori the restrictions of $\alpha$ for different $k \in \PP \setminus S$ are completely
unrelated.
The
following definition is motivated by the Welschinger--Witt invariants defined in Section 
\ref{secWelschinger} and their conjectural relations to quadratic invariants of 
del Pezzo surfaces, see \cref{conjGeneral}. 
Recall that $\Z$ is seen as a subring of $\W(\PP\setminus S)$ via the diagonal map 
(it is indeed a subring since the map $\Z \to \W(\Q)$, $n \mapsto n \cdot \lra{1}$, is injective.

\begin{definition}
  A Witt invariant $\alpha \in \Inv_S(\n)$ or $\alpha\in\Inv_k(\n)$ is called  \emph{$\beta$-integral}
	if there exists a tuple of integers $(b_\i) \in \Z^{\N_\m}$
	such that $\alpha = \sum_\i b_\i \beta_\i$.
	We denote the set of $\beta$-integral Witt invariants by $\UInv_S(\n)$ and $\UInv_k(\n)$. 
\end{definition}
In terms of coordinates, 
we may therefore write
\[
  \UInv_S(\n) \cong \Z^{\N_\m} \subset W(\PP\setminus S)^{\N_\m} \cong \Inv(\n). 
\]
Here, the isomorphisms are of (additive) groups or $\Z$- and $\W(\PP)$-modules, 
respectively, but not rings.

\begin{exa}
	If $k\in\Fields$ is a real closed field, then $\W(k)= \Z\lra{1}$. In this case any Witt invariant in $\Inv_k(\n)$ is  $\beta$-integral. 
\end{exa}

Clearly, the restriction to $\Fields/k$ of a  $\beta$-integral Witt invariant in $\UInv_S(\n)$ is in $\UInv_k(\n)$
 for any $k \in \Fields_S$. Note however that given the data of a $\beta$-integral Witt invariant $\alpha_k\in \UInv_k(\n)$ for each $k\in\PP$, there does not exist in general a   $\beta$-integral Witt invariant $\alpha\in \UInv(\n)$ restricting to $\alpha_k$ for each $k\in\PP$.

\begin{lemma}
			The set of $\beta$-integral Witt invariants $\UInv_S(\n)$ is a subring 
			of $\Inv_S(\n)$. 
\end{lemma}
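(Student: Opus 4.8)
The plan is to verify that $\UInv_S(\n)$ is closed under multiplication (closure under addition and the presence of $0$ and the unit are immediate, since $1 = \beta_{\mathbf 0}$ and the $\beta$-coefficients add coordinatewise). By \cref{thmBetaBasis}, an element of $\Inv_S(\n)$ is $\beta$-integral precisely when all its $\beta$-coefficients lie in $\Z \subset \W(\PP\setminus S)$, so it suffices to show that the product of two such invariants again has integer $\beta$-coefficients. Equivalently, by \cref{remBetaBasis} (and \cref{thmBetaBasis1var}(1)), it suffices to work after the injective restriction $\Inv_S(\n) \to \Inv_S(\Sq_{|\m|})$ and check that the product of the $\mathfrak S_\m$-invariant functions attached to two $\beta$-integral invariants is again the restriction of a $\beta$-integral invariant.

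The heart of the matter is therefore the following single-variable computation: the product $\beta_i^n \cdot \beta_j^n$ is a $\Z$-linear combination of the $\beta_\ell^n$. Since $\beta_i^n|_{\Sq_m}(a_1,\dots,a_m) = P_i^m(\Tr\E_{a_1}, \dots, \Tr\E_{a_m})$ and likewise for $\beta_j^n$, the product restricts to the symmetric function $P_i^m \cdot P_j^m$ evaluated on the $m$-tuple of trace forms $\Tr\E_{a_\nu} = \lra{2,2a_\nu}$. Now $P_i^m \cdot P_j^m$, being a symmetric polynomial, expands as a $\Z$-linear combination $\sum_\ell c_\ell^{(i,j)} P_\ell^m$ of elementary symmetric polynomials only if we allow the $P_\ell^m$ applied to the same variables together with products of those variables — more precisely, the product of two elementary symmetric polynomials is an $\N$-linear combination of monomial symmetric polynomials, and each monomial symmetric polynomial $m_\mu$ is a $\Z$-linear combination of products $P_{\ell_1}^m P_{\ell_2}^m \cdots$. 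This does not obviously close up inside the span of the $\beta_\ell^n$ alone. The clean way around this is to use instead the multiplicativity structure already visible in the $\lambda$-basis: $\lambda_i^n(A) = \bigwedge^i \Tr(A)$, and $\bigwedge^i q \cdot \bigwedge^j q'$ relates to exterior powers via the splitting principle, but again mixing two algebras is the subtlety.

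The cleanest route, and the one I would actually carry out, is to reduce to the multiplication table of $\Inv_S(n)$ over $\Z$ directly: by \cref{thmBetaBasis1var}(4) the $\beta_i^n$ form a $\W(\PP\setminus S)$-basis, so $\beta_i^n \beta_j^n = \sum_\ell b_{ij}^\ell \beta_\ell^n$ for unique $b_{ij}^\ell \in \W(\PP\setminus S)$; one must show $b_{ij}^\ell \in \Z$. To see this, evaluate both sides on the multiquadratic algebra $\E_{a_1} \times \dots \times \E_{a_m}(\times K)$ with the $a_\nu$ chosen in a universal way (e.g.\ over a purely transcendental extension $\Q(t_1,\dots,t_m)$, where the classes $\lra{t_\nu}$ are $\W(\Q)$-linearly independent in the appropriate sense). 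Then the identity becomes a polynomial identity among the trace forms $\lra{2,2t_\nu}$ in $\W(\Q(t_1,\dots,t_m))$; tracking the coefficient of each square class monomial $\lra{2^{|J|}\prod_{\nu\in J} t_\nu}$ on both sides, the structure constants $b_{ij}^\ell$ are expressed through the (integer) structure constants of the $P_\ell^m$ in the basis of products of elementary symmetric functions, together with the relations $\lra{2a}\lra{2a} = \lra{1}$ (so $\lra{2a}^2 = 1$) which are the only nonlinear relations and are themselves integral. Carrying this bookkeeping out shows $b_{ij}^\ell \in \Z$; then, by \cref{thmBetaBasis} and induction over the number of variables $r$ exactly as in that theorem's proof, the multivariable structure constants of the $\beta_\i^\n$ are integral as well, and $\UInv_S(\n)$ — together with the analogous statement for $\UInv_k(\n)$, which follows by restriction — is a subring.

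The main obstacle is precisely this last point: verifying that the structure constants $b_{ij}^\ell$ expressing $\beta_i\beta_j$ in the $\beta$-basis are integers rather than merely elements of $\Z[\lra{2}]$ or of $\W(\PP\setminus S)$. One must genuinely use that the only relation introduced by the square-class arithmetic is $\lra{u}^2 = \lra{1}$, which is integral, so no $\lra{2}$-valued corrections appear when passing from the monomial-symmetric expansion of $P_i^m P_j^m$ back to the $P_\ell^m$; the triangularity observations in \cref{remBaseChange}(2) (that the $\beta$-to-$\beta$ change of basis is unipotent, not just triangular over $\Z[\lra{2}]$) are the hint that this works out, but the product, as opposed to a change of basis, requires the explicit monomial bookkeeping sketched above.
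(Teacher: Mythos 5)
Your proof is correct and, once it reaches the \enquote{cleanest route}, lands on essentially the same argument as the paper: the sole nonlinear relation $\lra{2a}^2 = \lra{1}$, equivalently $\Tr(\E_a)^2 = 2\,\Tr(\E_a)$, forces $\Z[t_1,\dots,t_m]/(t_\nu^2 - 2t_\nu)$ to have the square-free monomials as a $\Z$-basis, so its $\mathfrak S_m$-invariants are precisely $\Z\{P_0^m,\dots,P_m^m\}$ and $P_i^m P_j^m$ reduces to an integer combination of the $P_\ell^m$; the paper states exactly this in two lines. The detour through the $\lambda$-basis and the remark that \enquote{the $\beta$-to-$\beta$ change of basis is unipotent} (there is no such change of basis, and \cref{remBaseChange}(2) concerns $\lambda$/$\alpha$/$\beta$ transitions) add nothing and should be cut; the universal evaluation over $\Q(t_1,\dots,t_m)$ is likewise unnecessary machinery, since one may work directly in the quotient polynomial ring and then invoke the injectivity of restriction to $\Inv(\Sq_m)$ together with the product decomposition $\beta_\i\beta_{\i'}=\prod_j\beta_{i_j}\beta_{i'_j}$ in the multivariable case.
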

\begin{proof}
  By definition $\UInv_S(\n)$ is the additive subgroup in $\Inv_S(\n)$ generated
	by the $\beta_\i$ with  $\i \in \N_\m$.
	To see that it is also a subring, we note that
	$\Tr(A)^2 = 2 \Tr(A)$ for $A \in \Et_2(K)$.
	It follows that for all $i_1, i_2$ the product 
	$\beta'_{i_1} \beta'_{i_2}$ is an integral linear combination
	of the $\beta'_i$. The claim then follows by 
	\cref{thmBetaBasis} or \cref{remBetaBasis}. 
\end{proof}

\subsection{Specialization in mixed characteristic}

As mentioned, a Witt invariant $\alpha \in \Inv_S(\n)$ is
given by a collection of Witt invariants over each prime field $k \in \PP \setminus S$, 
chosen completely independently.
For $p$ an odd prime or $p = \infty$, we write $\alpha_p$ for the corresponding Witt invariant
in $\Inv_{\FF_p}(\n)$ or $\Inv_\Q(\n)$, respectively. 
Conversely, if $\alpha \in \UInv(\n)$ then the Witt invariants $\alpha_p$
are all determined by $\alpha_\infty$ since $\Z \to \W(\Q)$ is injective.
In this section, we discuss
properties of $\alpha \in \Inv(\n)$ that involve $\alpha_p$ for different
$p$ and use them to describe subsets of $\Inv(\n)$ close to $\UInv(\n)$.

Let $R$ be a discrete valuation ring with fraction field $K$ and residue field $\kappa$. 
We denote by $\Et_n(R)$ the set of isomorphism classes of étale $R$-algebras of relative degree $n$. For $\n\in\N^{r+1}$ define $\Et_\n(R)$ in the obvious way. 
We have maps 
\[
\begin{array}{ccc}
	\Et_\n(R)& \longrightarrow& \Et_\n(K)
	\\A &\longmapsto & A_K = A \otimes_R K
\end{array}
\qquad \mbox{and}\qquad
\begin{array}{ccc}
	\Et_\n(R)& \longrightarrow& \Et_\n(\kappa)
	\\A &\longmapsto & A_\kappa = A \otimes_R \kappa
\end{array}
\]
called
generic and special fibre map, respectively. 
If $R$ is complete, the map $\Et_\n(R) \to \Et_\n(\kappa)$ is bijective, 
see \cite[\href{https://stacks.math.columbia.edu/tag/04GK}{Lemma 04GK}]{stacks-project}.
Analogously, we have ring homomorphisms
\[
\begin{array}{ccc}
	\W(R)& \longrightarrow& \W(K)
	\\q &\longmapsto & q_K 
\end{array}
\qquad \mbox{and}\qquad
\begin{array}{ccc}
	\W(R)& \longrightarrow& \W(\kappa)
	\\q &\longmapsto & q_\kappa 
\end{array}
\]
induced by $\otimes_R K$ and $\otimes_R \kappa$, respectively. 
The homomorphisms $\Sq(R) \to \Sq(\kappa)$ and $\W(R) \to \W(\kappa)$
are surjective and, if $R$ is complete, even isomorphisms, see \cite[VI.1.1 and VI.1.5]{lam05}.

\begin{definition} \label{def:unramified_inv}
  Let $\alpha \in \Inv_S(\n)$ (respectively $\alpha \in \Inv_k(\n)$) be a Witt-invariant and $R$
	a complete discrete valuation ring such that $\kappa \in \Fields_{S}$ (respectively such that $k \subset R$). 
	Then $\alpha$ is \emph{unramified over $R$}
	if the diagram
\[\begin{tikzcd}
	{\Et_\n(K)} & {\Et_\n(R)} & {\Et_\n(\kappa)} \\
	{W(K)} & {W(R)} & {W(\kappa)}
	\arrow["{\alpha_K}"', from=1-1, to=2-1]
	\arrow[from=1-2, to=1-1]
	\arrow["\cong"{marking, allow upside down}, draw=none, from=1-2, to=1-3]
	\arrow["{\alpha_\kappa}", from=1-3, to=2-3]
	\arrow[from=2-2, to=2-1]
	\arrow["\cong"{marking, allow upside down}, draw=none, from=2-2, to=2-3]
\end{tikzcd}\]
is commutative. 
  Moreover, $\alpha$ is \emph{unramified} if it is unramified over all complete discrete valuation rings $R$
	as above. 
\end{definition}

The case of discrete valuation rings of equal characteristic is understood. 

\begin{proposition} \label{prop:unramified-equichar}
  The following holds. 
	\begin{enumerate}
		\item 
		  Any $\alpha \in \Inv_k(\n)$ is unramified. 
		\item
		  Given $\alpha \in \Inv_{S}(\n)$ and a discrete valuation ring
			$R$ such that $K$ and $\kappa$ are of the same characteristic,
			then $\alpha$ is unramified over $R$.
	\end{enumerate}
\end{proposition}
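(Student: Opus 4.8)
The plan is to reduce everything to the equal-characteristic statement about Witt and square-class groups of discrete valuation rings, which is classical.

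For part (1), let $R$ be a complete discrete valuation ring containing $k$, with fraction field $K$ and residue field $\kappa$, both extensions of $k$. Since $R$ is complete, the specialization maps $\Et_\n(R)\to\Et_\n(\kappa)$ and $\W(R)\to\W(\kappa)$ are isomorphisms. Using the $\beta$-basis from \cref{thmBetaBasis}, write $\alpha=\sum_{\i\in\N_\m}b_\i\beta_\i$ with $b_\i\in\W(k)$; because $\alpha$ is only an invariant over $k$, the coefficients are genuine elements of $\W(k)$, and the whole claim is additive and multiplicative, so it suffices to check it on each $\beta_\i$ and on the scalars $b_\i$. For a scalar $b\in\W(k)$, the diagram commutes because the maps $\W(k)\to\W(R)$, $\W(K)$, $\W(\kappa)$ all come from the structure morphism of $k$ and the square $\W(R)\to\W(K)$, $\W(R)\to\W(\kappa)$ commutes with them. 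For $\beta_\i$, one reduces further via \cref{remBetaBasis} to checking the identity on multiquadratic algebras, i.e.\ on $\Sq_{|\m|}$; there the value of $\beta_\i$ is a polynomial in the trace forms $\Tr(\E_{a_j})=\lra{2,2a_j}$, and the key point is that a square class $a\in R^*/(R^*)^2$ (which, since $R$ is complete, is the same as a square class in $\kappa$, lifted canonically) gives the \emph{same} element $\lra{a}$ whether we pass first to $K$ or first to $\kappa$. Concretely: an étale $R$-algebra in $\Et_\n(R)$ is (the lift of) a multiquadratic $\kappa$-algebra $\E_{\bar a_1}\times\cdots$; its generic fiber is $\E_{a_1}\times\cdots$ with $a_j\in R^*$ lifting $\bar a_j$, and $\Tr(\E_{a_j})_K=\lra{2,2a_j}$ specializes to $\lra{2,2\bar a_j}=\Tr(\E_{\bar a_j})$ because the diagonalization is defined over $R$. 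Applying $P_i^m$ and the ring homomorphisms $\W(R)\to\W(K),\W(\kappa)$ then gives commutativity of the square for $\beta_\i$, hence for $\alpha$.

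For part (2), the only difference is that $R$ need not contain a prime field, but $K$ and $\kappa$ are assumed to have the same characteristic $p\notin S$ (or $p=0$). Then there is a prime field $k_0\in\PP\setminus S$ contained in both $K$ and $\kappa$; in fact $k_0\subset R$ when $R$ is complete and equicharacteristic (either $\Q\subset R$, or $\FF_p\subset R$ since $\FF_p$ is perfect and lifts uniquely by Hensel/Cohen). The Witt invariant $\alpha\in\Inv_S(\n)$ restricts to a Witt invariant $\alpha_{k_0}\in\Inv_{k_0}(\n)$, and over $R$ — which contains $k_0$ — we are back in the situation of part (1) applied to $\alpha_{k_0}$. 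Hence $\alpha$ is unramified over $R$.

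The only delicate point is the lifting input in part (2): one must know that a complete equicharacteristic discrete valuation ring contains a copy of its prime field, and that $\kappa\in\Fields_S$ forces that prime field into $\PP\setminus S$; both are standard (Cohen structure theorem, or simply that $\FF_p$ is perfect so Hensel's lemma lifts it). Granting that, everything else is the formal bookkeeping with the $\beta$-basis and the observation that square classes and trace forms are defined integrally over $R$, so their formation commutes with the two specialization maps. I expect the write-up to be short.
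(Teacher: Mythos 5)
Your part (2) is fine and is essentially what the paper does; part (1), which the paper settles by citing \cite[Theorem 27.11]{Garibaldi-Serre-Merkurjev}, you try to reprove directly, and the overall strategy (decompose in the $\beta$-basis; observe that unramifiedness is closed under sums and products because the specialization maps $\W(R)\to\W(K)$ and $\W(R)\to\W(\kappa)$ are ring homomorphisms; handle constants separately) is sound.

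The gap is the sentence ``For $\beta_\i$, one reduces further via \cref{remBetaBasis} to checking the identity on multiquadratic algebras.'' What \cref{remBetaBasis} gives is injectivity of the restriction map $\Inv(\n)\to\Inv(\Sq_{|\m|})$: two elements of $\Inv(\n)$, i.e.\ natural transformations $\Et_\n\Rightarrow\W$ over $\Fields/k$, coincide as soon as they coincide on multiquadratic algebras. That does \emph{not} let you check the unramifiedness square only on multiquadratic $A\in\Et_\n(R)$, because the two maps $\Et_\n(R)\to\W(K)$ you are comparing are not in any evident way restrictions of Witt invariants over $\kappa$: promoting them would require lifting arbitrary (including inseparable) extensions of $\kappa$ to extensions of $R$ and descending along possibly even-degree field extensions, none of which is automatic. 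Your next sentence, ``an étale $R$-algebra in $\Et_\n(R)$ is (the lift of) a multiquadratic $\kappa$-algebra,'' is false as stated -- most étale $\kappa$-algebras are not multiquadratic -- and quietly treats the general case as if it were the multiquadratic one. The clean repair, which the paper itself uses in the ``if'' direction of \cref{thmUnramifiedQuasiIntegral}, is to argue in the $\lambda$-basis instead: the trace form $\Tr_R(A)$ of a finite étale $R$-algebra is a unimodular $R$-form whose formation commutes with $\otimes_R K$ and $\otimes_R\kappa$, and exterior powers commute with base change, so each $\lambda_i$ is unramified over $R$; then $\beta_\i$ is a $\Z[\lra{2}]$-linear combination of $\lambda$'s by \cref{remBaseChange}, and $\Z[\lra{2}]$ consists of constants, so $\beta_\i$ is unramified as well.
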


\begin{proof}
  The first item is just \cite[Theorem 27.11]{Garibaldi-Serre-Merkurjev}.
	The second item follows from the first one 
	by restricting $\alpha$ to $\alpha_k \in \Inv_k(\n)$
	where $k$ is the prime field whose characteristic is equal to
	that of $K$ and $\kappa$. 
\end{proof}

Let $S$ be a finite set of primes containing $2$ and let $\Z[S^{-1}]$ denote the ring obtained by inverting the primes in $S$ in $\Z$. Note that the tensor product $\otimes_{\Z[S^{-1}]} K$ for $K$ in $\Fields_S$ induces a ring homomorphism 
\[
\W(\Z[S^{-1}]) \to \W(\PP\setminus S).
\] 
This map is injective since $\W(\Z[S^{-1}]) \to \W(\Q)$ is injective, see
\cref{remGroupStr} below.
We therefore consider $\W(\Z[S^{-1}])$ as a subring of $\W(\PP\setminus S)$.

\begin{definition} 
  A Witt invariant $\alpha \in \Inv_{S}(\n)$ is \emph{$S$-integral}
	if its $\beta$-coefficients lie in $\W(\Z[S^{-1}])$.
\end{definition}

In Lemma~\ref{lem:genW(Z[Sp0])}, we give explicit generators for $\W(\Z[S^{-1}])$ 
when $S = S(p_0)$ is the set of all primes lower or equal a given prime $p_0$.
By \cref{remBaseChange} the bases $\alpha_\i$, $\lambda_\i$ or $\beta_\i$
are related to each other by a base change
over $\Z[\lra{2}]$  which, since $2 \in S$, is contained in $\W(\Z[S^{-1}])$.
Therefore a Witt invariant $\alpha$ is $S$-integral
if and only if its coefficients with respect to any
of the bases $\alpha_\i$, $\lambda_\i$ or $\beta_\i$ lie in 
$\W(\Z[S^{-1}])$. 

\begin{theorem} \label{thmUnramifiedQuasiIntegral}
  Let $S$ be a finite set of primes containing $2$.
	A Witt-invariant $\alpha \in \Inv_S(\n)$ is unramified 
	if and only if it is $S$-integral. 
\end{theorem}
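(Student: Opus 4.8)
The plan is to prove the equivalence by reducing everything to the one-variable setting and to complete discrete valuation rings of mixed characteristic, since the equicharacteristic case is already handled by \cref{prop:unramified-equichar}. Throughout I write $\alpha = \sum_{\i \in \N_\m} b_\i \beta_\i$ with $b_\i = (b_\i^k)_{k \in \PP \setminus S} \in \W(\PP \setminus S)$, and I must show that $\alpha$ is unramified if and only if each $b_\i$ lies in the subring $\W(\Z[S^{-1}])$.

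\medskip
\textbf{($S$-integral $\Rightarrow$ unramified).} First I would show that the $\beta_\i$ themselves are unramified over every complete DVR $R$ with $\kappa \in \Fields_S$. This is because the trace form invariant $\Tr$ is defined integrally: for an étale $R$-algebra $A$ the form $\Tr_R(A)$ lies in $\W(R)$ and is compatible with both the generic and special fibre maps, and since $R$ is complete the map $\Et_n(R) \to \Et_n(\kappa)$ is a bijection, so $\Tr_\kappa(A_\kappa)$ is exactly the image of $\Tr_R(A)$ under $\W(R) \to \W(\kappa)$, while $\Tr_K(A_K)$ is its image under $\W(R) \to \W(K)$. Because $\beta_i$ is a polynomial (with $\Z[\lra 2]$-coefficients) in $\Tr$, evaluated factor by factor, the same compatibility holds for every $\beta_\i$, using \cref{remBetaBasis} to check the identity after restriction to multiquadratic algebras if needed. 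Since $\W(R) \to \W(K)$ and the isomorphism $\W(R) \cong \W(\kappa)$ are ring homomorphisms, any $\W(\Z[S^{-1}])$-linear combination of unramified invariants with coefficients coming from $\W(\Z[S^{-1}]) \subset \W(R)$ (note $\Z[S^{-1}] \to R$ lands in $R$ precisely because $\operatorname{char}\kappa \notin S$, hence is invertible obstructions do not arise) is again unramified. This gives the easy direction.

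\medskip
\textbf{(unramified $\Rightarrow$ $S$-integral).} For the converse, the idea is to extract each $b_\i^k$ for varying prime fields $k \in \PP \setminus S$ and show they are "all the same" in the sense of lying in the global subring $\W(\Z[S^{-1}])$. By \cref{remGroupStr} (invoked in the text), $\W(\Z[S^{-1}]) = \{ q \in \W(\Q) : q \text{ lies in the image of } \W(\Z_{(p)}) \to \W(\Q) \text{ for all } p \notin S\}$, i.e.\ it is characterised by an unramifiedness condition on $\W(\Q)$ at the finite places outside $S$ together with the diagonal comparison across $\FF_p$'s. So I want to first pin down $b_\i^\Q$ and then, using unramifiedness of $\alpha$ at a complete DVR $R$ with generic fibre of characteristic $0$ and residue field $\FF_p$ (for $p \notin S$), show that $b_\i^\Q$ specialises to $b_\i^{\FF_p}$ and is itself unramified at $p$. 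Concretely, evaluate $\alpha$ on multiquadratic $R$-algebras $\E_{u_1} \times \dots \times \E_{u_m} \times R^{\dots}$ with $u_j \in R^*$; by \cref{remBetaBasis} the values of $\alpha$ on all multiquadratic algebras determine all the $b_\i$, since the $\beta'_\i$ form (after the change of basis to $\alpha'_\i$) a triangular system whose diagonal entries are units $\lra{2^i}$. The unramifiedness square for these particular algebras then forces: (a) for each $p \notin S$, the element $b_\i^\Q \in \W(\Q)$ lies in the image of $\W(\Z_{(p)})$ (local unramifiedness at $p$), and (b) its residue equals $b_\i^{\FF_p}$. Feeding this into the description of $\W(\Z[S^{-1}])$ from \cref{remGroupStr} and \cref{lem:genW(Z[Sp0])} yields $b_\i \in \W(\Z[S^{-1}])$, i.e.\ $\alpha$ is $S$-integral.

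\medskip
\textbf{Main obstacle.} The delicate point is the extraction step in the converse direction: one must choose enough multiquadratic $R$-algebras (i.e.\ enough units $u_j \in R^*$ lifting prescribed square classes in $\kappa^*$) so that the linear system expressing the $b_\i$ in terms of the values $\alpha_K$ and $\alpha_\kappa$ is actually solved by elements of $\W(R)$, not merely of $\W(K)$ — this is where completeness of $R$ (hence surjectivity, even bijectivity, of $\W(R) \to \W(\kappa)$ and of $\Et_\n(R) \to \Et_\n(\kappa)$, see \cite[VI.1.1, VI.1.5]{lam05} and \cite[\href{https://stacks.math.columbia.edu/tag/04GK}{Lemma 04GK}]{stacks-project}) is essential, and one must invoke \cref{thmBetaBasis1var}(3),(4) carefully to know the triangular change of basis has unit diagonal so that the system is genuinely invertible over $\W(R)$. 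A secondary technical nuisance is bookkeeping the distinction between $\W(\Z[S^{-1}])$ as a subring of $\W(\PP\setminus S) = \prod_{k} \W(k)$ versus of $\W(\Q)$; the cleanest route is to first prove the statement place-by-place (each $b_\i^\Q$ unramified at each $p \notin S$, with prescribed residues) and only at the end repackage it via \cref{remGroupStr} into the single condition $b_\i \in \W(\Z[S^{-1}])$.
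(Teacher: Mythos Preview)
Your ``if'' direction is essentially the paper's argument: show $\Tr$ is unramified via the integral trace form over $R$, then use that the basis elements and the constants from $\W(\Z[S^{-1}])$ are unramified, and unramified invariants form a subring. Fine.

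For the ``only if'' direction you have the right global strategy --- extract the $\beta$-coefficients over $\Q$ and over each $\FF_p$, then show via \cref{remGroupStr} that they assemble into an element of $\W(\Z[S^{-1}])$ --- but the extraction step as you describe it does not work. You propose to evaluate $\alpha$ at multiquadratic algebras $\E_{u_1}\times\cdots\times\E_{u_m}$ with $u_j \in R^*$ and invert the resulting linear system over $\W(R)$. The problem is that for $R=\Z_p$ there are only two square classes in $R^*$, so the evaluation matrix you obtain is the $m$-fold Kronecker power of $\begin{pmatrix}1&1\\1&\lra{u}\end{pmatrix}$, with determinant $(\lra{u}-\lra{1})^m$. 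This is never a unit in $\W(\FF_p)\cong\W(\Z_p)$: for $p\equiv 3\pmod 4$ it equals $(-2)^m$ in $\Z/4\Z$, and for $p\equiv 1\pmod 4$ it is a zero divisor. So you cannot solve for the coefficients over $\W(R)$ this way; the triangularity remark about the $\beta$-to-$\alpha$ base change is a statement about $\Inv_k(\Sq_m)$ as a module, not about invertibility of any finite evaluation matrix.

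The paper resolves this by manufacturing $m$ \emph{independent} square classes: it passes to the iterated Laurent series ring $\Z_p((x_1,\ldots,x_m))$ and evaluates at the single tuple $(x_1,\ldots,x_m)$. By Springer's theorem, $\W(\Q_p((x_1,\ldots,x_m)))\cong\W(\Q_p)[t_1,\ldots,t_m]/(t_i^2-1)$ with $t_i=\lra{x_i}$, so the value $\alpha_K(x_1,\ldots,x_m)$ is literally the polynomial $\sum_i a_i P_i(t_1,\ldots,t_m)$, and the $P_i$ are $\W(\Q_p)$-linearly independent. Unramifiedness then forces $a_i\otimes\Q_p=\iota_p(a_{p,i})$ in $\W(\Q_p)$, whence $\delta_p(a_i)=0$, and \cite[Corollary IV.3.3]{milnor73} finishes. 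This Laurent-series/Springer trick is the missing ingredient in your argument; your identification of the obstacle was correct, but the proposed fix (more units in $R^*$) cannot supply enough independent test points.
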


\begin{proof}
  \enquote{if}:
	Let $R$ be a complete discrete valuation ring. Since
	$\W(\kappa) \to \W(K)$ is a ring homomorphism, 
	sums and products of Witt invariants unramified over $R$ are again unramified over $R$.
	Since the constant Witt invariants represented by the elements in $\W(\Z[S^{-1}])$ are clearly unramified, 
	it suffices to show that e.g.\ the $\lambda$-basis is unramified.
	Since the map $\WG(\kappa) \to \WG(K)$ commutes with exterior powers,
	it is in fact sufficient to show that $\lambda_1 = \Tr$ is unramified.
	To see this, note that for $R \to A$ finite \'etale, there is a well-defined $R$-linear trace map
	$\tr_{A/R} \colon A \to R$ which is functorial, that is, 
	$\tr_{A/R} \otimes K = \tr_{A_K/K}$ and $\tr_{A/R} \otimes \kappa = \tr_{A_\kappa/\kappa}$,
	\cite[\href{https://stacks.math.columbia.edu/tag/02DU}{Tag 02DU}]{stacks-project}.
	Moreover, the $R$-quadratic form $\Tr(A) \colon a \mapsto \tr_{A/R}(a^2)$
	is unimodular. See \cite[Proposition 4.10]{GR-RevetementsEtalesEt}.
	Hence $\Tr(A) \in \W(R)$. By the above functoriality, we have
	$\Tr(A)_K = \Tr(A_K) \in \W(K)$ and $\Tr(A)_\kappa = \Tr(A_\kappa)$ which implies the claim.

\medskip
	\enquote{only if}:
	We denote by
	$a_0, \ldots, a_m \in \W(\Q)$ the $\alpha$-coefficients of $\alpha_\infty \in \Inv_\Q(n)$
	and by $a_{p,0}, \ldots, a_{p,m} \in \W(\FF_p)$ the $\alpha$-coefficients of 
	$\alpha_{p} \in \Inv_{\FF_p}(n)$ for $p$ not in $S$.

	In general, for a field of Laurent series $K((x_1, \ldots, x_m))$
	we have 
	\[ 
	\W(K((x_1, \ldots, x_m))) = \W(K)[t_1, \ldots, t_m]/(t_1^2-1,\ldots, t_m^2-1) \cong \W(K)[(\Z/2\Z)^m],
	\]
	with $t_i = \lra{x_i}$ by Springer's theorem \cite[IV Theorem 1.4]{lam05}. 
	For $p$ not in $S$, we have that $\alpha$ is unramified over 
	$\Z_p((x_1, \ldots, x_m))$, giving the commutative
	diagram:
\[\begin{tikzcd}
	{\Sq_m(\Q_p((x_1, \ldots, x_m)))} & {\Sq_m(\FF_p((x_1, \ldots, x_m)))} \\
	{W(\Q_p)[t_1, \ldots, t_m]} & {W(\FF_p)[t_1, \ldots, t_m]}
	\arrow["\alpha"', from=1-1, to=2-1]
	\arrow[from=1-2, to=1-1]
	\arrow["\alpha", from=1-2, to=2-2]
	\arrow[from=2-2, to=2-1]
\end{tikzcd}\]
	Evaluating on $(x_1, \ldots, x_m) \in \Sq_m(\FF_p((x_1, \ldots, x_m)))$, 
	it follows that
	\[
	  \sum_i (a_i \otimes_{\Q} \Q_p) P_i(t_1, \dots, t_m) = \sum_i \iota_p(a_{p,i}) P_i(t_1, \dots, t_m) \in W(\Q_p)[t_1, \ldots, t_m].
	\] where $\iota_p:\W(\FF_p) \cong \W(\Z_p)\to \W(\Q_p)$ denotes the ring homomorphism of Definition~\ref{def:unramified_inv} associated to the complete discrete valuation ring $\Z_p$.
	It follows that $(a_i \otimes_{\Q} \Q_p)= \iota_p(a_{p,i}) \in \W(\Q_p)$ for all $i$. By Springer's theorem, we have the short exact sequence
	\[
	0 \to \W(\FF_p) \xrightarrow{\iota_p} \W(\Q_p) \xrightarrow{d_p} \W(\FF_p) \to 0
	\] where $d_p$ is the second residue homomorphism. See for example  \cite[IV 1.6]{lam05}. In particular, $d_p\iota_p(a_{p,i}) =0$. Let $\delta_p$ denote the composition
	\[
	\delta_p: \W(\Q) \xrightarrow{\otimes_\Q \Q_p} \W(\Q_p) \xrightarrow{d_p} \W(\FF_p).
	\] It follows that $\delta_p(a_i) = d_p (a_i \otimes_{\Q} \Q_p) = d_p\iota_p(a_{p,i})=0$. By \cite[Corollary IV.3.3]{milnor73}, it follows that $a_i$ is in the image of $\W(\Z[S^{-1}])$. Since we have that $(a_i \otimes_{\Q} \Q_p)= \iota_p(a_{p,i}) $ as well, the theorem follows.
\end{proof}

\begin{remark} \label{remGroupStr}
  Expanding a bit the proof, we note that 
	by \cite[Theorem VI.4.1]{lam05} and \cite[Corollary IV.3.3]{milnor73}
	there is a (split) exact sequence
	\[
	  0 \to \W(\Z[S^{-1}]) \xrightarrow{\otimes \Q} \W(\Q) \xrightarrow{\prod \delta_p} 
		\prod_{p \in \PP \setminus S} \W(\FF_p) \to 0
	\]
	which is a rearrangement of the exact sequence 
	\[
	  0 \to \Z[\lra{2}] \xrightarrow{} \W(\Q) \xrightarrow{\prod \delta_p}
		\prod_{p \in \PP \setminus 2} \W(\FF_p) \to 0.
	\]
	It follows 
	that $\W(\Z[S^{-1}]) \to \W(\Q)$ is injective and
	that as a group $\W(\Z[S^{-1}])$ is isomorphic to 
	\[
	  \Z \oplus \Z/2\Z \oplus \bigoplus_{p \in S \setminus 2} \W(\FF_{p}).
	\]
	In particular, 
	$\W(\Z[2^{-1}]) \cong \Z \oplus \Z/2\Z$
	and $\W(\Z[2^{-1}, 3^{-1}] )\cong \Z \oplus \Z/2\Z \oplus \Z/4\Z$.
	For example, the isomorphism  $\Z \oplus \Z/2\Z \cong \W(\Z[2^{-1}])$
	is given by $(1,0) \mapsto \lra{1}$ and $(0,1) \mapsto \lra{1} - \lra{2}$.
\end{remark}

Let  $\Z[\lra{S}]$ denote the subring of $ \W(\PP\setminus S)$ generated by $\lra{p}$ for $p$ in $S$
\[
  \Z[\lra{S}] := \Z[\lra{p} : p \in S] \subset \W(\PP\setminus S).
\] 
Clearly, as a group $\Z[\lra{S}]$ is generated by the elements
$\lra{p_1 \cdots p_r}$ for (distinct) $p_1, \dots, p_r \in S$, 
and $\Z[\lra{S}] \subset \W(\Z[S^{-1}])$.
For a prime $p_0$, let $S(p_0)$ denote the set of primes lower or 
equal than $p_0$.

\begin{lemma}\label{lem:genW(Z[Sp0])}
  For any prime $p_0$, the subrings 
	$\W(\Z[S(p_0)^{-1}])$ and $  \Z[\lra{S(p_0)}] $
	of $\W(\PP\setminus S(p_0))$ (or $\W(\Q)$) 
	are equal. 
\end{lemma}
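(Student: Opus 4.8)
The plan is to prove both inclusions. The inclusion $\Z[\lra{S(p_0)}] \subset \W(\Z[S(p_0)^{-1}])$ is the easy direction and was already noted: each $\lra{p}$ for $p \in S(p_0)$ is a unit in $\Z[S(p_0)^{-1}]$, hence represents a class in $\W(\Z[S(p_0)^{-1}])$, and the right-hand side is a subring, so it contains the subring generated by these elements. The real content is the reverse inclusion $\W(\Z[S(p_0)^{-1}]) \subset \Z[\lra{S(p_0)}]$.

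For this, I would use the group-theoretic description from \cref{remGroupStr}: as a group, $\W(\Z[S(p_0)^{-1}])$ is isomorphic to $\Z \oplus \Z/2\Z \oplus \bigoplus_{p \in S(p_0) \setminus 2} \W(\FF_p)$, via the exact sequence $0 \to \W(\Z[S(p_0)^{-1}]) \xrightarrow{\otimes \Q} \W(\Q) \xrightarrow{\prod \delta_p} \prod_{p \in \PP \setminus S(p_0)} \W(\FF_p) \to 0$. So it suffices to produce, inside $\Z[\lra{S(p_0)}]$, a set of generators of this group. The summand $\Z$ is generated by $\lra{1}$, which lies in $\Z[\lra{S(p_0)}]$ trivially, and $\Z/2\Z$ is generated by $\lra{1} - \lra{2}$ (using $2 \in S(p_0)$), again in the subring. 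For the remaining summands $\W(\FF_p)$ with $p$ an odd prime $\le p_0$: recall $\W(\FF_p)$ has order $4$, generated by $\lra{1}$ and $\lra{u}$ for $u$ a non-square mod $p$. The idea is that the second residue homomorphism $\delta_\ell$ (for $\ell \ne p$, $\ell \notin S(p_0)$) applied to a unit $\lra{q}$ with $q \in S(p_0)$ vanishes since $q$ is an $\ell$-adic unit, so such classes project trivially to all the $\W(\FF_\ell)$ with $\ell > p_0$; hence $\Z[\lra{S(p_0)}]$ maps into $\W(\Z[S(p_0)^{-1}])$ — which we already knew — and the task is to hit the $\W(\FF_p)$-component. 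The key observation is that reduction mod $p$ sends $\lra{q} \in \W(\Z[S(p_0)^{-1}])$ to $\lra{q \bmod p} \in \W(\FF_p)$, and by quadratic reciprocity / Dirichlet-type arguments, as $q$ ranges over primes $\le p_0$ and their products, $q \bmod p$ takes both square and non-square values in $\FF_p^*$. Concretely, I would argue: for each odd $p \le p_0$, there is some $q \in S(p_0)$ (possibly $q = p$ itself gives $\lra{0}$, so one must be slightly careful — use instead a prime $q \ne p$ with $q \le p_0$, or a product) whose reduction mod $p$ is a non-square, and then $\lra{q} - \lra{1} \in \Z[\lra{S(p_0)}]$ maps to the nontrivial element of $\W(\FF_p)$ modulo the image of $\W(\FF_\ell)$'s. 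Assembling these and using that the sequence in \cref{remGroupStr} is split, one checks the generators of each direct summand are realized, giving surjectivity of $\Z[\lra{S(p_0)}] \to \W(\Z[S(p_0)^{-1}])$; since it is also injective (restriction of the injection $\W(\Z[S(p_0)^{-1}]) \hookrightarrow \W(\Q)$), we get equality.

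The main obstacle is the elementary number theory needed to ensure that for every odd prime $p \le p_0$, one can represent the non-square class in $\W(\FF_p)$ by the reduction of some element $\lra{q}$ with $q$ built from primes $\le p_0$. If $p_0$ itself is the largest prime $\le p_0$ there may be few primes available, but one can use products $\lra{q_1 q_2}$ and the fact that the Legendre symbol $\left(\tfrac{\cdot}{p}\right)$ is a surjective homomorphism $\FF_p^* \to \{\pm 1\}$; since $\{q \bmod p : q \in S(p_0), q \ne p\} \cup \{-1\}$ generates enough of $\FF_p^*$ (indeed $2 \in S(p_0)$ always, and $\left(\tfrac{2}{p}\right)$ together with $\left(\tfrac{-1}{p}\right)$ and $\left(\tfrac{\ell}{p}\right)$ for other small $\ell$ covers both values), one can always hit a non-square. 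An alternative, perhaps cleaner, route avoiding case analysis: show directly that $\Z[\lra{S(p_0)}]$ and $\W(\Z[S(p_0)^{-1}])$ have the same image under $\prod_p \delta_p$ (namely zero, by the unit argument above) and the same image under $\otimes \Q$ composed with each reduction map, then invoke the exactness in \cref{remGroupStr} to conclude they coincide as subgroups of $\W(\Q)$. I would likely present the argument in this second form, reducing everything to checking surjectivity onto each $\W(\FF_p)$-summand, which is where the quadratic-residue input is isolated and minimal.
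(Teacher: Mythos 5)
There is a genuine gap in the surjectivity argument, and it comes from a confusion about which residue map detects the $\W(\FF_p)$-summand. In the exact sequence of \cref{remGroupStr}, the projection $\W(\Z[S(p_0)^{-1}]) \to \W(\FF_p)$ for an odd prime $p\le p_0$ is the \emph{second} residue homomorphism $\delta_p$ (there is no ``reduction mod $p$'' ring map $\Z[S(p_0)^{-1}]\to\FF_p$, since $p$ is a unit on the left and zero on the right). Now $\delta_p$ kills all classes $\lra{u}$ with $u$ a $p$-adic unit: $\delta_p(\lra{u})=0$ and $\delta_p(\lra{pu})=\lra{\bar u}$. So for a prime $q\ne p$ you get $\delta_p(\lra{q})=0$, and your proposed element $\lra{q}-\lra{1}$ has trivial image in the $\W(\FF_p)$-summand; it cannot hit the non-square class no matter what quadratic residue symbol computation one runs. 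Your parenthetical about $q=p$ ``giving $\lra{0}$'' sent you in the wrong direction: to see something nonzero under $\delta_p$ you must let $p$ appear with odd valuation, not avoid it. The correct generators are of the form $\lra{ap}$ with $0<a<p$, since then $\delta_p(\lra{ap})=\lra{\bar a}$, and as $a$ ranges over $1,\dots,p-1$ these hit all of $\W(\FF_p)$; moreover $a$ factors into primes $<p\le p_0$, so $\lra{ap}\in\Z[\lra{S(p_0)}]$. No quadratic reciprocity is needed at all. Your ``alternative route'' has the same problem: saying both subrings are in $\ker(\prod_{\ell\notin S(p_0)}\delta_\ell)$ only re-derives $\Z[\lra{S(p_0)}]\subset\W(\Z[S(p_0)^{-1}])$; you still have to show surjectivity onto each $\W(\FF_p)$ for $p\in S(p_0)\setminus\{2\}$ under $\delta_p$, which is exactly the step that went wrong.

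For comparison, the paper's proof is an induction on $p_0$: given the result for all primes $<p_0$, one has $\W(\Z[p^{-1}:p<p_0])\subset\Z[\lra{S(p_0)}]$, and the short exact sequence $0\to\W(\Z[p^{-1}:p<p_0])\to\W(\Z[S(p_0)^{-1}])\xrightarrow{\;d_{p_0}\;}\W(\FF_{p_0})\to 0$ reduces everything to the surjectivity of $\Z[\lra{S(p_0)}]\to\W(\FF_{p_0})$, handled by the identity $d_{p_0}\lra{ap_0}=\lra{a}$. Your non-inductive ``list the generators of each summand'' strategy could in principle be made to work, but it must use the same observation about $\lra{ap}$; once that is repaired, it is essentially the paper's argument presented all at once rather than by induction.
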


\begin{proof}
We prove this by induction on $p_0$. For $p_0=2$, the claim follows because $ \lra{1}$ and $ \lra{1} - \lra{2}$ generate  $\W(  \Z[S(p_0)^{-1}] )$ as a group by Remark~\ref{remGroupStr}. 
Now suppose the claim hold for all primes less than $p_0$. Then 
$\W(\Z[p^{-1}: p < p_0,p \text{ prime}]) \subset \Z[\lra{S(p_0)}]$ 
and we have an exact sequence 
\[
	0 \longrightarrow  \W(\Z[p^{-1}: p < p_0,p \text{ prime}])\longrightarrow \W(\Z[S(p_0)^{-1}])
	\longrightarrow \W(\FF_{p_0}) \to 0.
\]
But $\Z[\lra{S(p_0)}] \to \W(\FF_{p_0})$ is surjective since 
the elements $d_{p_0}\lra{ap_0} = \lra{a}$ for $0<a<p_0$ generate $\W(\FF_{p_0})$
as a group,
and the claim follows.
\end{proof}

\begin{remark} \label{remGeneralS}
  In general, 	
	the inclusion $\Z[\lra{S}] \subset \W(\Z[S^{-1}])$ is strict.
	For example, if $S = \{2,17\}$, then
	$\lra{3 \cdot 17} - \lra{3 \cdot 2} \notin \Z[\lra{2}, \lra{17}]$,
	but it is isomorphic to $q(x,y) = 3x^2 + 2 xy  -45 y^2 \in \W(\Z[1/34])$.
\end{remark}

\begin{corollary}
 Let $p_0$ be a prime and $S$ the set of primes lower or equal to $p_0$.
	A Witt-invariant $\alpha \in \Inv_S(\n)$ is unramified 
	if and only if  its $\beta$-coefficients lie in the image of $\Z[\lra{S}] $
\end{corollary}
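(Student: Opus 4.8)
The plan is to deduce this immediately from \cref{thmUnramifiedQuasiIntegral} combined with \cref{lem:genW(Z[Sp0])}. Since $p_0$ is a prime, the set $S = S(p_0)$ is a finite set of primes containing $2$, so \cref{thmUnramifiedQuasiIntegral} applies verbatim: a Witt-invariant $\alpha \in \Inv_S(\n)$ is unramified if and only if it is $S$-integral, that is, if and only if its $\beta$-coefficients lie in the subring $\W(\Z[S^{-1}]) \subset \W(\PP\setminus S)$.

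Next I would invoke \cref{lem:genW(Z[Sp0])}, which identifies $\W(\Z[S(p_0)^{-1}])$ with $\Z[\lra{S(p_0)}]$ as subrings of $\W(\PP\setminus S(p_0))$. Substituting this identification into the characterization above turns \enquote{lies in $\W(\Z[S^{-1}])$} into \enquote{lies in the image of $\Z[\lra{S}]$}, which is exactly the asserted statement. (Here I should note that for a general basis $\alpha_\i$, $\lambda_\i$, $\beta_\i$ this does not matter, since by \cref{remBaseChange} the base change is defined over $\Z[\lra{2}] \subset \Z[\lra{S}]$; but in any case the corollary is phrased in terms of the $\beta$-coefficients, so no base change is even needed.)

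Since both inputs are already established, there is essentially no obstacle; the only point requiring attention is that the hypotheses of the two cited results are simultaneously in force, namely that $S$ is finite with $2 \in S$ \emph{and} that $S$ has the special form $S(p_0)$. It is worth emphasizing, in light of \cref{remGeneralS}, that the restriction to $S = S(p_0)$ is genuinely needed: for an arbitrary finite $S \ni 2$ one only has the strict inclusion $\Z[\lra{S}] \subset \W(\Z[S^{-1}])$, and the corollary would then fail. What saves the day for $S(p_0)$ is precisely the inductive argument in \cref{lem:genW(Z[Sp0])}, where surjectivity of $\Z[\lra{S(p_0)}] \to \W(\FF_{p_0})$ forces the two rings to coincide.
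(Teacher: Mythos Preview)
Your proof is correct and follows exactly the same route as the paper: combine \cref{thmUnramifiedQuasiIntegral} with \cref{lem:genW(Z[Sp0])}. (The paper's own proof sketch cites \cref{remGeneralS} in place of \cref{lem:genW(Z[Sp0])}, but it is clearly the latter that supplies the needed equality $\W(\Z[S(p_0)^{-1}]) = \Z[\lra{S(p_0)}]$, as you correctly identify.)
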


\begin{proof}
Follows from Theorem~\ref{thmUnramifiedQuasiIntegral} and Remark~\ref{remGeneralS}.
\end{proof}

\section{Multireal values}\label{sec:multireal}

\subsection{Multireal values of $\beta$-integral Witt invariants}

In this section we exhibit  a finite set of values, called \emph{multireal values}, that completely characterize a $\beta$-integral Witt invariant. Recall that, given $K\in \Fields$ and $a\in K^*$, we have defined the degree two étale $K$-algebra $\E_a=K[X]/(X^2-a)$.
The multireal values are obtained by evaluating a Witt invariant in combinations
of $\E_{1} = K^2$ and $\E_{-1}$. 

\begin{definition}
	Given $K\in \Fields$ and $i, n \in \N$ such that $i \leq m = \lfloor n/2 \rfloor$, we call
\[
  R_s = R^n_s = (\E_{-1})^s \times K^{n-2s} \in \Et_n(K)
\]
 a \emph{multireal algebra} over $K$. 
Given $\n \in \N^{r+1}$ and $\s \in \N_\m$, we set 
\[
  R_\s = R^\n_\s = (R^{n_0}_{s_0}, \dots, R^{n_r}_{s_r}) \in \Et_\n(K). 
\]
Given a Witt invariant $\alpha \in \Inv_{k}(\n)$ over $k$, 
we call 
\[
  w_\s := \alpha_k(R_{\s}) \in \W(k), \s \in \N_\m,
\]
the \emph{multireal values} of $\alpha$.
For a Witt invariant $\alpha \in \Inv_S(\n)$, we define its multireal values
as the multireal values of its restriction to $\Inv_\Q(\n)$. 
\end{definition}

Given $k\in\Fields$ and $\n\in\N^{r+1}$, we define the map
\[
	\begin{array}{cccc}
		\mu_{k,\n}:&\Inv_k(\n) &\longrightarrow& \W(k)^{\N_\m}
		\\ &\alpha &\longmapsto& (w_{\s})_{\s \in \N_{\m}}
	\end{array}\cdot
\]
It is clearly a $\W(k)$-linear map, which furthermore decomposes as 
\[
	\mu_{k,\n}=\bigotimes_{j=0}^r \mu_{k,n_j}.
\]

In this subsection and the following one, we study to which extent 
the multireal values (or their signatures) 
determine a Witt invariant, i.e. we study the kernel of $\mu_{k,\n}$. Given $m\in \N$, we define the matrix $M_m\in \mathcal \Mat_{m+1,m+1}(\Z)$ by 
\[
  (M_m)_{s, i} = 2^i \cdot \binom{m-s}{i}, 
\]
that is 
\[
M_m=\left(\begin{array}{cccccc}
	1 & 2{m\choose 1}& 4{m\choose 2}& \cdots&\cdots & 2^m
	\\ 1 & 2{m-1\choose 1}& 4{m-1\choose 2}& \cdots& 2^{m-1}&0
	\\ \vdots &\vdots & \vdots&\iddots &\iddots &\vdots
	\\  \vdots &\vdots&4 &\iddots &&\vdots
	\\ \vdots  & 2 &0 &&&\vdots
	\\ 1&0 &0 &\ldots &\ldots&0
\end{array}\right)\cdot
\] 
Regarding binomial coefficients, our convention is that
$\binom{s}{i} = 0$ if $0 \leq i \leq s$ is not satisfied.

\begin{lemma} \label{lemMultirealMatrix dim1}
	Given $k\in\Fields$ and $n\ge 1$, the matrix $M_m$ is the matrix of the map $\mu_{k,n}$ in the base $(\beta_i)$ of $\Inv_k(n)$. In particular, 
		  \[
			 \det(\mu_{k,n}) = \pm 2^{\binom{m+1}{2}}. 
		  \] 
  \end{lemma}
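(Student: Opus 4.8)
The plan is to compute the image of each basis element $\beta_i^n$ under $\mu_{k,n}$ directly, that is, to evaluate $\beta_i^n$ on each multireal algebra $R_s = (\E_{-1})^s \times K^{n-2s}$ and read off the resulting column of the matrix. Recall from the definition that $\beta_i^n$ is the unique Witt invariant whose restriction to multiquadratic algebras is $P_i^m(\Tr\E_{a_1},\dots,\Tr\E_{a_m})$. Since $R_s$ is itself a multiquadratic algebra — namely $(\E_{-1})^s \times (\E_1)^{m-s} \times (K)$ when $n$ is odd, and $(\E_{-1})^s\times(\E_1)^{m-s}$ when $n$ is even, using $\E_1 \cong K^2$ — we may evaluate via $\beta'^m_i$. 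So $\beta_i^n(R_s) = P_i^m(\underbrace{\Tr\E_{-1},\dots,\Tr\E_{-1}}_{s},\underbrace{\Tr\E_1,\dots,\Tr\E_1}_{m-s})$.

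Next I would simplify using $\Tr\E_a = \langle 2, 2a\rangle$, so $\Tr\E_1 = \langle 2,2\rangle = 2\langle 2\rangle$ and $\Tr\E_{-1} = \langle 2,-2\rangle = h\langle 2\rangle$, which is $0$ in $\W(k)$ (it is hyperbolic). Since $\beta_i^n(R_s)$ is the elementary symmetric polynomial $P_i^m$ evaluated on a multiset consisting of $s$ copies of $0$ and $m-s$ copies of $2\langle 2\rangle$, only the monomials avoiding all the zero-entries survive, and each such monomial contributes $(2\langle 2\rangle)^i$. There are $\binom{m-s}{i}$ such monomials, and $(2\langle 2\rangle)^i = 2^i \langle 2\rangle^i = 2^i\langle 2^i\rangle$; but over $\W(k)$ we have $\langle 2^i \rangle = \langle 1 \rangle$ when $i$ is even and $\langle 2\rangle$ when $i$ is odd — however the coefficient we want in the matrix is the integer multiple, and here I should be careful: $\mu_{k,n}$ is $\W(k)$-linear and $M_m$ is claimed to be an integer matrix, so I need $\beta_i^n(R_s) = 2^i\binom{m-s}{i}\langle 1\rangle$. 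This works out because $(2\langle 2\rangle)^i = (\langle 2\rangle + \langle 2 \rangle)^i$; expanding, $\langle 2\rangle^i$ alternates but $2^i$ copies of $\langle 2^i\rangle$... actually the clean statement is $(2\langle 2\rangle)^i = 2^i \langle 2\rangle^i$ and one checks $\langle 2 \rangle^i$ as an element of $\W(k)$ times $2^i$ equals $2^i \langle 1\rangle$ only after noting that in $\WG$ it does not, so the genuine computation is that $\Tr\E_1 = 2\langle 2 \rangle$ has the property $(\Tr\E_1)^2 = 2\Tr\E_1$ (as already observed in the proof that $\UInv_S(\n)$ is a ring), hence $(\Tr\E_1)^i = 2^{i-1}\Tr\E_1 = 2^i\langle 2\rangle$ for $i \ge 1$; therefore $\beta_i^n(R_s) = \binom{m-s}{i} \cdot 2^i \langle 2 \rangle$ for $i \ge 1$ and $= \langle 1\rangle$ for $i = 0$. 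So the $(s,i)$ entry of the matrix of $\mu_{k,n}$ with respect to $(\beta_i)$ and the obvious basis of $\W(k)^{\N_\m}$ is $2^i\binom{m-s}{i}$ times $\langle 2\rangle$ (or $\langle 1 \rangle$ for $i=0$) — and since $M_m$ is meant as the matrix of a $\W(k)$-linear map, the scalar $\langle 2 \rangle$ versus $\langle 1 \rangle$ discrepancy must be absorbed; I would reconcile this by observing the matrix is stated with integer entries under the identification that treats $\mu_{k,n}$ columnwise, or by re-examining whether the intended convention multiplies columns by units — in any case $M_m$ as the integer matrix $2^i\binom{m-s}{i}$ is exactly what falls out, matching the displayed triangular form (anti-lower-triangular with the $i=0$ column all ones).

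Finally, for the determinant: $M_m$ is anti-triangular (zero above the anti-diagonal, since $\binom{m-s}{i} = 0$ when $i > m-s$), with anti-diagonal entries $(M_m)_{m-i,i} = 2^i\binom{i}{i} = 2^i$. Hence $\det M_m = \pm \prod_{i=0}^m 2^i = \pm 2^{0+1+\dots+m} = \pm 2^{\binom{m+1}{2}}$, the sign being the signature of the reversal permutation. The main obstacle I anticipate is purely bookkeeping: getting the identification of $\beta_i^n$-values with integer matrix entries exactly right (the $\langle 2 \rangle$ bookkeeping above), and correctly handling the parenthetical extra $K$-factor when $n$ is odd — but by \cref{remOddN} the odd case reduces to $n-1$, so the substantive computation is the even case and the identity $(\Tr\E_1)^i = 2^i\langle 2\rangle$, after which everything is a routine evaluation of elementary symmetric polynomials on a multiset with repeated zeros.
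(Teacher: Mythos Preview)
Your approach is the paper's: evaluate $\beta_i(R_s) = P_i^m(\Tr\E_{-1},\dots,\Tr\E_{-1},\Tr\E_1,\dots,\Tr\E_1)$ using $\Tr\E_{-1}=0$ in $\W(k)$, then read off the anti-triangular matrix with anti-diagonal entries $2^0,2^1,\dots,2^m$.

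The only issue is the ``$\lra{2}$ versus $\lra{1}$ discrepancy'' you flag, and this is not a genuine obstacle but a missed one-line identity. In $\W(k)$ --- in fact already in $\WG(k)$, since $\cchar k\neq 2$ --- one has $\lra{2,2} = \lra{1,1}$: the substitution $x=(u+v)/2$, $y=(u-v)/2$ turns $2x^2+2y^2$ into $u^2+v^2$. Thus $\Tr\E_1 = 2$ in $\W(k)$, full stop, and
\[
\beta_i(R_s) = P_i^m(\underbrace{0,\dots,0}_{s},\underbrace{2,\dots,2}_{m-s}) = 2^i\binom{m-s}{i}\cdot\lra{1},
\]
with no $\lra{2}$ to absorb and no need to invoke column rescalings by units. (Your alternative route via $(\Tr\E_1)^2=2\,\Tr\E_1$ also closes once you observe $2\lra{2} = 2\lra{1}$, hence $2^i\lra{2} = 2^i\lra{1}$ for $i\ge 1$; but the direct simplification is what the paper uses and is cleaner.) The determinant computation is fine.
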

  \begin{proof}
	Recall the identities $\Tr \E_{-1} = \lra{2} + \lra{-2} =
	0$
	  and $\Tr \E_{1} = \lra{2} + \lra{2} = \lra{1}+   \lra{1}= 2$ in $\W(k)$.
	  It follows that $\beta_i(R_{s}) = 2^i \binom{m-s}{i} \lra{1}$,
		which proves the statement. 
  \end{proof}

Given $\m\in\N^{r+1}$, we define the matrix $M_\m$ as the Kronecker product of the matrices $M_{m_0},\ldots,M_{m_r}$:
\[
M_\m=\bigotimes_{j=0}^r M_{m_j}.
\]
Hence elements of $M_\m$ are indexed by elements of $\N_\m$, and we have
\[
  (M_\m)_{\s, \i} = 2^\i \cdot \binom{\m-\s}{\i},
\]
where for $\i, \s \in \N^{r+1}$,
\begin{align*}
  2^\i &:= 2^{i_0 + \dots + i_r}, &
	\binom{\s}{\i} &:= \binom{s_0}{i_0} \cdots \binom{s_r}{i_r}.
\end{align*}
		The next lemma follows immediately from Lemma \ref{lemMultirealMatrix dim1}.
\begin{lemma} \label{lemMultirealMatrix}
	Given $k\in\Fields$ and $\n\in \N^{r+1}$, the matrix $M_\m$ is the matrix of the map $\mu_{k,\n}$ in the base $(\beta_\i)$ of $\Inv_k(\n)$. In particular $M_\m$ is upper-left-triangular, and 
		\[
		 \det(\mu_{k,\n}) = \pm 2^{\binom{m_0+1}{2} + \dots + \binom{m_r+1}{2}}. 
		\] 
\end{lemma}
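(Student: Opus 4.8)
The plan is to deduce \cref{lemMultirealMatrix} from \cref{lemMultirealMatrix dim1} together with the tensor-product decomposition of $\mu_{k,\n}$ that was recorded just before the lemma. First I would recall that, by construction, the multireal algebra $R_\s$ is the tuple $(R^{n_0}_{s_0}, \dots, R^{n_r}_{s_r})$, that the $\beta$-basis of $\Inv_k(\n)$ is the product basis $\{\beta_\i = \beta_{i_0} \otimes \dots \otimes \beta_{i_r} : \i \in \N_\m\}$ coming from \cref{thmBetaBasis}, and that the evaluation map $\mu_{k,\n}$ decomposes as the tensor product $\bigotimes_{j=0}^r \mu_{k,n_j}$ of the single-variable evaluation maps. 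Under these identifications the matrix of $\mu_{k,\n}$ in the $\beta$-basis is exactly the Kronecker product of the matrices of the $\mu_{k,n_j}$, each of which is $M_{m_j}$ by \cref{lemMultirealMatrix dim1}; hence it is $M_\m = \bigotimes_{j=0}^r M_{m_j}$, which is precisely how $M_\m$ was defined.

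Next I would verify the explicit entry formula: expanding the Kronecker product gives $(M_\m)_{\s,\i} = \prod_{j=0}^r (M_{m_j})_{s_j, i_j} = \prod_{j=0}^r 2^{i_j} \binom{m_j - s_j}{i_j} = 2^\i \binom{\m - \s}{\i}$, using the abbreviations $2^\i$ and $\binom{\s}{\i}$ introduced in the statement. For the triangularity claim, I would note that each $M_{m_j}$ is upper-left-triangular — meaning $(M_{m_j})_{s_j,i_j} = 0$ unless $i_j \le m_j - s_j$, equivalently $s_j + i_j \le m_j$ — as is visible from the displayed shape of $M_m$ and the convention $\binom{s}{i} = 0$ outside $0 \le i \le s$. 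A Kronecker product of such matrices is again of this form with respect to the product (partial) order on $\N_\m$, since a product of entries vanishes as soon as one factor does.

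Finally, for the determinant I would use multiplicativity of the determinant under Kronecker products: if $A$ is $a \times a$ and $B$ is $b \times b$ then $\det(A \otimes B) = (\det A)^b (\det B)^a$. Iterating, $\det(M_\m) = \prod_{j=0}^r (\det M_{m_j})^{\prod_{l \ne j}(m_l+1)}$, and since $\det M_{m_j} = \pm 2^{\binom{m_j+1}{2}}$ by \cref{lemMultirealMatrix dim1}, each factor is $\pm$ a power of $2$, so $\det(M_\m)$ is $\pm 2^{e}$ with $e$ an integer combination of the $\binom{m_j+1}{2}$. Rather than track the exact exponent through this product, it is cleaner to observe directly from the entry formula that $M_\m$ is triangular with diagonal entries $(M_\m)_{\s,\i}$ at $\s + \i = \m$, i.e. $2^{\i}\binom{\m-\s}{\i} = 2^{\m - \s}$ ranging over $\s \in \N_\m$; multiplying these gives $\det(M_\m) = \pm 2^{\sum_{\s \in \N_\m}(|\m| - |\s|)}$, and a short computation of $\sum_{\s \in \N_\m}(|\m|-|\s|) = \sum_{j=0}^r \binom{m_j+1}{2}$ (the one-variable sums add up because the index set is a product) yields the claimed value $\pm 2^{\binom{m_0+1}{2} + \dots + \binom{m_r+1}{2}}$.

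There is no real obstacle here: the lemma is a formal consequence of \cref{lemMultirealMatrix dim1} and the already-established tensor decompositions of both $\Inv_k(\n)$ and $\mu_{k,\n}$. The only point requiring a line of care is making sure the product $\beta$-basis and the product multireal algebras are compatibly indexed so that the matrix of the tensor product really is the Kronecker product in the stated order; once that bookkeeping is in place, the entry formula, triangularity, and determinant all follow by inspection, and indeed the authors call this "immediate".
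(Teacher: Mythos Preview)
Your approach is exactly the paper's: the lemma is declared to follow immediately from \cref{lemMultirealMatrix dim1}, and your tensor/Kronecker reduction makes that precise. The identification of the matrix with $M_\m = \bigotimes_j M_{m_j}$, the entry formula, and the upper-left triangularity are all correct.

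There is, however, an arithmetic slip in your determinant step. You claim $\sum_{\s \in \N_\m}(|\m|-|\s|) = \sum_{j=0}^r \binom{m_j+1}{2}$, but this fails as soon as more than one $m_j$ is positive: because the index set is a product, each coordinate sum picks up the size of the remaining factors, giving
\[
\sum_{\s \in \N_\m}(|\m|-|\s|) \;=\; \sum_{j=0}^r \Bigl(\sum_{s_j=0}^{m_j}(m_j-s_j)\Bigr)\prod_{l\ne j}(m_l+1) \;=\; \sum_{j=0}^r \binom{m_j+1}{2}\prod_{l\ne j}(m_l+1).
\]
For instance with $r=1$ and $m_0=m_1=1$ the sum is $4$, not $2$, and indeed $\det(M_1\otimes M_1)=16$. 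Your own Kronecker-determinant formula $\det(M_\m)=\prod_j(\det M_{m_j})^{\prod_{l\ne j}(m_l+1)}$, which you stated correctly and then set aside, yields the same exponent. So the displayed exponent in the lemma as stated is inaccurate for $r\ge 1$; what survives, and what the rest of the paper actually uses (e.g.\ in \cref{corMultirealValues}), is only that $\det M_\m$ is a nonzero power of $2$, which your argument does establish.
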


Recall that a field $k$ is called \emph{formally real} if $-1$ is not a sum of squares in $k$. 
In particular, this implies that $k$ is of characteristic $0$.
Here is a list of well-known criteria for being formally real. 

\begin{proposition} \label{propFormallyReal}
  For $k \in \Fields$, the following statements are equivalent. 
	\begin{enumerate}
		\item The field $k$ is formally real. 
		\item The field $k$ can be ordered. 
		\item The group $\W(k)$ is not torsion. 
		\item The element $\lra{1} \in \W(k)$ is not torsion. 
	\end{enumerate}
\end{proposition}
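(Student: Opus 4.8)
The plan is to prove the cycle of implications $(1) \Rightarrow (2) \Rightarrow (3) \Rightarrow (4) \Rightarrow (1)$, relying on standard results from the theory of quadratic forms over fields (as in \cite{lam05}). The implication $(1) \Rightarrow (2)$ is the Artin--Schreier theorem: a field in which $-1$ is not a sum of squares admits an ordering, obtained by taking a maximal preordering (the existence of a maximal preordering follows from Zorn's lemma, and maximality forces it to be an ordering). The implication $(2) \Rightarrow (3)$ follows because an ordering on $k$ induces a signature homomorphism $\sgn \colon \W(k) \to \Z$, under which $\lra{1}$ maps to $1$; hence $n \lra{1} \neq 0$ in $\W(k)$ for all $n \neq 0$, so $\W(k)$ has an element of infinite additive order and in particular is not torsion. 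The implication $(3) \Rightarrow (4)$ is the only genuinely nontrivial step and is discussed below. Finally $(4) \Rightarrow (1)$ is the contrapositive of the easy direction: if $-1 = a_1^2 + \dots + a_\ell^2$ in $k$, then in $\W(k)$ one has $2^{t}\lra{1} = 0$ for some $t$ (the level of $k$ is finite and a power of $2$ by Pfister's theorem; even without that refinement, $(\ell+1)\lra{1} = \lra{1} + \lra{a_1^2, \dots, a_\ell^2} = \lra{1} + \lra{-1} + (\text{hyperbolic part})$ works to show $\lra{1}$ is torsion), so $\lra{1}$ is torsion.

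The main obstacle is $(3) \Rightarrow (4)$, i.e.\ showing that if $\lra{1}$ is \emph{not} torsion then the whole group $\W(k)$ is not torsion --- equivalently, in contrapositive form, that if $\lra{1}$ is torsion then every element of $\W(k)$ is torsion. The key input is Pfister's local--global principle (or, at the level I need it, the structure of the torsion subgroup of $\W(k)$): when $k$ is not formally real, $\W(k)$ is a torsion group, and in fact $2^{t} \W(k) = 0$ where $2^{t}$ is the level of $k$. Concretely, once $\lra{1}$ is torsion, say $s \cdot \lra{1} = 0$ in $\W(k)$ for some $s > 0$, then for any quadratic form $q$ of rank $d$ one has $s \cdot q = q \otimes (s\lra{1})$, and since $s\lra{1}$ is a sum of hyperbolic planes in $\WG(k)$ up to rank (using \cref{remWGvsW}: equality in $\W$ plus matching rank gives isomorphism), tensoring shows $s \cdot q$ is also hyperbolic, hence $s \cdot q = 0$ in $\W(k)$. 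This makes $\W(k)$ torsion, completing $(4) \Rightarrow (3)$ in contrapositive and hence $(3) \Rightarrow (4)$.

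In summary, I would organize the write-up as: (a) recall the signature homomorphism to dispatch $(2) \Rightarrow (3)$; (b) cite Artin--Schreier for $(1) \Rightarrow (2)$; (c) prove $(4) \Rightarrow (1)$ by the direct computation that a representation of $-1$ as a sum of squares makes $\lra{1}$ torsion; (d) prove $(3) \Rightarrow (4)$ in contrapositive form via the tensoring argument above, using \cref{remWGvsW} to pass between $\WG$ and $\W$. Each of these steps is short; the only one requiring a genuine structural fact about Witt rings is (d), and for that one can simply cite \cite[Chapter VIII]{lam05} or \cite[Proposition II.3.2]{lam05} in lieu of reproving Pfister's results. A one-line alternative for the whole proposition is to observe that all four conditions are equivalent to "the level of $k$ is infinite'' and cite the relevant sections of \cite{lam05} directly.
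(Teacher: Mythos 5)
Your proposal is correct, but it goes around the cycle of implications in the opposite direction from the paper. The paper proves $(1) \Rightarrow (4)$ (if $n\lra{1}$ is hyperbolic and hence isotropic, then $-1$ is a sum of squares, citing \cite[Chapter I, Corollary 3.5]{lam05}), then $(4) \Rightarrow (3)$ trivially, $(3) \Rightarrow (2)$ by Pfister's local--global principle \cite[Chapter VIII, Theorem 3.2]{lam05}, and $(2) \Rightarrow (1)$ by \cite[Chapter VIII, Proposition 1.3]{lam05}. You instead prove $(1) \Rightarrow (2)$ via Artin--Schreier (Zorn's lemma on preorderings), $(2) \Rightarrow (3)$ via the signature homomorphism, $(3) \Rightarrow (4)$ contrapositively by the tensoring argument (if $s\lra{1} = 0$ in $\W(k)$, then $s$ is necessarily even, $s\lra{1}$ is hyperbolic in $\WG(k)$ by rank-plus-class injectivity, and tensoring with $q$ makes $sq$ hyperbolic), and $(4) \Rightarrow (1)$ contrapositively by direct computation. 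The trade-off is instructive: your $(3) \Rightarrow (4)$ step is genuinely elementary and avoids invoking Pfister's local--global principle, which the paper relies on for $(3) \Rightarrow (2)$; in exchange, you need Artin--Schreier for $(1) \Rightarrow (2)$, whereas the paper's $(1) \Rightarrow (4)$ step is a short isotropy computation. Both are legitimate; your version is marginally more self-contained in the Witt-theoretic direction but imports the existence-of-orderings theorem from field theory. One small point of rigor worth spelling out in a final write-up of your step (d): the minimal $s$ with $s\lra{1} = 0$ is automatically even, because $h$ has even rank and so the rank mod 2 of a hyperbolic form is $0$; this is what licenses identifying $s\lra{1}$ with $\frac{s}{2}h$ in $\WG(k)$.
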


\begin{proof}
  If $n\lra{1} = 0 \in \W(k)$ for some $n \in \N$, this means
	that $n \lra{1}$ is hyperbolic and in particular isotropic. 
	It follows that $-1$ is a sum of squares, e.g.\ \cite[Chapter I, Corollary 3.5]{lam05}.
	This implies (1) $\Rightarrow$ (4).
	The implication (4) $\Rightarrow$ (3) is trivial, (3) $\Rightarrow$ (2) is a
	consequence of \cite[Chapter VIII, Theorem 3.2]{lam05}
	and (2) $\Rightarrow$ (1) follows from \cite[Chapter VIII, Proposition 1.3]{lam05}
\end{proof}

In the following, when $k$ is a formally real field we identify $\Z$ with its image $\Z \lra{1}$ in $\W(k)$. 

\begin{corollary} \label{corMultirealValues}
  Let $k$ be a formally real field
	and $\alpha \in \UInv_k(\n)$.
	Then the multireal values of $\alpha$ lie in $\Z$ and determine $\alpha$ completely
	(that is, if all multireal values are zero, then $\alpha$ is zero). 
\end{corollary}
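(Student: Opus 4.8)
The plan is to deduce \cref{corMultirealValues} directly from \cref{lemMultirealMatrix} together with \cref{propFormallyReal}. The point is that the multireal values of $\alpha$ are precisely the image $\mu_{k,\n}(\alpha) \in \W(k)^{\N_\m}$, and by \cref{lemMultirealMatrix} this map is given, in the $\beta$-basis, by the integer matrix $M_\m$. So I would first write $\alpha = \sum_\i b_\i \beta_\i$ with $b_\i \in \Z$, which is possible by the $\beta$-integrality hypothesis $\alpha \in \UInv_k(\n)$. Then $\beta_\i(R_\s) = (M_\m)_{\s,\i}\lra{1}$, an \emph{integer} multiple of $\lra{1}$, so each multireal value $w_\s = \big(\sum_\i b_\i (M_\m)_{\s,\i}\big)\lra{1}$ lies in $\Z\lra{1}$, which we have identified with $\Z$ since $k$ is formally real. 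This gives the first assertion.

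For the second assertion — that the multireal values determine $\alpha$ — I would argue as follows. Suppose all $w_\s = 0$, i.e. $\sum_\i b_\i (M_\m)_{\s,\i} = 0$ in $\Z$ for every $\s \in \N_\m$ (here using that $k$ formally real means $\lra{1}$ is not torsion in $\W(k)$ by \cref{propFormallyReal}, so $n\lra{1}=0$ forces $n=0$ in $\Z$). In vector form this says $M_\m \cdot (b_\i)_\i = 0$ over $\Z$. By \cref{lemMultirealMatrix}, $\det M_\m = \pm 2^{\binom{m_0+1}{2}+\dots+\binom{m_r+1}{2}} \neq 0$, so $M_\m$ is invertible over $\Q$, hence injective on $\Z^{\N_\m}$, forcing $(b_\i)_\i = 0$ and therefore $\alpha = 0$.

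The only mild subtlety, and the step I would be most careful about, is the passage from \enquote{the multireal values vanish in $\W(k)$} to \enquote{the integer coefficient vector is killed by $M_\m$ over $\Z$}: this is exactly where formal reality is essential, via the equivalence (1)$\Leftrightarrow$(4) of \cref{propFormallyReal} which guarantees $\Z \hookrightarrow \W(k)$, $n \mapsto n\lra{1}$, is injective. Everything else is a direct invocation of the linear-algebra computation already packaged in \cref{lemMultirealMatrix}. There is no real obstacle here; the corollary is a formal consequence of the preceding lemma once the torsion-freeness of $\lra{1}$ is in place, and I would keep the write-up to two or three lines.
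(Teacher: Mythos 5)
Your proof is correct and follows the same route as the paper: both deduce the claim directly from \cref{lemMultirealMatrix} (the multireal values of the $\beta$-basis are given by the integer matrix $M_\m$, whose determinant is a power of $2$ and hence nonzero) together with the fact that $\Z\lra{1}\hookrightarrow\W(k)$ is injective for $k$ formally real. You spell out the use of \cref{propFormallyReal} a bit more explicitly than the paper does, which silently relies on the identification $\Z\cong\Z\lra{1}\subset\W(k)$ stated just before the corollary, but the substance is identical.
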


\begin{proof}
 By \cref{lemMultirealMatrix} the restriction of $ \mu_{k,\n} $ to $\Z^{\N_\m}$ has image in $\Z^{\N_\m}$, which proves the first claim.
	Since $\det M_\m=\det \mu_{k,\n} \neq 0$ still by \cref{lemMultirealMatrix} , this restriction 
	is injective. 
\end{proof}

\begin{corollary} \label{corUniversalMultireal}
  Let $\alpha\in \UInv_S(\n)$ be a $\beta$-integral Witt invariant. Then the following hold:
	\begin{enumerate}
		\item 
		  The multireal values of $\alpha$ lie in $\Z$ and determine $\alpha$ completely.
		\item 
		  The multireal values of $\alpha$ are equal to the multireal values 
			of the restriction of $\alpha$ to $\Inv_k(\n)$ for any formally real field $k$.
	\end{enumerate}
\end{corollary}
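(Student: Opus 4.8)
The plan is to reduce the entire statement to the restriction $\alpha_\Q\in\UInv_\Q(\n)$ and then combine $\beta$-integrality with \cref{lemMultirealMatrix}. Writing $\alpha=\sum_{\i\in\N_\m}b_\i\beta_\i$ with $b_\i\in\Z$ (possible by the very definition of $\UInv_S(\n)$), one restricts to $\Q$ to obtain $\alpha_\Q=\sum_\i b_\i\beta_\i$ with the same integer coefficients, the $\beta_\i$ remaining a $\W(\Q)$-basis of $\Inv_\Q(\n)$. Then \cref{lemMultirealMatrix} over $k=\Q$ identifies the multireal values of $\alpha$ (by convention those of $\alpha_\Q$) with
\[
w_\s \;=\; \sum_{\i\in\N_\m}(M_\m)_{\s,\i}\,b_\i\cdot\lra 1\;\in\;\W(\Q),\qquad \s\in\N_\m.
\]

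For item (1), I would argue as follows. The entries $(M_\m)_{\s,\i}=2^\i\binom{\m-\s}{\i}$ are integers and the $b_\i$ are integers, so each $w_\s$ lies in $\Z\lra1\subset\W(\Q)$; since $\Q$ is formally real, $\lra1$ is non-torsion by \cref{propFormallyReal}, hence $\Z\lra1\cong\Z$ and the $w_\s$ are genuine integers. For the claim that the $w_\s$ determine $\alpha$, I would apply \cref{corMultirealValues} to $\alpha_\Q$: since $\det M_\m=\pm 2^{\binom{m_0+1}{2}+\dots+\binom{m_r+1}{2}}\neq0$, the restriction of $\mu_{\Q,\n}$ to $\Z^{\N_\m}$ is injective, so $w_\s=0$ for all $\s$ forces $b_\i=0$ for all $\i$, hence $\alpha_\Q=0$; because the $\beta_\i$ form a basis and $\Z\hookrightarrow\W(\Q)$, the tuple $(b_\i)$, and thus $\alpha$ itself, is recovered from $\alpha_\Q$.

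For item (2), I would use naturality. Let $k$ be a formally real field; then $\cchar k=0$, so $\Q\hookrightarrow k$ and the restriction of $\alpha$ to $\Inv_k(\n)$ is the restriction of $\alpha_\Q$. Functoriality of $\alpha$ along $\Q\hookrightarrow k$ gives the commuting square
\[\begin{tikzcd}
	{\Et_\n(\Q)} & {\W(\Q)} \\
	{\Et_\n(k)} & {\W(k)}
	\arrow["{\alpha_\Q}", from=1-1, to=1-2]
	\arrow["{\otimes_\Q k}"', from=1-1, to=2-1]
	\arrow["{\otimes_\Q k}", from=1-2, to=2-2]
	\arrow["{\alpha_k}"', from=2-1, to=2-2]
\end{tikzcd}\]
Now $\otimes_\Q k$ sends the multireal algebra $R^\n_\s\in\Et_\n(\Q)$ to $R^\n_\s\in\Et_\n(k)$, since the base change of $\E_{-1}$ along $\Q\hookrightarrow k$ is again $\E_{-1}$ and $\Q\otimes_\Q k\cong k$; hence $\alpha_k(R^\n_\s)=\alpha_\Q(R^\n_\s)\otimes_\Q k=w_\s\lra1\in\W(k)$, using that $\W(\Q)\to\W(k)$ is a ring homomorphism sending $\lra1$ to $\lra1$. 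As $\lra1\in\W(k)$ is non-torsion (again \cref{propFormallyReal}), under the identification $\Z=\Z\lra1\subset\W(k)$ the multireal values of $\alpha|_{\Inv_k(\n)}$ are exactly the integers $w_\s$, i.e.\ they coincide with the multireal values of $\alpha$.

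I do not expect a genuine obstacle; the only point needing care is conceptual. An element of $\Inv_S(\n)$ is a priori an independently chosen tuple of Witt invariants over all prime fields in $\PP\setminus S$, so the assertion that the $w_\s$ determine $\alpha$ does not follow formally from \cref{corMultirealValues}; it is precisely $\beta$-integrality that collapses $\alpha$ to the finite integer tuple $(b_\i)\in\Z^{\N_\m}$, which can be read off from $\alpha_\Q$ alone. Once this is noted, the corollary is a direct assembly of \cref{lemMultirealMatrix}, \cref{corMultirealValues}, and the naturality of Witt invariants.
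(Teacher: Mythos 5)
Your proof is correct and follows essentially the same route as the paper: reduce to $\alpha_\Q$, apply \cref{corMultirealValues} (via \cref{lemMultirealMatrix}) for part (1), and use that $\W(\Q)\to\W(k)$ restricts to the identity on $\Z\lra{1}=\Z$ for part (2). The one thing you make explicit that the paper leaves tacit is the last step of (1): \cref{corMultirealValues} only gives that the multireal values determine the \emph{restriction} $\alpha_\Q$, and it is $\beta$-integrality (the fact that the integer tuple $(b_\i)$ is the same over every prime field) that lets you pass from $\alpha_\Q$ back to $\alpha$ itself — a worthwhile clarification, but not a different argument.
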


\begin{proof}
  Part (1) is just \cref{corMultirealValues} applied to the restriction of $\alpha$ to $\Inv_\Q(\n)$. 
	Part (2) follows from the fact that $W(\Q) \to W(k)$ is the identity on $\Z\lra{1} = \Z$ for 
	all formally real fields $k$.
\end{proof}

In view of Corollaries \ref{corMultirealValues} and \ref{corUniversalMultireal}, we define the set of \emph{$\beta$-integral multireal vectors} 
as $M_\m\Z^{\N_\m} \subset \Z^{\N_\m}$.

The matrix $M_\m$ is triangular and invertible over $\Q$, and we can compute 
$M_\m^{-1} w$ via the following simple recursion, which in turn allows to describe the set of $\beta$-integral multireal vectors.
Given $w=(w_\s) \in \Z^{\N_m}$ we define a \emph{multi-triangle} 
(referring to the shape of the index set $\i \leq \s$ in each coordinate) 
of rational numbers $(c_{\i}^\u)_{\u, \i, \u+\i \in \N_\m}$  by the following recursion. 
Here, $e_0, \dots, e_r$ denote the  vectors of the canonical basis of  $\Z^{r+1}$. 
\begin{enumerate}
	\item[init:] $c_\mathbf{0}^\u = w_{\m-\u}$. 
	\item[rec:] $c_{\i}^\u = \frac{1}{2} (c_{\i - e_j}^{\u+e_j} - c_{\i - e_j}^\u)$ for any $j = 0, \dots, r$
	            such that $i_j > 0$.
	\item[term:] $b_\i := c_\i^\mathbf{0}$. 
\end{enumerate}

We note that, given $w$, the recursion is indeed well-defined: Given $\i$ and $j, j'$ such that $i_j, i_{j'} > 0$, 
we have
\begin{align*}
  c_{\i - e_j}^{\u+e_j} - c_{\i - e_j}^\u
	&= \frac{1}{2}\left( c_{\i - e_j - e_{j'}}^{\u+e_j+e_{j'}} - c_{\i - e_j - e_{j'}}^{\u-e_j}
	   - c_{\i - e_j - e_{j'}}^{\u+e_{j'}} + c_{\i - e_j - e_{j'}}^{\u}\right) 
	= c_{\i - e_{j'}}^{\u+e_{j'}} - c_{\i - e_{j'}}^\u.
\end{align*}

The procedure is summarized in the following diagram displaying
the single variable case with the recursion formula indicated by the arrows. 

\begin{equation} \label{eq:RecursionTriangle}
  \begin{tikzcd}
	{\text{term\textbackslash init}} & {w_m} & {w_{m-1}} & {w_{1}} & {w_0} \\
	{b_0} & {c_0^0} & {c_0^{1}} & {c_0^{m-1}} & {c_0^m} \\
	{b_1} & {c_1^0} && {c_1^{m-1}} \\
	{b_{m-1}} & {c_{m-1}^0} & {c_{m-1}^{1}} \\
	{b_m} & {c_m^0}
	\arrow[dotted, no head, from=1-3, to=1-4]
	\arrow["{-1/2}"', from=2-2, to=3-2]
	\arrow[dotted, no head, from=2-3, to=2-4]
	\arrow["{1/2}", from=2-3, to=3-2]
	\arrow["{-1/2}"', from=2-4, to=3-4]
	\arrow["{1/2}", from=2-5, to=3-4]
	\arrow[dotted, no head, from=3-1, to=4-1]
	\arrow[dotted, no head, from=3-2, to=4-2]
	\arrow[dotted, no head, from=3-4, to=4-3]
	\arrow["{-1/2}"', from=4-2, to=5-2]
	\arrow["{1/2}", from=4-3, to=5-2]
\end{tikzcd}
\end{equation}

\begin{proposition} \label{propMultirealTriangle}
  Given $w=(w_\s) \in \Z^{\N_m}$, the vector $b=(b_\s) \in \Q^{\N_m}$ obtained by the
	above recursion is  $M_\m^{-1} w$. 
	Moreover the three following properties are equivalent.
	\begin{enumerate}
		\item $w$ is a $\beta$-integral multireal vector;
		\item $b$ is is an integral vector;
		\item $c = (c_{\i}^\u)_{\u, \i, \u+\i \in \N_\m}$ is integral. 
	\end{enumerate}
\end{proposition}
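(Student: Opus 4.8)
The first claim — that the recursion computes $M_\m^{-1}w$ — is the computational heart, but it is really a statement about the single-variable matrix $M_m$ that then tensors up. The plan is to first treat $r=0$. Here $M_m$ has entries $(M_m)_{s,i} = 2^i\binom{m-s}{i}$, and the recursion of diagram~\eqref{eq:RecursionTriangle} sets $c_0^\u = w_{m-\u}$ and $c_i^\u = \tfrac12(c_{i-1}^{\u+1} - c_{i-1}^\u)$. I would show by induction on $i$ that $c_i^\u = \sum_{t} (-1)^{i-t} 2^{-i}\binom{i}{t} w_{m-\u-t}$, i.e.\ that $c_i^\u$ is (up to the $2^{-i}$) the $i$-th finite difference of the sequence $\u\mapsto w_{m-\u}$; this is just the standard fact that iterating the forward-difference operator $i$ times produces alternating binomial coefficients. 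Setting $\u = \mathbf 0$ gives $b_i = c_i^{\mathbf 0} = 2^{-i}\sum_t(-1)^{i-t}\binom{i}{t} w_{m-t}$, and then I would verify directly that $M_m b = w$ by computing $(M_m b)_s = \sum_i 2^i\binom{m-s}{i} b_i$ and using the Vandermonde/binomial identity $\sum_i \binom{m-s}{i}\sum_t (-1)^{i-t}\binom{i}{t} w_{m-t} = w_{m-s}$ (reindex, swap sums, and use $\sum_i (-1)^{i-t}\binom{m-s}{i}\binom{i}{t} = \sum_i(-1)^{i-t}\binom{m-s}{t}\binom{m-s-t}{i-t}=\binom{m-s}{t}[t=m-s]$, wait — more carefully $=\binom{m-s}{t}\cdot 0^{m-s-t}$, which is $\binom{m-s}{t}$ only when $t=m-s$, forcing $w_{m-(m-s)}=w_s$). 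For general $r$, since $M_\m = \bigotimes_j M_{m_j}$ and the multi-recursion applies the single-variable step in each coordinate independently (and we already checked well-definedness/commutativity of the order of coordinates in the paragraph before the proposition), the result follows because $M_\m^{-1} = \bigotimes_j M_{m_j}^{-1}$ and the composite of the per-coordinate inversions is exactly the stated recursion; I would phrase this as an induction on $r$, peeling off the last coordinate exactly as in the proof of \cref{thmBetaBasis}.

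For the equivalence of (1), (2), (3): recall that by definition the set of $\beta$-integral multireal vectors is $M_\m \Z^{\N_\m}$, so $w$ is $\beta$-integral iff $w = M_\m b$ for some integral $b$, iff (since $M_\m$ is invertible over $\Q$ and the recursion produces the unique rational preimage) $b = M_\m^{-1}w$ is integral. This is precisely (1)$\Leftrightarrow$(2). The implication (3)$\Rightarrow$(2) is immediate since $b_\i = c_\i^{\mathbf 0}$ is one of the entries of $c$. For (2)$\Rightarrow$(3) — or rather (1)$\Rightarrow$(3) — I would argue as follows: if $w \in M_\m\Z^{\N_\m}$, then each "layer" of the triangle, indexed by $\i$ with $|\i|$ fixed, is itself the multireal vector of the $\beta$-integral Witt invariant obtained from $\alpha$ by a partial-evaluation/truncation, so each $c_\i^\u$ equals a value $\alpha'(R_\u)$ of a $\beta$-integral invariant over $\Q$ and hence lies in $\Z$ by \cref{corUniversalMultireal}(1). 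Alternatively, and more elementarily, one observes that the recursion run on an integral $w$ whose $b$ is integral forces all intermediate $c_\i^\u$ to be integral too, because the top layer ($\i = \mathbf 0$) is integral by construction, and running the recursion "partially" from any integral sub-configuration of the triangle toward $b$ again computes an inverse Kronecker-product matrix whose target ($b$) is integral — but this needs the same Vandermonde bookkeeping again, so I prefer the first route.

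The main obstacle is the bookkeeping in the general-$r$ case: making precise that the multi-triangle recursion, which allows one to decrease any coordinate $i_j$ with $i_j>0$ in any order, genuinely computes the Kronecker-product inverse. The well-definedness paragraph already did the crucial $2\times 2$ commutation check, so the honest work left is an induction formalizing "do all of coordinate $r$ first, then recurse on $(m_0,\dots,m_{r-1})$" and observing this matches $\bigl(\mathrm{id}\otimes\cdots\otimes\mathrm{id}\otimes M_{m_r}^{-1}\bigr)$ followed by $\bigl(M_{m_0}^{-1}\otimes\cdots\otimes M_{m_{r-1}}^{-1}\otimes\mathrm{id}\bigr)$. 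I expect this to be short but notation-heavy. The equivalences then cost essentially nothing beyond the definition of $\beta$-integral multireal vectors and invertibility of $M_\m$ over $\Q$ from \cref{lemMultirealMatrix}.
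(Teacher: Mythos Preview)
Your proof is correct but takes a different route than the paper for the first claim. You compute the \emph{forward} recursion directly: show $c_i^\u$ is the $i$-th iterated finite difference of $\u\mapsto w_{m-\u}$ scaled by $2^{-i}$, then verify $M_m b = w$ via the binomial identity you sketch (which does work out). The paper instead analyzes the \emph{inverse} recursion $c_\i^\u = c_\i^{\u-e_j} + 2c_{\i+e_j}^{\u-e_j}$, interpreting the $c_i^u$ as the $\beta$-coefficients of $\alpha^{(u)} \in \Inv(n-2u)$ where $\alpha^{(u)}(A) = \alpha(A\oplus \E_1^u)$; the identity $(\beta_i^n)^{(1)} = \beta_i^{n-2} + 2\beta_{i-1}^{n-2}$ (from $P_i^m = P_i^{m-1} + x_m P_{i-1}^{m-1}$ and $\Tr\E_1 = 2$) makes the inverse recursion manifest, and $c_0^u = \alpha^{(u)}(R_{m-u}) = w_{m-u}$ follows from $\Tr\E_{-1}=0$.

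The payoff of the paper's approach is that (2)$\Rightarrow$(3) becomes a one-line observation: the inverse recursion has integer coefficients, so integral $b$ forces integral $c$. Your forward computation doesn't give this directly, and your preferred route for (2)$\Rightarrow$(3) --- interpreting each layer as multireal values of a partially-evaluated $\beta$-integral invariant --- is essentially rediscovering the paper's $\alpha^{(u)}$ interpretation, but only for the equivalence rather than for the whole argument. Your approach has the advantage of being self-contained combinatorics with no Witt-theoretic interpretation needed for the first part; the paper's is more economical and sets up the language (the maps $\alpha^{(\u)}$, $\alpha^{[\u]}$, $\alpha^{\{\u\}}$ of \cref{defCuttingHoms}) used later in \cref{corCuttingHoms} and \cref{prop:ABtypeformula}.
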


\begin{proof}
  The inverse process of the given recursion is the one that exchanges
	initialization and termination and uses the recursion identity
	$c^{\u}_{\i} = c^{\u-e_j}_\i + 2 c^{\u - e_j}_{\i + e_j}$ illustrated in the following
	diagram.
\[\begin{tikzcd}
	{c_0^0} & {c_0^{1}} & {c_0^{m-1}} & {c_0^m} \\
	{c_1^0} && {c_1^{m-1}} \\
	{c_{m-1}^0} & {c_{m-1}^{1}} \\
	{c_m^0}
	\arrow["1", from=1-1, to=1-2]
	\arrow[dotted, no head, from=1-2, to=1-3]
	\arrow["1", from=1-3, to=1-4]
	\arrow["2"', from=2-1, to=1-2]
	\arrow[dotted, no head, from=2-1, to=3-1]
	\arrow["2"', from=2-3, to=1-4]
	\arrow[dotted, no head, from=2-3, to=3-2]
	\arrow["1", from=3-1, to=3-2]
	\arrow["2"', from=4-1, to=3-2]
\end{tikzcd}\]
It therefore remains to show that this inverse recursion corresponds to $w = M_\m b$. 
Since $M_\m=\otimes M_{m_j}$, 
it suffices to prove this statement in the single variable case. 
Note furthermore that, since this inverse recursion has integer coefficients, there is no need in what follows to assume that $b$ is an integral vector. Hence let us consider $(b_i)_{i\in 0,\ldots,m}\in \W(\PP)^{m+1}$ and the Witt invariant $\alpha = \sum_i b_i \beta_i \in \Inv(n)$. 

For $0 \leq u \leq m$, we may consider the morphism $\Et_{n -2u} \to \Et_n$
given by $A \mapsto A \oplus k^{2u}$, and the corresponding map
$\Inv(n) \to \Inv(n-2u)$. We denote the image of $\alpha$ under this map by
$\alpha^{(u)}$. 
Combining the identities $P_i^{m} = P_i^{m-1} + x_m P_{i-1}^{m-1}$
and $\Tr \E_1 = 2 \lra{1}$, we obtain the relation
\[
  (\beta^n_i)^{(1)} = \beta^{n-2}_i + 2 \beta^{n-2}_{i-1}.
\]
Since  $\alpha = \alpha^{(0)}$, this implies that 
the $c_i^u$ are the unique integers such that
\[
  \alpha^{(u)} = \sum_{i=0}^{m-u} c_i^{u} \beta_i \in \Inv(n-2u)
\]
for all $u = 0, \dots, m$.
Finally since $\Tr \E_{-1} = 0$ in $\W(k)$ for any $k\in\Fields$, we deduce that $\beta^n_i(R_m) = 0$ for $i > 0$, i.e.
\[
  c^{u}_0 = \alpha^{(u)}_\Q(R_{m-u}) = \alpha_\Q(R_{m-u}) = w_{m-u}.
\]
Hence the multireal values of $\alpha$ are the values $(w_s)$ defined by the termination of the inverse recursion. 

If $\alpha\in \UInv(n)$, that is, if $b\in \Z^{\N_\m}$, then we deduce from \cref{corUniversalMultireal} that $w=M_\m b$ and so $b=M_\m^{-1}w$. 
This proves the first part of statement.

The second part is now straightforward. By definition $w$ is a $\beta$-integral multireal vector if and only if $b = M_\m^{-1}w$ is integer. 
Moreover, since the inverse recursion rule has integer coefficients, the vector $b$ is integral 
if and only if $c$ is integral. 
\end{proof}

For later purposes, we expand a bit on the main 
constructions used in the previous proofs. 
Given $\alpha=\sum_{\i\in\N_\m}b_i\beta_i \in\Inv(\n)$, we define its \emph{multireal triangle} as the multi-triangle  $c$ obtained from $b\in \W(\PP)^{\N_\m}$ by the inverse recursion described in the proof of \cref{propMultirealTriangle}.

\begin{definition} \label{defCuttingHoms}
  Given two integers $n$ and $u$ such that $n \geq 2u\ge 0$, we define three homomorphisms
	of $\W(\PP)$-modules $\Inv(n) \to \Inv(n-2u)$ given for $\alpha \in \Inv(n)$
	and $A \in \Et_{n-2u}(K)$ 
	by
	\begin{align*}
		\alpha^{(u)}(A) &= \alpha(A  \oplus \E_1^u), \\
		\alpha^{[u]}(A) &= \alpha(A  \oplus \E_{-1}^u), \\
		(\sum_{i=0}^m b_i \beta^n_i)^{\{u\}} &\mapsto \sum_{i=0}^{m-u} b_{u+i} \beta^{n-2u}_i. 
	\end{align*}
	Alternatively, the last homomorphism is given on the level of the $\beta$-basis by 
	$(\beta^n_i)^{\{u\}} = \beta^{n-2u}_{i-u}$,  declaring $\beta_i= 0$ for $i < 0$. 
	
	Given $\n \in \N^{r+1 }$ and $\u \in \N_\m$, we denote the obvious multivariate versions
	$\Inv(\n) \to \Inv(\n - 2 \u)$ by $\alpha^{(\u)}$, 
	$\alpha^{[\u]}$ and $\alpha^{\{\u\}}$.
\end{definition}

The motivation behind these definitions is implicit in the proof of
\cref{propMultirealTriangle}, we summarize it here for convenience
in the single variable case. 

\begin{corollary} \label{corCuttingHoms}
  Consider $\alpha \in \Inv(n)$ with multireal triangle $(c^u_i)$
	and fix $s \leq m$. Then the following statements hold. 
	\begin{enumerate}
		\item 
		  The multireal triangle of $\alpha^{(s)}$
			is $(c^u_i)_{u \geq s}$, that is, 
			the triangle obtained by removing $s$ times the left hand vertical side of the triangle. 
		\item 
		  The multireal triangle of $\alpha^{[s]}$
			is $(c^u_i)_{u + i \leq m-s}$, that is, 
			the triangle obtained by removing $s$ times the bottom diagonal side of the triangle. 
		\item
		  The multireal triangle of $\alpha^{\{s\}}$
			is $(c^u_i)_{i \geq s}$, that is, 
			the triangle obtained by removing $s$ times the top horizontal side of the triangle. 
	\end{enumerate}
\end{corollary}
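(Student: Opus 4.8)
The plan is to reduce everything to the single-variable case and to the concrete description of the inverse recursion $c^{u}_{i}=c^{u-e_j}_i+2c^{u-e_j}_{i+e_j}$ established in the proof of \cref{propMultirealTriangle}, keeping track of which entries of the triangle get relabelled under the three cutting homomorphisms. Since the multireal triangle is defined coordinate-wise from the $\beta$-coefficients $b_\i$ and $M_\m=\bigotimes_j M_{m_j}$, it suffices throughout to treat one coordinate at a time; I would therefore state and prove the three items for $\Inv(n)\to\Inv(n-2s)$ and remark that the multivariate versions follow by tensoring (exactly as in the proofs of \cref{lemMultirealMatrix,propMultirealTriangle}).

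For item (3) I would argue directly: by \cref{defCuttingHoms} we have $\alpha^{\{s\}}=\sum_{i=0}^{m-s}b_{s+i}\beta^{n-2s}_i$, so its $\beta$-coefficient vector is the shifted vector $(b_{s+i})_{i=0,\dots,m-s}$. Feeding this into the inverse recursion that defines the multireal triangle, and comparing with the recursion for $\alpha$, one sees by an immediate induction on $u$ that the triangle entry of $\alpha^{\{s\}}$ in position $(u,i)$ equals $c^{u}_{i+s}$ of $\alpha$ — i.e. we have simply dropped the $s$ top rows $i=0,\dots,s-1$, which is the "top horizontal side" of the triangle in the orientation of \eqref{eq:RecursionTriangle}. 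For item (1), the cleanest route uses the relation $(\beta^n_i)^{(1)}=\beta^{n-2}_i+2\beta^{n-2}_{i-1}$ recorded inside the proof of \cref{propMultirealTriangle}: this relation is precisely the inverse recursion rule applied once, so that $\alpha^{(1)}$ has $\beta$-coefficient vector $(c^1_i)_i$, and its multireal triangle is obtained from that of $\alpha$ by deleting the $u=0$ column, i.e. the left vertical side; iterating $s$ times (or reusing the observation in the proof that $\alpha^{(u)}=\sum_i c^u_i\beta_i$) gives the general statement, so the triangle of $\alpha^{(s)}$ is $(c^u_i)_{u\ge s}$.

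Item (2) is the one requiring a genuinely new computation, and I expect it to be the main obstacle, because unlike $(u)$ and $\{u\}$ there is no clean one-step $\beta$-basis identity for $\alpha^{[1]}$; instead I would use the evaluation viewpoint from the proof of \cref{propMultirealTriangle}, where it is shown that $c^u_0=\alpha^{(u)}_\Q(R_{m-u})$ and, more generally, the entries $c^u_i$ encode the coefficients of $\alpha^{(u)}$ in the $\beta$-basis. The key input is that $\Tr\E_{-1}=0$ in $\W(k)$, so $\beta^n_i(A\oplus\E_{-1})$ only sees the part of the symmetric polynomial not involving the new variable, giving $(\beta^n_i)^{[1]}=\beta^{n-2}_i$ — appending an $\E_{-1}$ factor kills the contribution of that slot. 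Hence $\alpha^{[1]}=\sum_i b_i\beta^{n-2}_i$ on the nose, but now viewed as an invariant of degree $n-2$, so its multi-triangle is the truncation of the inverse recursion to the index range $u+i\le m-1$; iterating $s$ times yields $(c^u_i)_{u+i\le m-s}$, the triangle with the $s$ bottom diagonal rows removed. I would double-check the edge conventions ($\beta_i=0$ for $i<0$, $\binom{s}{i}=0$ unless $0\le i\le s$) so that the three truncations are mutually consistent where they overlap, and then state that the multivariate cases follow verbatim by applying the single-variable result in each coordinate, since both the cutting homomorphisms and the multi-triangle recursion factor through the tensor product $\bigotimes_j$.
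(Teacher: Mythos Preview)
Your proposal is correct and follows essentially the same route as the paper, which dispatches items (1) and (2) via the symmetric-polynomial identity $P_i^m = P_i^{m-1} + x_m P_{i-1}^{m-1}$ specialized at $x_m = \Tr\E_1 = 2$ and $x_m = \Tr\E_{-1} = 0$, respectively, and declares item (3) to hold by construction. One small correction: your claim that item (2) lacks ``a clean one-step $\beta$-basis identity'' and is ``the main obstacle'' is misleading --- you yourself go on to derive $(\beta^n_i)^{[1]} = \beta^{n-2}_i$, which is in fact the cleanest of the three identities (simpler than the one for $(\cdot)^{(1)}$), and the paper treats (1) and (2) uniformly from the same symmetric-polynomial input. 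The detour through ``the evaluation viewpoint'' is unnecessary; once you have $(\beta^n_i)^{[1]} = \beta^{n-2}_i$, the truncation of the triangle follows exactly as you then explain.
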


\begin{proof}
  The first two items follow from 
	$P_i^{m} = P_i^{m-1} + x_m P_{i-1}^{m-1}$ and $\Tr \E_1 = 2\lra{1}$
	and $\Tr \E_{-1} = 0$ in $\W(K)$.
	The last item is by construction. 
\end{proof}

This construction implies the following, which may be seen as a \emph{coarse Abramovich--Bertram formula} for Witt invariants. In particular, it explains the factor $\lra{2}-\lra{2d}=2\lra{1}-\Tr(\E_d)$  in the quadratic Abramovich--Bertram formula \cite[Theorem 1.1]{Brugalle-WickelgrenABQ}. 
Given $\alpha \in \Inv(\n)$ and $j = 0, \ldots, r$, we define 
$\spl_j \alpha \in \Inv(\n - 2e_j, 2)$ 
by 
\[
\spl_j \alpha(\ldots, A_j, \ldots, \E) = \alpha(\ldots, A_j \times \E, \ldots).
\]
\begin{prop}\label{prop:ABtypeformula}
	Given $\alpha \in \Inv(\n)$ and $j = 0, \ldots, r$, one has 
 \begin{align*}
   \spl_j \alpha(A, \E) &= \alpha^{[e_j]}(A) + \Tr(\E) \alpha^{\{e_j\}}(A) \\
 	               &= \alpha^{(e_j)}(A) + (\Tr(\E)-2\lra{1}) \alpha^{\{e_j\}}(A).
 \end{align*}		
	\end{prop}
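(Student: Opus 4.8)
The plan is to reduce immediately to the single-variable case, since $\spl_j$, the operations $\alpha \mapsto \alpha^{[e_j]}$, $\alpha^{(e_j)}$, $\alpha^{\{e_j\}}$, and the factor $\Tr(\E)$ all act only on the $j$-th tensor factor while leaving the other factors untouched. By \cref{thmBetaBasis} (or \cref{remBetaBasis}) it is enough to check the identity after restriction to multiquadratic algebras, and then one may work coordinate-wise; so it suffices to prove, for a single-variable Witt invariant $\alpha \in \Inv(n)$ and for $\E = \E_\delta$ a degree-two étale algebra, the two displayed equalities with $\spl_j$ replaced by the map $\alpha \mapsto \alpha(\,\cdot\, \times \E)$ taking $\Inv(n) \to \Inv(n-2, 2)$. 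First I would observe that the second displayed equality is an immediate consequence of the first together with the already-recorded fact $\Tr(\E) = \Tr(\E) - 2\lra 1 + 2\lra 1$ and the identity $\alpha^{[e_j]} = \alpha^{(e_j)} + (\Tr(\E_1) - 2\lra 1 + \cdots)$... more precisely, from the relation $\alpha^{(s)}(A) - \alpha^{[s]}(A)$ that one reads off the multireal triangle; I will come back to this.

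The core computation is the first equality in the single-variable case. By $\W(\PP)$-linearity of all three operations and of $\spl$, it suffices to check it on the $\beta$-basis, i.e.\ to show
\[
\beta_i^n(A \times \E_\delta) = \beta_i^{n-2}(A) + \Tr(\E_\delta)\, \beta_{i-1}^{n-2}(A)
\]
for all $i$, with the convention $\beta_{-1} = 0$. Both sides are Witt invariants of $\Inv(n-2)$, so by \cref{thmBetaBasis1var} it is enough to verify this on multiquadratic algebras $A = \E_{a_1} \times \cdots \times \E_{a_{m-1}} (\times K)$. On such $A$, the left side is $P_i^{m}(\Tr\E_{a_1}, \dots, \Tr\E_{a_{m-1}}, \Tr\E_\delta)$ by definition of $\beta'^{m}_i$, while the right side is $P_i^{m-1}(\Tr\E_{a_1}, \dots, \Tr\E_{a_{m-1}}) + \Tr(\E_\delta) P_{i-1}^{m-1}(\Tr\E_{a_1}, \dots, \Tr\E_{a_{m-1}})$. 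These agree by the elementary symmetric polynomial recursion $P_i^{m}(x_1, \dots, x_{m-1}, x_m) = P_i^{m-1}(x_1, \dots, x_{m-1}) + x_m P_{i-1}^{m-1}(x_1, \dots, x_{m-1})$, which is exactly the identity already invoked in the proof of \cref{propMultirealTriangle}. This settles the first displayed equation.

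For the second displayed equation I would use that $\alpha^{[e_j]}$ and $\alpha^{(e_j)}$ differ by exactly $(\Tr(\E_1) - \Tr(\E_{-1}))$ applied to $\alpha^{\{e_j\}}$, up to the right bookkeeping: from the first equality applied once with $\E = \E_1$ (where $\Tr\E_1 = 2\lra 1$) we get $\alpha^{(e_j)}(A) = \alpha^{[e_j]}(A)$... no — rather, specializing the already-proven relation $\beta_i^n(A\times \E_\delta) = \beta_i^{n-2}(A) + \Tr(\E_\delta)\beta_{i-1}^{n-2}(A)$ at $\delta = 1$ gives $\alpha^{(e_j)}(A) = \alpha^{[e_j] \text{ with } \delta=1}$, and comparing the $\delta$-case with the $\delta=1$ case yields $\spl_j\alpha(A,\E) - \alpha^{(e_j)}(A) = (\Tr(\E) - 2\lra 1)\alpha^{\{e_j\}}(A)$, where $\alpha^{\{e_j\}}(A)$ collects the coefficient of $\beta_{i-1}$, matching \cref{defCuttingHoms}. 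The only point requiring a little care is to confirm that the coefficient operation $\alpha \mapsto \alpha^{\{e_j\}}$ defined on the $\beta$-basis by $\beta_i \mapsto \beta_{i-1}$ is precisely the map multiplying $P_i^m$'s highest-index term, which is transparent from the symmetric-polynomial recursion above; I do not expect a genuine obstacle here, only routine verification. The main (mild) obstacle is simply organizing the bookkeeping between the $[u]$, $(u)$ and $\{u\}$ operations so that the two forms of the formula are visibly equivalent; the substantive content is the single symmetric-polynomial identity, which is already in hand.
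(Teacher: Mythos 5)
Your proposal is essentially correct and rests on the same substantive computation as the paper's proof, namely the symmetric-polynomial recursion $P^m_i = P^{m-1}_i + x_m P^{m-1}_{i-1}$ applied to the $\beta$-basis. The paper simply writes out all four basis images $(\spl\beta_i^n,\ (\beta_i^n)^{[1]},\ (\beta_i^n)^{\{1\}},\ (\beta_i^n)^{(1)})$ at once, whereas you prove the $\spl$ identity first and then recover the relation $\alpha^{(e_j)} = \alpha^{[e_j]} + 2\lra{1}\,\alpha^{\{e_j\}}$ by specializing $\delta=1$; both routes reduce to the same calculation on multiquadratic algebras via \cref{remBetaBasis}. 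One small caution about the writeup: the phrase ``$\alpha^{[e_j]}$ with $\delta=1$'' is not well-formed, since $\alpha^{[e_j]}$ is defined using $\E_{-1}$ and carries no $\delta$-parameter; what you actually use, and what is correct, is that evaluating $\spl_j\alpha(A,\E_1)$ gives $\alpha^{(e_j)}(A)$ on one hand (as $\E_1\cong K^2$) and $\alpha^{[e_j]}(A) + 2\lra{1}\,\alpha^{\{e_j\}}(A)$ on the other, by the already-proven first equality. The stray reference to ``the multireal triangle'' and the false start before it should be cut.
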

	
	\begin{proof}
		 This just follows from doing the calculus on the bases: We have
	\begin{align*}
	  \spl \beta^n_i = \beta^{n-2}_i + \Tr(\E) \beta^{n-2}_{i-1}, \\
		(\beta^n_i)^{[1]} = \beta^{n-2}_i, \\
		(\beta^n_i)^{\{1\}} = \beta^{n-2}_{i-1}, \\
		(\beta^n_i)^{(1)} = \beta^{n-2}_i + 2 \beta^{n-2}_{i-1},
	\end{align*}
	which all follow from $P^m_i = P^{m-1}_i + x_m P^{m-1}_{i-1}$.
	\end{proof}

\subsection{Multireal values of Witt invariants}
We proved in the previous section that a $\beta$-integral Witt invariant in $\UInv(\n)$ or $\UInv_k(\n)$, with $k$ formally real, is 
 determined by its multireal values.
In general, the  multireal values of a Witt invariant is only determined up to 
certain torsion elements.

\begin{proposition} \label{propValuesTorsion}
  Given $k \in \Fields$ and $\alpha = \sum_{\i\in \N_\m} b_\i \beta_\i \in \Inv_k(\n)$, all multireal values of $\alpha$ are zero
	if and only if	
	each $b_\i\in\W(k)$ is a $2^{\m-\i}$-torsion element. 
\end{proposition}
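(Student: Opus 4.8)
The statement is a multivariable version of the claim that the kernel of $\mu_{k,\n}$ (the multireal evaluation map) is governed by $2$-torsion. The starting point is \cref{lemMultirealMatrix}: in the $\beta$-basis, the multireal values $(w_\s)_{\s \in \N_\m}$ of $\alpha = \sum_\i b_\i \beta_\i$ are given by $w_\s = \sum_\i (M_\m)_{\s,\i} \, b_\i = \sum_\i 2^\i \binom{\m - \s}{\i} b_\i$, where the identity uses $\Tr \E_{-1} = 0$ and $\Tr \E_1 = 2\lra{1}$ in $\W(k)$. So the question is purely linear-algebraic: for which tuples $(b_\i) \in \W(k)^{\N_\m}$ is $M_\m (b_\i) = 0$? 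Because $M_\m = \bigotimes_j M_{m_j}$ is a Kronecker product of upper-left-triangular matrices, and the torsion condition $2^{\m-\i} b_\i = 0$ also factors coordinatewise, I would first reduce to the single-variable case $r = 0$ and then reassemble; the reduction is the kind of bookkeeping that the paper has already set up via $\mu_{k,\n} = \bigotimes_j \mu_{k,n_j}$, though one must be a little careful since $\W(k)$-modules don't split as cleanly as vector spaces. Alternatively — and I think this is cleaner — I would work directly with the inverse recursion from the proof of \cref{propMultirealTriangle}: the multireal triangle $c = (c_\i^\u)$ built from $b$ by the inverse recursion $c_\i^\u = c_\i^{\u - e_j} + 2 c_\i^{\u - e_j + e_j}$ wait, $c^{\u}_{\i} = c^{\u-e_j}_\i + 2 c^{\u - e_j}_{\i + e_j}$, satisfies $w_\s = c_\mathbf{0}^{\m - \s}$ at termination, and $b_\i = c_\i^{\mathbf 0}$ at initialization.

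So here is the concrete line of attack. Fix $k$ and the tuple $b = (b_\i)$, form the multireal triangle $c$. The vanishing of all $w_\s$ says exactly that the entire top row (in the inverse-recursion picture, the $\u$-indexed row at $\i = \mathbf 0$) vanishes: $c_\mathbf 0^\u = 0$ for all $\u \in \N_\m$. I claim this is equivalent to $2^{\m - \i} b_\i = 0$ for all $\i$. For the single-variable case, unwind the recursion $c_0^u = c_0^{u-1} + 2 c_1^{u-1}$, $c_1^{u} = c_1^{u-1} + 2 c_2^{u-1}$, etc.: starting from $c_i^0 = b_i$, an induction on $u$ shows $c_i^u = \sum_{j \geq 0} 2^j \binom{u}{j} b_{i+j}$ (this is just re-deriving the matrix $M_m$ from the other end). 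Then $c_0^u = 0$ for all $u = 0, \dots, m$ is a triangular linear system: $c_0^0 = b_0 = 0$, then $c_0^1 = b_0 + 2 b_1 = 0$ forces $2 b_1 = 0$, then $c_0^2 = b_0 + 2\binom{2}{1} b_1 + 4 b_2 = 0$ forces $4 b_2 = 0$ — wait, I need $2 b_1 = 0$ first, which gives $4b_1 = 0$, so $\binom 2 1 2 b_1 = 0$, hence $4 b_2 = 0$, i.e. $2^2 b_2 = 0$. Inductively, having shown $2^j b_j = 0$ (hence $2^{j'} b_j = 0$ for all $j' \geq j$) for $j < i$, the equation $c_0^i = \sum_{j=0}^{i} 2^j \binom{i}{j} b_j = 0$ reduces to $2^i b_i = 0$ because every lower term $2^j\binom i j b_j$ with $j < i$ already vanishes. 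Since here $m = m_0$ and the relevant torsion order is $2^{m-i}$, and we only have the equations for $u \leq m$, this gives $2^i b_i = 0$ for $i \le m$; but wait, the statement wants $2^{m-i} b_i = 0$, which is \emph{weaker} for $i < m/2$ and \emph{stronger} for $i > m/2$. Let me reconsider: actually I should run the recursion the other way, or note that $\N_m$ only has indices $0 \le i \le m$ and $0 \le s \le m$, so we have exactly $m+1$ equations $c_0^u = 0$, $u = 0, \dots, m$, for $m+1$ unknowns $b_0, \dots, b_m$, and the triangular structure pins down $2^{?} b_i$. The correct bookkeeping: in $c_0^u = \sum_{j} 2^j \binom{u}{j} b_j$, the top equation is $u = m$, giving the highest power of $2$ on $b_m$ namely $2^m$... no. I think the right move is to observe that $c_0^u = 0$ for all $u$ is equivalent to $c_0^u = 0$ for $u = m$ together with the recursion, or simply to solve the system from the bottom: this is exactly the forward recursion (division by $2$) of \cref{propMultirealTriangle}, and $\alpha$ has all multireal values zero iff each step of the division "loses" a power of $2$, forcing $2^{m-i} b_i = 0$. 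I would carry this out carefully with the forward recursion $c_i^u = \tfrac12(c_{i-1}^{u+1} - c_{i-1}^u)$: the entry $b_i = c_i^0$ requires dividing by $2$ exactly... the $i$-th column of the triangle reaches depth $m - i$ before hitting the boundary, so $b_i$ "sees" $2^{m-i}$ in the denominator relative to the $w$'s, whence $2^{m-i} b_i = 0$ is exactly the obstruction. Then invertibility over $\Q$ (\cref{lemMultirealMatrix}, $\det M_m \ne 0$) gives the converse: if $2^{m-i} b_i = 0$ for all $i$, one checks directly that every $c$-entry, hence every $w_\s$, vanishes, by running the \emph{inverse} recursion and noting each entry $c_i^u$ is a $\Z$-combination of $2^{?} b_j$'s with enough powers of $2$ to kill them.

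The main obstacle, I expect, is getting the exact exponent $\m - \i$ right and making the torsion argument go through cleanly in both directions over an arbitrary $\W(k)$-module rather than a vector space — in particular the "converse" direction, where one must show that $2^{m-i} b_i = 0$ for all $i$ forces $c_0^u = 0$ for all $u$, requires checking that in the formula $c_0^u = \sum_j 2^j \binom u j b_j$ every term vanishes, i.e. that $2^j b_j = 0$ whenever $j$ contributes, and this needs $2^{m-j} b_j = 0 \Rightarrow 2^u b_j = 0$ for the relevant $u \leq m$; since $u \le m$ and $j \ge 0$... hmm, one needs $2^j b_j = 0$ for the term $2^j \binom u j b_j$ but we only know $2^{m-j}b_j=0$, and $j$ could be less than $m-j$. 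The resolution must be that the relevant sum for $c_0^u$ with $u \le m$ only involves $b_j$ with — no. I believe the honest statement requires tracking that $c_i^u$ involves $b_j$ with $j \ge i$ and the coefficient carries a factor $2^{j - i}$, combined with the constraint $u + i \le m$ (so $u \le m - i$), and then $2^{j-i} b_j = 0$ follows from $2^{m-j} b_j = 0$ once $j - i \ge m - j$, i.e. always on the boundary. The cleanest packaging, which I would ultimately adopt, is: diagonalize the problem by the change of basis to the "staircase" form where $M_\m$ becomes a product of elementary bidiagonal matrices $\mathrm{diag}$-blocks each of which is $\begin{pmatrix} 1 & 2 \\ & 1 \end{pmatrix}$-like, so that $\ker$ over any ring is read off as the $2$-torsion in the appropriate quotients — this is essentially the Smith-normal-form statement $\det M_\m = \pm 2^{\sum \binom{m_j+1}{2}}$ refined to track \emph{which} $b_\i$ each power of $2$ attaches to, and \cref{lemMultirealMatrix} already records the determinant. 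I would then conclude the multivariable case by the Kronecker-product factorization, using that $\alpha = \sum b_\i \beta_\i$ has all multireal values zero iff the single-variable "slices" do, exactly as in the proof of \cref{thmBetaBasis}, reducing $r$ by one at a time.
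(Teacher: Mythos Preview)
Your approach---using the multireal triangle and the inverse recursion of \cref{propMultirealTriangle}---is essentially the same as the paper's, and your triangular elimination is correct: from $c_0^0 = b_0 = 0$, then $c_0^1 = b_0 + 2b_1 = 0$, and so on, you obtain $2^i b_i = 0$ for each $i$. The confusion you run into with the exponent is not a flaw in your reasoning but a typo in the statement. The proposition as printed asks for $2^{\m-\i}$-torsion, but the correct condition---and the one the paper's own proof actually establishes (it proves ``$c^\u_\i$ is $2^\i$-torsion for all $\u,\i$'', and $b_\i = c^{\mathbf 0}_\i$)---is that each $b_\i$ is $2^{\i}$-torsion. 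You can see this already in the single-variable case $m=1$: the system $w_1 = b_0$, $w_0 = b_0 + 2b_1$ vanishes if and only if $b_0 = 0$ and $2b_1 = 0$, which is $2^0$-torsion for $b_0$ and $2^1$-torsion for $b_1$, not the reverse. Taking $b_0 \in \W(k)$ a nonzero $2$-torsion element and $b_1 = 0$ satisfies the printed condition but gives $w_1 = b_0 \neq 0$; conversely, taking $b_0 = 0$ and $b_1$ a nonzero $2$-torsion element makes all $w_s$ vanish while $b_1$ is not $1$-torsion. So both directions of the printed statement fail as written.

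With the corrected exponent $2^\i$, the converse you worried about becomes immediate: if $2^\i b_\i = 0$ for all $\i$, then every term $2^\i \binom{\m-\s}{\i} b_\i$ in $w_\s = \sum_\i 2^\i \binom{\m-\s}{\i} b_\i$ vanishes. The multivariable case follows either by the Kronecker factorization $M_\m = \bigotimes_j M_{m_j}$ you suggest, or---as the paper does---by a short induction on $|\m|$ via $\alpha^{(e_j)}$ and \cref{corCuttingHoms}.
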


\begin{proof}
We prove by induction on $\m$ that if $c=(c_\i^\u)_{\u,\i,\u+\i\in\N_\m}$ is the multireal triangle of $\alpha$, then $\alpha$ has only zero multireal values if and only if $c_\i^\u$ is a $2^\i$-torsion element of $\W(k)$ for all $\u,\i$, the case $\m=0$ being true by assumption.
Let now $\m$ be such that $\m-e_j\in \N^{r+1}$. By \cref{corCuttingHoms}, the multireal values of $\alpha$ and of $\alpha^{(e_j)}$ coincide, and it follows from the proof of \cref{propMultirealTriangle} that
\[
	\alpha^{(e_j)}=\sum_{\i\in\N_{\m-e_j}} c_\i^{\m-e_j}\beta_i.
\]
Since $c_\i^\u=c_\i^{\m-e_j}+2c_{\i+e_j}^{\m-e_j}$, the result follows.
\end{proof}

We now turn to a version of \cref{propValuesTorsion} involving integer numbers rather than element of Witt rings.
We denote by $\OO$ the set of orderings of the field $k \in \Fields$. 
Recall that $k$ is formally real if and only if $\OO \neq \emptyset$, 
see \cref{propFormallyReal}.
Given an ordering $P \in \OO$, we have an associated signature map
$\sgn_P \colon \W(k) \to \Z$ defined as the number of positive minus the number
of negative elements (with respect to $P$) in a diagonalization of an element of $\W(k)$,
see \cite[Chapter VIII, Section 3]{lam05}.
The \emph{total signature} is the map defined  by 
\[
\begin{array}{cccc}
	\sgn \colon& \W(k) &\longrightarrow& \Z^\OO 
	\\ & a &\longmapsto &(\sgn_P(a))_{P \in \OO}
\end{array}\cdot
\]
We recall that the torsion subgroup of $\W(k)$ is equal to the kernel 
of $\sgn$, see \cite[Chapter VIII, Theorem 3.2]{lam05}.
Given $\alpha \in \Inv_k(\n)$, we call $\sgn(w_\s)$ the \emph{multireal signatures}
of $\alpha$. The multireal signatures determine $\alpha$ up to torsion. 

\begin{proposition} \label{propSignatureTorsion}
  Given $k \in \Fields$ and $\alpha \in \Inv_k(\n)$, 
	the multireal signatures of $\alpha$ are all zero if and only if $\alpha$ lies in the torsion submodule
	of $\Inv_k(\n)$.
\end{proposition}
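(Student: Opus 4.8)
The plan is to combine the decomposition of $\alpha$ in the $\beta$-basis with the computation of $\mu_{k,\n}$ from \cref{lemMultirealMatrix} and Pfister's description of the torsion in $\W(k)$. Write $\alpha = \sum_{\i \in \N_\m} b_\i \beta_\i$ with $b_\i \in \W(k)$. Since $\Inv_k(\n)$ is $\W(k)$-free on the finite basis $(\beta_\i)_{\i \in \N_\m}$ by \cref{thmBetaBasis}, the element $\alpha$ lies in the torsion submodule if and only if each $b_\i$ is a torsion element of $\W(k)$. By \cite[Chapter VIII, Theorem 3.2]{lam05} the torsion subgroup of $\W(k)$ is exactly $\ker(\sgn)$, so this happens if and only if $\sgn(b_\i) = 0 \in \Z^{\OO}$ for every $\i$. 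If $k$ is not formally real then $\OO = \emptyset$, and the statement is vacuous on the hypothesis side but still correct, since by \cref{propFormallyReal} the whole ring $\W(k)$, hence $\Inv_k(\n)$, is torsion.

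Next I relate the multireal signatures to the coefficients $b_\i$. By \cref{lemMultirealMatrix} the multireal values of $\alpha$ are $w_\s = \sum_{\i} (M_\m)_{\s,\i}\, b_\i$, where the entries $(M_\m)_{\s,\i} = 2^{\i}\binom{\m-\s}{\i}$ are non-negative integers acting on $\W(k)$ by iterated addition of $\lra{1}$. Each $\sgn_P \colon \W(k) \to \Z$ is additive and sends $n\lra{1}$ to $n$, so applying the total signature yields
\[
  \sgn(w_\s) = \sum_{\i} (M_\m)_{\s,\i}\, \sgn(b_\i) \in \Z^{\OO}\quad\text{for all }\s \in \N_\m,
\]
equivalently, for every ordering $P \in \OO$ one has $(\sgn_P(w_\s))_{\s} = M_\m \cdot (\sgn_P(b_\i))_{\i}$ in $\Z^{\N_\m} \subset \Q^{\N_\m}$.

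Both implications then follow. If $\alpha$ is torsion, then $\sgn(b_\i) = 0$ for all $\i$ by the first step, hence $\sgn(w_\s) = 0$ for all $\s$. Conversely, if all multireal signatures vanish, then for each $P$ the vector $(\sgn_P(b_\i))_{\i}$ lies in the kernel of $M_\m$; but $\det M_\m = \pm 2^{\binom{m_0+1}{2} + \dots + \binom{m_r+1}{2}} \neq 0$ by \cref{lemMultirealMatrix}, so $M_\m$ is injective and therefore $\sgn_P(b_\i) = 0$ for all $P$ and all $\i$. Hence $\sgn(b_\i) = 0$ for every $\i$, each $b_\i$ is torsion, and $\alpha$ is torsion.

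There is no serious obstacle here beyond assembling the right inputs; the points that need care are that $M_\m$ is invertible only over $\Q$ rather than over $\Z$ (its inverse has entries in $\Z[1/2]$), which is harmless since the signatures $\sgn_P(b_\i)$ are integers and an integer matrix injective over $\Q$ is injective over $\Z$, and that the non-formally-real case $\OO = \emptyset$ must be disposed of separately as above. One could alternatively phrase the argument by passing to $\overline{\W(k)} = \W(k)/\W(k)_{\mathrm{tors}}$, which is torsion-free and embeds in $\Z^{\OO}$ via $\sgn$, and then invoke injectivity of $M_\m$ over the $\Q$-vector space $\overline{\W(k)}\otimes\Q$; this is the same computation repackaged.
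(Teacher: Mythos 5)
Your proof is correct and follows essentially the same route as the paper: reduce to vanishing of $\sgn(b_\i)$ using that $\Inv_k(\n)$ is $\W(k)$-free on the $\beta$-basis, push the computation of multireal values through $\sgn$ to land in the torsion-free group $\Z^{\OO}$, and conclude by injectivity of $M_\m$. The only cosmetic difference is that you invoke the nonvanishing of $\det M_\m$ from \cref{lemMultirealMatrix} directly, whereas the paper phrases the same fact via the multireal triangle recursion (the top row of the signature triangle vanishes if and only if the left column does, using that division by $2$ is injective on $\Z^{\OO}$); these are two packagings of the identical computation.
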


\begin{proof}
 Let $c=(c_\i^\u)_{\u,\i,\u+\i\in\N_\m}$ be the multireal triangle of $\alpha$. The triangle 
	$(\sgn(c^\s_\i))_{\u,\i,\u+\i\in\N_\m}$ has values in $\Z^\OO$. 
	Since $\Z^\OO$ is torsionfree, the top line is zero if and only if the left row is zero. 
	The statement follows. 
\end{proof}

We now consider the case $k = \Q$. Given $\alpha \in \Inv_\Q(\n)$, we denote
by $\alpha_\R$ the restriction of $\alpha$ to $\Inv_\R(\n)$. 
Note that since $\W(\R) = \Z$, any Witt invariants in $\Inv_\R(\n)$ is $\beta$-integral and has integer multireal values. 

\begin{proposition} \label{propSignatureTorsionQ}
  Given $\alpha\in \Inv_\Q(\n)$ , the multireal values of $\alpha_\R$ are all zero if and only if $\alpha$ lies in the torsion submodule
	of $\Inv_\Q(\n)$.
\end{proposition}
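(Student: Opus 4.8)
The plan is to deduce the statement from \cref{propSignatureTorsion} applied with $k=\Q$, the only point being to recognise the multireal values of $\alpha_\R$ as the multireal signatures of $\alpha$. First I would write $\alpha = \sum_{\i \in \N_\m} b_\i \beta_\i$ with $b_\i \in \W(\Q)$, using that $(\beta_\i)_{\i \in \N_\m}$ is a $\W(\Q)$-basis of $\Inv_\Q(\n)$ by \cref{thmBetaBasis}. The field $\Q$ carries a unique ordering $P$, and under the identification $\W(\R) = \Z$ by the signature the scalar extension map $\otimes_\Q \R \colon \W(\Q) \to \W(\R)$ is exactly $\sgn_P$: a diagonalisation of a $\Q$-quadratic form stays diagonal after $\otimes_\Q\R$, and a nonzero rational number is positive for $P$ if and only if it is positive as a real number. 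Hence $\alpha_\R = \sum_{\i} \sgn_P(b_\i)\,\beta_\i$ in $\Inv_\R(\n)$, and for each $\s \in \N_\m$ the $\s$-th multireal value of $\alpha_\R$, namely $\alpha_\R(R_\s) = \alpha_\Q(R_\s) \otimes_\Q \R$, equals $\sgn_P$ of the $\s$-th multireal value of $\alpha$, i.e.\ the $\s$-th multireal signature of $\alpha$ for the unique ordering $P$.

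Granting this identification, \cref{propSignatureTorsion} with $k=\Q$ gives the result immediately. If one prefers a self-contained argument, one can instead reason directly: by \cref{lemMultirealMatrix} the map $\mu_{\R,\n}$ has matrix $M_\m$ with $\det M_\m \neq 0$, so it is injective on the torsion-free $\Z$-module $\Inv_\R(\n)$; therefore the multireal values of $\alpha_\R$ all vanish if and only if $\alpha_\R = 0$, if and only if $\sgn_P(b_\i) = 0$ for every $\i$. On the other hand, since $\Q$ has a unique ordering, the torsion subgroup of $\W(\Q)$ is precisely $\ker(\sgn_P)$ by \cite[Chapter VIII, Theorem 3.2]{lam05}; and as $\Inv_\Q(\n)$ is $\W(\Q)$-free with basis $(\beta_\i)$, its torsion submodule consists of the $\sum_\i b_\i \beta_\i$ with every $b_\i$ torsion in $\W(\Q)$. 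Combining these two observations, all multireal values of $\alpha_\R$ vanish if and only if each $b_\i$ is torsion, if and only if $\alpha$ lies in the torsion submodule of $\Inv_\Q(\n)$.

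There is no serious obstacle here: all the needed ingredients — the $\beta$-basis and the freeness of $\Inv_\Q(\n)$ (\cref{thmBetaBasis}), the explicit matrix $M_\m$ of $\mu_{k,\n}$ with nonzero determinant (\cref{lemMultirealMatrix}), and the description of the torsion of $\W(k)$ as the kernel of the total signature — are already established, and the proof of \cref{propSignatureTorsion} already carries out the corresponding computation over $\Z^{\OO}$. The only care needed is the bookkeeping that turns \enquote{multireal values over $\R$} into \enquote{multireal signatures} via the unique ordering of $\Q$; once this is in place, the proof is a one-line appeal to \cref{propSignatureTorsion}.
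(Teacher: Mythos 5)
Your proof is correct and follows essentially the same route as the paper: identify the multireal values of $\alpha_\R$ with the multireal signatures of $\alpha$ via the unique ordering of $\Q$ (i.e.\ the fact that $\otimes_\Q\R \colon \W(\Q) \to \W(\R) \cong \Z$ is $\sgn_P$), then invoke \cref{propSignatureTorsion}. The extra self-contained argument you include is a fine alternative, but the core identification and the appeal to \cref{propSignatureTorsion} are exactly what the paper does.
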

\begin{proof}
  The only possible ordering for $\Q$ is $P = \Q_{\geq 0}$
	and using see \cite[Chapter II, Proposition 3.2]{lam05} we have the diagram:
	\[\begin{tikzcd}
		{W(\Q)} & {W(\R)} \\
		& \Z
		\arrow["{\otimes_\Q \R}", from=1-1, to=1-2]
		\arrow["\sgn"', from=1-1, to=2-2]
		\arrow["\cong", from=1-2, to=2-2]
	\end{tikzcd}\]
  Therefore, the multireal signatures of $\alpha$ agree with the 
	multireal values of $\alpha_\R$. 
	The statement then follows from \cref{propSignatureTorsion}. 
\end{proof}

We emphasize again the meaning of this statement: even 
if $\alpha \in \Inv_\Q(\n)$ is not $\beta$-integral, the (integer) multireal
values of $\alpha_\R$ agree with the multireal signatures of $\alpha$
and hence determine $\alpha$ up to torsion. 
\begin{remark}
One can consider any real closed field $k$ rather than $\R$ in	the above discussion.
\end{remark}

\section{Welschinger--Witt invariants} \label{secWWelschinger}

In this section, we show how Welschinger invariants can be arranged in certain  $\beta$-integral Witt invariants which we call
Welschinger--Witt invariants.

\subsection{Welschinger invariants} \label{secWelschinger}
Here we briefly recall the definition of Welschinger invariants and recast their properties that are of interest in this paper. We refer to 
\cite{Welschinger-invtsReal4mflds,Welschinger-invCountHoloDisk,Bru18} for more details. 
A {\em real symplectic manifold} $(X,\omega,\tau)$ is a symplectic
manifold $(X,\omega)$ equipped with an anti-symplectic involution
$\tau$. An almost complex
structure $J$ on $X$ is called \emph{$\tau$-compatible} if it is
tamed by $\omega$, and if $\tau$ is $J$-anti-holomorphic.

Let $(X,\omega,\tau)$ be a compact  real symplectic 4-manifold, and denote by
$H_2^{-\tau}(X;\Z)$ the space of $\tau$-anti-invariant classes. 
The fixed point set of $\tau$ is denoted by $X^{\tau}$, and is assumed to be non-empty. 
Choose a class
$D\in H_2^{-\tau}(X;\Z)$ 
and
$s\in\N$ such that
\[c_1(X)\cdot D - 1- 2s =: r \in \N.
\]
Choose a configuration $\x$ made of $r$ points in $X^{\tau}$ and  $s$ 
 pairs of $\tau$-conjugated 
points in $X\setminus X^{\tau}$.
Given a   $\tau$-compatible almost complex structure $J$ tamed by $\omega$,
we denote by $\mathcal C(D,\x,J)$
 the set of real rational
$J$-holomorphic curves  in $X$ realizing the class $D$, and passing
 through $\x$. Then we define the integer
 \[
 \Wel_{(X,\omega,\tau)}(D;s)= \sum_{C\in\mathcal C(D,\x,J)}(-1)^{m(C)},
 \]
where
$m(C)$ is the number of nodes of  $C$ in $X^\tau$ with two
$\tau$-conjugated branches.
For a generic choice of $J$, the set  $\mathcal C(D,\x,J)$ is finite, and
 $W_{(X,\omega,\tau)}(D;s)$
depends
 neither on $\x$, $J$, nor on the deformation class of the triple
 $(X,\omega,\tau)$, see \cite{Welschinger-invtsReal4mflds,Wel15,Bru18}.
 We call these numbers the \emph{Welschinger invariants of $(X,\omega,\tau)$}.
 When $c_1(X)\cdot D - 1=0$, implying $s=0$, we use the shorthand
 \[
 \Wel_{(X,\omega,\tau)}(D) = \Wel_{(X,\omega,\tau)}(D;s).
\]
 
 \medskip

A real algebraic projective surface $X$ equipped with a real Kähler form $\omega$ 
induces a 
real symplectic 4-manifold $(X(\C),\omega,\tau)$, where $\tau$ is the action of $\Gal(\C:\R)$ on $X(\C)$. 
We note that we also get an induced $\tau$-compatible complex structure $J$ 
on $(X(\C),\omega,\tau)$ tamed by $\omega$, which
however may not be generic in the above sense.
The first Chern class composed with Poincaré duality  maps a divisor class $D\in \Pic(X)$ to an element
in $H_2(X;\Z)$, still denoted by $D$.
A $\Gal(\C:\R)$-invariant class is mapped to an element of $H_2^{-\tau}(X;\Z)$. Hence we can define Welschinger invariants $\Wel_X(D;s)$ for any class $D\in\Pic(X)(\R)$. We denote by $\pi:X_{n,s}\to X$  a blow-up of $X$ at a real configuration of $n$ distinct points containing exactly $s$ pairs of $\Gal(\C:\R)$-conjugated points.  Note that if $X(\R)$ is not connected and $n-2s\ge 1$, there exists several deformation classes of real blow-ups $\pi:X_{n,s}\to X$ depending on the distribution of the blown-up real points on $X(\R)$. However, it follows from  \cite[Theorem 1.3]{Bru18} that the Welschinger invariants of   $X_{n,s}$ do not depend on the choice of a particular deformation class. Subsequently we do not record the blown-up configuration in the notation $X_{n,s}$.
Denoting by $E_1, \ldots , E_n$
the  exceptional classes,  the map $(D, a_1,\ldots,a_n) \mapsto \pi^* D -a_1 E_1 -\cdots - a_n E_n$ identifies $\Pic (X_{n,s})$ with $ \Pic (X) \times \Z^n$, and we tacitly use this identification throughout the following. 
 Note that under this identification we have
$\Pic (X_{n,0})(\R) = \Pic (X) (\R) \times \Z^n$, 
and $\Pic (X_{2,1})(\R) = \Pic (X) (\R) \times \Delta$ where $\Delta$ denotes the 
diagonal in $\Z^2$. 
By   \cite[Theorem 1.3]{Bru18} combined for example with \cite[Corollary 1.3]{DingHu18}, we have
\begin{equation}\label{eq:W n=0}
\Wel_X(D;s) = \Wel_{X_{n,s}}(\pi^* D - E_1-\cdots-E_n),
\end{equation}
where  $n=c_1(X)\cdot D - 1$. 
Moreover, it is easy to check that $\Wel_X(D;s) = \Wel_{X_{n,s'}}(\pi^* D; s)$.

The next result is the key observation that relates Welschinger invariants to $\beta$-integral Witt invariants.

\begin{thm}[{\cite[Proposition 2.3]{Bru18}}] \label{thm:ABW}
	Let $X$ be a real algebraic projective surface, $D \in \Pic(X)(\R)$ and $d \in \N$.
	Then one has
	\[
    \Wel_{X_{2,1}}((D,d,d);s) -  \Wel_{X_{2,0}}((D,d,d);s) 
		  = 2 \sum_{\ell=1}^d  (-1)^{\ell} \Wel_{X_{2,0}}((D, d+\ell, d-\ell); s).
  \]
\end{thm}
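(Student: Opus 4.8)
The plan is to prove this real Abramovich--Bertram formula by a degeneration argument, following the strategy of \cite{Bru18}. I would compute the two Welschinger invariants on the left-hand side by letting the two blown-up points of $X_{2,\bullet}$ collide at a fixed real point $p_0 \in X(\R)$, while keeping the $r$ real points and $s$ conjugate pairs of the incidence condition generic and away from $p_0$; since Welschinger invariants depend neither on the (generic) incidence configuration nor on the deformation class, this is legitimate. On the $X_{2,0}$ side I would let the two real points approach $p_0$ along two real tangent directions, and on the $X_{2,1}$ side let the conjugate pair approach $p_0$ along conjugate tangent directions. In both cases the blown-up surface degenerates to the \emph{same} complex rational surface $Z = \Bl_{\tilde p}\Bl_{p_0}X$, where $\tilde p$ lies on the exceptional line of $\Bl_{p_0}X$ and records the common tangent direction; the surface $Z$ contains the $(-1)$-curve $E'$ over $\tilde p$ and a rational $(-2)$-curve $F$, the strict transform of the exceptional line over $p_0$, which is real for \emph{both} real structures.

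The next step is a computation in $\Pic$: under these degenerations the class $(D,d,d)$ on $X_{2,\bullet}$ specializes to one and the same class $\delta$ on $Z$ with $\delta \cdot F = 0$, and for $\ell \in \N$ the shifted class $\delta - \ell[F]$ corresponds on $X_{2,0}$ to $(D, d+\ell, d-\ell)$, while $\delta + \ell[F]$ corresponds to $(D, d-\ell, d+\ell)$; these two are interchanged by the automorphism of $X_{2,0}$ swapping the exceptional divisors $E_1, E_2$, hence carry equal Welschinger invariants. I would then invoke the classical (complex) Abramovich--Bertram formula for the $(-2)$-curve $F$: a rational curve of class $(D,d,d)$ through the constraint degenerates, on $Z$, to a nodal rational curve consisting of $F$ with some multiplicity $0 \le \ell \le d$ together with a residual component of class $\delta - \ell[F]$ through the constraint (larger multiplicities not contributing, for effectivity reasons), each such configuration smoothing back with a contribution governed by the $\ell$-fold multiple cover of $F$.

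The heart of the matter is to carry out this analysis \emph{equivariantly} and keep track of Welschinger signs. Curves on $Z$ not meeting $F$ (the case $\ell = 0$) smooth into both $X_{2,1}$ and $X_{2,0}$ by a modification supported away from them, hence with the same nodes and the same Welschinger sign, and therefore cancel in the difference $\Wel_{X_{2,1}}((D,d,d);s) - \Wel_{X_{2,0}}((D,d,d);s)$; what survives is the contribution of the configurations with $\ell \ge 1$. The key computation --- a solitary-node analysis of the $\ell$-fold cover of the real $(-2)$-curve, as in Welschinger's original argument for $\P^2$ \cite[Theorem 0.4]{Welschinger-invtsReal4mflds} --- should show that such a configuration over a real residual curve of class $\delta - \ell[F]$ contributes with the sign $(-1)^\ell$ relative to the Welschinger sign of that residual curve; combined with the $E_1 \leftrightarrow E_2$ symmetry, which makes each class $(D, d+\ell, d-\ell)$ count twice, summation over $\ell = 1, \dots, d$ yields the asserted identity.

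The hard part will be exactly this last step: the real/symplectic surgery and the solitary-node bookkeeping across the degeneration, which is the genuinely new input of \cite{Bru18}. The complex Abramovich--Bertram formula and the $\Pic$-computation are, by comparison, routine. One must also be careful that $Z(\R)$ differs topologically from both $X_{2,1}(\R)$ and $X_{2,0}(\R)$, so the "degeneration" is really a surgery and the deformation-invariance of Welschinger invariants has to be applied together with the right genericity statement; arranging this is the main technical burden besides the sign analysis.
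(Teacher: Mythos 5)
The paper does not prove \cref{thm:ABW}: it simply cites \cite[Proposition 2.3]{Bru18}, so there is no ``paper proof'' to compare against. Your sketch is a reasonable account of the strategy that does underlie Brugall\'e's result --- degenerate to a common surface $Z$ carrying a real $(-2)$-curve $F$, observe $\delta \cdot F = 0$ so curves of class $\delta$ either avoid $F$ (these cancel in the difference) or break off $\ell F$ plus a residual of class $\delta - \ell F$, then track Welschinger signs through the smoothings. The identification of $\delta - \ell F$ with $(D, d+\ell, d-\ell)$ is correct, and you are right that the real sign analysis near the vanishing cycle is the genuine content of \cite{Bru18}.

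There is, however, one place where the sketch is not just incomplete but likely pointing in the wrong direction: the explanation of the overall factor $2$. In a one-parameter degeneration to $Z$, a curve of class $\delta$ on a nearby fiber limits to $C + \ell F$ on $Z$ with $\ell \geq 0$; the class $\delta + \ell F$ (your ``$(D, d-\ell, d+\ell)$'') never appears as a residual class in this process, so one cannot obtain the factor $2$ by pairing $\ell$ with $-\ell$ and invoking the $E_1 \leftrightarrow E_2$ symmetry. The factor $2$ in both Welschinger's original formula and Brugall\'e's generalization is a \emph{local} multiplicity: it comes out of the analysis of how solutions in the fibered sum / surgery are assembled near $F$ (intuitively, from complex-conjugate pairs of gluing solutions, or equivalently from the appearance of a solitary node), together with the $(-1)^\ell$ from the parity of the local winding. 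So the $2$ and the $(-1)^\ell$ should be produced simultaneously by the local analysis you defer, not obtained by first getting a sum over all $\ell \in \Z$ and then folding it in half. Relatedly, on the $X_{2,1}$ side the conjugate pair must collide so that the limiting \emph{secant} direction is real (this is automatic, but it is the real direction, not a pair of conjugate tangent directions, that determines the marked point $\tilde p$ and hence $Z$). Beyond this, the proposal honestly isolates what is hard; carrying out the real gluing analysis and the solitary-node bookkeeping is precisely the work of \cite{Bru18}, and your sketch does not contain it.
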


\begin{remark} \label{rem:wel formula}
  The particular case $d = 1$ of the formula (in combination with \cref{eq:W n=0})
	gives
	\emph{Welschinger's formula} \cite[Theorem 0.4]{Welschinger-invtsReal4mflds}
  \[
    \Wel_X(D;s) -  \Wel_X(D;s+1) = 2 \Wel_{X_{1,0}}(\pi^*D - 2E_1;s).
  \] 
\end{remark}

In what follows, the algebraic surface 
  $X$ will always be $\R$-rational, that is, it will be either
 $Q(1)=\P_\R^1\times \P_\R^1$, 
the quadric ellipsoid $Q(-1)$ in $\P^3_\R$, 
or  $\P^2_{\R, n,s}$, 
equipped with any real Kähler form. Welschinger invariants of $\R$-rational surfaces can be all computed via Solomon's open WDVV equations, see \cite{HorevSolomon,ChenZinger21}.

\subsection{Welschinger--Witt invariants} \label{subsecWelschingerWitt}

In this subsection, we associate $\beta$-integral Witt invariants to complex deformation classes of marked rational surfaces. We first treat  the case of blow-ups of $\P^2_\C$.

\emph{Convention:}
Throughout the following, a bold face letter like $\n$ denotes a vector in $\Z^r$, often
$\N^r$ (where $r \in \N$ is fixed). 
Its coordinates are labelled as $\n = (n_1, \dots, n_r)$. 
A bold face letter with bar like $\nbar$ denotes vectors in $\Z^{r+1}$.
In this case, the first coordinate is denoted $n_0$ and the last $r$ coordinates
form a vector $\n$, so that $\nbar = (n_0, \n) \in \Z \times \Z^r$. 
Given $\n=(n_1,\ldots,n_r)\in \N^r$, we set
\[
  \m=\left(\left\lfloor \frac{n_1}2 \right\rfloor,\ldots ,\left\lfloor \frac{n_r}2 \right\rfloor\right)\in \N^{r}.
 \] 
 Given $\dbar = (d_0, \d) \in \N^{r+1}$, 
 we set 
\[
  n_0 := 3 d_0 - \sum_{j=1}^r d_{j}n_j -1
\] 
and $m_0 = \lfloor n_0/2 \rfloor$. In the cases of interest to us, we have
$n_0 \geq 0$. We set $\nbar = (n_0, \n)$ and $\mbar = (m_0, \m)$.
 
 Given $\s=(s_1,\ldots,s_r) \leq \m$, i.e. $s_j\le m_j$ for all $j=1,\ldots,r$,
we denote by $X_{\n,\s}$ the blow-up of $\P^2_\R$ 
 at $\PP=\PP_1\sqcup\cdots\sqcup\PP_r\subset \P_\R^2$, where each $\PP_j=\{p_{j,1},\ldots,p_{j,n_j}\}$ is a real configuration of $n_j$ points 
such that $p_{j,1}, \dots, p_{j,n_j-2s_j}$ are real points
and the remaining $2s_j$ points consist of $s_j$ pairs of $\Gal(\C:\R)$-conjugated points.
 We denote by  $L \in \Pic(X_{\n,\s})$  the pullback of the class of a line under the blow-up map, 
 and by $E_{j,t} \in \Pic(X_{\n,\s})$ the class in of the exceptional divisor corresponding to $p_{j,t}$.
 We also use the shorthand
 $E_j = E_{j,1} + \dots + E_{j,n_j}$ for all $j = 1, \dots, r$.
When $r=0$, we have $\n=()$ and $X_{\n,\s}=\P^2_\R$. 

Throughout the following, we consider $\Z^{r+1}$ as a subset of
$\Pic(X_{\n,\s})$, for any $\s$,
via the map
$\Delta \colon (d_0,d_1,\ldots,d_r)\mapsto d_0 L - d_1 E_1 - \dots - d_r E_r$. 
Note that under this identification $\Z^{r+1}$ lies in 
the subset $\Pic(X_{\n,\s})(\R)$
of $\Gal(\C:\R)$-invariant classes. 
We usually suppress the map $\Delta$ when there is no risk of confusion. For example, we write
 $\Wel_{X_{\n, \s}}(\dbar; s_0)$ for the associated
Welschinger invariant $\Wel_{X_{\n, \s}}(\Delta(\dbar); s_0)$.

\medskip
The main theorem of this section shows that the Welschinger invariants $(\Wel_{X_{\n, \s}}(D; s_0))_{\overline \s\in\N_\mbar}$ are the multireal values of a $\beta$-integral Witt invariant. In order to describe the multireal triangle of this latter, we need to introduce some further notation. 
Let us assume $n_0 = 0$, which is no significant restriction thanks to Identity \eqref{eq:W n=0}. Note that $\N_\mbar=\N_\m$ in this case.
Let $\s,\i\in\N_\m$ such that $\s+\i\in \N_\m$. 
We consider the surface $X_{\n,\s}$ and recall
that by our conventions, since $\s + \i \leq \m$, 
the classes $E_{j,1},\ldots E_{j,2i_j}$
 are real for all $j = 1, \dots, r$.
	We define $\gamma_{j,t}=E_{j,2t-1}- E_{j,2t}\in \Pic(X_{\n,\s})(\R)$.
	We set $\N^\i = \N^{i_1} \times \dots \times \N^{i_r}$
	and, for  $\ell=(\ell_1,\ldots,\ell_r)\in \N^\i$, 
  \[
|\ell|=\sum_{\substack{j=1,\ldots,r \\ t=1,\ldots,i_j}}\ell_{j,t}, \qquad \mbox{and}\qquad 
\Gamma_{\ell}=\sum_{\substack{j=1,\ldots,r \\ t=1,\ldots,i_j}}(\ell_{j,t}+1)\gamma_{j,t}.
\]
Finally, we define
\[
\Wel^\i_{X_{\n, \s}}(\dbar)=\sum_{\ell\in \N^\i} (-1)^{|\ell|}\Wel_{X_{\n, \s}}
\left(\Delta(\dbar)-\Gamma_\ell\right).
\]
Note that $\Wel^{0}_{X_{\n, \s}}(\dbar)=\Wel_{X_{\n, \s}}(\dbar)$.

\begin{theorem} \label{thmWelschingerUniversal}
	For $\n\in \N^r$ and $\dbar \in  \Z^{r+1}$ such that $n_0 \geq 0$, 
	the vector $w = (w_{\sbar})_{\sbar \leq \mbar} \in \Z^{\N_\mbar}$ given by
	\[
	  w_{\sbar} = \Wel_{X_{\n,\s}}(\dbar; s_0)
	\]
	is $\beta$-integral. In particular there exists a unique $\beta$-integral Witt invariant $\V_{\n,\dbar}\in \beta\Inv(\nbar)$ with multireal values $w$.
	Furthermore, if $n_0 = 0$
	then the multireal triangle of 
	$\V_{\n, \dbar} \in \UInv(\n)$ is given by
	\[
	  c_{\i}^{\u} = \Wel^{\i}_{X_{\n, \m-\u-\i}}(\dbar).
	\]
\end{theorem}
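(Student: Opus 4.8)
The plan is to establish the two assertions separately: first that $w$ is $\beta$-integral, then (for $n_0=0$) the explicit multireal triangle. Once $w$ is known to be $\beta$-integral, \cref{corUniversalMultireal} together with \cref{lemMultirealMatrix} immediately give the existence of a \emph{unique} $\V_{\n,\dbar}\in\UInv(\nbar)$ with multireal values $w$, namely $\V_{\n,\dbar}=\sum_\i (M_\mbar^{-1}w)_\i\,\beta_\i$. I would first reduce the $\beta$-integrality of $w$ for arbitrary $n_0\ge 0$ to the case $n_0=0$ using Identity~\eqref{eq:W n=0}: applied to the surface $X_{\n,\s}$, the class $\Delta(\dbar)$, and the integer $s_0$, it rewrites $\Wel_{X_{\n,\s}}(\dbar;s_0)=\Wel_{X_{\n',\s'}}(\dbar')$, where $\n'=(n_1,\dots,n_r,n_0)$ adjoins a group of $n_0$ blown-up points, $\s'=(s_1,\dots,s_r,s_0)$, and $\dbar'=(d_0,d_1,\dots,d_r,1)$ gives the new group multiplicity $1$. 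The quantity attached to $(\n',\dbar')$ is $3d_0-\sum_{j=1}^r d_jn_j-n_0-1=0$, and the reindexing $\N_\mbar\to\N_{\mbar'}$ sending $(s_0,\s)$ to $(0,\s,s_0)$ identifies $w$ with the analogous vector for $(\n',\dbar')$; since this reindexing only inserts a trivial factor and permutes the Kronecker factors of $M_\mbar=\bigotimes_j M_{m_j}$, it carries $\beta$-integral multireal vectors to such. So it is enough to treat $n_0=0$, $\V_{\n,\dbar}$ being then obtained from the invariant for $(\n',\dbar')$ by discarding the trivial factor $\Et_0$ and permuting variables.

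Assume $n_0=0$, so $\N_\mbar=\N_\m$, $s_0=0$ throughout, and $w_\s=\Wel_{X_{\n,\s}}(\dbar)$. By \cref{propMultirealTriangle}, $w$ is $\beta$-integral if and only if its multireal triangle $c$ is integral, so it suffices to compute $c$; I would prove, by induction on $|\i|$, the sharper identity
\[
  c_\i^\u=\Wel^\i_{X_{\n,\m-\u-\i}}(\dbar)\qquad\text{whenever }\u,\i,\u+\i\in\N_\m,
\]
the right-hand side being an alternating sum of finitely many Welschinger invariants, hence an integer, so that $c$ is integral. The case $\i=\mathbf{0}$ is the initialization of the triangle, since $c_\mathbf{0}^\u=w_{\m-\u}=\Wel_{X_{\n,\m-\u}}(\dbar)=\Wel^{\mathbf{0}}_{X_{\n,\m-\u}}(\dbar)$.

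For the inductive step, fix $\u,\i$ and a coordinate $j$ with $i_j>0$, and set $\s=\m-\u-\i$, so $\s+\i=\m-\u\le\m$; in particular $E_{j,1},\dots,E_{j,2i_j}$ are real on $X_{\n,\s}$ and $E_{j,1},\dots,E_{j,2i_j-2}$ are real on $X_{\n,\s+e_j}$. For each $\ell\in\N^{\i-e_j}$ the class $\Delta(\dbar)-\Gamma_\ell$ has coefficient $-d_j$ along both $E_{j,2i_j-1}$ and $E_{j,2i_j}$, since $\gamma_{j,i_j}$ does not occur in $\Gamma_\ell$; I would apply \cref{thm:ABW} to this class and this pair of exceptional divisors---real in $X_{\n,\s}$ and a conjugate pair in $X_{\n,\s+e_j}$, the precise points being irrelevant by \cite[Theorem 1.3]{Bru18}---whose right-hand side is $2\sum_{k=1}^{d_j}(-1)^k\Wel_{X_{\n,\s}}(\Delta(\dbar)-\Gamma_\ell-k\gamma_{j,i_j})$. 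Summing over $\ell\in\N^{\i-e_j}$ with weight $(-1)^{|\ell|}$ and reindexing the resulting double sum by $\tilde\ell\in\N^\i$ with $\tilde\ell_{j,i_j}=k-1$---so that $\Gamma_{\tilde\ell}=\Gamma_\ell+k\gamma_{j,i_j}$, $(-1)^{|\tilde\ell|}=-(-1)^{|\ell|+k}$, and the terms with $\tilde\ell_{j,i_j}\ge d_j$ vanish because the corresponding class carries a positive coefficient on the $(-1)$-curve $E_{j,2i_j}$---yields
\[
  \Wel^{\i-e_j}_{X_{\n,\s}}(\dbar)-\Wel^{\i-e_j}_{X_{\n,\s+e_j}}(\dbar)=2\,\Wel^\i_{X_{\n,\s}}(\dbar).
\]
By the inductive hypothesis $c^{\u+e_j}_{\i-e_j}=\Wel^{\i-e_j}_{X_{\n,\s}}(\dbar)$ and $c^{\u}_{\i-e_j}=\Wel^{\i-e_j}_{X_{\n,\s+e_j}}(\dbar)$ (using $\m-(\u+e_j)-(\i-e_j)=\s$ and $\m-\u-(\i-e_j)=\s+e_j$), so the displayed equality is exactly the triangle recursion $c_\i^\u=\tfrac12\big(c^{\u+e_j}_{\i-e_j}-c^{\u}_{\i-e_j}\big)$; since the recursion is independent of the choice of $j$, the induction closes.

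I expect the inductive step to be the main obstacle: applying \cref{thm:ABW} termwise inside the alternating sum defining $\Wel^{\i-e_j}$ and reassembling the output as $\Wel^\i$, keeping track of which exceptional classes are real on $X_{\n,\s}$ versus $X_{\n,\s+e_j}$, and justifying that the sums over $\N^{\i-e_j}$ and $\N^\i$ are finite so that the reindexing and the vanishing of the extra terms are legitimate. The reduction in the first paragraph should be routine bookkeeping once the relabelling of the blown-up groups is handled with care.
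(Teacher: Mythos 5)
Your proof is correct and follows essentially the same route as the paper's: reduce to $n_0=0$ via Identity~\eqref{eq:W n=0}, then verify that $c_\i^\u=\Wel^\i_{X_{\n,\m-\u-\i}}(\dbar)$ satisfies the triangle recursion by applying \cref{thm:ABW} termwise inside the alternating sum defining $\Wel^{\i-e_j}$ and reindexing, with the extra terms vanishing for class reasons. The only cosmetic differences are that you phrase the verification as an induction on $|\i|$ (rather than checking the recursion directly on the proposed formula) and spell out the bookkeeping of the $n_0>0$ reduction and the vanishing argument more explicitly.
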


\begin{proof}
It suffices to consider the case $n_0 = 0$ by Identity \eqref{eq:W n=0}. 
In view of \cref{corUniversalMultireal} and \cref{propMultirealTriangle}, 
it remains to prove that, taking $c_{\i}^{\u} = \Wel^{\i}_{X_{\n, \m-\u-\i}}(\dbar)$
as a definition, this multitriangle satisfies
$c_\mathbf{0}^\u = w_{\m-\u}$ and 
$c_{\i+e_j}^{\u-e_j} = \frac{1}{2} (c_{\i}^{\u} - c_{\i}^{\u-e_j})$ for any $j = 0, \dots, r$
	            such that $u_j > 0$.
We already explained the first part: we have 
$c_\mathbf{0}^\u = \Wel^{0}_{X_{\n, \m-\u}}(\dbar) = \Wel_{X_{\n, \m - \u}}(\dbar)
= w_{\m-\u}$. 
The second part is an application of \cref{thm:ABW} (using the shorthand $D = \Delta(\dbar)$):
\begin{align*}
	c_{\i}^{\u} - c_{\i}^{\u-e_j} 
	&=\sum_{\ell\in \N^\i} (-1)^{|\ell|} \left(\Wel_{X_{\n, \m-\u-\i}}\left(D-\Gamma_\ell\right) - \Wel_{X_{\n, \m-\u-\i+e_j}}\left(D-\Gamma_\ell\right)\right)
	\\& = 2 \sum_{\ell\in \N^\i} (-1)^{|\ell|}\sum_{l\ge 1} (-1)^{l+1}\Wel_{X_{\n, \m-\u-\i}}\left(D-\Gamma_\ell - l\gamma\right)
	\\& = 2 \sum_{\ell\in \N^{\i+e_j}} (-1)^{|\ell|}\Wel_{X_{\n, \m-\u-\i}}\left(D-\Gamma_\ell\right)
	\\&=  2 \Wel^{\i+e_j}_{X_{\n, \m-\u-\i}}(D) 
	\\&= 2 c_{\i+e_j}^{\u-e_j}.
\end{align*}
Here, $\gamma$ is the difference 
$\gamma_{j,i_j+1} = E_{j,2i_j +1} - E_{j, 2i_j + 2}$
which is $\Gal(\C:\R)$-invariant since $p_{j, 2i_j+1}$ and $p_{j, 2i_j +2}$ are real points. 
	This proves the statement. 
\end{proof}

\begin{remark}
	There is geometry of the Welschinger invariants being encoded in the fact that they form a $\beta$-integral Witt invariant. Conversely, a hypothetical alternate proof of the $\beta$-integrality of a Witt invariant encoding the Welschinger invariants as in \eqref{eqn:Vd-defining} would impose congruence conditions on Welschinger invariants themselves, including Welschinger's formula and a version of the Abramovich--Bertram formula, see \cref{prop:ABtypeformula}. 
\end{remark}

Let $Y_{\eta}$ the blow-up of $\P^2_\C$ at a configuration
of $\eta$ labelled points $p_1,\ldots,p_\eta$ in $\P^2_\C$. 
This description of $Y_\eta$ provides a canonical identification of $\Pic(Y_\p)$ with $\Z^{\eta+1}$: a class $D$ maps to $(d_0,a_1,\ldots,a_\eta)$, where $d_0$ is the intersection number of $D$ with the class of a line, and $a_i$ is the intersection number of $D$ with the exceptional divisor $E_i$ corresponding to the point $p_i$. Let $D=(d_0,a_1,\ldots,a_\eta)\in \Pic(Y_\eta)=\Z^{\eta+1}$, and consider the partition $\sqcup_{j=1}^rI_j$ of the set $\{1,\ldots,\eta\}$ induced by the values of the $a_i$'s. That is  $i,i'\in I_j\Leftrightarrow a_i=a_{i'}$. We define $d_j$ to be the value of the $a_i$'s on $I_j$, and $n_j=|I_j|$. Finally we set 
 \[
 \V_D=\V_{\n,\dbar}.
 \]

\begin{definition} \label{defWelschingerWittInvariants}
	Let $\n \in \N^r$ and $\dbar \in \Z^{r+1}$ such that $n_0 \geq 0$.
	The $\beta$-integral Witt invariant $\V_{\n, \dbar}\in \UInv(\nbar)$ from \cref{thmWelschingerUniversal} is called 
	the \emph{Welschinger--Witt invariant} associated to $\n$ and $\dbar$.
	
	Given $D\in \Pic(Y_\eta)$, the $\beta$-integral Witt invariant $\V_{D}\in \UInv(\nbar)$  is called 
	the \emph{Welschinger--Witt invariant} associated to $D$.
\end{definition}

When $r=0$, and so $\n=()$ and $\dbar=d\in\N$, we rather use the notation $\V_{d}$,  and we call it a Welschinger--Witt invariant of $\P^2$.

\begin{remark}\label{rem:WG inv}
	There exists a unique lift of the Welschinger--Witt invariant $\V_D$ to a Witt invariant
	with value in the Witt--Grothendieck functor $\WG$ and with fixed rank.  
Indeed, recall that a quadratic form $q\in\WG(k)$ is determined by its rank and its class in $\W(k)$. Let $\widehat \beta_i:\Et_n\to\WG$ be the lift of $\beta_i$ of rank $2^i$, and $\widehat \beta_\i$ be the obvious multivariable version. Then if $\V_D=\sum_\i b_\i\beta_\i$, the Witt invariant 
\[
\widehat \V_D=\sum_\i b_\i\widehat \beta_\i + \frac{\GW_{Y_\eta}(D)-\Wel_{X_{\eta,0}}(D;0)}{2}h
\]
reduces to $\V_D$ in $\W$ and has rank the genus $0$ Gromov--Witten invariant $\GW_{Y_\eta}(D)$ for the class $D$ in the surface $Y_\eta$.
\end{remark}

So far we associated Witt $\beta$-invariants to all (deformation classes of) complex algebraic rational (marked) surfaces but $\P^1\times \P^1$. To include the latter, recall that 
$\Pic(\P^1\times \P^1)\cong \Z^2$ with $F_1=\{p\}\times \P^1(\C)$ and $F_2 =\P^1(\C)\times\{p\}$ corresponding to the canonical basis of $\Z^2$. 
Up to swapping $F_1$ and $F_2$, there is a canonical identification of $\Pic(\P^1\times \P^1)$ with $(\pi^*L-E_1-E_2)^\perp$ in $\Pic(Y_2)$. We then
define we define the Welschinger--Witt invariant of $\P^1\times \P^1$
as
\[
\V_{\P^1\times \P^1,(1,1),(d_1,d_2)}= \V_{(1,1),(d_1+d_2,d_1,d_2)} \qquad\mbox{and}\qquad \V_{\P^1\times \P^1,(2),(d)}= \V_{(2),(2d,d)}.
\]
Note that this is consistent with identifications at the level of Welschinger invariants since
\[
\Wel_{Q(1)}((a,b);s)= \Wel_{X_{2,0}}((a+b,a,b);s)\qquad\mbox{and}\qquad \Wel_{Q(-1)}((a,a);s)= \Wel_{X_{2,1}}((2a,a,a);s).
\]
Analogously, given $D = (d_1, d_2) \in\Pic(\P^1_\C\times\P^1_\C)$, we define 
\[
\V_{\P^1\times \P^1,(d_1, d_2)}=\V_{(d_1+d_2, d_1, d_2)}.
\]

\begin{exa}[Welschinger--Witt invariants of $\P^2$] \label{exa:WWP2}
	Let $d \in \Z \cong \Pic(\P^2)$. Combining Theorem \ref{thmWelschingerUniversal} with Remark \ref{rem:wel formula} we get 
	\[
	\V_{d}=\sum_{i=0}^{\left\lfloor \frac{3d-1}{2} \right\rfloor} \Wel_{X_{i, 0}}\left(d L-2(E_{1}+\ldots  E_{i});\left\lfloor \frac{3d-1}{2} \right\rfloor-i\right)\beta_{i}.
	\]

  In \cref{tab:WWP2}, we list the first values of 
	the Welschinger--Witt invariants $\V_{d}$
	in terms of the $\beta$-basis. 
	To do so, we use the Welschinger invariants $\Wel_{\P^2}(d ; s)$
	as computed for example in \cite{Br8}. 
	For example, for $d=4$ we find that 
	$\Wel_{\P^2}(4 ; s) = 240, 144, 80, 40, 16, 0$ for $s = 0, 1, 2, 3, 4$.
	The associated triangles is:
	\[
    \begin{matrix}
		  0 & 16 & 40 & 80 & 144 & 240 \\
			8 & 12 & 20 & 32 &  48 &     \\
			2 &  4 &  6 &  8 &     &     \\
			1 &  1 &  1 &    &     &     \\
			0 &  0 &    &    &     &     \\
			0 &    &    &    &     &     
		\end{matrix}
	\]
	Therefore, we find $\V_{4} = 8 \beta_1 + 2 \beta_2 +  \beta_3$.
	We also note that, while the full triangle is completely encoded by
	both the multireal values $w \in \Z^{m+1}$
	or the $\beta$-coefficients $b \in \Z^{m+1}$, the latter
	seem to be
	composed of smaller numbers.
\begin{table}[!ht]
  \[
    {\setlength{\extrarowheight}{3pt}
    \begin{array}{|c|c|}
     \hline  d & \V_{d}
    \\  \hhline{|=|=|} 1 & \beta_0
    \\ \hline 2 & \beta_0
    \\ \hline   3 & \beta_1
    \\ \hline  4& 8\beta_1 + 2\beta_2+\beta_3
    \\ \hline  5& 64\beta_0 + 46\beta_2 + 16\beta_3+12\beta_4+4\beta_5+\beta_6 
    \\ \hline  6& 1024\beta_0 + 256\beta_1 +1088\beta_2 + 848\beta_3+  728\beta_4 + 480\beta_5 + 288\beta_6+  132\beta_7   +46\beta_8 

    \\ \hline  \multirow{2}{*}{7}& -14336\beta_0+  13056 \beta_1+  4096\beta_2+  16978\beta_3+  16512\beta_4+  18088\beta_5
    \\ & +  16240\beta_6+  13491\beta_7+   9832\beta_8+   6238\beta_9+ 3336\beta_{10} 
    \\ \hline  \multirow{2}{*}{8}& -280576 \beta_0 + 390144 \beta_1 + 356352 \beta_2 + 913408 \beta_3 + 1300160 \beta_4 + 1719968 \beta_5 + 2029008 \beta_6
    \\ & + 2213368 \beta_7 + 2217016 \beta_8 + 2037884 \beta_9 + 1704276 \beta_{10} + 1285806 \beta_{11}
    \\ \hline  
  \end{array}
    }
  \]
  \caption{Welschinger--Witt invariants of $\P^2$ in $\beta$-basis}
  \label{tab:WWP2}
\end{table}
						
	Thanks to Proposition \ref{prop:beta basis}, we can also express 
	the invariants in terms of the $\lambda$-basis. 
	The results are displayed in \cref{tab:WWP2 lambda} 
	Interestingly, in all computed examples the product of the signature of two 
	consecutive coefficients in the $\lambda$-basis is non-positive.

\begin{table}[!ht]
  \[
  {\setlength{\extrarowheight}{3pt}
  \begin{array}{|c|c|}
    \hline  d & \V_{d}
    \\ \hhline{|=|=|} 1 & \lambda_0
    \\ \hline 2 & \lambda_0
    \\  \hline  3 & \lambda_1
    \\  \hline 4& -13\lambda_0 +13\lambda_1 - \lambda_2+\lambda_3
    \\ \hline  5& 589 \lambda_0+  (\lra{2} - 110)\lambda_1 +    109\lambda_2  +(\lra{2} - 14)\lambda_3     + 13\lambda_4 +  (\lra{2} - 2)\lambda_5       +\lambda_6  
    \\ \hline  6& 196500\lambda_0 -96160\lambda_1 + 49110\lambda_2 -21068\lambda_3   +9186\lambda_4  -3176\lambda_5+   1066\lambda_6   -236\lambda_7+     46\lambda_8
    \\  \hline  \multirow{2}{*}{7}& 116803576\lambda_0 -63115170\lambda_1+  32807172\lambda_2 -16003434\lambda_3+   7374736\lambda_4  -3105703\lambda_5
    \\ & +   1196494\lambda_6   -398753\lambda_7+    113384\lambda_8    -23786 \lambda_9    + 3336\lambda_{10}
    \\  \hline  \multirow{3}{*}{8}& -409568889748\lambda_{0} +  209980086324\lambda_{1} -102839510628\lambda_{2}  + 47794430388\lambda_{3}  -20878902720 \lambda_{4}
	\\ &   + 8478699840\lambda_{5}   -3148076928 \lambda_{6} +  1046510240 \lambda_{7}   -300864590  \lambda_{8}   + 71144126    \lambda_{9} 
	\\&-12439590   \lambda_{10}+    1285806\lambda_{11}
    \\  \hline 
  \end{array}
  }
  \]
  \caption{Welschinger--Witt invariants of $\P^2$ in $\lambda$-basis}
	  \label{tab:WWP2 lambda}
\end{table}
\end{exa}

\begin{exa}\label{exa=r=1}
	Similarly to the case of $\P^2$,  for 
	$\n=(1,\ldots,1)\in \N^r$,
	we obtain 
\[
\V_{\n,\dbar}=\sum_{i=0}^{m_0} \Wel_{X_{r+ i, s}}(d_0 L - d_1 E_1 - \dots - d_r E_r - 2(E_{r+1}+\ldots  E_{r+i});m_0-i)\beta_{i}.
\]
For example, using tables of Welschinger invariants in \cite{ChenZinger21},  we find
\begin{align*}
	 \V_{(6,3)}&= 
		224 \beta_1 + 92 \beta_2 + 78 \beta_3 + 40 \beta_4 + 
			20 \beta_5 + 6 \beta_6 + \beta_7
			\\& = -749\lambda_0  +749\lambda_1 -109\lambda_2+  109\lambda_3  -13 \lambda_4+  13 \lambda_5  -\lambda_6+    \lambda_7.
\end{align*}
\end{exa}

\begin{exa}[Welschinger--Witt invariants of $\P^1\times \P^1$]\label{exa:WWP1P1}

	Recall that we denote by $F_1$ and $F_2$ the classes of the two rulings of $\P^1_\C\times \P^1_\C$. 
	By Example \ref{exa=r=1}, we have
 \[
\V_{\P^1\times \P^1,(1,1),(a,b)}=\sum_{i=0}^{a+b-1} \Wel_{Q(1)_{i, 0}}(a F_1 + b F_2 -2(E_1+\dots+E_i)\emph{\emph{}};a+b-1-i)\beta_{i}.
\]
Moreover,  
we have
\begin{align*}
	\V_{\P^1\times \P^1,(2),(a)}&= \sum_{i=0}^{2a-1}\left( \Wel_{Q(-1)_{i, 0}}(D_{i}; 2a-1-i)\beta_{(i,0)} \right.
	\\
&\left.   \qquad \quad + \frac{\Wel_{Q(1)_{i, 0}}(D_{i}; 2a-1-i)-\Wel_{Q(-1)_{i, 0}}(D_{i}; 2a-1-i)}{2}\beta_{(i,1)} \right).
\end{align*}
where $D_i = a (F_1 + F_2) -2(E_1+\dots+E_i)$, or equivalently (c.f.\ \cref{thm:ABW})
\begin{align*}
  \V_{\P^1\times \P^1,(2),(a)}= \sum_{i=0}^{2a-1}\Big( & \Wel_{Q(-1)_{i, 0}}(D_{i}; 2a-1-i)\beta_{(i,0)} \Big. \\
  & \Big. + \sum_{\ell\ge 1}(-1)^\ell \Wel_{Q(1)_{i,0}}(D_{i} - \ell(F_1-F_2); 2a-1-i)\beta_{(i,1)} \Big) .
\end{align*}

We give a few values of  $\V_{\P^1\times \P^1}$, both in the $\beta$ and $\lambda$-basis, in Tables \ref{tab:WWP1P1 more} and \ref{tab:WWP1P1}. 
The needed values of Welschinger invariants can be found for example in \cite{ChenZinger21}.  The specific case of $\V_{\P^1 \times \P^1,(1,1), (a,2)}$ is treated  in Example \ref{exa:(2,a)}.

\begin{table}[!h]
	\[
	{\setlength{\extrarowheight}{3pt}
	\begin{array}{|c|ccc|}
	\hline  (a,b) &  \multicolumn{3}{|c|}{\V_{\P^1\times \P^1,(1,1),(a,b)}} 
		\\ \hhline{|=|===|}  
  (a,1) & \beta_0 & = & \lambda_0
		\\ \hline
	  \multirow{2}{*}{(3,4)}  & 224 \beta_0 + 92 \beta_1 + 78 \beta_2 & \multirow{2}{*}{=}&(\lra{2} + 639)\lambda_0  + (\lra{2}- 1)\lambda_1 +   (\lra{2} + 95)\lambda_2  
		\\ & + 40 \beta_3 + 20 \beta_4 + 6 \beta_5 + \beta_6 & & +(\lra{2} - 1)\lambda_3 +  (\lra{2} + 11)\lambda_4  + (\lra{2} - 1)\lambda_5       +\lambda_6
		  
		  \\  \hline  
		  \multirow{2}{*}{(3,5)} & 991 \beta_0 + 448 \beta_1 + 408 \beta_2 + 
			248 \beta_3 & \multirow{2}{*}{=}&
			-2595\lambda_0 + 3432\lambda_1 -1076\lambda_2+  792\lambda_3
			\\ &+ 158 \beta_4  + 80 \beta_5 + 32 \beta_6 + 8 \beta_7 && 
			-234\lambda_4+   120 \lambda_5 -24  \lambda_6+  8\lambda_7
		\\  \hline  
		\multirow{3}{*}{(4,5)} & 13056 \beta_0 + 7552 \beta_1 + 8128 \beta_2  & \multirow{3}{*}{=}&
		4395560 \lambda_0 - 2232960\lambda_1+   1113240\lambda_2 
		  \\&  + 
		  7248 \beta_3+ 6376 \beta_4 + 4864 \beta_5&&  
			-494672\lambda_3+   208472 \lambda_4  -74240\lambda_5   
		  \\ & + 3328 \beta_6 + 1920 \beta_7 + 912 \beta_8  && 
			23632 \lambda_6   -5376 \lambda_7+     912\lambda_8
		\\  \hline 
	\end{array}
	}
	\]
	\caption{
	Welschinger--Witt invariants of $\P^1 \times \P^1$
	}
	\label{tab:WWP1P1 more}
\end{table}

\begin{table}[!h] 
  \[
  {\setlength{\extrarowheight}{3pt}
  \begin{array}{|c|ccc|}
   \hline  a & \multicolumn{3}{|c|}{\V_{\P^1\times \P^1,(2),(a)}} 
    \\ \hhline{|=|===|}  1 & \beta_{(0,0)} & = & \lambda_{(0,0)}
    \\  \hline   2 & \beta_{(1,0)} +\beta_{(0,1)}& = & -\lambda_{(0,0)} +\lambda_{(1,0)} +\lambda_{(0,1)} 
    \\  \hline   \multirow{3}{*}{3} & 16 \beta_{(0,0)}  + 8 \beta_{(2,0)}+  2 \beta_{(3,0)}  + \beta_{(4,0)}  &  \multirow{3}{*}{=}&
    (\lra{2} + 91)\lambda_{(0,0)} + (\lra{2} - 27)\lambda_{(1,0)}
    \\ & + 15 \beta_{(0,1)} +8 \beta_{(1,1)}+   2 \beta_{(2,1)}   + \beta_{(3,1)} &&  +  (\lra{2} + 13)\lambda_{(2,0)}+(\lra{2} - 3)\lambda_{(3,0)}+ \lambda_{(4,0)} 
    \\ &&& +     2\lambda_{(0,1)} + 13\lambda_{(1,1)}   -\lambda_{(2,1)}  +\lambda_{(3,1)} 
    \\ \hline 
  \end{array}
  }
  \]
  \caption{Welschinger--Witt invariants of $\P^1\times \P^1$ (note that the product of the signature of two consecutive coefficients of the element $\lambda_{(i,0)}$ are non-positive)}
	 \label{tab:WWP1P1}
\end{table}

\end{exa}

\begin{exa}[The invariants  $\V_{\P^1\times \P^1,(1,1),(a,2)}$]\label{exa:(2,a)}
Note that one has 
\begin{equation}\label{eq:dehn}
\Wel_{Q(1)_{i,0}}(D_{a,i};s)=\Wel_{Q(1)}((a-i)F_1+2F_2;s),
\end{equation}
where $D_{a,i}=aF_1+2F_2 -2E_1-\cdots-2E_i$.
Indeed, by \cref{eq:W n=0} we have
\[
\Wel_{Q(1)_{i,0}} (D_{a,i};s)=\Wel_{Q(1)_{i+1,0}}(D_{a,i}-E_{i+1};s).
\]
Since Welschinger invariants are invariant under real Dehn twists, see \cite[Remark 1.3]{Bru18-surgery}, 
applying a Dehn twist with respect to the class $F_1-E_{i}-E_{i+1}$ gives
\begin{align*}
\Wel_{Q(1)_{i+1,0}}( D_{a,i}-E_{i+1};s)&=\Wel_{Q(1)_{i+1,0}}(D_{a-1,i-1}-E_i;s)
\\&=\Wel_{Q(1)_{i-1,0}}(D_{a-1,i-1});s).
\end{align*}
Identity \eqref{eq:dehn} follows by induction.

From \cref{exa:WWP1P1} and Identity \eqref{eq:dehn}, we conclude that  one has
\begin{align*} 
\V_{\P^1\times \P^1,(1,1),(a,2)}= \sum_{i=0}^{a-1}  \Wel_{Q(1)}((a-i)F_1+2F_2;a-i+1) \beta_i
\end{align*}
(note that $\beta_a$ and $\beta_{a+1}$ do not appear since 
$ \Wel_{Q(1)}( 2F_2;1)= \Wel_{Q(1)}(-F_1+2F_2;0)=0$). 
It is a nice exercise, left to the reader, to show that
\[
\Wel_{Q(1)}( aF_1+2F_2;a+1) = \left\lfloor\frac{a+1}{2}\right\rfloor 2^{a-1}=
\left\{\begin{array}{ll}
a 2^{a-2}&\mbox{if $a$ is even,}
\\ (a+1)2^{a-2}   &\mbox{if $a$ is odd.}
\end{array}  \right.
\]
Therefore, the first terms of $\V_{\P^1\times \P^1,(1,1),(a,2)}$ are  
\[
\V_{\P^1\times \P^1,(1,1),(a,2)} = \beta_{a-1} +  2\beta_{a-2} + 8\beta_{a-3} +16\beta_{a-4} +48\beta_{a-5}+ 
96\beta_{a-6}   \ldots 
\]
We give the decomposition of the invariants $\V_{\P^1\times \P^1,(1,1),(a,2)}$ in the 
$\lambda$-basis in Table \ref{tab:WWP1P1 lambda}.
\begin{table}[!ht]
	\[
	{\setlength{\extrarowheight}{3pt}
	\begin{array}{|c|c|}
	\hline  a & \V_{\P^1 \times \P^1,(1,1),(a,2)}
		\\ \hhline{|=|=|}  1 & \lambda_0
		\\  \hline  2 & \lambda_0  + \lambda_1 
		\\  \hline  3 & (\lra{2} + 11)\lambda_0  + (\lra{2} - 1)\lambda_1 +     \lambda_2
		\\ \hline  4& 3\lambda_0 +13\lambda_1 -1\lambda_2 +\lambda_3   
		\\ \hline  5&   123\lambda_0    -12\lambda_1+ (\lra{2} + 14)\lambda_2 + (\lra{2} - 3)\lambda_3  +\lambda_4   
		\\ \hline  6&  -99\lambda_0+ 153\lambda_1 -30\lambda_2+  18\lambda_3  -3\lambda_4  +\lambda_5  
		\\ \hline   7&   1336 \lambda_0   -304\lambda_1+ (\lra{2} + 204)\lambda_2  + (\lra{2} - 53)\lambda_3+ (\lra{2} + 21)\lambda_4 + (\lra{2} - 5)\lambda_5 +\lambda_6 
		\\  \hline 
	\end{array}
	}
	\]
	\caption{Welschinger--Witt invariants $\V_{\P^1\times \P^1,(1,1),(a,2)}$ 
	in $\lambda$-basis}
	\label{tab:WWP1P1 lambda}
\end{table}

\end{exa}

\cref{thm:ABW} and \cref{rem:wel formula} combined with \cref{prop:ABtypeformula} give the following Abramovich--Bertram type formula for Welschinger--Witt invariants. Recall that we identify $\Inv(\n-2e_j)$ with $\Inv(\n-2e_j, 1,1)$ if $\n-2e_j\ge 0$.
\begin{prop}\label{prop:ABWW}
	Let  $\n\in \N^r$ and $\dbar \in  \Z^{r+1}$, such that $n_0 \geq 0$ and $\n-2e_j\ge 0$. Then one has 
	\begin{align*}
		j > 0: &&   \spl_j \V_{\n, \dbar}(\cdot, \E) &= \V_{{(\n-2e_j, 1, 1)}, (\dbar, d_j, d_j)} + (2\lra{1}-\Tr(\E) ) \sum_{\ell \geq 1} (-1)^{\ell} \V_{(\n-2e_j, 1, 1), (\dbar, d_j+\ell,d_j-\ell)}, \\
		j = 0: &&   \spl_0 \V_{\n, \dbar}(\cdot, \E) &= \V_{{(\n, 1, 1)}, (\dbar, 1, 1)} + (\Tr(\E) - 2\lra{1}) \V_{(\n, 1), (\dbar, 2)}.
	\end{align*}
\end{prop}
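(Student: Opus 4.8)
The plan is to deduce both formulas from the coarse Abramovich--Bertram formula \cref{prop:ABtypeformula}, applied to $\alpha = \V_{\n,\dbar}$, which gives in $\Inv(\nbar - 2e_j, 2)$
\[
  \spl_j \V_{\n,\dbar}(A,\E) = \V_{\n,\dbar}^{(e_j)}(A) + \bigl(\Tr(\E) - 2\lra 1\bigr)\,\V_{\n,\dbar}^{\{e_j\}}(A).
\]
So the real task is to identify the two cutting homomorphisms of \cref{defCuttingHoms} applied to $\V_{\n,\dbar}$ with Welschinger--Witt invariants: for $j > 0$, that $\V_{\n,\dbar}^{(e_j)} = \V_{(\n - 2e_j,1,1),(\dbar,d_j,d_j)}$ and $\V_{\n,\dbar}^{\{e_j\}} = -\sum_{\ell\geq 1}(-1)^\ell\,\V_{(\n - 2e_j,1,1),(\dbar,d_j+\ell,d_j-\ell)}$ (a finite sum, cf.\ \cref{thm:ABW}); and for $j=0$, that $\V_{\n,\dbar}^{(e_0)} = \V_{(\n,1,1),(\dbar,1,1)}$ and $\V_{\n,\dbar}^{\{e_0\}} = \V_{(\n,1),(\dbar,2)}$. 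Substituting these into the display above and using $\Tr(\E) - 2\lra 1 = -(2\lra 1 - \Tr(\E))$ then yields the statement.

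All invariants occurring here are $\beta$-integral: the homomorphisms $\alpha\mapsto\alpha^{(e_j)},\alpha^{[e_j]},\alpha^{\{e_j\}}$ send integer $\beta$-combinations to integer $\beta$-combinations by \cref{defCuttingHoms}, and the right-hand invariants are $\beta$-integral by \cref{thmWelschingerUniversal}. Hence by \cref{corUniversalMultireal} it is enough to compare multireal values. For these I would use the elementary identities that evaluating $\V_{\n,\dbar}^{(e_j)}$ on a multireal algebra amounts to replacing the $j$-th factor $R^{n_j-2}_{s_j}$ by $R^{n_j-2}_{s_j}\oplus\E_1 = R^{n_j}_{s_j}$, giving $\Wel_{X_{\n,\s}}(\dbar;s_0)$, and that replacing it by $R^{n_j-2}_{s_j}\oplus\E_{-1} = R^{n_j}_{s_j+1}$ gives $\Wel_{X_{\n,\s+e_j}}(\dbar;s_0)$, i.e.\ the multireal values of $\V_{\n,\dbar}^{[e_j]}$; together with the relation $\V_{\n,\dbar}^{\{e_j\}} = \frac12\bigl(\V_{\n,\dbar}^{(e_j)} - \V_{\n,\dbar}^{[e_j]}\bigr)$, which is immediate from \cref{prop:ABtypeformula} (and visible on the $\beta$-basis).

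With this in hand, the first identity reduces to the geometric fact that adjoining two further \emph{real} exceptional divisors of multiplicity $d_j$ (for $j=0$: absorbing two multiplicity-$1$ points into the interpolation points via Identity~\eqref{eq:W n=0}) does not change the Welschinger invariant; this is Identity~\eqref{eq:W n=0} together with deformation-invariance of Welschinger invariants of blow-ups, \cite[Theorem 1.3]{Bru18}. The second identity reduces, after inserting the formula for $\V_{\n,\dbar}^{\{e_j\}}$, to rewriting $\Wel_{X_{\n,\s+e_j}}(\dbar;s_0) - \Wel_{X_{\n,\s}}(\dbar;s_0)$ as $2\sum_{\ell\geq 1}(-1)^\ell \Wel_{X_{\n,\s}}(\dbar - \ell\gamma;s_0)$ with $\gamma$ the difference of the two split-off exceptional classes (which is $\Gal(\C:\R)$-invariant since those points are real) — and this is exactly \cref{thm:ABW} with $D = \Delta(\dbar)$ and $d = d_j$. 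For $j=0$ the split-off multiplicity is $1$, so \cref{thm:ABW} degenerates to Welschinger's formula (\cref{rem:wel formula}) and the alternating sum collapses to the single term $\V_{(\n,1),(\dbar,2)}$.

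The main obstacle is not conceptual but bookkeeping: checking that the $n_0$-degree attached to each surface on the right matches the $0$-th degree of $\spl_j\V_{\n,\dbar}$, so that after the identifications $\Inv(\dots,1,1)\cong\Inv(\dots)$ both sides genuinely lie in $\Inv(\nbar-2e_j,2)$; keeping track of which split-off points are real and which are $\Gal(\C:\R)$-conjugate when evaluating on multireal algebras; and navigating the hypothesis $n_0 = 0$ in the triangle description of \cref{thmWelschingerUniversal} (which is why I would argue at the level of multireal values rather than multireal triangles throughout). Once the relevant blow-ups are matched as deformation-equivalent surfaces, the geometric inputs \eqref{eq:W n=0}, \cref{rem:wel formula} and \cref{thm:ABW} apply verbatim and the remaining computation is routine.
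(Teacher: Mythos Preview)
Your proposal is correct and follows essentially the same route as the paper: apply \cref{prop:ABtypeformula}, then identify $\V_{\n,\dbar}^{(e_j)}$ and $\V_{\n,\dbar}^{\{e_j\}}$ separately. The only organizational difference is that for $\V_{\n,\dbar}^{\{e_j\}}$ the paper invokes the multireal-triangle description directly (\cref{thmWelschingerUniversal} together with \cref{corCuttingHoms}), whereas you compute multireal values via the relation $\V^{\{e_j\}} = \tfrac12(\V^{(e_j)} - \V^{[e_j]})$ and then re-apply \cref{thm:ABW}; since the triangle description itself was established using \cref{thm:ABW}, this is the same content unpacked one level. One small wording issue: for $j>0$, the identity $\V_{\n,\dbar}^{(e_j)} = \V_{(\n-2e_j,1,1),(\dbar,d_j,d_j)}$ is pure relabeling (the two surfaces and classes are literally the same), so no appeal to \eqref{eq:W n=0} is needed there---that identity is only used for $j=0$.
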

\begin{proof}
	By \cref{prop:ABtypeformula}, one has 
	\begin{align*} 
		\spl_j \V_{\n, \dbar}(\cdot, \E) &= \V_{\n, \dbar}^{(e_j)} + ( \Tr(\E)-2\lra{1}) \V_{\n, \dbar}^{\{e_j\}}.
	  \end{align*}
	  Performing $\V_{\n, \dbar} \to \V_{\n, \dbar}^{(e_j)}$
	  corresponds to declaring two points
	  in $\PP_j$ \emph{real}, that is
	  \begin{align*} 
		  j > 0: &&   \V_{\n, \dbar}^{(e_j)} &= \V_{{(\n-2e_j, 1, 1)}, (\dbar, d_j, d_j)}, \\
		  j = 0: &&   \V_{\n, \dbar}^{(e_0)} &= \V_{{(\n, 1, 1)}, (\dbar, 1, 1)}.
	  \end{align*}
	  Furthermore by \cref{thmWelschingerUniversal} and \cref{corCuttingHoms}, one has 
	  \begin{align*} 
		  j > 0: &&   \V_{\n, \dbar}^{\{e_j\}} &= - \sum_{\ell \geq 1} (-1)^{\ell} \V_{(\n-2e_j, 1, 1), (\dbar, d_j+\ell,d_j-\ell)}, \\
		  j = 0: &&   \V_{\n, \dbar}^{\{e_0\}} &= \V_{(\n, 1), (\dbar, 2)}.
	  \end{align*}
		This proves the claim.
\end{proof}

\begin{remark}
	Weslchinger invariants are also defined for some real algebraic $3$-folds and real symplectict $6$-folds, see for example \cite{Welschinger3d,Wel0605,Solomon-thesis,PanSolWal08}. For simplicity we restrict here to the case of $\P^3_\R$. In this case, the number  $\Wel_{\P^3_\R}(d;s)$ is an invariant count of real rational curves of degree $d$ in $\P^3_\R$ interpolating a generic real configuration of $2d$ points, containing exactly $s$ pairs on complex conjugated points. We refer to \cite{Welschinger3d} for the precise definition of the sign of a real rational curve in $\P^3_\R$. The invariants $\Wel_{\P^3_\R}(d;s)$ enjoy  an analog of Weslchinger's formula (\cite[Theorem 0.3]{Welschinger3d}), and 
	 it follows from \cite[Theorem 1]{BruGeo16} that the $\beta$-integral  Witt invariant
$\V_{\P^3_\R,d}:\Et_{2d-2}\to\W$ defined by
	 \[
	 \V_{\P^3,d} = \sum_{\substack{d_1+d_2=d\\ 0\le d_1<d_2}} \V_{\P^1\times\P^1,(1,1),(d_1,d_2)}
	 \]
	 has multireal values
	 \[
		\V_{\P^3,d,\R}(\C^s\times \R^{2d-2s}) =  \Wel_{\P^3_\R}(d;s).
	 \]
	Analogously to \cref{rem:WG inv}, if $\V_{\P^1\times\P^1,(1,1),(d_1,d_2)}=\sum_i b_{d_1,d_2,i}\beta_i$, the Witt invariant
	 \[
	 \widehat \V_{\P^3,d} =\sum_i \sum_{\substack{d_1+d_2=d\\ 0\le d_1<d_2}} b_{d_1,d_2,i}\widehat \beta_i + \frac{\GW_{\P^3_\C}(d)-\Wel_{\P^3_\R}(d;0)}{2}h
	 \]
	 with value in $\WG$ reduces to $ \V_{\P^3,d}$ in $\W$ and  has rank the genus $0$  degree $d$ Gromov--Witten invariant $\GW_{\P^3_\C}(d)$ of $\P^3_\C$. See \cite{NguyenAnh25} for generalization of this discussion to other real algabraic Fano $3$-folds of index 2.
\end{remark}

\section{Quadratic Gromov--Witten invariants}\label{sec:quadGW}

In \cite{KLSW-relor,degree}, Kass, Levine, Solomon and the third author generalize the enumeration of (real) rational curves in (real) surfaces to enumeration of rational curves in $\A^1$-connected del Pezzo surfaces over a large class of fields $k$. These invariants no longer take values in $\Z$, but in the 
Witt-Grothendieck ring $\WG(k)$ (at least for $k$-rational surfaces over an infinite field, 
they in fact always lie in $\QF(k)$). 
For this reason, we call the invariants defined in \cite{KLSW-relor,degree} \emph{quadratic Gromov--Witten invariants}.

We recall their construction in \cref{sec:constr quadGW}. In fact we do slightly more: we show in \cref{thmQuadraticWitt-nonperfect} that they are Witt invariants, under certain hypotheses. To this end, we first need to extend the definition of quadratic Gromov--Witten invariants from \cite{KLSW-relor,degree} to include enumerations of curves through points over non-perfect fields (of characteristic not $2$ and $3$). See \cref{def:quadGW}.
Note that for the reader's convenience, we also provide in \cref{WittInvarianceQuadratic} 
an alternative version of parts of \cref{sec:constr quadGW} in terms of the enumerative description of these invariants. 
We then study specialization properties in mixed characteristic of these quadratic invariants in \cref{sec:sp mixed qGW}. In particular, we prove in \cref{thm:QXDunramifiedWittawayS} that quadratic Gromov--Witten invariants of surfaces defined over $\Z[1/6]$,
such as toric del Pezzo surfaces, are unramified away from characteristic 2 and 3.
The exclusion of characteristic $2$ and $3$ is due to the lack of definition of the invariants in these cases.
We  elaborate on relations between Welschinger--Witt and quadratic Gromov--Witten invariants in \cref{sec:conj}.

\subsection{Quadratic Gromov--Witten invariants as Witt invariants}\label{sec:constr quadGW}

Let $k\in\Fields$ and let $X$ be a del Pezzo surface over $k$. 
Let $D \in \Pic(X)(k)$ be an effective divisor class  such that $n = -K_X \cdot D -1 \geq 0$.
Let $\Mbar_{0,n}(X,D)$ denote the moduli stack representing tuples $(u: C \to X, (p_1,\ldots,p_n))$ consisting of a stable map $u: C \to X$ with $C$ a genus $0$ nodal curve over a base $B$ such that $u_*[C] = D$ on geometric fibers, and $p_i:B \to C$ are sections landing in the smooth locus of $C$. See \cite{Abramovich--Oort-mixed_char}. The total evaluation map 
\[
\ev=\ev_{X,D} : \Mbar_{0,n}(X,D) \to X^n
\] 
is defined by sending  $(u: C \to X, (p_1,\ldots,p_n))$ to $(u(p_1),\ldots, u(p_n))$. 
There is an open subscheme $\Mbar_{0,n}(X,D)^{\odp} \subset \Mbar_{0,n}(X,D)$ representing those $(u: C \to X, (p_1,\ldots,p_n))$ such that $C$ is smooth, the map $u: C \to u(C)$ is unramified (which is equivalent to the surjectivity of the map $u^* T^* X \to T^* C$ on cotangent spaces), and for every geometric point of the base $B$, the singularities of the image curves are only ordinary double points. See for example \cite[Lemma 2.14]{KLSW-relor}.

Let us now assume that $k$ is perfect of characteristic not $2$ and $3$
and that $X$ is $\A^1$-connected. 
Recall that the degree of a del Pezzo surface $X$ is the integer $K_X^2$.
Suppose that the following hypothesis is satisfied, see \cite[Hypothesis 1]{degree}. 

\begin{hypothesis}\label{deghyp1-part}
Either $X$ is of degree at least 4, or
$X$ has degree 3 and $n \neq 5$.
 \end{hypothesis} 
 
By \cite[Corollary 3.15]{KLSW-relor} there is a dense open subset $U \subseteq X^n$ such that $\ev^{-1}(U) \subset \Mbar_{0,n}(X,D)^{\odp}$ (in the language of \cite{Brugalle-WickelgrenABQ}, this is to say that $X$ is enumerative).
In \cite{KLSW-relor}, the class $D$ was additionally restricted to exclude the case of an $m$-fold multiple of a $-1$-curve for $m>1$. In this case, however, we can choose an open dense subset $U \subseteq X^n$ such that $\ev^{-1}(U) = \emptyset$ and the claimed results are trivial. 
By for example \cite[Lemma 2.27]{KLSW-relor}, the restriction of the evaluation map 
\[
\ev(U):=\ev^{-1}\vert_{\ev^{-1}(U)}: \ev^{-1}(U) \to U
\] 
is étale. It follows that the relative canonical bundle $\omega_{\ev(U)}$
is trivial. More explicitly, the determinant of 
$d\ev(U): \ev(U)^*\Omega_{U} \to \Omega_{\ev^{-1}(U)}$ 
gives an isomorphism
\[
\det d\ev(U):\cO_{\ev^{-1}(U)} \to \omega_{\ev(U)}.
\] 
The map $\ev(U)$
can be equipped with an orientation in the sense of fixing a square root of $\omega_{\ev(U)}$
(see \cite[Definition 2.2]{degree}) as follows.
There is a finite étale map $\pi: \mathcal{D} \to  \ev^{-1}(U)$ 
from the (functorial) {\em double point locus} $\mathcal{D}$. 
See \cite[Lemma 5.4, Section 6]{KLSW-relor}. 
The discriminant $\disc_{\pi}$ of the trace form section of $\pi$ 
determines an isomorphism 
\[
\disc_{\pi}:  \cO_{\ev^{-1}(U)} \to (\det \pi_* \cO_{\mathcal{D}})^{-2}.
\] 
 In \cite{KLSW-relor} the {\em double point orientation} of $\ev(U)$ is defined to be the composition $(\det d\ev(U)) \circ \disc_{\pi}^{-1} :  (\det \pi_* \cO_{\mathcal{D}})^{-2} \to \omega_{\ev^{-1}(U)} $.

In \cite{degree}, an $\A^1$-degree of oriented maps such as $\ev(U)$ is defined. This $\A^1$-degree takes values in the sections $\sWG(U)$ of a Witt--Grothendieck sheaf $\sWG$. The definition of $\sWG$ can be found in \cite[Section 2.3]{degree}, where it is denoted $\mathcal{GW}$. 
For us, it is sufficient to recall that 
the degree of $\ev(U)$ lies in the image of  
the canonical injective map $\WG(k) = \sWG(\Spec(k)) \to \sWG(X^n) \to \sWG(U)$,
see \cite[Sections 2.3 and 2.4]{degree}.
Our goal in the remainder of this section 
is to extend the construction of this degree
to the situation after base change
with an arbitrary (not necessarily perfect)
field $K$.

Let $k \to K$ be an arbitrary (not necessarily perfect)
field extension. Our general rule
is that any object over $K$ obtained by base change $\otimes_k K$
from a corresponding object over $k$ is decorated with the index $K$. 
For example, we set $X_K = X \otimes_k K$, $D_K = D \otimes_k K \in \Pic(X_K)(K)$ and
\[
  \ev_K = \ev_{X_K,D_K} = \ev \otimes_k K
	\colon 
	\Mbar_{0,n}(X_K,D_K) \to X_K^n.
\]
Over the dense open subset $U_K = U \otimes_{k_0} k \subset X_K^n$,
the map $\ev(U_K) := \ev_K\vert_{\ev_K^{-1}(U_K)} = \ev(U) \otimes k$
can be oriented by the pull back of the double point orientation
(note that by \cite[Tag 068E,08QL]{stacks-project}, 
there is a canonical isomorphism $\omega_{\ev(U_K) } \cong \omega_{\ev(U)} \otimes_{k} K$). 

Given $A \in \Et_n(K)$, 
we can study a twisted  evaluation map
\[
  \ev^A = \ev_K^A: \Mbar_{0,n}(X_K,D_K)^A \to (X_K^n)^A.
\] 
defined in \cite[Section 5]{degree}. 
The twist of a map oriented over a dense open subset of the base is likewise oriented over a dense open subset of the base. We denote by $U^A$ a dense open subset of $(X_K^n)^A$ such that 
\[
\ev(U^A):=\ev^A\vert_{\ev^{A, -1}(U^A)}
\colon \ev^{A, -1}(U^A) \to U^A
\] 
is oriented by the double point orientation.

\begin{lemma}\label{lem:k0kA-Q}
The canonical map $\WG(K) \to \sWG((X_K^n)^A) \to \sWG(U^A)$ is injective
and the $\A^1$-degree $\deg (ev(U^A)) \in \sWG(U^A)$ lies in the image of this map.
\end{lemma}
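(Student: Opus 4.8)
The plan is to reduce both assertions to statements about the Witt--Grothendieck sheaf on the base of an étale map, using that $\ev(U^A)$ is finite étale over a dense open subset of its target and that such targets behave well with respect to restriction. First I would recall the general principle, already used in the excerpt for the untwisted map $\ev(U)$, that the $\A^1$-degree of an oriented finite étale map $f \colon Y \to V$ with $V$ smooth integral over $K$ lies in $\sWG(V)$, and that $\sWG$ is an unramified sheaf in the sense of \cite{degree}, so the restriction map $\sWG(V) \to \sWG(V')$ to any dense open $V' \subseteq V$ is injective. Applying this to a smooth dense open $V \subseteq U^A$ identifies $\deg(\ev(U^A))$ with a well-defined element independent of shrinking; combined with the canonical map $\WG(K) = \sWG(\Spec K) \to \sWG(V)$ obtained from the structure morphism $V \to \Spec K$, this reduces the injectivity claim to the injectivity of $\WG(K) \to \sWG(V)$ for $V$ smooth integral over $K$, which is standard (the composite with evaluation at a $K$-rational or, after a further finite separable base change, at a closed point recovers the identity on $\WG$; see \cite[Sections 2.3 and 2.4]{degree}).

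Next I would address the claim that $\deg(\ev(U^A))$ actually lies in the image of $\WG(K)$. The key input is that the twisted evaluation map $\ev^A$ is obtained from the untwisted one by the base-change/twist construction of \cite[Section 5]{degree}, and that the $\A^1$-degree is compatible with this twisting. Concretely, $\ev(U^A)$ is a twisted form of $\ev(U_K)$ along the étale $K$-algebra $A$, and the degree of a twisted oriented étale map is the trace (transfer) along $A/K$ of the degree of the original map, which over $U_K$ already lies in the image of $\WG(K)$ by the untwisted case discussed in the excerpt. Since $\WG(K)$ is closed under the relevant transfer operations (Scharlau transfers of forms over finite étale algebras land back in $\WG(K)$), the image stays inside $\WG(K)$. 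More precisely I would invoke \cite[Section 5]{degree} for the statement that $\deg(\ev^A)$ is the image under $\WG(K) \to \sWG((X_K^n)^A)$ of the transfer along $A$ of $\deg(\ev_K)$, and then restrict to $U^A$; the element $\deg(\ev_K)$ over $X_K^n$ is the pullback of $\deg(\ev(U)) \in \WG(k)$ base-changed to $K$, hence lies in the image of $\WG(k) \to \WG(K)$.

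The main obstacle I expect is purely bookkeeping: making sure the orientations match up correctly under base change and twisting, so that the $\A^1$-degree is genuinely defined and genuinely equals the transfer of the untwisted degree, rather than merely being conjugate to it up to a square. This requires tracking the double point orientation through $\otimes_k K$ (using the canonical isomorphism $\omega_{\ev(U_K)} \cong \omega_{\ev(U)} \otimes_k K$ noted in the excerpt, together with \cite[Tags 068E, 08QL]{stacks-project}) and through the twist by $A$, where the orientation on $\ev(U^A)$ is pulled back from the double point orientation as recalled just before the lemma. Once one knows the orientations are compatible, the degree computation is functorial and the transfer formula from \cite[Section 5]{degree} applies verbatim. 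I would therefore structure the proof as: (i) cite unramifiedness of $\sWG$ to get injectivity of restriction to a smooth dense open $V \subseteq U^A$; (ii) cite injectivity of $\WG(K) \to \sWG(V)$ for $V$ smooth integral over $K$; (iii) cite the twist formula of \cite[Section 5]{degree} to write $\deg(\ev^A)$ as the transfer along $A$ of the base change of $\deg(\ev(U)) \in \WG(k)$, which lies in $\WG(K)$; (iv) restrict to $U^A$ and conclude.
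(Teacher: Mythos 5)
Your proposal diverges from the paper's proof in both halves, and in both places there are genuine gaps.

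For the injectivity claim, the paper's key input is that $(X_K^n)^A$ is $\A^1$-connected: $X$ is assumed $\A^1$-connected, this passes to $X_K$, and \cite[Proposition 2.37]{degree} shows $\A^1$-connectedness survives the twist; then \cite[Proposition 2.26]{degree} gives that $\WG(K) \to \sWG((X_K^n)^A)$ is in fact an \emph{isomorphism}, and unramifiedness of $\sWG$ finishes the injectivity. You replace this with the claim that $\WG(K) \to \sWG(V)$ is injective for $V$ smooth integral over $K$ \enquote{by evaluating at a rational point, or after a finite separable base change at a closed point.} That argument does not work as stated: $V$ need not have a $K$-rational point, and for a closed point of even degree the composite $\WG(K) \to \WG(k(p))$ can fail to be injective (Springer's theorem only gives injectivity for odd-degree extensions). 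The $\A^1$-connectedness of the twist is the essential structural fact you are missing, and without it the injectivity is not established.

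For the second claim, you invoke a \enquote{twist/transfer formula} purporting to express $\deg(\ev^A)$ as the Scharlau transfer along $A$ of the base change of $\deg(\ev(U))$. This is not how the twist enters: the twist replaces the \emph{base} $X_K^n$ by $(X_K^n)^A$, and the $\A^1$-degree of $\ev^A$ is an a priori non-constant section of $\sWG$ over an open of that new base; it is not directly the transfer of a section living over $X_K^n$. The paper's actual argument is a purity argument, and it splits into two cases. In characteristic $0$ it uses \cite[Theorem 4.9]{degree} to extend the double point orientation (hence the degree section) over an open $V \subset (X_K^n)^A$ with complement of codimension $\geq 2$, at which point unramifiedness forces the section to come from $\sWG((X_K^n)^A) = \WG(K)$. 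In positive characteristic this extension theorem is not available and the paper instead invokes the \enquote{lifting data} machinery of \cite[Assumption 2.16, Section 5.2]{degree}. Your proposal does not see this dichotomy at all, and the transfer identity you need is neither stated in the paper nor obviously true. So the plan would need substantial repair on both points.
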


\begin{proof}
Since $X$ is $\A^1$-connected over $k$, the surface $X_K$ is $\A^1$-connected over $K$.
By \cite[Proposition 2.37]{degree}, it follows that $(X_K^n)^A$ is $\A^1$-connected. 
This implies that 
$\WG(K) \to \sWG((X_K^n)^A)$ is an isomorphism, 
see for example \cite[Proposition 2.26]{degree}.
Since $\sWG$ is an unramified sheaf by \cite{PaninOjanguren}, 
any restriction map to a dense open is injective. 
Hence the composed map $\WG(K) \to \sWG(U^A)$ is injective. 

For the second part, suppose first that  $k$ is of characteristic $0$. Then by \cite[Theorem 4.9]{degree}, the double point orientation of $\ev(U)$ extends to an orientation of the pullback of $\ev$ to an open subscheme of $X^n$ whose complement has codimension $\geq 2$. Thus the orientation of $\ev(U_K)$ extends to an orientation of the pullback of $\ev_K$ to an open subscheme of $X_K^n$ whose complement has codimension $\geq 2$. It follows that the orientation of $\ev(U^A)$ extends to an orientation of $\ev^A$ to an open subscheme $V$ of $(X_K^n)^A$ whose complement has codimension $\geq 2$. Thus $\deg(U^A)$ extends to a unique section in $\sWG(V)$. Moreover, using again that $\sWG$ is unramified, the restriction map $\sWG((X_K^n)^A) \to \sWG(V)$ is an isomorphism. 
 This proves the lemma for $k$ characteristic $0$.

Now suppose $k$ is of positive characteristic greater than $3$. Since $\ev(U)$ is oriented, we say the map $\ev$ is oriented away from codimension $1$. Moreover, $\ev$ can be equipped with lifting data in the sense of \cite[Assumption 2.16]{degree} by \cite[Section 5.2]{degree}. 
Since this lifting data is compatible with base change and twisting, the same is true 
for $\ev_{K}$ and even $\ev^A$. Thus the degree of $\ev(U^A)$ is the restriction of a section
from $\WG((X_K^n)^A)$ by \cite[Section 2.4]{degree}. This finishes the proof. 
\end{proof}

\begin{defi}\label{def:quadGW}
  Let $X$ be a $\A^1$-connected del Pezzo surface 
	over a perfect field $k$ of characteristic not $2$ or $3$, 
	and let $D$ be an effective divisor class such that 
	$(X,D)$ satisfies Hypothesis~\ref{deghyp1-part}. 
	Let $k \to K$ be a field extension and $A \in \Et_n(K)$.
	The \emph{quadratic Gromov--Witten invariant} 
	$\widehat Q_{X,D,K}(A)\in \WG(K)$ 
	is the unique element in $\WG(K)$ whose image
	in $\sWG(U^A)$ agrees with the $\A^1$-degree of $\ev(U^A)$
	as constructed in \cref{lem:k0kA-Q}.
\end{defi}
  
	Hence, given $(X,D)$ and $k \to K$ as above, we have a map 
	$\widehat Q_{X,D,K} \colon \Et_n(K) \to \WG(K)$. 
  By \cite[Proposition 5.26]{Brugalle-WickelgrenABQ}, for any field extensions $K \to L$, we have a commutative diagram
 	\[\begin{tikzcd}
		{\Et_n(K)} && {\WG(K)} \\
		{\Et_n(L)} && {\WG(L).}
		\arrow["{\widehat Q_{X,D,K}}", from=1-1, to=1-3]
		\arrow["{\otimes_K L}"', from=1-1, to=2-1]
		\arrow["{\otimes_K L}", from=1-3, to=2-3]
		\arrow["{\widehat Q_{X,D,L}}"', from=2-1, to=2-3]
	\end{tikzcd}\] 
  We have thus constructed a natural transformation $\widehat Q_{X,D} \colon \Et_n \to \WG(K)$ 
	over $\Fields/k$. 
	We denote by $Q_{X,D,K}$ and $Q_{X,D}$ 
	the composition of $\widehat Q_{X,D,K}$ and $\widehat Q_{X,D}$,  respectively, with the natural transformation $\WG \to \W$ 
	given by the quotient maps $\WG(K) \to \W(K)$. 
  In summary, we have the following.

 \begin{theorem}\label{thmQuadraticWitt}\label{thmQuadraticWitt-nonperfect}
Suppose $X$ is an $\A^1$-connected del Pezzo surface over a perfect field $k$ of characteristic not $2$ or $3$. Suppose $D \in \Pic(X)(k)$ is an effective divisor class such that 
Hypothesis~\ref{deghyp1-part} is satisfied. Set $n = -K_X \cdot D -1 \geq 0$.
Then $Q_{X,D}$ is a Witt invariant of degree $n$ over $k$.
 \end{theorem}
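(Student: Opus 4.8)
The plan is to assemble the ingredients already in place. Fix $(X,D)$ and $n=-K_X\cdot D-1\ge 0$ as in the statement. By \cref{def:quadGW} together with \cref{lem:k0kA-Q}, for every field extension $k\to K$ in $\Fields/k$ (with $K$ not necessarily perfect) and every $A\in\Et_n(K)$, the element $\widehat Q_{X,D,K}(A)\in\WG(K)$ is well defined: the $\A^1$-degree $\deg(\ev(U^A))$ lies in the image of the injective restriction map $\WG(K)\to\sWG(U^A)$, and, $\sWG$ being an unramified sheaf, this image does not depend on the choice of the dense open $U^A\subset(X_K^n)^A$ over which $\ev^A$ is oriented by the double point orientation. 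First I would record that $\widehat Q_{X,D,K}(A)$ depends only on the isomorphism class of $A$: an isomorphism $A\cong A'$ of étale $K$-algebras induces an isomorphism of the twisted stacks $\Mbar_{0,n}(X_K,D_K)^A\cong\Mbar_{0,n}(X_K,D_K)^{A'}$ lying over a corresponding isomorphism $(X_K^n)^A\cong(X_K^n)^{A'}$, compatible with the twisted evaluation maps and with their double point orientations, so the two $\A^1$-degrees coincide. (In the degenerate case where $D$ is a positive multiple of a $(-1)$-curve one may take $U^A$ with $\ev^{A,-1}(U^A)=\emptyset$, so the invariant is $0$ and the assertions below are vacuous.) We thus obtain, for each $K\in\Fields/k$, a map $\widehat Q_{X,D,K}\colon\Et_n(K)\to\WG(K)$.

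Next I would verify naturality with respect to a field extension $K\to L$ over $k$. The twisted evaluation map $\ev^{A\otimes_K L}$ is the base change of $\ev^A$ along $\Spec L\to\Spec K$, and the double point orientation, the lifting data in the sense of \cite[Assumption 2.16]{degree}, and (in characteristic $0$) the extension of the orientation across a locus of codimension $\ge 2$ are all stable under this base change --- this is exactly the compatibility already invoked in the proof of \cref{lem:k0kA-Q}. Since the $\A^1$-degree of an oriented map is stable under base change, one gets $\widehat Q_{X,D,L}(A\otimes_K L)=\widehat Q_{X,D,K}(A)\otimes_K L$ in $\WG(L)$, which is the content of \cite[Proposition 5.26]{Brugalle-WickelgrenABQ} recalled above. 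Hence $\widehat Q_{X,D}$ is a morphism of functors $\Et_{n,k}\to\WG$ over $\Fields/k$.

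Finally, composing with the natural transformation $\WG\to\W$ whose components $\WG(K)\to\W(K)$ are the (functorial) quotient ring homomorphisms produces a morphism of functors $Q_{X,D}\colon\Et_{n,k}\to\W_k$, that is, an element of $\Inv_k(n)$: a Witt invariant of degree $n$ over $k$. The only genuinely delicate input is \cref{lem:k0kA-Q}, whose proof already treats the two regimes separately (characteristic $0$ via the codimension-$2$ extension of the orientation from \cite[Theorem 4.9]{degree}, and positive characteristic $>3$ via the lifting data of \cite[Section 5.2]{degree}); once that lemma and the base-change compatibility of the construction are granted, the theorem follows formally, and I do not expect any further obstacle.
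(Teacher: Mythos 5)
Your proposal is correct and takes essentially the same route as the paper: the theorem is a summary of the preceding construction, with well-definedness supplied by \cref{lem:k0kA-Q}, naturality under base change supplied by \cite[Proposition 5.26]{Brugalle-WickelgrenABQ}, and the final step being post-composition with the quotient $\WG \to \W$. The only addition you make — checking dependence only on the isomorphism class of $A$ — is a harmless extra remark that the paper leaves implicit.
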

 
 For convenience, we give a more explicit proof under more restrictive hypotheses via the enumerative interpretation of $Q_{X,D}$ in \cref{WittInvarianceQuadratic}.

\begin{remark} \label{remChangePerfectField}
  We finish by mentioning that the value $\widehat Q_{X, D,K}(A)$ only depends on the tuple $(X_K,D_K)$.
	More precisely, let $k' \to K$ another perfect subfield of $K$
	and $(X', D')$ a del Pezzo surface and divisor as above
	such that $(X'_K, D'_K) \cong (X_K, D_K)$. 
	Then $\widehat Q_{X, D, K}(A) = \widehat Q_{X', D', K}(A)$.
This follows from the fact that 
the double point locus and discriminant are stable under base change, and hence the base change 
of the double point orientation is itself defined by a composition 
\[
(\det d\ev(U_K)) \circ \disc_{\pi_K}:  (\det (\pi_K)_* \cO_{\mathcal{D}_K})^{2} \to \cO_{\ev_K^{-1}(U_K)}.
\] 
Since this latter map can be defined without reference to $k$, 
it agrees on the overlap with the corresponding map obtained
from $k'$ over some $U'_K$. 
Thus the orientations in codimension $1$ of $\ev_K$ as well as the twist 
$\ev^A$ do not depend on this choice. 
It the follows from \cref{lem:k0kA-Q} that also the degree as element in $\WG(K)$
does not depend on the choice. 
\end{remark}

\subsection{Specialization in mixed characteristic}\label{sec:sp mixed qGW}

Next we prove that quadratic Gromov--Witten invariants are unramified Witt invariants under appropriate hypotheses. We refer to Definition~\ref{def:unramified_inv} for the meaning of an unramified Witt invariant.
The next proposition is closely related to how the quadratic Gromov--Witten invariants in positive characteristic are constructed in \cite{degree}.

\begin{proposition}\label{propQuadraticUnramified} 
  Let $X_0$ be a del Pezzo surface over a mixed characteristic complete discrete valuation ring
	$R_0$  with fraction field $K_0$ and perfect residue field $\kappa_0$. Let $D \in (\Pic X_0)(R_0)$ be a relative divisor class. Suppose that the characteristic of $\kappa_0$ is not $2$ or $3$,  $X_{\kappa_0}$ is $\kappa_0$-rational,  
	$( X_{\kappa_0}, D_{\kappa_0})$ satisfies Hypothesis~\ref{deghyp1-part}, and   $X_{K_0}$ is $\A^1$-connected. Let $R_0 \to R $ be a map of complete discrete valuation rings. Let $\kappa$ and $K$ denote the residue field and fraction field of $R$ respectively.
	Then $( X_K, D_K)$ satisfies Hypothesis~\ref{deghyp1-part}
	 and moreover the diagram
\[\begin{tikzcd}
	{\Et_n(K)} & {\Et_n(R)} & {\Et_n(\kappa)} \\
	{W(K)} & {W(R)} & {W(\kappa)}
	\arrow["{Q_{X,D,K}}"', from=1-1, to=2-1]
	\arrow[from=1-2, to=1-1]
	\arrow["\cong"{marking, allow upside down}, draw=none, from=1-2, to=1-3]
	\arrow["{Q_{X,D,\kappa}}", from=1-3, to=2-3]
	\arrow[from=2-2, to=2-1]
	\arrow["\cong"{marking, allow upside down}, draw=none, from=2-2, to=2-3]
\end{tikzcd}\]
commutes. 
\end{proposition}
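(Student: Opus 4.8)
The plan is to reduce the statement to the constructions of \cite{degree,KLSW-relor} performed over the ring $R$ itself, so that the quadratic Gromov--Witten invariants over $K$ and over $\kappa$ appear as the generic and special fibres of a single class in $\WG(R)$.

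First I would dispose of the claim that $(X_K,D_K)$ satisfies \cref{deghyp1-part}. For the smooth proper del Pezzo family $X_0\to\Spec R_0$ with relative divisor class $D$, the integers $K_{X_0/R_0}^2$ and $-K_{X_0/R_0}\cdot D$ are locally constant, hence constant on the connected scheme $\Spec R_0$; thus the degree of $X_K$ and the number $n=-K_{X_K}\cdot D_K-1$ coincide with those of $X_{\kappa_0}$, and \cref{deghyp1-part} refers only to these two numbers. In particular $(X_{K_0},D_{K_0})$ satisfies the hypothesis as well, so $\widehat Q_{X,D,K}$ is defined and, by \cref{remChangePerfectField}, equals the invariant obtained by base change from the perfect field $K_0$; likewise $\widehat Q_{X,D,\kappa}$ is the base change from $\kappa_0$, which is legitimate because $X_{\kappa_0}$, being $\kappa_0$-rational, is $\A^1$-connected.

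For the diagram I would fix $A\in\Et_n(R)$, with generic fibre $A_K$ and special fibre $A_\kappa$ (recall $\Et_n(R)\to\Et_n(\kappa)$ is bijective since $R$ is complete), and set $X_R=X_0\otimes_{R_0}R$, $D_R=D\otimes_{R_0}R$. Over $R_0$ one already has the moduli stack $\Mbar_{0,n}(X_0,D)$ (cf.\ \cite{Abramovich--Oort-mixed_char}), its evaluation map $\ev$, the open locus $U_0\subseteq X_0^n$ on which $\ev$ is étale (dense by flatness, since it is dense in both fibres by the field-case results of \cite{KLSW-relor} applied over the $\A^1$-connected surfaces $X_{K_0}$ and $X_{\kappa_0}$), the double point locus with its finite étale map $\pi$, and the double point orientation of $\ev(U_0)$. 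Base changing to $R$ and then twisting by $A$ produces, compatibly with the further base changes to $K$ and to $\kappa$ (by \cref{remChangePerfectField} and \cite[Tags 068E, 08QL]{stacks-project}), an oriented map $\ev(U^A)$ over a dense open $U^A$ of $Y:=(X_R^n)^A$, whose $\A^1$-degree $\deg(\ev(U^A))\in\sWG(U^A)$ restricts on the two fibres to $\deg(\ev(U^A_K))$ and $\deg(\ev(U^A_\kappa))$; by \cref{lem:k0kA-Q}, applied to $K_0\to K$ and to $\kappa_0\to\kappa$, these are the images of $\widehat Q_{X,D,K}(A_K)\in\WG(K)$ and of $\widehat Q_{X,D,\kappa}(A_\kappa)\in\WG(\kappa)$.

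The crux is then to show that $\deg(\ev(U^A))$ comes from a single class $q_R\in\WG(R)$. As in the proof of \cref{lem:k0kA-Q}, the double point orientation extends over the codimension $1$ points of $Y$ (by \cite[Theorem 4.9]{degree} in equal characteristic $0$, and via the lifting data of \cite[Section 5.2]{degree}, compatible with base change and twisting, in general), so, $\sWG$ being unramified by \cite{PaninOjanguren}, the class $\deg(\ev(U^A))$ is the restriction of a unique $\widehat q\in\sWG(Y)$. Now $Y$ is smooth and proper over $R$, and its fibres $Y_\kappa=(X_\kappa^n)^{A_\kappa}$ and $Y_K=(X_K^n)^{A_K}$ are $\A^1$-connected (the former because $X_\kappa$ is $\kappa$-rational, $\kappa_0$-rationality being inherited by the extension $\kappa$, and the latter because $X_K$ is $\A^1$-connected over $K$ as in the proof of \cref{lem:k0kA-Q}; one applies \cite[Proposition 2.37]{degree} in both cases); moreover $Y$ has an $R$-point, since $X_R\to\Spec R$ is smooth and $X_\kappa$, being $\kappa$-rational, has a $\kappa$-point, which lifts to $X_R(R)$ as $R$ is complete, and the corresponding diagonal point lies in $(X_R^n)^A(R)=Y(R)$. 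Pulling back $\widehat q$ along such a section gives $q_R\in\WG(R)\subseteq\sWG(Y)$; since $\widehat q$ and $q_R$ agree at a point of the $\A^1$-connected fibre $Y_K$, on which $\sWG(Y_K)\cong\WG(K)$ by \cite[Proposition 2.26]{degree}, they agree on $Y_K$, hence on all of $Y$ because $Y_K$ is dense and $\sWG$ is unramified. Therefore $\widehat q=q_R\in\WG(R)$, and restriction to the two fibres gives $q_R\otimes_R K=\widehat Q_{X,D,K}(A_K)$ and $q_R\otimes_R\kappa=\widehat Q_{X,D,\kappa}(A_\kappa)$; passing to the Witt ring and invoking the isomorphism $\W(R)\cong\W(\kappa)$ valid for complete $R$ (see \cite[VI.1.5]{lam05}) yields the commutativity of the displayed square. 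The step I expect to be genuinely delicate is this last one: the ingredients (the double point orientation, and in mixed characteristic the lifting data of \cite{degree}) are available in \cite{degree,KLSW-relor}, but were set up fibrewise over a field, so one must check that they assemble into a single construction over the discrete valuation ring $R$ that is compatible with both the generic and the special fibre.
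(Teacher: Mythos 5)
Your overall strategy differs from the paper's and contains a genuine gap at the step you yourself flag as delicate.

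Both arguments start the same way: the constancy of the intersection numbers gives Hypothesis~\ref{deghyp1-part} for $(X_K,D_K)$, and one works with the moduli stack, evaluation map, open locus $\U_0$ and double point orientation over $R_0$, then base changes to $R$ and twists by $A\in\Et_n(R)$ to get an oriented map $\ev^A:M\to\U$ with a degree class $\deg(\ev^A)\in\sWG(\U)$ (the paper cites \cite[Theorem~9.15]{KLSW-relor} and \cite[Section~5.2]{degree} for this). But from here the two proofs diverge. You try to promote $\deg(\ev^A)$ from $\sWG(\U)$ to a section $\widehat q\in\sWG(Y)$ over the entire twisted product $Y=(X_R^n)^A$, and then to identify $\widehat q$ with a constant class $q_R\in\WG(R)$ by evaluating at the image of an $R$-rational point and invoking $\A^1$-connectedness of both fibres. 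The paper instead never extends the degree past $\U$: it chooses a closed point $p$ of the dense open $\U_\kappa$ with $k(p)/\kappa$ finite of \emph{odd} degree, lifts $p$ by formal smoothness and completeness to a map $q:\Spec S\to\U$ into a complete DVR extension $S$ of $R$ with fraction field $L$, and then reads off the result from a commutative diagram using the injectivity of $\WG(K)\to\WG(L)$ for odd-degree extensions (\cite[Theorem~VII.2.7]{lam05}) together with $\WG(R)\cong\WG(\kappa)$, $\WG(S)\cong\WG(k(p))$.

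The concrete gap in your argument is the assertion that $\deg(\ev(U^A))\in\sWG(U^A)$ \enquote{is the restriction of a unique $\widehat q\in\sWG(Y)$}. Unramifiedness of $\sWG$ gives you \emph{uniqueness} of such an extension across a dense open, not its \emph{existence}. Existence would require either that $Y\setminus U^A$ have codimension at least $2$ in $Y$, or a lifting-data argument over the DVR $R$. Neither is available off the shelf: in the paper's setting only $\U_{K_0}$ has codimension $\geq 2$ complement, while $\U_{\kappa_0}$ is merely dense, so $\U$ can have codimension $1$ complement in $Y$; and the lifting data of \cite[Section~5.2]{degree} is formulated over a field, not over a mixed-characteristic DVR. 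You correctly identify this as the delicate point, but the paper's proof is designed precisely to avoid having to settle it: it needs only the degree over $\U$ and a single well-chosen section $\Spec S\to\U$, and it replaces the appeal to $\A^1$-connectedness with the elementary injectivity of $\WG$ under odd-degree extensions. As a secondary remark, the paper is also more careful in producing the required point: it constructs a closed point of $\U_\kappa$ of odd degree by an argument akin to \cite[Proposition~2.5]{Brugalle-WickelgrenABQ}, rather than assuming a $\kappa$-rational point of the dense open exists, which over a finite residue field is not automatic.
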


\begin{proof}
Since $X_{\kappa}$ is the basechange of $X_{\kappa_0}$, we have that $(X_{\kappa},D_{\kappa})$ satisfies Hypothesis~\ref{deghyp1-part}. Because the intersection products $\deg(-K_X \cdot D), \deg(K_X \cdot K_X) \in \CH^0(\Spec R)$ are locally constant \cite[B18]{Kleinman_The_Picard_Scheme}, they are integers. Thus Hypothesis~\ref{deghyp1-part} is satisfied for $(X_K, D_K)$ because it is for $( X_\kappa, D_\kappa)$. See also \cite[Lemma 9.4]{KLSW-relor}.

 The evaluation map
\[
\ev_0: \Mbar_{0,n}(X_0, D_0) \to X_0^n
\] can be pulled back to an open subscheme $\U_0 \subset X_0^n$
such that $\U_{K_0} \subset X_{K_0}^n$ has complement of codimension at least $2$, 
$\U_0 \otimes \kappa_0 \subset X^n_{\kappa_0}$ is dense and the evaluation map carries a \enquote{double point} orientation, see \cite[Theorem 9.15 (2), Theorem 9.15 (4)]{KLSW-relor}
whose induced orientations on the special and general fibers
coincide with the double point orientations from \cref{sec:constr quadGW}.

Let $A \in \Et_n(\kappa)$. We may view $A$ as a finite \'etale extension of $R$ as well as a finite \'etale extension of $\kappa$ by the bijection $\Et_n(R) \cong \Et_n(\kappa)$. 
We use the shorthands $Y$, $Y_K$ and $Y_\kappa$ for the twists
$(X^n)^A$, $(X^n)^A_K = (X_K^n)^{A_K}$ and $(X^n)^A_\kappa = (X_\kappa^n)^A$, 
respectively. 
The pullback of $\ev_0$ to $R$ given by
\[
\ev: \Mbar_{0,n}(X, D) \to X^n
\] 
can be twisted by $A$. The open subset $U_0$ gives rise to an open subset $\U \subset Y$ such that  $\U_K \subset Y_K$ has complement of codimension at least $2$, 
$\U_\kappa \subset Y_\kappa$ is dense and the twisted evaluation map 
\[
\ev^A \colon M \to \U
\] 
carries the double point orientation
whose induced orientations on $\ev^A_K \colon M_K \to \U_K$ and $\ev^A_\kappa \colon M_\kappa \to \U_\kappa$
coincide with the double point orientations from \cref{sec:constr quadGW}.
Moreover, there is a well-defined $\A^1$-degree $\deg(\ev^A) \in \sWG(\U)$
by \cite[Section 5.2]{degree}.
Since $X_\kappa$ is $\kappa$-rational and hence $\A^1$-connected, 
using \cite[Proposition 2.21]{degree} we can assume that the associated
degrees $\deg(\ev^A_K) \in \sWG(\U_K)$ and $\deg(\ev^A_\kappa) \in \sWG(\U_\kappa)$
lift to $\sWG(Y_K)$ and $\sWG(Y_\kappa)$
where they are equal to the images of $\widehat Q_{X_K, D_K}(A_K) \in \WG(K)$ and 
$\widehat Q_{X_\kappa, D_\kappa}(A) \in \WG(\kappa)$, respectively.

Since $\U_\kappa$ is a dense open subset of the special fiber which is a rational surface, we can choose a point $p$ of $\U_{\kappa}$ with $\kappa \subseteq k(p)$ finite and odd degree, so that the degree is not a multiple of the characteristic of $\kappa$, by an argument similar to \cite[Proposition 2.5]{Brugalle-WickelgrenABQ}.
The extension $\kappa \subseteq k(p)$ is a separable because its degree is not a multiple of the characteristic of $\kappa$. 
Since  $\Et_n(R) \cong \Et_n(\kappa)$, the extension $\kappa \subseteq k(p)$ corresponds to a finite \'etale extension $R \subseteq S$. Let $t$ denote a uniformizer of $R$ (which is also a uniformizer of $S$). Since $Y = (X^n)^A \to \Spec R$ is smooth, it is formally smooth at $p$.
 By formal smoothness, 
starting with $q_0 = p \colon \Spec S/(t) = \Spec k(p) \to Y$,
any solid diagram \eqref{eq:formal_smoothness} 
\begin{equation}\label{eq:formal_smoothness}
\begin{tikzcd}
\Spec S/(t^{n}) \arrow[r,"q_n"] \arrow[d] &  Y \arrow[d] \\
\Spec S/(t^{n+1}) \arrow[r] \arrow[ru, dashed] & \Spec R
\end{tikzcd}
\end{equation} 
admits a lift $q_{n+1}:\Spec S/(t^{n+1}) \to Y$.
Since $R$ is complete, so is $S$. 
The maps $q_n$ thus determine a map $q: \Spec S \to Y$ such that $\Spec S \to Y \to \Spec R$ corresponds to the chosen extension $R \subseteq S$. Since $q_{k(p)} = p \in \U_\kappa$,
we have $q \in \U$.   
Let $L$ denote the fraction field of $S$ and $\tilde{p} = q_L \colon \Spec L \to Y_K$.
We can summarize the construction in the following commutative diagram:
\[
\begin{tikzcd}
	 {\WG(L)} & {\WG(S)}&{\WG(k(p))}\\
	{\sWG(\U_K)} & {\sWG (\U)} & {\sWG (\U_\kappa)} \\
	{\WG(K)} & {\WG(R)} & {\WG(\kappa)}
	\arrow["\tilde{p}",from=2-1, to=1-1]
	\arrow["q",from=2-2, to=1-2]
	\arrow["p",from=2-3, to=1-3]
	\arrow[from=1-2, to=1-1]
	\arrow["\cong",from=1-2, to=1-3]
	\arrow[from=3-1, to=2-1]
	\arrow[from=2-2, to=2-1]
	\arrow[from=2-2, to=2-3]
	\arrow[from=3-3, to=2-3]
	\arrow[from=3-2, to=3-1]
	\arrow[from=3-2, to=2-2]
	\arrow["\cong", from=3-2, to=3-3]
\end{tikzcd}
\begin{tikzcd}
	 {\bullet} & {\bullet}&{\bullet}\\
	{\deg(\ev^A_K)} & {\deg(\ev^A)} & {\deg(\ev^A_\kappa)} \\
	{\widehat Q_{X_K, D_K}(A_K)} & {} & {\widehat Q_{X_\kappa, D_\kappa}(A)}
	\arrow[maps to, from=2-1, to=1-1]
	\arrow[maps to,from=2-2, to=1-2]
	\arrow[maps to,from=2-3, to=1-3]
	\arrow[maps to,from=1-2, to=1-1]
	\arrow[leftrightarrow,from=1-2, to=1-3]
	\arrow[maps to,from=3-1, to=2-1]
	\arrow[maps to,from=2-2, to=2-1]
	\arrow[maps to,from=2-2, to=2-3]
	\arrow[maps to,from=3-3, to=2-3]
	\arrow[dashed, maps to,from=3-3, to=3-1]
\end{tikzcd}
\] 
Since the extension $K \subset L$ 
is of odd degree, the vertical composition 
$\WG(K) \to \WG(L)$
is injective, see  \cite[Theorem VII.2.7]{lam05}.
Hence the claim follows. 
\end{proof}

Let $S$ denote a finite set of primes containing $2,3$ and let $\sigma$ denote their product. 
Let $X \to \Spec \Z[1/\sigma]$ be a smooth, proper relative del Pezzo surface.
Let $D$ be an effective relative divisor class of $X$. 
As argued in \cref{propQuadraticUnramified}, the 
intersection numbers $K_X^2$ and $n = -K_X \cdot D -1$
are constant and hence integers. 
Suppose that Hypothesis~\ref{deghyp1-part} is satisfied. 
Suppose that for each prime field $k \in \PP \setminus S$
the surface $X_k$ is $k$-rational.
Examples are $\P^2_\Z$ and 
$\P^1_\Z \times \P^1_\Z$ (one may also consider
the blow-up of $\P^2_\Z$ in a subset of the points
$(1:0:0)$, $(0:1:0)$, $(0:0:1)$ and $(1:1:1)$).
Then, according to \cref{thmQuadraticWitt-nonperfect},
we have a Witt invariant $Q_{X,D,k} \in \Inv_k(n)$  for each
$k \in \PP \setminus S$ which we can package together
as a single Witt invariant $Q_{X,D} \in \Inv_S(n)$. 
 
  \begin{theorem}\label{thm:QXDunramifiedWittawayS}
  In the above situation, the Witt invariant $Q_{X,D} \in \Inv_S(n)$
	is unramified. 
  \end{theorem}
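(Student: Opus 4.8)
The plan is to deduce the theorem from \cref{propQuadraticUnramified}, which already carries out the substantive mixed-characteristic specialization argument; the remaining task is to reduce to the situation where that proposition applies and to supply the correct base ring $R_0$.

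By \cref{def:unramified_inv} we must show that for every complete discrete valuation ring $R$, with fraction field $K$ and residue field $\kappa \in \Fields_S$, the square involving $Q_{X,D,K}$ and $Q_{X,D,\kappa}$ commutes. By \cref{remChangePerfectField} these maps are the quadratic Gromov--Witten invariants $Q_{X_K,D_K}$ and $Q_{X_\kappa,D_\kappa}$ of the base changes of $X$ (note that $\kappa$ need not be perfect, but these invariants are defined over all of $\Fields_S$ by \cref{thmQuadraticWitt-nonperfect}). If $K$ and $\kappa$ have the same characteristic, the square commutes by \cref{prop:unramified-equichar}. So it remains to treat complete discrete valuation rings $R$ of mixed characteristic $(0,p)$; since $\kappa \in \Fields_S$ we have $p \notin S$, and in particular $p \neq 2,3$.

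For such $R$ I would invoke \cref{propQuadraticUnramified} with $R_0 = \Z_p$ (so $K_0 = \Q_p$ and $\kappa_0 = \FF_p$), $X_0 = X_{\Z_p}$, $D$ the induced relative divisor class, and the canonical local homomorphism $\Z_p \to R$ (which exists because every prime $q \neq p$ is a unit in the local ring $R$, whence $\Z_{(p)} \to R$, and $R$ is complete). The hypotheses of \cref{propQuadraticUnramified} are satisfied: $\kappa_0 = \FF_p$ is perfect of characteristic $p \notin \{2,3\}$; $X_{\FF_p}$ is $\FF_p$-rational by assumption; $(X_{\FF_p},D_{\FF_p})$ satisfies \cref{deghyp1-part} since $K_X^2$ and $n = -K_X\cdot D - 1$ are integers, constant over $\Spec\Z[1/\sigma]$, hence unchanged by base change; and $X_{\Q_p}$ is $\A^1$-connected. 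For the last point, $X_\Q$ is $\Q$-rational by hypothesis (as $\Q \in \PP\setminus S$), so $X_{\Q_p} = X_\Q\otimes_\Q\Q_p$ is $\Q_p$-rational, and a smooth proper $k$-rational surface is $\A^1$-connected since $\A^1$-connectedness is a birational invariant of smooth proper varieties.

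With these hypotheses verified, \cref{propQuadraticUnramified} applied to $\Z_p \to R$ states exactly that $(X_K,D_K)$ satisfies \cref{deghyp1-part} and that the diagram of \cref{def:unramified_inv} commutes for $R$. As $R$ was an arbitrary complete discrete valuation ring with residue field in $\Fields_S$, this proves that $Q_{X,D}\in\Inv_S(n)$ is unramified. The only step that is not pure bookkeeping is the $\A^1$-connectedness of the generic fibre $X_{\Q_p}$, obtained by descending rationality from $X_\Q$; all the genuine input --- the behaviour of the $\A^1$-degree of the oriented twisted evaluation map under the specialization $\Z_p\to R$, together with the lifting data of \cite{degree} --- is already contained in \cref{propQuadraticUnramified}, so beyond that the proof is formal.
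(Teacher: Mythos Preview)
Your proof is correct and follows essentially the same approach as the paper: reduce to mixed characteristic via \cref{prop:unramified-equichar}, then apply \cref{propQuadraticUnramified} with $R_0=\Z_p$ (the paper describes $R_0$ as the completion of $\Z[1/\sigma]$ at the pullback of the maximal ideal of $R$, which is exactly $\Z_p$). Your version is in fact more thorough, as you explicitly verify the hypotheses of \cref{propQuadraticUnramified}, which the paper leaves implicit.
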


\begin{proof}
In equal characteristic, this follows from Proposition~\ref{prop:unramified-equichar}. 
So let us assume $R$ is as in Definition~\ref{def:unramified_inv}
and of mixed characteristic. 
Since $\sigma$ is invertible in $R$, there is a unique map $\Z[1/\sigma] \to R$. 
Let $R_0$ be the completion of $\Z[1/\sigma]$ at the inverse image of the maximal ideal of $R$. 
Let $K_0$ and $\kappa_0$ be the fraction field and residue field of $R_0$ respectively. 
Then $\kappa_0$ is a prime field and hence perfect. The claim follows from Proposition~\ref{propQuadraticUnramified}.
\end{proof}

\begin{remark}
Results analogous to Theorem~\ref{thm:QXDunramifiedWittawayS} hold for $X$ smooth and proper over a Dedekind domain, but we omit the statement for now to simplify.
\end{remark}

\begin{example} 
  The combination of \cref{thmQuadraticWitt}, 
	\cref{thmUnramifiedQuasiIntegral} and
	\cref{propQuadraticUnramified} shows that 
	for $d > 0$, $n = 3d-1$ 
	and $m = \lfloor n/2 \rfloor$,
	we have 
	$Q_{\P^2, d} = b_0 \beta_0^n + \dots + b_m \beta_m^n \in \Inv_{2,3}(n)$ with 
	$b_i \in \mathbb{Z}[\langle 2 \rangle, \langle 3 \rangle]$.
	In fact, in \cref{thm:dP6} we show that $Q_{\P^2, d}$ is even $\beta$-integral, that
	is, $b_0, \dots, b_m \in \Z$. 
\end{example}

\subsection{Quadratic Gromov--Witten and Welschinger--Witt invariants}\label{sec:conj}

Let $(X,D)$ be a tuple over a perfect field $k$
and let $k \to K$ be a field extension
as in \cref{def:quadGW}. 
We also have a Gromov--Witten invariant $\GW_{X_K}(D_K) \in \N$ which can be defined as the (ordinary)
degree of the  evaluation map $\overline{M}_{0,n}(X_K,D_K) \to X_K^n$.
See \cite{BM-StacksStableMaps,Beh-GromovWittenInvariants,BLRT}.

\begin{lemma} \label{corRankGromovWitten}
  For any $A \in \Et_n(K)$, the rank of $\widehat Q_{X,D,K}(A)$ is equal to the corresponding
	Gromov--Witten invariant $\GW_{X_K}(D_K) \in \N$.
\end{lemma}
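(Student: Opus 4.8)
The plan is to reduce the statement to the fact that the $\A^1$-degree of a finite map refines its ordinary degree, together with the compatibility of twisting and base change already used to define $\widehat Q_{X,D,K}(A)$. First I would recall that for the untwisted evaluation map, over the dense open $U \subseteq X_K^n$ where $\ev(U_K)$ is finite \'etale, the $\A^1$-degree $\deg(\ev(U_K)) \in \sWG(U_K)$ has rank equal to the ordinary degree of $\ev(U_K)$, which is $\GW_{X_K}(D_K)$ by the very definition of the Gromov--Witten invariant as $\deg(\overline{M}_{0,n}(X_K,D_K) \to X_K^n)$. This is the content of the local-to-global construction in \cite[Section 2]{degree}: the rank map $\sWG \to \underline{\Z}$ sends the $\A^1$-degree to the ordinary (unoriented) degree of the finite \'etale map, and ranks are locally constant so they may be computed on $U_K$.

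Next I would pass to the twisted evaluation map $\ev^A$. The key point is that twisting by a finite \'etale $K$-algebra $A$ of rank $n$ does not change the ordinary degree of the evaluation map: $(X_K^n)^A \to \Spec K$ and $X_K^n \to \Spec K$ have the same relative dimension, and over a dense open $U^A$ the map $\ev(U^A)$ is again finite \'etale of the same degree as $\ev(U_K)$, since the twist is a form of $\ev(U_K)$ after a further finite separable base change (concretely, base change to a splitting field of $A$ recovers an $n$-fold self-twist, i.e.\ a permutation of the original factors, hence has the same degree). Therefore the rank of $\deg(\ev(U^A)) \in \sWG(U^A)$ is still $\GW_{X_K}(D_K)$. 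Finally, by \cref{lem:k0kA-Q} the element $\widehat Q_{X,D,K}(A) \in \WG(K)$ is the unique class whose image in $\sWG(U^A)$ is $\deg(\ev(U^A))$, and the rank homomorphism $\WG(K) \to \Z$ is compatible with the rank on $\sWG$ under the injection $\WG(K) \hookrightarrow \sWG(U^A)$; hence $\rk \widehat Q_{X,D,K}(A) = \GW_{X_K}(D_K)$.

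The main obstacle I anticipate is making precise that the ordinary degree is unchanged under the twist $A \mapsto (X_K^n)^A$ and $\ev \mapsto \ev^A$, i.e.\ that the Gromov--Witten count computed on the twisted space agrees with $\GW_{X_K}(D_K)$. This should follow from the construction of the twist in \cite[Section 5]{degree} — in particular \cite[Proposition 5.26]{Brugalle-WickelgrenABQ}, already invoked above for the base-change compatibility of $\widehat Q_{X,D}$ — because that compatibility, applied to a splitting field $L$ of $A$, identifies $\widehat Q_{X,D,L}(A \otimes_K L)$ with the untwisted invariant over $L$ (up to reindexing the $n$ points), whose rank is $\GW_{X_L}(D_L) = \GW_{X_K}(D_K)$; and rank is insensitive to the field extension $K \to L$. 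So in fact the cleanest route may be: compute the rank after base change to a splitting field, use that rank is a base-change invariant of elements of $\WG$, and conclude.
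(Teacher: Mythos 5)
Your final remark (``the cleanest route may be: compute the rank after base change to a splitting field, use that rank is a base-change invariant of elements of $\WG$, and conclude'') is essentially the paper's own proof: the paper reduces to the case $K$ algebraically closed, where the twist by $A \cong K^n$ disappears, $\rk \colon \QF(K) \to \N$ is an isomorphism, and the $\A^1$-degree of the evaluation map is literally the ordinary degree defining $\GW_{X_K}(D_K)$. The longer first part of your proposal --- tracking the rank through $\sWG(U^A)$ and arguing directly that the twisted evaluation map has the same ordinary degree as the untwisted one because the twist is a form that trivializes after a finite separable base change --- is correct in spirit, but it recreates by hand the invariance under base change that the paper invokes once up front; in particular, the claim ``the twist has the same ordinary degree'' is precisely a base-change-invariance-of-rank statement, so passing immediately to an algebraically closed (or splitting) field both shortens the argument and makes the comparison $\A^1$-degree $=$ ordinary degree a single citation to \cite[Subsection~2.4]{KLSW-relor} rather than a separate discussion of the twist construction.
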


\begin{proof}
  Since both $\rk(\widehat Q_{X,D,K}(A))$ and
	$\GW_{X_K}(D_K)$ are invariant under base change, 
	we may assume that $K$ is algebraically closed.
	In this case, the map $\rk \colon \QF(K) \to \N$ is an isomorphism
	and the $\A^1$-degree of the evaluation map used to define
	$\widehat Q_{X,D,K}(K^n)$ agrees with the ordinary degree 
	used to define $\GW_{X_K}(D_K)$, see
	\cite[Subsection 2.4]{KLSW-relor}.
\end{proof}

Since a quadratic form is determined by its rank and its class in the Witt ring, we obtain the following.
\begin{corollary}
	The quadratic Gromov--Witten invariant $\widehat Q_{X,D,k}$
	is completely determined by the Gromov--Witten invariant $\GW_X(D)$ and the 
	associated Witt invariant $Q_{X,D,k}$.
\end{corollary}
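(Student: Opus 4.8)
The proof of this corollary is essentially a direct application of the two preceding results, combined with a structural fact about quadratic forms over fields already recalled in the paper. The plan is as follows.

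First, I would invoke \cref{remWGvsW}: over a field $K \in \Fields$, two unimodular quadratic forms are isomorphic if and only if they have the same rank and the same class in $\W(K)$. Equivalently, the map $(\rk, [\_]) \colon \WG(K) \to \Z \times \W(K)$ is injective, so an element of $\WG(K)$ is pinned down by the pair consisting of its rank and its image in $\W(K)$. Applying this to $K = k$ and to the element $\widehat{Q}_{X,D,k}(A) \in \WG(k)$ for each $A \in \Et_n(k)$, we see that $\widehat{Q}_{X,D,k}(A)$ is determined by $\rk(\widehat{Q}_{X,D,k}(A))$ together with its image $Q_{X,D,k}(A) \in \W(k)$.

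Second, I would identify these two pieces of data with the claimed invariants. By \cref{corRankGromovWitten}, the rank $\rk(\widehat{Q}_{X,D,k}(A))$ equals the Gromov--Witten invariant $\GW_X(D) \in \N$, which crucially does not depend on $A$. And by construction (see \cref{def:quadGW} and the discussion immediately following \cref{thmQuadraticWitt}), the image of $\widehat{Q}_{X,D,k}(A)$ under the quotient $\WG(k) \to \W(k)$ is precisely $Q_{X,D,k}(A)$. Hence the function $A \mapsto \widehat{Q}_{X,D,k}(A)$ factors through the pair of functions $A \mapsto (\GW_X(D), Q_{X,D,k}(A))$ via the injection $(\rk, [\_])$, which is exactly the assertion of the corollary: knowing the single number $\GW_X(D)$ and knowing the Witt invariant $Q_{X,D,k}$ suffices to recover $\widehat{Q}_{X,D,k}$.

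There is no real obstacle here; the only point that requires a moment's care is making sure that the rank of $\widehat{Q}_{X,D,k}(A)$ is constant in $A$ (so that a single Gromov--Witten number, rather than a rank function, suffices), but this is exactly the content of \cref{corRankGromovWitten}. One could phrase the proof in one or two sentences: \emph{By \cref{remWGvsW} the map $(\rk,[\_])\colon \WG(k) \to \Z \times \W(k)$ is injective; applied to the values of $\widehat{Q}_{X,D,k}$ and combined with \cref{corRankGromovWitten}, this shows $\widehat{Q}_{X,D,k}(A)$ is the unique element of $\WG(k)$ of rank $\GW_X(D)$ whose class in $\W(k)$ is $Q_{X,D,k}(A)$.} Everything else is bookkeeping.
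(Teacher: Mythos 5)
Your proof is correct and matches the paper's own (implicit) argument: the paper states the corollary immediately after \cref{corRankGromovWitten} with the remark that a quadratic form is determined by its rank and its Witt class, which is exactly the combination of \cref{remWGvsW} and \cref{corRankGromovWitten} that you invoke. Nothing is missing.
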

For convenience, let us summarize
the exact setting for the following
discussion:

\begin{setting} \label{settingConj}
  Let $k$ be a perfect field of 
	characteristic not $2$ or $3$. 
	Fix $\n \in \N^r$ and $(A_1, \dots, A_r) \in \Et_\n(k)$
	and suppose that 
	$X$ is a rational del Pezzo surface 
	constructed 
	as the blow-up of $\P^2_k$
	along the zero-dimensional subschemes $\p_1, \dots, \p_r \subset \P^2_k$
	such that $\p_i = \Spec A_i$.
	Let $D$ be an effective divisor class in $\Pic(X)(k)$
	such that $(X,D)$ satisfies Hypothesis~\ref{deghyp1-part}. 
	Suppose that $D$ corresponds to $\dbar \in \N^{r+1}$
  in the sense that the coefficient of $L$ in $D$ is $d_0$
  and the coefficient of each exceptional divisor
  projecting to $\p_i$ is $-d_i$. 
	We set $n_0 = n = -K_X \cdot D -1$ and $\nbar = (n_0, \n)$.
	Then we have associated Witt invariants
	$Q_{X,D} \in \Inv_k(n_0)$ 
	from \cref{thmQuadraticWitt-nonperfect}
	and $\V_{\n,\dbar} \in \Inv_k(\nbar)$
	from \cref{defWelschingerWittInvariants}.
	We define $\V_{X,D} \in \Inv_k(n_0)$ by 
	\[
		\V_{X,D}(A_0) = \V_{\n,\dbar}(A_0, A_1 \otimes K, \dots, A_r \otimes K)
	\]
	for any extension $k \to K$ and $A_0 \in \Et_{n_0}(K)$. 
\end{setting}

\begin{lemma} \label{lem:equaltors}
  Supposing \cref{settingConj}
  and  $k = \Q$, the
	Witt invariants $Q_{X,D,k}, \V_{X,D} \in \Inv_k(n_0)$ have the same multireal	signatures. 
	In particular, they only differ by torsion. 
\end{lemma}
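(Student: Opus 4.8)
The plan is to translate the statement into an equality of multireal values after base change to $\R$, where it reduces to the known comparison between real quadratic Gromov--Witten invariants and Welschinger invariants. Recall from \cref{propSignatureTorsionQ} (and the diagram in its proof) that, for a Witt invariant $\alpha\in\Inv_\Q(n_0)$, the multireal signatures of $\alpha$ coincide, via the signature isomorphism $\W(\R)\cong\Z$, with the multireal values of the restriction $\alpha_\R\in\Inv_\R(n_0)$, and that $\alpha$ lies in the torsion submodule of $\Inv_\Q(n_0)$ if and only if these all vanish. Applying this to $\alpha=Q_{X,D,k}-\V_{X,D}$, it suffices to prove that $(Q_{X,D,k})_\R$ and $(\V_{X,D})_\R$ take the same value on every multireal algebra $R_{s_0}=\E_{-1}^{s_0}\times\R^{n_0-2s_0}$ with $0\le s_0\le m_0$; this simultaneously gives the equality of multireal signatures and, applied to the difference, the torsion statement.

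First I would evaluate $(\V_{X,D})_\R$. For $i=1,\dots,r$ let $s_i$ be the number of complex places of $A_i$, so that $A_i\otimes_\Q\R\cong\C^{s_i}\times\R^{n_i-2s_i}$, which over $\R$ is the multireal algebra $R_{s_i}^{n_i}$ (using $\C\cong\E_{-1}$); note $2s_i\le n_i$ forces $s_i\le m_i$. Setting $\s=(s_1,\dots,s_r)$ and $\sbar=(s_0,\s)$, the definition of $\V_{X,D}$ in \cref{settingConj} gives $(\V_{X,D})_\R(R_{s_0})=\V_{\n,\dbar}(R_{s_0},R_{s_1},\dots,R_{s_r})$, which is the multireal value $w_{\sbar}$ of $\V_{\n,\dbar}$, and by \cref{thmWelschingerUniversal} this equals $\Wel_{X_{\n,\s}}(\dbar;s_0)$. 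Since $X$ is, by construction, the blow-up of $\P^2_\Q$ along $\Spec A_i$, the surface $X_\R$ is the blow-up of $\P^2_\R$ along the real configurations $\Spec(A_i\otimes_\Q\R)$, hence is a representative of the deformation class $X_{\n,\s}$, and $D_\R$ corresponds to $\dbar$ (that is, to $\Delta(\dbar)$) in the notation of \cref{secWelschinger}; using the deformation invariance of Welschinger invariants of rational surfaces (\cite[Theorem 1.3]{Bru18}) we conclude $(\V_{X,D})_\R(R_{s_0})=\Wel_{X_\R}(D_\R;s_0)$ for all $s_0$.

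Next I would evaluate $(Q_{X,D,k})_\R$. By \cref{remChangePerfectField} the value $\widehat Q_{X,D,K}(A)$, for $K$ an extension of $\R$, depends only on $(X_K,D_K)$, so the restriction of $Q_{X,D,k}\in\Inv_\Q(n_0)$ to $\Inv_\R(n_0)$ is $Q_{X_\R,D_\R}$; in particular $(Q_{X,D,k})_\R(R_{s_0})=Q_{X_\R,D_\R,\R}(R_{s_0})$. By the comparison of real quadratic Gromov--Witten invariants with Welschinger invariants, namely \cite[Theorem 3]{degree} (see also \cite[Remark 2.5]{Levine-Welschinger} and \cref{exRealQuadratic}), the signature of $\widehat Q_{X_\R,D_\R,\R}(\C^{s_0}\times\R^{n_0-2s_0})$ equals $\Wel_{X_\R}(D_\R;s_0)$; passing to $\W(\R)\cong\Z$ this gives $(Q_{X,D,k})_\R(R_{s_0})=\Wel_{X_\R}(D_\R;s_0)$ for all $0\le s_0\le m_0$. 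Comparing with the previous paragraph, $(Q_{X,D,k})_\R$ and $(\V_{X,D})_\R$ agree on all multireal algebras, which by the first paragraph proves the lemma.

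The enumerative substance of the statement is already carried by the cited comparisons \cite{degree,Bru18}, so the work here is organizational rather than geometric. The only points requiring care are the identification of the two descriptions of $X_\R$ — the base change of the $\Q$-surface $X$ versus the combinatorial blow-up model $X_{\n,\s}$ — together with the matching of $D_\R$ with $\Delta(\dbar)$ and of $n_0=-K_X\cdot D-1$ with $3d_0-\sum_j d_jn_j-1$, and the (harmless) invocation of the deformation invariance of $\Wel$, which is needed because $\Spec(A_i\otimes_\Q\R)$ is only one representative of its deformation class. I would expect getting this bookkeeping exactly right, rather than any genuine difficulty, to be the main thing to watch.
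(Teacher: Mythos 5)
Your proof is correct and follows essentially the same route as the paper: reduce to comparing multireal values over $\R$ via \cref{propSignatureTorsionQ}, identify $(\V_{X,D})_\R$ with Welschinger invariants via \cref{thmWelschingerUniversal}, identify $(Q_{X,D,k})_\R$ with the same Welschinger invariants via the enumerative description and \cref{exRealQuadratic}, and conclude. The paper states this in two sentences; you have simply spelled out the intermediate bookkeeping (the identification $A_i\otimes\R\cong R_{s_i}^{n_i}$, the deformation-class matching of $X_\R$ with $X_{\n,\s}$ using \cite{Bru18}, and the independence of the base perfect field via \cref{remChangePerfectField}), all of which is correct and implicit in the paper's terser argument.
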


\begin{proof}
  Since the multireal signatures are given by evaluating
	the invariants for $K = \R$, the first statement
	follows from the 
	enumerative description of quadratic invariants
	\cite[Theorem 3]{degree} \cite{Levine-Welschinger}, see also \cref{WittInvarianceQuadratic},
	and the relationship to Welschinger signs, see \cite[Remark 2.5]{Levine-Welschinger}
	\cref{exRealQuadratic}.
	The second statement follows from \cref{propSignatureTorsionQ}.
\end{proof}

Based on existing computations, on their multireal values, and on the Abramovich--Bertram formulas satisfied by both $Q_{X,D}$ and $\V_D$, we conjecture that quadratic Gromov--Witten invariants of rational del Pezzo surfaces are given by Welschinger--Witt invariants.

\begin{conjecture} \label{conjGeneral}
	Supposing \cref{settingConj}, 
	we have
		\[
		Q_{X,D,k}(A_0) = \V_{X,D}(A_0). 
		\]
  \end{conjecture}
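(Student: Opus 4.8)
The plan is to prove the a priori stronger multivariable statement, of which \cref{conjGeneral} is a specialization. Recall that in \cref{settingConj} the invariants $Q_{X,D}$ and $\V_{X,D}$ in $\Inv_k(n_0)$ are obtained from $\V_{\n,\dbar}\in\Inv_k(\nbar)$ and from the quadratic Gromov--Witten invariants $Q_{X_{(A_1,\dots,A_r)},D}(A_0)$ by fixing the last $r$ arguments to $(A_1,\dots,A_r)$, where $X_{(A_1,\dots,A_r)}$ is the blow-up of $\P^2$ along $\Spec A_1\sqcup\dots\sqcup\Spec A_r$. The first step is to assemble the values $Q_{X_{(A_1,\dots,A_r)},D}(A_0)$ into a genuine Witt invariant $Q_D\in\Inv_k(\nbar)$ depending functorially on the \emph{whole} tuple $(A_0,\dots,A_r)$; this uses the base-change behaviour of the twisted evaluation maps of \cref{def:quadGW} together with \cref{thmQuadraticWitt-nonperfect}. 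When the blown-up configuration spreads out over $\Z[1/6]$ — for instance because in the range $|\n|\le 3$ relevant for \cref{thm:dP6} every such surface is a twist of the toric del Pezzo obtained by blowing up the three coordinate points of $\P^2$ — one in fact gets $Q_D\in\Inv_{2,3}(\nbar)$.

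I would then reduce the equality $Q_D=\V_{\n,\dbar}$ to $\beta$-integrality of $Q_D$. By \cref{thmWelschingerUniversal} the invariant $\V_{\n,\dbar}$ is $\beta$-integral with multireal values $\Wel_{X_{\n,\s}}(\dbar;s_0)$, while the enumerative description of quadratic invariants over $\R$ (\cite[Theorem 3]{degree}, \cite{Levine-Welschinger}, \cref{exRealQuadratic}) and \cref{lem:equaltors} show that $Q_D$ and $\V_{\n,\dbar}$ have the same multireal signatures. If $Q_D$ is $\beta$-integral, its multireal values lie in $\Z\langle 1\rangle$ and hence coincide with their signatures, so they equal those of $\V_{\n,\dbar}$; then \cref{corUniversalMultireal} gives $Q_D=\V_{\n,\dbar}$, and since a $\beta$-integral invariant is determined by an integer vector independent of the field this equality persists over every $K$, in particular over every perfect field of characteristic not $2$ or $3$. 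Specializing the last $r$ arguments then yields the conjecture.

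It remains to establish that $Q_D$ is $\beta$-integral, i.e. that each multireal value $Q_D(R_\sbar)$ lies in $\Z\langle 1\rangle$. First, knowing that $Q_D$ is unramified away from $2$ and $3$ (\cref{thm:QXDunramifiedWittawayS}, resp.\ \cref{propQuadraticUnramified} in mixed characteristic) gives via \cref{thmUnramifiedQuasiIntegral} that its $\beta$-coefficients lie in $\W(\Z[1/6])\cong\Z\oplus\Z/2\Z\oplus\Z/4\Z$; thus the multireal values are integer multiples of $\langle 1\rangle$ plus torsion, and only the torsion has to be killed. Second, the quadratic Abramovich--Bertram formula of \cite{Brugalle-WickelgrenABQ} — whose shape is that of \cref{prop:ABtypeformula} and mirrors \cref{prop:ABWW} — lets one trade a pair of conjugate blown-up points for two rational points, the twist terms being proportional to $\Tr(\E_{-1})-2\langle1\rangle=-2\langle1\rangle$ and hence integral; iterating, $Q_D(R_\sbar)$ becomes a $\Z\langle1\rangle$-linear combination of quadratic Gromov--Witten invariants of blow-ups of $\P^2$ at prime-field points — and, blowing down, of $\P^2$ itself — evaluated at multireal algebras. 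Finally, for $\P^2$ these are given by the floor-diagram formula of \cite{PMPR-QuadraticallyEnrichedPlane}, which exhibits them as integer multiples of $\langle1\rangle$ (plus hyperbolic forms, which vanish in $\W$) uniformly in the characteristic, and the remaining toric surfaces follow from $\P^2$ by Abramovich--Bertram; this forces the torsion to vanish.

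The genuinely hard step is the last one, upgrading ``$\beta$-coefficients in $\W(\Z[1/6])$'' to ``$\beta$-coefficients in $\Z$''. No formal argument can achieve this: the torsion summand $\Z/2\Z\oplus\Z/4\Z$ really occurs for other del Pezzo surfaces (cf.\ \cref{exAnticanonical}), so one must feed in the enumerative input of \cite{PMPR-QuadraticallyEnrichedPlane}, and it is precisely the availability of that computation that confines the unconditional result to degree at least $6$. A minor point, worth noting to avoid a spurious case analysis: the Abramovich--Bertram reductions above only ever touch multiquadratic algebras, and $\beta$-integrality is by \cref{remBetaBasis} a statement about the values on those; consequently étale algebras among the $A_i$ that are not built from quadratic factors — such as a cubic field extension — require no separate treatment.
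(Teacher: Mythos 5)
Your argument, by your own account, covers only the range $|\n|\le 3$ (del Pezzo surfaces of degree at least $6$), which is exactly the content of \cref{thm:dP6}; the paper does not prove \cref{conjGeneral} beyond that range, and neither do you. The Abramovich--Bertram reduction breaks down beyond $|\n|=3$ because the floor-diagram input for toric surfaces in \cite{PMPR-QuadraticallyEnrichedPlane} is only available for blow-ups of $\P^2$ at up to three rational points. Within the proven range your strategy does mirror the paper's — reduce to $\beta$-integrality, use unramifiedness and \cref{propQuadraticUnramified} to control the coefficients, reduce to the toric case by quadratic Abramovich--Bertram, and invoke the floor-diagram computation — but one step is logically flawed.

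You assert that $\beta$-integrality of $Q_D$ \emph{is} the statement that every multireal value $Q_D(R_\sbar)$ lies in $\Z\lra{1}$; that equivalence is false. Integer multireal values are necessary but not sufficient, because $\det M_\m$ is a power of $2$ (\cref{lemMultirealMatrix dim1}) and $\W(\Z[1/6])$ contains $2$- and $4$-torsion, so the multireal evaluation can annihilate non-integer $\beta$-coefficients. Concretely, $\alpha = (\lra{1}-\lra{2})\beta_1 \in \Inv_{2,3}(2)$ is $S$-integral but not $\beta$-integral, yet $\alpha(R_0)=(\lra{1}-\lra{2})\cdot 2\lra{1} = 2\lra{1}-2\lra{2}=0$ and $\alpha(R_1)=0$, so all its multireal values lie in $\Z\lra{1}$; compare \cref{propValuesTorsion}, which makes precise exactly which torsion is invisible to the multireal evaluation. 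What \cref{thm:qGWtoric integral} actually uses is the \emph{full} content of \cref{thm:FD}: the restriction $Q'_{\dbar}|_{\Sq_m}$ is a polynomial with integer coefficients in $t_1,\dots,t_m$, i.e.\ in the trace forms $\Tr(\E_{\delta_j})$ of \emph{arbitrary} quadratic algebras, not merely the multireal ones $\delta_j\in\{\pm 1\}$. Since the monomials $t_J$, $J\subset\{1,\dots,m\}$, form a basis of $\Inv_S(\Sq_m)$ matching the $\beta$-basis, this gives integral $\beta$-coefficients directly. You need to replace your multireal-values criterion with this restriction-to-$\Sq_m$ argument to close the gap.
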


	We may rephrase the conjecture as stating
that whenever the quadratic Gomov--Witten invariant of a rational surface
$X$ is defined, it agrees with the corresponding
evaluation of $\V_{\n,\dbar}$.
\begin{exa}\label{exa:real pyt}
	It follows from \cref{lem:equaltors} that \cref{conjGeneral} holds if $\W(k)$ has no torsion. According to \cite[Theorem VIII.4.1]{lam05}, this is equivalent to the condition that $k$ if formally real (i.e. $-1$ is not a sum of square) and Pythagorean (i.e. every sum of squares is a square). Note that such a field has characteristic 0.
\end{exa}

	Note that \cref{conjGeneral} also encompasses the case of smooth quadrics in $\P^3_k$ with a rational point, 
	since the blow-up at a rational point of any such quadric is isomorphic to the blow-up $\P^2_k$ at reduced zero-dimensional scheme $\p$  of length 2.
	
  \begin{remark}
\cref{conjGeneral} would also imply that quadratic Gromov--Witten invariants of rational surfaces in
  positive characteristic are essentially determined by their behavior 
  in characteristic $0$. This is a property which is expected to hold
  by the construction of the positive characteristic invariants
  and observed in the computed examples. 
\end{remark}

Since the right hand side $\V_{X,D}$ of \cref{conjGeneral}
is the specialization of
a multivariable Witt invariant $\V_{\n,\dbar}$, we conjecture 
an equality of multivariable Witt invariants
for a suitable defined $Q_{\n, \dbar}$. 
Currently, there are three obstructions to making this precise:
\begin{enumerate}
	\item 
	  It is currently not known whether the quadratic Gromov--Witten invariant
		$Q_{X,D}$ is a \emph{deformation invariant}
		in the sense that it only depends on the étale $k$-algebras
		$(A_1, \dots, A_r) \in \Et_\n(k)$ but not on the 
		(generic) subschemes $\p_1, \dots, \p_r \in \P^2_k$ from
		\cref{settingConj}. Note that
		our conjecture implies deformation invariance in 
		\cref{settingConj}.
	\item
	  Given $(A_1, \dots, A_r) \in \Et_\n(k)$, 
		to construct $X$ as in \cref{settingConj}
		we need to find a \emph{general} point configuration
		in $\P^2_k$ with étale $k$-algebra $A_1 \times \dots \times A_r$. 
		This is always possible in characteristic $0$,
		but not necessarily in positive characteristic. 
	\item
	  Currently, the invariant $Q_{X,D}$ is only defined
		for (base changes of) surfaces $X$ defined over a perfect field $k$
		as in \cref{def:quadGW}. This corresponds to evaluating
		the right hand side $\V_{\n,\dbar}$ only in tuples
		$(A_0, A_1 \otimes K, \dots, A_r \otimes K)$
		where $A_0 \in \Et_{n_0}(K)$ but 
		$(A_1, \dots, A_r) \in \Et_\n(k)$
		(for some field extension $k \to K$).
\end{enumerate}
Therefore, up these limitations
with respect to the definition
of the quadratic invariants, 
we conjecture an equality 
of multivariable Witt invariants
$Q_{\n,\dbar} = \V_{\n,\dbar}$
for a suitable quadratic counterpart $Q_{\n,\dbar}$.
To illustrate the general idea and give some evidence
for the conjecture, 
in the following we prove 
the case of rational del Pezzo surfaces of degree at least $6$.
In this case, the first two obstructions
can be removed and the conjecture can 
be proven as explained now.

\begin{lemma} \label{lemdelPezzo6}
  Let $k$ be a perfect field of 
	characteristic not $2$ or $3$. 
	Fix $\eta \leq 4$ and $A \in \Et_\eta(k)$.
	Then there exist a closed immersion
	$\p: \Spec A \hookrightarrow \P^2_k$ such that
	the blow-up $X$ of $\P^2_k$ along $\p$
	is a del Pezzo surface. 
	Moreover, the surface $X$ is unique up to isomorphism over $k$.
\end{lemma}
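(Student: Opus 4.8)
The plan is to reduce both halves of the statement to transitivity of the $\PGL_{3,k}$-action together with standard facts about forms of del Pezzo surfaces. Call a closed immersion $\p\colon\Spec A\hookrightarrow\P^2_k$ \emph{admissible} if the blow-up is del Pezzo; since $\eta\le 4$, admissibility is equivalent to asking that no three of the $\eta$ geometric points of $\p$ be collinear.

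\emph{Existence.} For $\eta\le 3$ I would argue by an explicit construction. Fix a smooth conic $C\subset\P^2_k$ carrying a $k$-point, so $C\cong\P^1_k$; any closed subscheme of $C$ is automatically admissible, since an irreducible conic meets every line in a scheme of length at most $2$. It therefore suffices to realise $\Spec A$ as a closed subscheme of $\P^1_k$, which is possible whenever $A$ is monogenic over $k$ (write $A=k[t]/(f)$ with $f$ monic and separable and take $V(f)\subset\mathbb{A}^1_k\subset\P^1_k$). Every étale algebra over an infinite field is monogenic, and over finite fields the only non-monogenic algebras of degree $\le 4$ are $\FF_2^3$, $\FF_2^4$, $\FF_3^4$ and $\FF_4\times\FF_4$ over $\FF_2$; the first three are realised by a subset of the frame $[1:0:0],[0:1:0],[0:0:1],[1:1:1]$, which is admissible over any field, and the last by any two distinct closed points of degree $2$ of $\P^2_{\FF_2}$ — any two such are admissible, because two distinct $\FF_2$-rational lines meet only in an $\FF_2$-rational point. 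For $\eta=4$ I would instead argue structurally: the degree-$5$ del Pezzo surface corresponding to the class $[A\times k]\in H^1(k,S_5)$ exists because all such surfaces are forms of the split one, it carries a $k$-rational point since every del Pezzo surface of degree $5$ is $k$-rational, hence admits a blow-down to $\P^2_k$ defined over $k$, and the blown-up subscheme then has coordinate ring $A'$ with $A'\times k\cong A\times k$, so $A'\cong A$ by cancellation of étale algebras.

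\emph{Uniqueness.} It is enough to show that $\PGL_3(k)$ acts transitively on the set of admissible $\p$ with $\mathcal{O}(\p)\cong A$, since any element carrying one such $\p$ to another induces a $k$-isomorphism of the blow-ups. Choosing compatible isomorphisms $A\otimes_k\bar k\cong\mathcal{O}(\p_{\bar k})$ and $A\otimes_k\bar k\cong\mathcal{O}(\p'_{\bar k})$ and using that $\PGL_3(\bar k)$ acts transitively on ordered $\eta$-tuples in general position (valid for $\eta\le 4$), one gets $\bar g\in\PGL_3(\bar k)$ matching the labelled geometric points of $\p$ and $\p'$; Galois-compatibility then forces $c_\sigma:=\bar g^{-1}({}^\sigma\bar g)$ to lie in the scheme-theoretic pointwise stabiliser $S_\p\subset\PGL_{3,k}$ of $\p$, and $[c_\sigma]\in H^1(k,S_\p)$ is the obstruction to descending $\bar g$. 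For $\eta=4$, $S_\p$ is trivial and one concludes at once (this recovers the simple transitivity on projective frames). For $\eta\le 2$, $S_\p$ is larger but one can avoid cohomology: the geometric points of $\p$ span a unique $k$-rational line $\ell$, and on $\ell$ the group $\PGL_2(k)=\Aut(\ell)$ acts transitively on closed points of a given degree and residue field — for degree $1$ by the multiple transitivity of $\PGL_2(k)$ on $\P^1(k)$, for degree $2$ by Skolem–Noether applied to embeddings of the quadratic étale algebra into $\Mat_2(k)$.

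The main obstacle is the case $\eta=3$. Here $S_\p$ is a $2$-dimensional torus — the twist of the diagonal torus of $\PGL_3$ by the $S_3$-action encoded by $A$, namely $R_{A/k}\mathbb{G}_m/\mathbb{G}_m$ — so $H^1(k,S_\p)\cong\ker(\operatorname{Br} k\to\operatorname{Br} A)$, which need not vanish. To kill the obstruction I would exploit the second blow-down of $X=\Bl_\p\P^2_k$: contracting the strict transforms of the three lines through pairs of points of $\p$ produces a del Pezzo surface $Z$ of degree $9$, i.e.\ a Severi–Brauer surface, which is the image of $X$ under the morphism attached to $|2L-E_1-E_2-E_3|$; as the space of sections of this system is the honest $3$-dimensional $k$-vector space $H^0(\P^2_k,I_\p(2))$, we get $Z\cong\P^2_k$. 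Since the obstruction class in $H^1(k,S_\p)\hookrightarrow\operatorname{Br}(k)$ is measured precisely by the Brauer classes of such second blow-downs, it vanishes, so $\p$ and $\p'$ are $\PGL_3(k)$-conjugate. Alternatively one may deduce the cases $\eta=3,4$ from the classification of del Pezzo surfaces of degree $\ge 6$ together with the rationality of degree-$5$ del Pezzo surfaces, but the above keeps the argument self-contained.
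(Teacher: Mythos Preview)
Your proposal reaches the correct conclusion but by a considerably more elaborate route than the paper, and your $\eta=3$ uniqueness step contains an unjustified claim.

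The paper's argument is uniform and elementary. For existence, it embeds $\Spec A$ into $\P^1_k$ (possible for all $\eta\le 4$ once $\operatorname{char}(k)\notin\{2,3\}$, since then every étale $k$-algebra of degree $\le 4$ is monogenic) and composes with the Veronese conic $\P^1\hookrightarrow\P^2$; no three points on an irreducible conic are collinear. This handles $\eta=4$ just as well as $\eta\le 3$, so your detour through degree-$5$ del Pezzo surfaces is unnecessary. For uniqueness, the paper simply augments each of $\p,\p'$ by $4-\eta$ auxiliary $k$-rational points in general position (possibly different points on each side). Since $\PGL_3(\bar k)$ acts simply transitively on ordered $4$-tuples in general position, there is a \emph{unique} $\phi\in\PGL_3(\bar k)$ matching the augmented tuples; as both the $A$-labelled points and the rational augmentation points are Galois-stable with the same permutation action on each side, uniqueness forces ${}^\sigma\phi=\phi$, hence $\phi\in\PGL_3(k)$. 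No case analysis, no cohomology.

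Your $\eta=3$ argument has a gap. You correctly identify the obstruction as a class $[c_\sigma]\in H^1(k,S_\p)\hookrightarrow\operatorname{Br}(k)$, but the assertion that this class ``is measured precisely by the Brauer classes of such second blow-downs'' is neither justified nor obviously true: the second blow-down of $X=\Bl_\p\P^2$ being $\P^2$ is a property of $\p$ alone and says nothing about the comparison with $\p'$. The correct (and much shorter) way to kill the obstruction within your framework is to observe that $c_\sigma=\bar g^{-1}\,{}^\sigma\bar g$ is by construction a coboundary in $\PGL_3(\bar k)$, so its image in $H^1(k,\PGL_3)$---and hence in $\operatorname{Br}(k)$ via the compatible connecting map from $1\to\mathbb{G}_m\to\GL_3\to\PGL_3\to 1$---is trivial; injectivity of $H^1(k,S_\p)\to\operatorname{Br}(k)$ then gives $[c_\sigma]=0$. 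Your Skolem--Noether argument for $\eta=2$ and the del Pezzo $5$ argument for $\eta=4$ existence are correct but overkill compared to the paper's one-line treatment.
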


\begin{proof}
	Let $\overline k$ be an algebraic closure of $k$. 
	The blow-up of $\P^2$ along $\p$ gives a del Pezzo surface if and only if the geometric
	points $\p(\overline k)$ are in general position in the sense
	that no three are collinear. 
	To construct such $\p$, choose $\p_0: \Spec A \hookrightarrow \P^1_k$
	and take $\p$ as the image of $\p_0$ under the Veronese map $(s:t) \mapsto (s^2:st:t^2)$.
	To show the resulting blow-ups are isomorphic, assume that $\p': \Spec A \hookrightarrow \P^2_k$ is another such point configuration. 
	Since $\vert \Spec A(\overline{k})\vert \leq 4$ there is a projective 
	coordinate change  $\phi \in \PGL_3(\overline k)$ such that  $\phi \circ (\p \otimes \overline{k}) = (\p' \otimes \overline{k})$. Moreover, fixing an additional $4 - \vert \eta \vert$ rational points on $\P^2_k$, we may assume $\phi$ is unique. Thus $\phi$ is Galois invariant and hence $\phi \in \PGL_3(k)$. 
\end{proof}

For $K \in \Fields$ and $\n \in \N^r$, let $\widetilde{\Et}_{\n}(K)$ denote the subset of $\Et_{\n}(K)$ consisting of elements of the form $(A_1 \otimes_k K, \ldots, A_r \otimes_k K)$ with $k$ a perfect subfield of $K$ such that there exists a closed immersion $\p:\Spec \prod_i A_i \hookrightarrow \P^2_k$ such that the blow-up of $\P^2_k$ along $\p$ is del Pezzo.
Note that $\widetilde \Et_\n$ is a subfunctor of $\Et_\n$, that is, the subsets are compatible with base change.
Suppose that quadratic Gromov--Witten invariants are deformation invariant
(for $\eta =n_1 + \cdots + n_r \leq 4$, this follows from \cref{lemdelPezzo6}). 
For $D \in \Z^{1+ \eta}$, $\n \in \N^r$, $\dbar \in \N^{r+1}$
as in \cref{settingConj}, 
we can then define the \emph{partially defined} Witt invariant $Q_{\n,\dbar}$
given by the collection of maps  
\begin{align*}
  Q_{\n,\dbar,K} :  \Et_{n_0} (K) \times \widetilde{\Et}_{\n}(K) &\to  W(K), \\
	               (A_0, \dots, A_r) &\mapsto Q_{X,D}(A_0),
\end{align*}
for all $K \in \Fields_{2,3}$, 
where $X$ denotes the blow-up of $\P^2_K$ in $A_1, \dots, A_r$ as in \cref{settingConj}. 
Note $Q_{\n,\dbar}$ indeed satisfies the Witt invariance
property in all variables, that is, for all $K \to L$ 
we have 
\begin{align*} 
    Q_{\n,\dbar,K}(A_0, \dots, A_r) \otimes L = Q_{\n,\dbar,L}(A_0\otimes L, \dots, A_r\otimes L)
		& & \forall \; (A_0, \dots, A_r) \in  \Et_{n_0} (K) \times \widetilde{\Et}_{\n}(K).
\end{align*}
This follows immediately from the construction in \cref{sec:constr quadGW}
and the fact that $X \otimes L$ is the blow-up of $\P^2_L$ in
$A_1\otimes L, \dots, A_r\otimes L$.
Analogous to \cref{defWelschingerWittInvariants}, we also write $Q_D$ for $Q_{\n, \dbar}$ where
$\n$ is the coarsest partition compatible with $D$. 

By \cref{thmWelschingerUniversal}, \cref{conjGeneral} is equivalent to the statement that quadratic Gromov--Witten invariants of rational del Pezzo surfaces are deformation invariant, and extend to a unique $\beta$-integral Witt invariant.

\begin{conjecture}[Equivalent to \cref{conjGeneral}]\label{conj2}
	  For $n_1+\cdots+n_r\le 6$ and any $\dbar \in \N^{r+1}$ with $n_0 \neq 5$, quadratic Gromov--Witten invariants are deformation invariant and
	the partially defined Witt invariant $Q_{\n,\dbar}$
	is the restriction of a $\beta$-integral Witt invariant from
	$ \Et_{n_0}  \times {\Et}_{\n}$ to  $\Et_{n_0} \times \widetilde{\Et}_{\n}$.
\end{conjecture}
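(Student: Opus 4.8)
The plan is to prove that \cref{conj2} is logically equivalent to \cref{conjGeneral}. Fix the combinatorial data $\n\in\N^r$, $\dbar\in\N^{r+1}$ (with $n_0\geq0$) in the range where \cref{conj2} is stated, so that the surface $X$ in \cref{settingConj} is a del Pezzo surface of degree $\geq3$ and $(X,D)$ satisfies \cref{deghyp1-part}; in particular $Q_{X,D}$ is defined. The three inputs I would rely on are all already available: (i) by \cref{thmWelschingerUniversal}, $\V_{\n,\dbar}\in\UInv(\nbar)$ and its $\sbar$-th multireal value is $\Wel_{X_{\n,\s}}(\dbar;s_0)$; (ii) by \cref{corUniversalMultireal}, a $\beta$-integral Witt invariant is determined by its multireal values, and these may be computed after restriction to any formally real field, e.g.\ $\R$; (iii) the enumerative description of quadratic invariants over the reals, namely that the image of $Q_{X,D,\R}(\C^{s}\times\R^{n-2s})$ under the signature isomorphism $\W(\R)\cong\Z$ equals $\Wel_X(D;s)$; see \cite[Theorem 3]{degree}, \cite[Remark 2.5]{Levine-Welschinger} and \cref{exRealQuadratic}.

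First I would treat the direction \cref{conjGeneral}$\Rightarrow$\cref{conj2}. Assuming \cref{conjGeneral}, the value $Q_{X,D,k}(A_0)=\V_{\n,\dbar}(A_0,A_1\otimes K,\dots,A_r\otimes K)$ depends only on the étale algebras $A_1,\dots,A_r$ and not on the chosen subschemes $\p_1,\dots,\p_r\subset\P^2_k$ realizing them; hence quadratic Gromov--Witten invariants of rational del Pezzo surfaces are deformation invariant, $Q_{\n,\dbar}$ is well defined, and \cref{conjGeneral} exhibits it as the restriction to $\Et_{n_0}\times\widetilde{\Et}_\n$ of the $\beta$-integral invariant $\V_{\n,\dbar}$. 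That is exactly \cref{conj2}.

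Conversely, assuming \cref{conj2}, $Q_{\n,\dbar}$ is well defined and is the restriction of some $\widetilde Q\in\UInv(\nbar)$; I would show $\widetilde Q=\V_{\n,\dbar}$ by comparing multireal values via (ii). The multireal values of $\widetilde Q$ are $\widetilde Q_\R(\C^{s_0}\times\R^{n_0-2s_0},\dots,\C^{s_r}\times\R^{n_r-2s_r})$ for $\sbar=(s_0,\s)\in\N_\mbar$. Since over $\R$ one can choose, $\Gal(\C:\R)$-equivariantly, a general configuration of at most $|\n|\leq6$ points realizing $(\C^{s_1}\times\R^{n_1-2s_1},\dots,\C^{s_r}\times\R^{n_r-2s_r})$, these tuples lie in $\widetilde{\Et}_\n(\R)$, and therefore $\widetilde Q_\R(\dots)=Q_{\n,\dbar,\R}(\dots)=Q_{X_{\n,\s},D,\R}(\C^{s_0}\times\R^{n_0-2s_0})$, where $X_{\n,\s}$ is the blow-up of $\P^2_\R$ along such a configuration, a del Pezzo surface of degree $9-|\n|\geq3$ (and independent of the chosen general configuration, by the deformation invariance now available). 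By (iii) and $\W(\R)\cong\Z$ this equals $\Wel_{X_{\n,\s}}(\dbar;s_0)$, which by (i) is the $\sbar$-th multireal value of $\V_{\n,\dbar}$. Hence $\widetilde Q=\V_{\n,\dbar}$, and for any $k\to K$ and $A_0\in\Et_{n_0}(K)$ we get $Q_{X,D,K}(A_0)=Q_{\n,\dbar,K}(A_0,A_1\otimes K,\dots)=\V_{\n,\dbar}(A_0,A_1\otimes K,\dots)=\V_{X,D}(A_0)$, which is \cref{conjGeneral}. As a byproduct, the $\beta$-integral extension $\widetilde Q$ is then unique, being pinned down by its values on the multireal tuples in $\widetilde{\Et}_\n(\R)$.

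The equivalence is essentially formal: the one genuinely non-trivial ingredient, \cref{thmWelschingerUniversal}, is already proved, and the deformation-invariance clause inside \cref{conj2} is conjectural input rather than something this argument has to supply (for $|\n|\leq4$ it is a consequence of \cref{lemdelPezzo6}). The step I would watch most carefully is the matching of hypotheses together with the claim that the relevant multireal tuples actually lie in $\widetilde{\Et}_\n(\R)$: one needs that over $\R$ — or any infinite formally real field — general $\Gal(\C:\R)$-equivariant configurations of up to six points exist for every distribution into real points and conjugate pairs, so that the resulting blow-ups are del Pezzo of degree $\geq3$ and $Q_{\n,\dbar,\R}$ may legitimately be evaluated on these tuples.
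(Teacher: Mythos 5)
Your argument is correct and, in fact, spells out what the paper only sketches: the paper states the equivalence in one line ("By \cref{thmWelschingerUniversal}, \cref{conjGeneral} is equivalent to the statement that quadratic Gromov--Witten invariants of rational del Pezzo surfaces are deformation invariant, and extend to a unique $\beta$-integral Witt invariant") and you fill in both directions carefully, using exactly the intended ingredients — \cref{thmWelschingerUniversal}, \cref{corUniversalMultireal}, and the identification $Q_{X,D,\R}(\C^{s}\times\R^{n-2s})=\Wel_X(D;s)$ from \cref{exRealQuadratic}. You also correctly isolate the one point that needs checking, namely that the multireal tuples over $\R$ actually lie in $\widetilde{\Et}_\n(\R)$, which holds because for $|\n|\le 6$ one can choose $\Gal(\C:\R)$-equivariant configurations in general position. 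This is the paper's approach, merely carried out in full detail.

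One small remark you might add: the combinatorial ranges do not literally match — \cref{conj2} excludes $n_0=5$ for every $|\n|\le 6$, whereas Hypothesis~\ref{deghyp1-part} (hence \cref{conjGeneral} via \cref{settingConj}) only excludes $n_0=5$ when $|\n|=6$ — so "equivalent" should be read as "equivalent on the common range of validity of $Q_{\n,\dbar}$." Your opening sentence glosses over this, as does the paper, but it is worth being explicit about which tuples $(\n,\dbar)$ both statements address.
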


\begin{remark}
	A consequence of \cref{conj2} can be informally stated as follows: quadratic Gromov--Witten invariants $Q_{X,D,k} \colon \Et_{n_0}(k) \to \W(k)$ 
	of a del Pezzo surface $X$ of degree a least 3 over $k\in\Fields_{2,3} $ only depend on the lattice $\Pic(X)(\overline{k})$ equipped with its intersection form and the action of $\Gal(\overline{k}:k)$. 
	See also \cite[Remark 1.4]{Bru18} for a similar remark regarding Welschinger invariants.
\end{remark}

Before proving \cref{thm:dP6}
we revisit the 
\enquote{classical} computation of $Q_{X,-K_X}$.
\begin{example} \label{exAnticanonical}
   In \cite[Example 1.4]{degree}, 
	it is shown that for $D = -K_X$ one has
	\[   
	\widehat Q_{X,D,k}( A_0) = - \chi^{\A^1}(X) + 1 + \Tr(A_0),
	\]
	where $\chi^{\A^1}(X) \in \WG(k)$ denotes the $\A^1$-Euler characteristic.
	The latter behaves similiar to the ordinary Euler characteristic.
	Hence if $X$ is the blow-up of $\P^2_k$ at $\p_1$ with finite étale $k$-algebra $A_1$
	of degree $n_1$, then
	\[  
	\chi^{\A^1}(X) = \chi^{\A^1}(\P^2_k) + \Tr(A_1)(\chi^{\A^1}(\P^1_k) -1)
	= 1 + (1+\Tr(A_1))h - \Tr(A_1).
	\]
	Therefore 
	\[
	Q_{X,D} (A_0) = \Tr(A_0) + \Tr(A_1),
	\]
	which agrees with 
	$\V_{(n_1),(3,1)}=\beta_{1,0}+\beta_{0,1} \in \Inv(n_0, n_1)$, the splitting
	of $\V_3 = \beta_1 \in \Inv(8)$, as predicted by \cref{conjGeneral}.
\end{example}

\section{The case of del Pezzo surfaces of degree at least 6}\label{sec:dP6}

This section is devoted to the proof of \cref{conjGeneral} (equivalently \cref{conj2} ) when $n_1+\cdots+n_r\le 3$. 
\begin{thm}\label{thm:dP6}
  For $n_1+\cdots+n_r\le 3$ and any $\dbar \in \N^{r+1}$,
	the partially defined Witt invariant $Q_{\n,\dbar}$
	is the restriction of $\V_{\n,\d}$ from
	$ \Et_{n_0}  \times {\Et}_{\n}$ to  $\Et_{n_0} \times \widetilde{\Et}_{\n}$.
\end{thm}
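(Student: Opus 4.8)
\emph{Step 1 (setup and the unramified bound).} Since $n_1+\cdots+n_r\le 3\le 4$, \cref{lemdelPezzo6} shows that the del Pezzo surface $X$ attached to a tuple $(A_1,\dots,A_r)$ as in \cref{settingConj} is unique up to $k$-isomorphism; hence $Q_{\n,\dbar}$ is deformation invariant, well defined, and by \cref{thmQuadraticWitt} a partial Witt invariant over $\Fields_{2,3}$. Spreading the surfaces out over complete discrete valuation rings --- for a mixed-characteristic $R$, the blow-up of $\P^2_R$ along the finite étale center $\Spec\prod_i A_i$ is again a relative del Pezzo surface with $\kappa$-rational special fibre --- and applying \cref{propQuadraticUnramified} (with \cref{thm:QXDunramifiedWittawayS} in equal characteristic) shows $Q_{\n,\dbar}$ is unramified; by \cref{thmUnramifiedQuasiIntegral} its $\beta$-coefficients then lie in $\W(\Z[S^{-1}])$ for a suitable finite $S\ni 2,3$. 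The same unramifiedness, applied to the canonical lifts over $\Z_p$ of configurations defined over $\FF_p$, will reduce the whole statement to characteristic zero, where every field is perfect and $Q_{\n,\dbar}$ is a genuine (not merely partial) Witt invariant.

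\emph{Step 2 (reduction to $\beta$-integrality).} By the enumerative interpretation of quadratic Gromov--Witten invariants (\cite[Theorem 3]{degree}, \cite[Remark 2.5]{Levine-Welschinger}, \cref{exRealQuadratic}) and the signature isomorphism $\W(\R)\cong\Z$, the multireal values of $Q_{\n,\dbar}$ over $\R$ are the Welschinger invariants $\Wel_{X_{\n,\s}}(\dbar;s_0)$, which by \cref{thmWelschingerUniversal} are also the multireal values of $\V_{\n,\dbar}$. Since $\V_{\n,\dbar}$ is $\beta$-integral and a $\beta$-integral invariant is determined by its multireal values (\cref{corUniversalMultireal}), it suffices to prove that $Q_{\n,\dbar}$ is itself $\beta$-integral: then $Q_{\n,\dbar}$ and $\V_{\n,\dbar}$ are $\beta$-integral invariants with the same multireal values, hence equal. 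Equivalently, combined with Step 1 the $\beta$-coefficients of $Q_{\n,\dbar}-\V_{\n,\dbar}$ are torsion elements of $\W(\Z[S^{-1}])$, and $\beta$-integrality forces them to vanish.

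\emph{Step 3 ($\beta$-integrality via Abramovich--Bertram and floor diagrams).} Working over characteristic zero (by Step 1), I would prove $\beta$-integrality by induction on $\sum_{j\ge 1}n_j$ and on the exceptional multiplicities $d_j$. The quadratic Abramovich--Bertram formula of \cite[Theorem 1.1]{Brugalle-WickelgrenABQ} --- whose shape on the Welschinger--Witt side is \cref{prop:ABWW} --- expresses $Q_{\n,\dbar}$ on a multiquadratic tuple in which some $A_j$ splits off a rank-$2$ factor $\E$ in terms of $Q$ for the surface with that conjugate pair replaced by two rational blow-up points, plus correction terms with shifted exceptional multiplicities; iterating this, together with the quadratic Welschinger formula to lower the $d_j$ and Identity~\eqref{eq:W n=0} to absorb multiplicity-one exceptional classes, reduces everything to quadratic invariants of toric del Pezzo surfaces of degree $\ge 6$, and ultimately to $Q_{\P^2,d}$. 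All intermediate surfaces remain $\A^1$-connected del Pezzo surfaces of degree $\ge 6$, so the invariants are defined and \cref{deghyp1-part} persists. The base case $r=0$ is handled by the floor-diagram computation of \cite{PMPR-QuadraticallyEnrichedPlane}, which exhibits $Q_{\P^2,d}$ on multiquadratic algebras --- hence everywhere --- as a $\beta$-integral invariant, necessarily equal to $\V_d$ by Step 2. Since $\spl_j$ and its inverse operations carry the $\beta$-basis to integral combinations of $\beta$-basis vectors, $\beta$-integrality propagates back up the recursion; and because $\V_{\n,\dbar}$ obeys the same recursion by \cref{prop:ABWW}, one in fact obtains $Q_{\n,\dbar}=\V_{\n,\dbar}$ on all multiquadratic tuples, hence on $\Et_{n_0}\times\widetilde{\Et}_\n$ by \cref{remBetaBasis}.

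\emph{Main obstacle.} The technical heart is the bookkeeping of the Abramovich--Bertram reduction: one must verify that the induction genuinely terminates at toric surfaces for \emph{every} $\dbar$ in the degree-$\ge 6$ range, that the running hypotheses ($\A^1$-connectedness, del Pezzo of degree $\ge 6$, \cref{deghyp1-part}, enumerativity) survive all the blow-ups and blow-downs involved, and --- most crucially --- that the non-integral factor $2\lra{1}-\Tr(\E)$ appearing in the formula is absorbed so as to yield $\beta$-integrality rather than merely the $S$-integrality already obtained in Step 1. A secondary issue is the descent needed in Step 1 to realize the surfaces over complete discrete valuation rings meeting the hypotheses of \cref{propQuadraticUnramified}, which is again controlled by \cref{lemdelPezzo6}.
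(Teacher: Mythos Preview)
Your approach is essentially the paper's: reduce to $\beta$-integrality via multireal signatures, establish it over $\Q$ via Abramovich--Bertram plus floor diagrams, then handle positive characteristic by lifting over complete DVRs.

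Two points where the paper is sharper than your Step~3. First, the floor-diagram computation (\cref{thm:FD}, \cref{thm:qGWtoric integral}) already gives $\beta$-integrality for the full toric case $\n=(1,1,1)$ with arbitrary $\dbar=(d_0,d_1,d_2,d_3)$, so no further reduction to $r=0$ is needed --- in particular there is no ``quadratic Welschinger formula'' lowering the $d_j$ in the paper, and your two-parameter induction on $\sum_j n_j$ and on $d_j$ does not match the AB recursion, which keeps $|\n|=3$ throughout. Second, the case $\n=(3)$ is not reached by an inductive AB step but by the odd-degree isomorphism $\Inv(n_0,3)\cong\Inv(n_0,2,1)$ of \cref{remOddN}, which identifies $Q_{(3),(d_0,d_1)}$ with the specialization $Q_{(2,1),(d_0,d_1,d_1)}(\cdot,\cdot,K)$ and carries $\beta$-bases to $\beta$-bases. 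With these corrections a \emph{single} application of \cref{lm:generalizeABQ6.6} suffices for $\n=(2,1)$: since $2\lra{1}-\Tr(\E)=2\beta_0-\beta_1\in\Inv(2)$ is manifestly $\beta$-integral and $Q_{(1,1,1),\dbar'}$ is $\beta$-integral for all $\dbar'$, so is $Q_{(2,1),\dbar}$ --- which dissolves your ``main obstacle'' about absorbing the factor $2\lra{1}-\Tr(\E)$.
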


The proof of \cref{thm:dP6} goes in two steps. First,  thanks to the quadratic Abramovich--Bertram formula from  \cite{Brugalle-WickelgrenABQ},  we reduce to the study of $X$ a toric del Pezzo surface over $k$, that is, to the specialization 
$Q'_{\n, \dbar} := Q_{\n, \dbar}^{((0,\m))} \in \Inv_{2,3}(n_0)$
of $Q_{\n, \dbar}$ for $(A_1, \dots, A_r) = (k^{n_1}, \dots, k^{n_r})$. (We remind the reader the notation $\alpha^{(\i)}$ from \cref{defCuttingHoms} for a Witt invariant $\alpha$.)
Next, we prove  that $Q'_{\n, \dbar}$ is $\beta$-integral 
using the tropical computation of quadratic Gromov--Witten invariants from  \cite{PMPR-QuadraticallyEnrichedPlane}. 
Since our setup is slightly different from \cite{PMPR-QuadraticallyEnrichedPlane}, we start by recalling in \cref{sec:fd} the floor diagram algorithm to compute quadratic Gromov--Witten invariants of toric del Pezzo surfaces.
The proof of \cref{thm:dP6} is given in \cref{sec:proofDP6}.

\subsection{Floor diagrams}\label{sec:fd}
We briefly recall the definition of floor diagrams and their markings following \cite{BruMik09}, and define their multiquadratic multiplicities following \cite{PMPR-QuadraticallyEnrichedPlane}. We refer to these two papers for more details and examples.
A (finite, oriented) \emph{graph} $G$ is the data of a finite set $\ve(G)$, the \emph{vertices} of $G$, a subset $\ed(G)$ of  $\ve(G)\times \ve(G)$, the \emph{bounded (oriented) edges} of $G$, and two finite multisets $\ed^{-\infty}(G)$ and $\ed^{+\infty}(G)$ made of elements of $\ve(G)$, the \emph{sources} and \emph{sinks} of $G$, respectively. 
Elements of $\ed(G)\cup \ed^{\pm\infty}(G)$ are referred to \emph{edges} of $G$.
We consider a bounded edge $(v_1,v_2)$ as oriented from $v_1$ to $v_2$ and a source or a sink adjacent to the vertex $v$ as oriented towards or outwards from $v$, respectively. 
A graph is called a \emph{tree} if it is connected and $|\ve(G)|-|\ed(G)|=1$.
A \emph{weighted graph} is a graph $G$ together with a \emph{weight function}
$\omega:\ed(G)\rightarrow \N \setminus \{0\}$. 
In addition, 
we declare the weight of a all sources and sinks to be $1$. 
We define the \emph{divergence} $\div(v)$ of a vertex $v \in \ve(G)$ as the sum of the 
weights of all its incoming edges
minus the sum of the weights of all its outgoing edges.

\begin{defi}\label{def fd}
    Let $d_0>d_1\ge d_2\ge d_3\in \N$ such that $d_0-d_1-d_2\ge 0$. 
A  floor diagram
 $\D$  of class $(d_0,d_1,d_2,d_3)$
is the data of a weighted oriented tree
$G$ and a map $\theta : \ve(G)\to \{0,1\}$ which satisfy
the
  following conditions

\begin{itemize}
\item $|\ve(\D)|=d_0-d_1$;
\item there are exactly $d_0-d_2-d_3$ and $d_1$ edges in
  $\ed^{-\infty}(G)$ and $\ed^{+\infty}(G)$, respectively;
\item the function $\theta$ takes value $0$ exactly $d_0-d_1-d_3$ times, and value $1$ exactly $d_3$ times;
  \item the function $\theta+\div$ takes value $0$ exactly $d_0-d_1-d_2$ times, and value $1$ exactly $d_2$ times.
\end{itemize}
\end{defi}

\begin{remark}\label{rem:poly to surface}
In the terminology of \cite{BruMik09}, a floor diagram of class $(d_0,d_1,d_2,d_3)$ is a floor diagram of genus 0 and Newton polygon depicted in Figure \ref{fig:poly}. 
\begin{figure}[h!]
\centering
\includegraphics[height=3.5cm, angle=0]{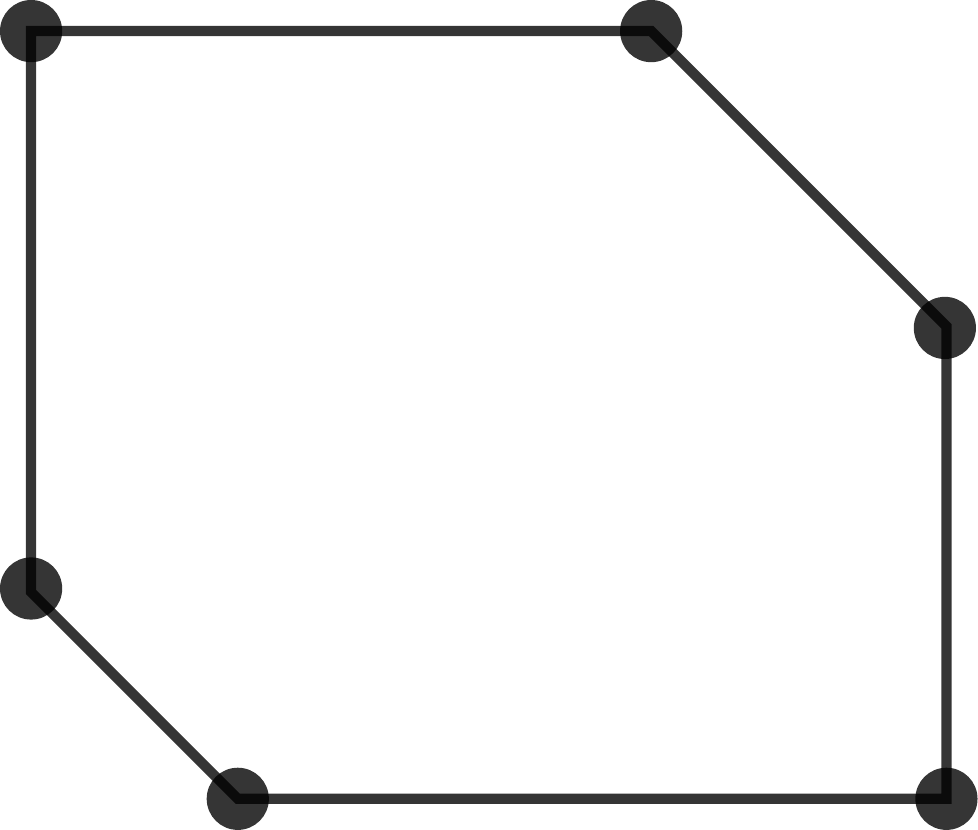}
\put(-150,25){$(0,d_3)$}
\put(-173,93){$(0,d_0-d_1)$}
\put(-105,-12){$(d_3,0)$}
\put(-30,93){$(d_1,d_0-d_1)$}
\put(-30,-12){$(d_0-d_2,0)$}
\put(3,57){$(d_0-d_2,d_2)$}
\caption{}
\label{fig:poly}
\end{figure}
This polygon corresponds to the blow-up $X = \P^2_{3,0}$ of $\P^2_\R$
in $3$ real points and the divisor $D = d_0 L - d_1 E_1- d_2 E_2 - d_3 E_3$. 
\end{remark}

Given a floor diagram $\D = (G, \theta)$, by abuse of notation
we also denote by $\D$ the disjoint
union of $\ve(G)$, $\ed(G)$, $\ed^{+\infty}(G)$ and $\ed^{-\infty}(G)$.
This is a partially ordered set of size $n = (d_0-d_1) + (d_0 - d_1 -1) + d_1 + (d_0 - d_2 - d_3) =
3d_0-d_1-d_2-d_3-1$, where $x \leq y$ if there is an oriented path in $G$ connecting
$x$ to $y$.

\begin{defi}\label{def marking}
Fix $s, r \in \N$ such that $2s +r = n$. A $s$-marking of a floor diagram $\D$ of class $(d_0,d_1,d_2,d_3)$  is an monotone map 
$\varphi  : \D \to \{1,\dots,s+r\}$ such that for all $j \in \{1, \dots, r\}$ 
\[
  |\varphi^{-1}(j)| = 
	\begin{cases}
	  2 & \text{if } j \leq s, \\
		1 & \text{if } j > s. 
	\end{cases}
\]
\end{defi}

A floor diagram enhanced with a $s$-marking is called a \textit{$s$-marked floor
  diagram}.

\begin{defi}
Two $s$-marked floor diagrams $(\D,\varphi)$ and $(\D',\varphi')$ are called
\emph{equivalent} if there exists an isomorphism of oriented trees
$\psi : \D\to\D'$ such that $w=w'\circ \psi$, $\theta=\theta'\circ
\psi$, and $\varphi=\varphi' \circ\psi$.
\end{defi}

Given a $s$-marked floor diagram $(\D, \varphi)$, 
we use the shorthand $P_j = \varphi^{-1}(j)$.
We next introduce a Witt-invariant multiplicity
for $(\D,\varphi)$ which only depends on its equivalence class.

By the monotonicity of $\varphi$, for $j \in \{1, \dots, s\}$
the pair $P_j$ consists either of a vertex and edge adjacent to each other
or two incomparable elements in $\D$. We denote by $V$ and $\Im$ the subsets
of indices of the first and second type, respectively. 
For $j \in V$, we set $\omega(j) = \omega(e)$ where $e$ is the edge in $P_j$. 

An automorphisms $\psi \in \Aut(\D, \varphi)$ necessarily satisfies $\psi(P_j) = P_j$
for all $0 \leq j \leq s+r$. Hence it is completely determined by the subset 
$J \in \{1, \dots, s\}$ labelling those $P_j$ whose elements get exchanged by $\psi$. 
We can therefore regard $\Aut(\D, \varphi)$ as a subgroup of the power set of 
$\{1, \dots, s\}$ equipped with the
operation $J \triangle J = (J \cup J')\setminus (J\cap J')$.
We denote by $\T$ the inclusion-minimal non-empty elements
in this subgroup. 
It follows that the elements of $\T$ are pairwise disjoint and
that $\Aut(\D, \varphi) \cong (\Z/2\Z)^{\T}$ canonically.

Given $T \in \T$, the associated automorphism $\psi_T$ exchanges two isomorphic (connected) 
subtrees of $G$ while keeping
the rest of $G$ fixed. We therefore call $T$ a twin tree
and $j \in T$ a twin edge/vertex/source/sink 
whenever $P_j$ consists of edges/vertices/sources/sinks of $G$.
We denote by $\omega^{\pm\infty}_T$ the number of twin sinks and twin sources in $T$, 
respectively. 
Moreover, there is a unique twin edge $j \in T$ such that 
the edges in $P_j$ are adjacent to the same vertex $v$ and $\varphi(v) \notin T$;
we call $j$ the \emph{twin root} of $T$ and set 
$\omega^\infty_T = \omega(j) + \omega^{+\infty}_T + \omega^{-\infty}_T$ where
$\omega(j) = \omega(e)$ for $e \in P_j$.

Setting $C = \Im \setminus \bigcup_{T\in \T} T$, 
we obtain a partition $\{1, \dots, s\} = (\bigsqcup_{T \in \T} T) \sqcup C \sqcup V$. 
This partition depends only on the equivalence class 
of $(\D, \varphi)$.

We denote by $t_1, \dots, t_s$ 
the elements of $\Inv(\Sq_s)$
given by
$t_j(\delta) = \Tr \E_{\delta_j} = \lra{2,2\delta_j} \in \W(K)$
for any $K \in \Fields$ and $\delta = (\delta_1,\ldots,\delta_s) \in \Sq_s(K)$.
For $\omega \in \N$, $j \in \{1, \dots, s\}$ and $J \subset \{1, \dots, s\}$, we set
\begin{align*}
  t_J &= \prod_{j\in J} t_j, & u_j &= 2 \lra{1} - t_j, & u_J &= \prod_{j\in J} u_j, 
\end{align*}
\begin{align*}
  [\omega]_j  :=
	  \begin{cases}
		  \lra{1} +\frac{\omega-1}{2} u_j& \text{ if $\omega$ is odd,} \\
		  \frac{\omega}{2}u_j & \text{ if $\omega$ is even.} 
		\end{cases}
\end{align*}
We note that $t_J, u_J, [\omega]_j$ lie in $\Z[t_1, \dots, t_s]$
and satisfy the following simple rules:
$t_j^2 = 2 t_j$, $u_j^2 = 2 u_j$ $t_j u_j = 0$, and $[\omega]_j \cdot [\omega']_j = [\omega \omega']_j$.
Furthermore we have $t_J \pm u_J \in 2\Z[t_1, \dots, t_s]$, and
we denote by  $(t_J \pm u_J)/2$ the unique elements in $\Z[t_1, \dots, t_s]$  which give 
$t_J \pm u_J$ when multiplied by $2$.

\begin{definition}\label{def:mult}
  Let $(\D, \varphi)$ be a $s$-marked floor diagram 
	and define $V$, $C$ and $\T$ as above. 
    Then $(\D, \varphi)$ 
		is \emph{essential} if for any bounded edge $e$
		of even weight we have $\varphi(e) \in T$ for some $T \in \T$
		(that is, even weights only occur in the twin trees). 
		
	If $(\D, \varphi)$ is essential, we define its 
	\emph{quadratic multiplicity} $\mu(\D, \varphi) \in \Inv(\Sq_s)$ as 
	  \[
      \mu(\D, \varphi) = t_C
			  \prod_{\substack{e \in \ed(G)\\ \varphi(e) \leq s}} [\omega(e)]_{\varphi(e)}
			  \prod_{T \in \T}  \frac{t_T + (-1)^{\omega^\infty_T} u_T}{2}.
		\]
\end{definition}

Importantly, the discussion above implies that $\mu(\D, \varphi) = \mu(\D', \varphi')$
if $(\D, \varphi)$ and $(\D', \varphi')$ are equivalent. 
Note also that the second product can be restricted to $\varphi(e) \in \{1, \dots, s\} \setminus C$
since $t_j [\omega]_j = t_j$ when $\omega$ is odd. 

We now give the following generalization of \cite[Theorem 10.13]{PMPR-QuadraticallyEnrichedPlane}, with the assumption of sufficiently large
characteristic  in \emph{loc.\ cit.} removed. 
Let us summarize the setting:
We set $S = \{2,3\}$. 
We fix $\dbar \in \N^4$ such that $d_0>d_1\ge d_2\ge d_3\in \N$ and $d_0-d_1-d_2\ge 0$
and set $n = 3d_0-d_1-d_2-d_3-1$ and $m = \lfloor n/2 \rfloor$.
We denote by $Q'_{\dbar} \in \Inv_S(\Sq_m)$ the restriction of $Q_{(1,1,1), \dbar} \in \Inv_S(n,1,1,1) = \Inv_S(n)$ to multi-quadratic algebras.
Furthermore, we fix $0 \leq s \leq m$
and denote by $Q'^{(m-s)}_{\dbar} \in \Inv_S(\Sq_s)$ the invariant
given by $Q'^{(m-s)}_{\dbar}(\delta_1, \dots, \delta_s) = Q'_{\dbar}(\delta_1, \dots, \delta_s, 1, \dots, 1)$.

\begin{thm}\label{thm:FD}
    In the setting just described, we have
\[
 Q'^{(m-s)}_{\dbar} = \sum_{[(\D,\varphi)]} \mu(\D,\varphi) \in \Inv_S(\Sq_s),
\]
  where the sum ranges over all equivalence classes of essential  $s$-marked floor diagrams $(\D,\varphi)$ of class $(d_0,d_1,d_2,d_3)$. 
\end{thm}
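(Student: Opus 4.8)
The plan is to reduce the statement to the floor diagram count established in \cite{PMPR-QuadraticallyEnrichedPlane} over fields of sufficiently large characteristic, and then use the unramifiedness/specialization machinery developed in Sections~\ref{secWittInvariants}--\ref{sec:quadGW} to propagate it to characteristic $5$, $7$, $11$, \dots (everything away from $S=\{2,3\}$). Concretely, \cite[Theorem 10.13]{PMPR-QuadraticallyEnrichedPlane} gives the displayed identity for the invariant $Q'^{(m-s)}_{\dbar}$ over any field $K$ of characteristic $0$ or of characteristic larger than some explicit bound $N(\dbar)$; moreover the right-hand side $\sum_{[(\D,\varphi)]} \mu(\D,\varphi) \in \Inv_S(\Sq_s)$ is, by construction, a \emph{single} Witt invariant defined uniformly over all of $\Fields_S$ (its coefficients $t_J, u_J, [\omega]_j$ live in $\Z[t_1,\dots,t_s]$, so in particular in $\W(\Z[S^{-1}])[t_1,\dots,t_s]$, independently of the characteristic). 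Call this right-hand side $F_{\dbar,s}\in \Inv_S(\Sq_s)$. What remains is to show that $Q'^{(m-s)}_{\dbar}$ and $F_{\dbar,s}$, which by the cited theorem already agree over $\Q$ and over all $\FF_p$ with $p>N(\dbar)$, in fact agree over \emph{every} $\FF_p$ with $p\notin S$.

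The key step is a ramification/specialization argument. First, $\P^2$ (and more generally the relevant toric del Pezzo surface $X=\P^2_{3,0}$, whose $\p_i$ are rational points, hence base changes of rational points over the prime field) is smooth and proper over $\Z[1/6]$, so by \cref{thm:QXDunramifiedWittawayS} the Witt invariant $Q_{(1,1,1),\dbar}\in\Inv_S(n)$, and hence its restriction $Q'_{\dbar}\in\Inv_S(\Sq_m)$ and its further cutdown $Q'^{(m-s)}_{\dbar}\in\Inv_S(\Sq_s)$, are unramified away from $S$. (Cutting down along $\E_1$-factors, i.e.\ applying $\alpha\mapsto\alpha^{(m-s)}$, preserves unramifiedness since it is induced by the natural transformation $A\mapsto A\oplus K^{2(m-s)}$.) On the other side, $F_{\dbar,s}$ has $\beta$-coefficients in $\Z[t_1,\dots,t_s]$-combinations that, after expressing in the $\beta$-basis of $\Inv(\Sq_s)$, lie in $\Z\subset\W(\Z[S^{-1}])$; thus $F_{\dbar,s}$ is $S$-integral and hence unramified by \cref{thmUnramifiedQuasiIntegral}. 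Now consider the difference $\Delta := Q'^{(m-s)}_{\dbar} - F_{\dbar,s}\in\Inv_S(\Sq_s)$: it is unramified, its $\Q$-component $\Delta_\infty$ is zero, and its $\FF_p$-components $\Delta_p$ vanish for $p>N(\dbar)$. By \cref{thmUnramifiedQuasiIntegral} (more precisely by the exact sequence in \cref{remGroupStr}, applied coefficient-wise to the $\beta$-coefficients of $\Delta$), an unramified Witt invariant is determined by its $\Q$-component together with the vanishing of the second residues $\delta_p$; since $\Delta_\infty=0$ forces each $\beta$-coefficient of $\Delta$ to be $0$ in $\W(\Q)$, and the coefficients lie in $\W(\Z[S^{-1}])\hookrightarrow\W(\Q)$ injectively, we conclude $\Delta=0$. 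This gives the equality over all of $\Fields_S$, which is the assertion since $\Inv_S(\Sq_s)=\prod_{k\in\PP\setminus S}\Inv_k(\Sq_s)$.

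I would organize the write-up as follows. First, invoke \cite[Theorem 10.13]{PMPR-QuadraticallyEnrichedPlane} to get the identity over $\Q$ and over large-characteristic prime fields, being careful to note that the combinatorial side is manifestly a well-defined element of $\Inv_S(\Sq_s)$ (this requires only checking that Definition~\ref{def:mult} makes sense characteristic-freely, which it does: the only arithmetic used is $t_j^2=2t_j$, $u_j^2=2u_j$, $t_ju_j=0$, and dividing $t_J\pm u_J$ by $2$, all valid over $\Z$). Second, establish unramifiedness of $Q'^{(m-s)}_{\dbar}$ from \cref{thm:QXDunramifiedWittawayS} together with the observation that the restriction map $\Inv_S(n)\to\Inv_S(\Sq_m)$ and the cutdown $\Inv_S(\Sq_m)\to\Inv_S(\Sq_s)$ are given by pre-composition with natural transformations of functors, hence preserve the unramified property (this follows because the defining commuting square in \cref{def:unramified_inv} is functorial in the source functor). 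Third, establish $S$-integrality, hence unramifiedness, of $F_{\dbar,s}$, by computing that its $\beta$-coefficients are integers — here the precise normalizations of $[\omega]_j$ and of $(t_J\pm u_J)/2$ matter, but this is exactly the content of the algebra in \cite{PMPR-QuadraticallyEnrichedPlane} and need not be redone. Fourth, run the difference argument above.

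The main obstacle will be the third step made fully rigorous: one must verify that the combinatorial multiplicity $\sum_{[(\D,\varphi)]}\mu(\D,\varphi)$, when re-expanded in the $\beta$-basis of $\Inv(\Sq_s)^{\mathfrak S_s}$, has integer (not merely $\W(\Z[S^{-1}])$-valued) coefficients — equivalently, that $F_{\dbar,s}$ is genuinely $\beta$-integral, so that it is \emph{the} $\beta$-integral invariant whose multireal values one can read off and compare. This is where the interplay between $t_j=\lra{2,2\delta_j}$, $u_j=2\lra1-t_j$, and the projections $\beta_i$ must be handled with care; one route is to note that $t_j, u_j$ are the two primitive idempotent-like generators with $t_j+u_j=2\lra1$ and $t_ju_j=0$, so any polynomial in the $t_j$ with integer coefficients, when symmetrized, is an integer combination of the $\beta'_i$ (as in the proof that $\UInv_S(\n)$ is a subring, using $\Tr(A)^2=2\Tr(A)$). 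Granting that, the rest is the formal specialization argument, which is short. A secondary subtlety is to confirm that the hypothesis $d_0>d_1\ge d_2\ge d_3$, $d_0-d_1-d_2\ge 0$ is exactly the range in which $\P^2_{3,0}$ with $D=d_0L-d_1E_1-d_2E_2-d_3E_3$ is a del Pezzo surface satisfying \cref{deghyp1-part}, so that $Q_{(1,1,1),\dbar}$ is defined and \cref{thm:QXDunramifiedWittawayS} applies; this is a direct check on the lattice $\Pic(X)$.
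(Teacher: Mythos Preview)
Your approach is essentially the same as the paper's: reduce to characteristic zero by showing both sides are unramified away from $S=\{2,3\}$, then invoke \cite[Theorem 10.13]{PMPR-QuadraticallyEnrichedPlane}. The paper carries this out in exactly this way.

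There is one genuine omission. You write \enquote{invoke \cite[Theorem 10.13]{PMPR-QuadraticallyEnrichedPlane} to get the identity over $\Q$}, but the multiplicity in \cite[Definition 10.11]{PMPR-QuadraticallyEnrichedPlane} is \emph{not} literally the expression $\mu(\D,\varphi)$ of \cref{def:mult}: the twin-tree factor there has the form $\lra{2^{|T|+1}}\prod_j [\omega(j)^2]_j \sum_{|J|\equiv \omega_T^\infty}\prod_{j\in J}\lra{\delta_j}$, not $(t_T+(-1)^{\omega_T^\infty}u_T)/2$. Matching the two is the content of \cref{lemTreeMult}, a page-long computation that you have skipped. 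Without it, you have not connected the cited floor-diagram count to the displayed sum in \cref{thm:FD}. (The paper also checks that the non-essential diagrams contribute zero in $\W$, which you would likewise need.)

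Two of your stated obstacles are misplaced. First, for your unramifiedness argument you only need that $F_{\dbar,s}$ is unramified, and this is immediate once you observe $\mu(\D,\varphi)\in\Z[t_1,\dots,t_s]$: each $t_j=\Tr(\E_{\delta_j})$ is unramified (same argument as for $\Tr$ in the proof of \cref{thmUnramifiedQuasiIntegral}), and integer polynomials in unramified invariants are unramified. Your \enquote{main obstacle} about $\beta$-integrality of $F_{\dbar,s}$ is strictly stronger than what is needed here; it is the content of \cref{thm:qGWtoric integral}, which is a \emph{corollary} of \cref{thm:FD}, not an ingredient. Second, your \enquote{secondary subtlety} is not one: $\P^2_{3,0}$ (the blow-up of $\P^2$ in three rational points in general position) is a del Pezzo surface of degree $6$ regardless of $\dbar$, so \cref{deghyp1-part} holds automatically and \cref{thm:QXDunramifiedWittawayS} applies. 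The inequalities on $\dbar$ are only there so that the Newton polygon in \cref{rem:poly to surface} is sensible and the floor-diagram formalism applies.
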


Before proving \cref{thm:FD}, let us explain the main consequence of interest to us.

\begin{corollary} \label{thm:qGWtoric integral}
  Given $\dbar \in \N^4$,
	the quadratic invariant $Q_{(1,1,1), \dbar}$
	is $\beta$-integral, that is, $Q_{(1,1,1), \dbar} \in \UInv_S(n)$.
\end{corollary}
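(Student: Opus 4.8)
The plan is to read the $\beta$-coefficients of $Q_{(1,1,1),\dbar}$ directly off the floor-diagram formula \cref{thm:FD}. Keep the ambient notation $S=\{2,3\}$, $n=3d_0-d_1-d_2-d_3-1$, $m=\lfloor n/2\rfloor$ and the standing assumptions $d_0>d_1\ge d_2\ge d_3$, $d_0-d_1-d_2\ge 0$ of \cref{def fd} (the case of a general $\dbar\in\N^4$ reduces to this one by permuting $(d_1,d_2,d_3)$ and, if necessary, applying an automorphism of the toric del Pezzo surface inducing a Weyl group element). By \cref{thmQuadraticWitt-nonperfect} we have $Q_{(1,1,1),\dbar}\in\Inv_S(n)$, so by \cref{thmBetaBasis1var} there are uniquely determined $b_0,\dots,b_m\in\W(\PP\setminus S)$ with $Q_{(1,1,1),\dbar}=\sum_{i=0}^m b_i\,\beta_i^n$; the goal is to show that each $b_i$ lies in $\Z$.

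First I would pass to multiquadratic algebras. By \cref{thmBetaBasis1var} the restriction map $\Inv_S(n)\to\Inv_S(\Sq_m)$ sends $\beta_i^n$ to $\beta'^{m}_i=P^m_i(t_1,\dots,t_m)=\sum_{|J|=i}t_J$, in the notation of \cref{sec:fd}; hence the restriction $Q'_\dbar$ of $Q_{(1,1,1),\dbar}$ equals $\sum_{J\subseteq\{1,\dots,m\}}b_{|J|}\,t_J$ in $\Inv_S(\Sq_m)$. An argument identical to the one in the proof of \cref{thmBetaBasis1var} — expanding $t_j=\lra{2}\bigl(\lra{1}+\lra{\delta_j}\bigr)$ and noting that the resulting change of basis from the $\alpha'$-basis of \cite[Theorem 27.15]{Garibaldi-Serre-Merkurjev} is triangular with unit diagonal entries $\lra{2^{|J|}}$ — shows that $\{t_J\}_{J\subseteq\{1,\dots,m\}}$ is itself a $\W(\PP\setminus S)$-basis of $\Inv_S(\Sq_m)$. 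Consequently the coefficients $b_{|J|}$ displayed above are the unique coordinates of $Q'_\dbar$ in this basis.

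Finally I would invoke \cref{thm:FD} with $s=m$, giving $Q'_\dbar=\sum_{[(\D,\varphi)]}\mu(\D,\varphi)$, the sum ranging over equivalence classes of essential $m$-marked floor diagrams of class $(d_0,d_1,d_2,d_3)$. By \cref{def:mult} and the divisibility relations recorded just before it, every factor appearing in $\mu(\D,\varphi)$ — the monomial $t_C$, the terms $[\omega(e)]_j$ (which lie in $\Z\lra{1}+\Z t_j$ because $u_j=2\lra{1}-t_j$), and the expressions $(t_T\pm u_T)/2$ — lies in the subring $\bigoplus_{J}\Z\,t_J$ of $\Inv_S(\Sq_m)$; hence $\mu(\D,\varphi)\in\bigoplus_J\Z\,t_J$ for every diagram, and therefore $Q'_\dbar=\sum_J N_J\,t_J$ with all $N_J\in\Z$. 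Comparing with the second paragraph, uniqueness of coordinates in the basis $\{t_J\}$ forces $b_{|J|}=N_J\in\Z$, so $b_0,\dots,b_m\in\Z$ and $Q_{(1,1,1),\dbar}\in\UInv_S(n)$. Granting \cref{thm:FD}, this is pure linear algebra over $\W(\PP\setminus S)$; the one point that needs care is the compatibility of the two integral structures in play — the diagonal $\Z\hookrightarrow\W(\PP\setminus S)$ defining $\beta$-integrality versus the $\Z$-span of the $t_J$ on the floor-diagram side — which is exactly what identifying $\{t_J\}$ as a basis takes care of. The genuinely hard input, and hence the real obstacle, is \cref{thm:FD} itself.
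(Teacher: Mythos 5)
Your proof is correct and follows essentially the same path as the paper's: invoke \cref{thm:FD}, observe that each multiplicity $\mu(\D,\varphi)$ lies in $\Z[t_1,\dots,t_m]$ (equivalently the $\Z$-span of the $t_J$, using $t_j^2=2t_j$), note that the $t_J$ form a $\W(\PP\setminus S)$-basis of $\Inv_S(\Sq_m)$ restricting the $\beta$-basis, and conclude by uniqueness of coordinates. The one place you diverge slightly is the opening reduction: the paper disposes of general $\dbar\in\N^4$ by noting that permutations of $(d_1,d_2,d_3)$ plus the observation that $Q_{(1,1,1),\dbar}$ is \emph{trivially zero} whenever the inequalities $d_0>d_1$, $d_0-d_1-d_2\ge 0$ fail (no irreducible rational curves in the class), together with handling $d_0=1$ separately. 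Your appeal to a Weyl group automorphism of the degree-$6$ del Pezzo does not by itself cover the non-effective or degenerate classes — e.g.\ $\dbar=(2,2,1,0)$ is not brought into normal form by $S_3\times\Z/2$, and the right reason the invariant is integral there is simply that it vanishes. This is a minor patch; the core of the argument is identical.
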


\begin{proof}
    If $d_0=1$, the result is trivial. For $d_0\ge 2$, by symmetry and since otherwise $Q_{(1,1,1), \dbar}$ is trivially $0$,
		we can restrict to the case 
		$d_0>d_1\ge d_2\ge d_3\in \N$ and 
		$d_0-d_1-d_2\ge 0$.
		
		Since clearly $\mu(\D, \varphi) \in \Z[t_1, \dots, t_m]$
		for any essential marked floor diagram $(\D, \varphi)$, 
		it follows from \cref{thm:FD} 
		that $Q'_{\dbar} \in \Z[t_1, \dots, t_m] \subset \Inv_S(\Sq_m)$.
		Since $t_j^2 = 2t_j$, this is equivalent to being an integral linear combination
		of the $t_J$, $J \subset \{1, \dots, m\}$. The latter form 
		a $\W(\PP\setminus S)$-basis for $\Inv_S(\Sq_m)$
		(the canonical isomorphism $\Inv_S(\Sq_m) \cong \Inv_S(2,\dots, 2)$ with $m$ repetitions
		identifies the $t_J$ with the $\beta$-basis). 
		Finally, since the restriction map $\UInv_S(n) \to \Inv_S(\Sq_m)$
		is injective and 
		sends $\beta_j$ to $\beta'_j = \sum_{J : |J| = j} t_J$,
		see \cref{thmMultiquadraticFree} and \cref{thmBetaBasis1var}, 
		we conclude that $Q'_{\dbar} \in \Z[t_1, \dots, t_m]$
		implies $Q_{(1,1,1), \dbar} \in \Z[\beta_0, \dots, \beta_m] = \UInv_S(n)$.
	\end{proof}

In order to prove \ref{thm:FD}, we start with rewriting
the factor corresponding to a twin tree $T$ in accordance with the presentation given in \cite[Definition 10.11]{PMPR-QuadraticallyEnrichedPlane}. In next lemma, we use the notation $a\equiv b$ for two integers $a$ and $b$ equal modulo 2.

\begin{lemma} \label{lemTreeMult}
  Let $(\D, \varphi)$ be a essential $s$-marked floor diagram 
	and $T \in \T$ a twin tree. Then 
	\begin{equation}\label{eq:mut}
	  \mu(T) := \frac{t_T + (-1)^{\omega^\infty_T} u_T}{2}
		\prod_{\substack{e \in \ed(G) \\ \varphi(e) \in T}} [\omega(e)]_{\varphi(e)}
		=
		\lra{2^{|T| + 1}}
		\prod_{\substack{j \in T \\ \text{twin edge}}} [\omega(j)^2]_{j}
		\sum_{\substack{J\subset T\\ |J| \equiv \omega^\infty_T}}
		\prod_{j\in J} \lra{\delta_j}.
	\end{equation}
\end{lemma}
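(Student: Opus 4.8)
The plan is to peel off the twin-edge contribution and reduce the lemma to a single algebraic identity in $\Inv_S(\Sq_s)$. By the very structure of a twin tree, for every twin edge $j\in T$ the fibre $\varphi^{-1}(j)$ consists of exactly two bounded edges of $G$, both carrying the weight $\omega(j)$ — the twin root being one such twin edge. Hence, using the multiplicativity rule $[\omega]_j[\omega']_j=[\omega\omega']_j$,
\[
  \prod_{\substack{e\in\ed(G)\\ \varphi(e)\in T}}[\omega(e)]_{\varphi(e)}
  \;=\;\prod_{\substack{j\in T\\ \text{twin edge}}}[\omega(j)]_j\,[\omega(j)]_j
  \;=\;\prod_{\substack{j\in T\\ \text{twin edge}}}[\omega(j)^2]_j ,
\]
which is exactly the twin-edge product on the right-hand side of \eqref{eq:mut}. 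So everything reduces to the identity
\[
  \frac{t_T+(-1)^{\omega^\infty_T}u_T}{2}
  \;=\;\lra{2^{|T|+1}}\sum_{\substack{J\subseteq T\\ |J|\equiv\omega^\infty_T}}\prod_{j\in J}\lra{\delta_j}.
\]

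To prove this I would expand both sides. The key point is that the left-hand side must be computed by first dividing by $2$ inside the polynomial ring $\Z[t_1,\dots,t_s]$: expanding $u_T=\prod_{j\in T}(2\lra1-t_j)$ by the binomial theorem and adding $t_T$, the coefficient of the squarefree monomial $t_T$ is $1+(-1)^{\omega^\infty_T+|T|}\in\{0,2\}$, while the coefficient of each proper subproduct $t_A$, $A\subsetneq T$, already carries a factor $2^{|T|-|A|}\ge2$; so $\tfrac12\!\left(t_T+(-1)^{\omega^\infty_T}u_T\right)$ is a genuine integral polynomial in the $t_j$. Passing now to $\Inv_S(\Sq_s)$, I would substitute $t_j=\lra2(\lra1+\lra{\delta_j})$ and, using the identity $2\lra1=2\lra2$ in $\W$ (equivalently $\lra{1,1}\cong\lra{2,2}$), rewrite $u_j=2\lra1-t_j$ as $\lra2(\lra1-\lra{\delta_j})$, whence
\[
  t_T+(-1)^{\omega^\infty_T}u_T
  \;=\;\lra2^{\,|T|}\Bigl(\,\prod_{j\in T}(\lra1+\lra{\delta_j})+(-1)^{\omega^\infty_T}\prod_{j\in T}(\lra1-\lra{\delta_j})\,\Bigr).
\]
Expanding the two products over subsets and using $\lra{\delta_j}^2=\lra1$, the coefficient of $\prod_{j\in J}\lra{\delta_j}$ is $1+(-1)^{\omega^\infty_T+|J|}$, which equals $2$ exactly when $|J|\equiv\omega^\infty_T$ and $0$ otherwise. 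Dividing by $2$ and matching the power of $\lra2$ gives the identity; an induction on $|T|$ that removes one element $j_0$ and uses $t_T=t_{T\setminus\{j_0\}}t_{j_0}$, $u_T=u_{T\setminus\{j_0\}}u_{j_0}$ keeps this last step organized.

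I expect the main obstacle to be precisely the bookkeeping forced by the $2$-torsion of $\Inv_S(\Sq_s)$ (for instance $2(\lra1-\lra2)=0$): since $2x=2y$ no longer implies $x=y$, one cannot divide by $2$ after substituting, and the identity has to be anchored by the integral computation in $\Z[t_1,\dots,t_s]$ and then transported carefully, keeping track of which representative modulo $2$-torsion one lands on — this is what pins down the normalizing factor $\lra{2^{|T|+1}}$. All the remaining steps are routine manipulations with the relations $t_j^2=2t_j$, $u_j^2=2u_j$, $t_ju_j=0$, $t_j+u_j=2\lra1$ and $[\omega]_j[\omega']_j=[\omega\omega']_j$, together with the fact (\cref{thmBetaBasis1var}) that the products $\prod_{j\in J}\lra{\delta_j}$ form a $\W(\PP\setminus S)$-basis of $\Inv_S(\Sq_s)$, which allows one to check the identity coefficient by coefficient.
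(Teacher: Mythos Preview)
Your reduction step is where the argument breaks. After correctly identifying
\[
  \prod_{\substack{e\in\ed(G)\\ \varphi(e)\in T}}[\omega(e)]_{\varphi(e)}
  \;=\;\prod_{\substack{j\in T\\ \text{twin edge}}}[\omega(j)^2]_j,
\]
you cancel this common factor and claim the lemma reduces to
\[
  \frac{t_T+(-1)^{\omega^\infty_T}u_T}{2}
  \;=\;\lra{2^{|T|+1}}\sum_{\substack{J\subseteq T\\ |J|\equiv\omega^\infty_T}}\prod_{j\in J}\lra{\delta_j}.
\]
This identity is simply false when $\omega^\infty_T\equiv|T|\pmod 2$. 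Take $|T|=1$, $\omega^\infty_T$ odd: the left side is $t_j-\lra{1}=\lra{2,2\delta_j,-1}$ while the right side is $\lra{\delta_j}$; over $\Q$ with $\delta_j=3$ these differ (compare Hasse invariants at $3$). The paper's computation shows that in this parity case the two sides of your reduced identity differ by the $2$-torsion element $(\lra{1}-\lra{2})\,t_T$. So the twin-edge product is not a bystander you can peel off: it is a zero divisor, and cancelling it loses information.

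What actually makes the lemma true is a combinatorial input you have not used. A short Euler-characteristic count on one of the two isomorphic subtrees shows that the number of twin vertices plus the number of twin edges in $T$ is even, hence $\omega^\infty_T\equiv|T|\pmod 2$ if and only if the \emph{root} weight $\omega(j_0)$ is even. In that case $[\omega(j_0)^2]_{j_0}=\tfrac{\omega(j_0)^2}{2}\,u_{j_0}$ appears in the twin-edge product, and the relation $u_{j_0}t_{j_0}=0$ annihilates the extra $(\lra{1}-\lra{2})\,t_T$ term. This is precisely how the paper closes the gap; without invoking the even-root-weight fact, the $2$-torsion ambiguity you flag at the end is not merely a bookkeeping nuisance but an actual obstruction, and your proposed substitution-then-divide maneuver (which, incidentally, lands on $\lra{2^{|T|}}$ rather than $\lra{2^{|T|+1}}$) cannot resolve it.
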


\begin{proof}
  First, note that
	\[
	  \prod_{\substack{e \in \ed(G) \\ \varphi(e) \in T}} [\omega(e)]_{\varphi(e)}
		= 
		\prod_{\substack{j \in T \\ \text{twin edge}}} [\omega(j)^2]_{j}
	\]
	because $\omega(e) = \omega(e')$ for $\{e,e'\}=P_j$ with $j$ a twin edge, and 
	$[\omega]^2_j = [\omega^2]_j$ for all $\omega \in \N$. 
  Since $\lra{\delta_j} = \lra{2}(t_j-\lra{2})$ (we consider $\lra{\delta_j}$
	as the obvious element of $\Inv(\Sq_m)$ here), we 
	can rewrite the remaining parts of the right hand side as
	\begin{align*}
		\lra{2^{|T| + 1}}
		  \sum_{\substack{J\subset T\\ |J| \equiv \omega^\infty_T}}
		  \prod_{j\in J} \lra{\delta_j} 
		&= 
		  \lra{2^{\omega^\infty_T + |T| + 1}}
		  \sum_{\substack{J\subset T\\ |J| \equiv \omega^\infty_T}}
		  \prod_{j\in J} (t_j-\lra{2}) \\
		&=
		  \lra{2^{\omega^\infty_T + |T| + 1}}
		  \sum_{I\subset T} t_I
			\left(
			\sum_{\substack{I \subset J\subset T\\ |J|\equiv \omega^\infty_T}}
			\lra{(-2)^{|J|-|I|}} 
			\right) \\
		&=
		  (-1)^{\omega_T^\infty} \sum_{I\subset T} (-1)^{|I|} t_I \lra{2^{|T|-|I|-1}} 
			\left|\left\{
			\begin{array}{c}
			  I \subset J\subset T\\ |J|\equiv \omega^\infty_T 
			\end{array}
			\right\}\right|.
	\end{align*}
	We now distinguish two cases. If $I \subsetneq T$, 
	then the cardinality appearing as the last term
	is $2^{|T| - |I| - 1}$ (one half of all subsets of $T \setminus I$). 
	If $I = T$, then the cardinality is either $1$ (if $\omega^\infty_T \equiv |T|$)
	or $0$ (if $\omega^\infty_T \equiv |T|+1$). 
	Denoting $t_T^? = t_T$ or $t_T^? = 0$ accordingly and using 
	$2\lra{2} = 2$, we can therefore deduce
	\begin{align*} 
		\lra{2^{|T| + 1}}
		  \sum_{\substack{J\subset T\\ |J| \equiv \omega^\infty_T (2)}}
		  \prod_{j\in J} \lra{\delta_j} 
		&= 
		  (-1)^{\omega_T^\infty}
			\Big(
			(-1)^{|T|} \lra{2}  t_T^?  +
			\sum_{I\subsetneq T} 2^{|T|-|I|-1} (-1)^{|I|} t_I 
			\Big) \\
		&= 
		  (-1)^{\omega_T^\infty}
			\Big(
			(-1)^{|T|} \lra{2}  t_T^?  +
			\frac{u_T - (-1)^{|T|} t_T}{2}
			\Big) \\
		&= 
		  (-1)^{\omega_T^\infty+|T|+1}
			\Big(
			- \lra{2}  t_T^?  +
			\frac{t_T + (-1)^{|T|+1}u_T}{2}
			\Big).
	\end{align*}
	If $\omega^\infty_T \equiv |T| + 1$  the claim follows, 
	so let us suppose $\omega^\infty_T \equiv |T|$. 
	In this case, the fact that $T$ is a tree implies
	that the weight $\omega$ of the pair of edges forming the root edge $j \in T$ 
	has to be even, say $\omega = 2k$.  
	Therefore, its contribution to the edge product is 
	$[4k^2]_j = 2 k^2 u_j$.
	Again using that $2\lra{2} = 2$ we find that the total left hand side of \eqref{eq:mut}
	gives 
	\begin{multline*}
		  (-1)^{\omega_T^\infty+|T|+1}
			\prod_{\substack{j \in T \\ \text{twin edge}}} [\omega(x_{2j})^2]_{j}
			\Big(
			- \lra{2}  t_T  +
			\frac{t_t + (-1)^{|T|+1}u_T}{2}
			\Big) 
			\\ = 
			\prod_{\substack{e \in \ed(G) \\ \varphi(e) \in T}} [\omega(e)]_{\varphi(e)}
			\Big(
			  t_T  -
			\frac{t_T + (-1)^{|T|+1}u_T}{2}
			\Big) 
			= \frac{t_T + (-1)^{\omega^\infty_T} u_T}{2}
		\prod_{\substack{e \in \ed(G) \\ \varphi(e) \in T}} [\omega(e)]_{\varphi(e)}
	\end{multline*}
	which proves the claim. 
\end{proof}

\begin{proof}[Proof of \cref{thm:FD}]
    Since both sides of 
		the equation in \cref{thm:FD} are unramified away from $S = \{2,3\}$,
		it suffices to prove the equality for characteristic 0, that is, 
		for $K \in \Fields_\Q$ and $\delta = (\delta_1, \dots, \delta_s) \in \Sq_s(K)$.
		Now, it follows from  \cite[Theorem 10.13]{PMPR-QuadraticallyEnrichedPlane}
		(together with 
		\cite[Proposition 5.4]{PMPR-QuadraticallyEnrichedPlane})
		that $Q'_{\dbar}$ can be computed 
		as a weighted count over equivalence classes of 
		$s$-marked floor diagrams (floor diagrams with $s$ merged points in \emph{loc.\ cit.})
		and it remains to compare multiplicities. 
Let us denote by $\mu'(\D, \varphi) \in \W(K)$ 
the image of the multiplicity of $(\D,\varphi)$ from \cite[Definition 10.11]{PMPR-QuadraticallyEnrichedPlane} 
under the map $\WG(K) \to \W(K)$. 
 One has $\mu'(\D, \varphi) = 0$ 
for  a non-essential $(\D,\varphi)$ and 
$\mu'(\D, \varphi) = \mu(\D, \varphi)(\delta)$ 
for essential $(\D,\varphi)$
using \cref{lemTreeMult}.
We conclude that the weighted count
mentioned above is equal to our weighted count
in \cref{thm:FD}. This proves the claim. 
\end{proof}

\begin{example}
  Note that $\mu(T)$ can be rewitten as follows. We set $\omega(T) :=
	 \prod_{e} \omega(e)$ where the product runs through all
	 $e \in \ed(G)$ such that $\varphi(e) \in T$.
	Then, using $t_j [\omega]_j = 0$ for $\omega$ even and $u_j [\omega]_j = \omega u_j$,
	one easily checks 
	\[
	  \mu(T) = 
		  \begin{cases}
			  (t_T + (-1)^{\omega_T^\infty} \omega(T) u_T)/2 & \text{if $\omega(T)$ odd}, \\
			  (-1)^{\omega_T^\infty} \omega(T) u_T/2 & \text{if $\omega(T)$ even}.				
			\end{cases}
	\]
\end{example}

\begin{exa}
   When $K=\R$, 
	the multiplicity $\mu(\D,\varphi)(-1, \dots, -1) \in \W(\R)\cong\Z$ equals  
	the sum of the corresponding real multiplicity in the sense of 
	\cite[Definition 3.8]{BruMik09} over all equivalence classes of
	conventionally marked floor diagrams 
	associated to $(\D,\varphi)$. 
	Therefore, \cite[Theorem 3.9]{BruMik09} 
	confirms that the multireal values of $Q_{\dbar}$
	are given by the Welschinger invariants $\Wel_{\P^2_{3,0}}(\dbar;s)$.
\end{exa}

\begin{remark} 
  \cref{thm:FD} also holds when the notion of $s$-marking is 
	modified as follows: Given a fixed subset $A \subset \{1, \dots, s+r\}$
	of size $s$,
	we now require that a $s$-marking $\varphi \colon \D \to \{1, \dots, s+r\}$
	satisfies $|\varphi^{-1}(j)| =2$ for $j \in A$ and $|\varphi^{-1}(j)| =1$ otherwise.
	As in the real case, the equality of the counts for different choices of $A$
	is not very transparent from a combinatorial point of view. 
\end{remark}

\begin{remark} 
  Passing from $s$ to $s+1$ 
	can be understood quite easily 
	in our setting, c.f.\ \cite[Section 5]{PMPR-QuadraticallyEnrichedPlane}.
	
	Fix a essential floor diagram $\D$. 
	First, note that the map
	$\{1, \dots, s+r\} \to \{1, \dots, s+r-1\}$
	which sends $s+1, s+2 \mapsto s+1$ and is otherwise 
	strictly monotone transforms an $s$-marking $\varphi$
	into an $s+1$-marking denoted by $\Phi(\varphi)$.
	Moreover, $\Phi$ compatible with equivalence on both sides. 
	Therefore, given an $s+1$-marking $\varphi$, it remains
	to show that $\mu(\D, \varphi)$ evaluated in $\delta_{s+1} = 1$
	is equal to the sum of $\mu(\D, \varphi)$ taken over
	the essential $\varphi$ in $\Phi^{-1}(\varphi)$ modulo equivalence. 
	We denote by $V$, $C$ and $\T$ the partition of $\{1, \dots, s+1\}$
	given by $\varphi$. 
	There are three cases:
	\begin{enumerate}
		\item 
		  If $s+1 \in V$: In this case, there is a single $s$-marking $\varphi'$ such 
			that $\Phi(\varphi') = \varphi$ since the two elements
			in $\PP_{s+1}$ are comparable in $G$. 
			The partition of $\varphi'$ only changes in $V' = V \setminus \{s+1\}$.
			Finally, since $[\omega]_{s+1}(\dots, 1) = 1$ for $\omega$ odd, 
			we get $\mu(\D, \varphi)(\dots, 1) = \mu(\D, \varphi')$.
		\item
		  If $s+1 \in C$: In this case, there are two non-equivalent $s$-markings
			$\varphi_1$ and $\varphi_2$ over $\varphi$ (which of course only differ
			by exchanging the labels $s+1$ and $s+2$). 
			They are of equal multiplicity, have the same partition except for $C' = C \setminus \{s+1\}$
			and since $t_{s+1}(\dots, 1) = 2$ we get $\mu(\D, \varphi)(\dots, 1) = \mu(\D, \varphi_1) +
			 \mu(\D, \varphi_2)$.
		\item 
		  If $s+1 \in T$ for some $T \in \T$: 
			Again, there are two  $s$-markings
			$\varphi_1$ and $\varphi_2$ over $\varphi$ 
			but they are equivalent via the automorphism $\psi_T$ of $\D$.
			The partition for $\varphi_1$ (and $\varphi_2$) is given
			by $\T' = \T \setminus \{T\}$ and $C' = C \cup T \setminus \{s+1\}$.
			Since $u_T(\dots, 1) = 0$, the factor for $T$ in the third product
			of $\mu(\D, \varphi)(\dots, 1)$ simplifies to $t_{T \setminus \{s+1\}}$.
			Moreover, for $j \leq s$ we have $[\omega]_{s+1}(\dots, 1) = 0$ or $1$ and 
			$[\omega]_j t_j = 0$ or $t_j$ according
			to whether $\omega$ is even or odd.
			It follows that $\mu(\D, \varphi)(\dots, 1) = 0$
			if $T$ contains a twin edge of even weight, and
			$\mu(\D, \varphi)(\dots, 1) = \mu(\D, \varphi_1)$
			otherwise. 
	\end{enumerate}
\end{remark}

\subsection{Proof of \cref{thm:dP6}}\label{sec:proofDP6}

 The quadratic Abramovich--Bertram formula \cite[Theorem 6.6]{Brugalle-WickelgrenABQ} generalizes to the following case, as indicated in \cite[Remark 6.7]{Brugalle-WickelgrenABQ}. We include a proof for completeness. Let $k\in\Fields_{2,3}$ be a perfect field and let $\delta$ be in $k^*$. Let $X_{\E_{\delta} \times k}$ denote the del Pezzo blow-up of $\P^2_k$ along a subscheme isomorphic to $\Spec (\E_{\delta} \times k)$ from \cref{lemdelPezzo6}.

\begin{lemma}\label{lm:generalizeABQ6.6}
We have \[
Q_{X_{\E_\delta\times k},(d_0,d_1,d_1,d_3)} = Q_{X_{k^3},(d_0,d_1,d_1,d_3)} + (2\lra{1}-\Tr(\E_\delta))\sum_{\ell\ge 1}(-1)^{\ell}Q_{X_{k^3},(d_0,d_1-\ell,d_1+\ell,d_3)}.
\]
\end{lemma}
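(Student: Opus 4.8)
The statement is exactly a case of the quadratic Abramovich--Bertram formula of \cite{Brugalle-WickelgrenABQ}, extended to allow the blown-up points to be grouped into a non-trivial étale algebra. The plan is to mimic the strategy of \cite[Theorem 6.6]{Brugalle-WickelgrenABQ}, tracking carefully how the degeneration argument interacts with the étale structure $\E_\delta \times k$ instead of $k^3$. First I would recall that $X_{\E_\delta\times k}$ is, by \cref{lemdelPezzo6}, the (unique up to isomorphism) del Pezzo blow-up of $\P^2_k$ along a length-$3$ subscheme isomorphic to $\Spec(\E_\delta\times k)$; concretely one may realize it as the blow-up of $\P^1_k\times\P^1_k$ along a single point together with one more point of residue field $\E_\delta$, or directly as a blow-up of $\P^2_k$ where two of the three exceptional divisors are conjugate under $\Gal(\bar k/k)$ via $\delta$. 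The divisor class $D=(d_0,d_1,d_1,d_3)$ is the one symmetric in the two conjugate exceptional divisors, hence $\Gal$-invariant and lying in $\Pic(X_{\E_\delta\times k})(k)$, so $Q_{X_{\E_\delta\times k},D}$ is defined.

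Next I would set up the one-parameter degeneration used in \cite[Section 6]{Brugalle-WickelgrenABQ}: degenerate $X_{\E_\delta\times k}$ to a surface where the pair of conjugate $(-1)$-curves gets contracted (equivalently, move the length-$2$ subscheme $\Spec\E_\delta$ to a single conjugate pair of infinitely near points on $\P^1_k$), and apply the deformation invariance and gluing formula for quadratic Gromov--Witten invariants along this degeneration. On the special fibre one obtains a count of rational curves in the two components: one contributes $Q_{X_{k^3},(d_0,d_1,d_1,d_3)}$ (the \enquote{main} term) and the other contributes curves passing through the conjugate pair with prescribed tangency/multiplicity data, which are enumerated by the $Q_{X_{k^3},(d_0,d_1-\ell,d_1+\ell,d_3)}$ summands. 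The key point distinguishing this from the split case $k^3$ is that the two exceptional divisors indexed by $\E_\delta$ are not individually rational; a curve in class $(d_0,d_1-\ell,d_1+\ell,d_3)$ and its $\delta$-conjugate in class $(d_0,d_1+\ell,d_1-\ell,d_3)$ get swapped by Galois, so they must be counted \enquote{together}. This is precisely what produces the coefficient $2\lra{1}-\Tr(\E_\delta)$ in place of the scalar $-2$ that appears (via $\sum$) in the real/split setting: one is taking the trace form of $\E_\delta$ rather than summing two real branches. I would invoke \cref{prop:ABtypeformula} (the \enquote{coarse Abramovich--Bertram formula}), which already isolates the factor $\lra{2}-\lra{2\delta}=2\lra{1}-\Tr(\E_\delta)$ as the universal Witt-theoretic shadow of this phenomenon, together with \cref{prop:ABWW} on the Welschinger side, to organize the bookkeeping and confirm the signs $(-1)^\ell$ match.

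Concretely, the steps are: (i) realize $X_{\E_\delta\times k}$ and fix the relevant divisor classes and their Galois behaviour; (ii) run the same degeneration as in \cite[Proof of Theorem 6.6]{Brugalle-WickelgrenABQ}, checking that the hypotheses (enumerativity, $\A^1$-connectedness, \cref{deghyp1-part}) persist for the intermediate surfaces — here $|\n|\le 3$ keeps us among del Pezzo surfaces of degree $\ge 6$ so there is nothing to worry about; (iii) identify the contributions of the components of the central fibre, using that the quadratic multiplicity of a conjugate pair of curves assembles into the trace form $\Tr(\E_\delta)$ of the field of definition; (iv) collect terms and match signs. The main obstacle I anticipate is step (iii): making precise, in the quadratic/$\A^1$-enriched setting, that the \enquote{loss} coming from blowing down the conjugate exceptional pair is measured by $\Tr(\E_\delta)$ rather than by $2$, i.e.\ that the local contribution of a Galois-conjugate pair of nodal branches is the trace form of $\E_\delta$. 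This should follow from the same local computation underlying \cite[Theorem 1.1, Theorem 6.6]{Brugalle-WickelgrenABQ} (the discriminant / double-point-orientation computation), specialized to the residue field $\E_\delta$, but it needs to be spelled out carefully; everything else is a routine transcription of the split-case argument with the combinatorics of $\{1,1\}$-grouping replaced by the single index of residue algebra $\E_\delta$.
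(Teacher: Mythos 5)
Your overall strategy — realize $X_{\E_\delta\times k}$, degenerate to a $1$-nodal Lefschetz fibration where the two conjugate blow-up centers collide, and invoke the quadratic Abramovich--Bertram formula of \cite{Brugalle-WickelgrenABQ} — is the paper's route as well, but you overestimate how much of the degeneration argument must be reproved, and you do not treat positive characteristic.

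On the first point: the paper does not re-run the degeneration/gluing analysis and, in particular, does not need your step~(iii). It writes down an explicit $1$-nodal Lefschetz fibration $\X \to \Spec k[[t]]$, namely the blow-up of $\P^2_{k[[t]]}$ along the subschemes $(y,x^2-tz^2)$ and $(z,x-y)$, and applies \cite[Corollary 5.33]{Brugalle-WickelgrenABQ} directly. That corollary is the general statement for $1$-nodal Lefschetz fibrations of del Pezzo surfaces, not the already-split case \cite[Theorem 6.6]{Brugalle-WickelgrenABQ} that you cite, and it already contains the factor $2\lra{1}-\Tr(\E_\delta)$ and the alternating sum. The only new content is identifying the generic fiber $\Sigma(a)$ of the pullback of $\X$ along $t \mapsto at^2$ with the base change of $X_{\E_a\times k}$, which is a one-line change of variables $z\mapsto tz$. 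Re-deriving the trace-form contribution of a Galois-conjugate pair of nodal branches — which you flag as the main obstacle — is not needed: that local discriminant/orientation computation is already encapsulated in the cited corollary. Relatedly, \cref{prop:ABtypeformula} and \cref{prop:ABWW} cannot stand in for the quadratic Abramovich--Bertram formula: they are purely formal facts about Witt (and Welschinger--Witt) invariants, and they tell you what shape a splitting identity must take if it holds, not that $Q_{X,D}$ satisfies it.

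On the second point: \cite[Corollary 5.33]{Brugalle-WickelgrenABQ} as applied here produces the identity over $k((t))$ with $k$ of characteristic $0$. For $k$ perfect of positive characteristic the paper lifts over a complete discrete valuation ring $R$ of mixed characteristic, using $\Sq(R)\cong\Sq(k)$ to lift $\delta$, lifting the point configuration by smoothness of $\P^2_R$, lifting $D$ by \cite[Lemma 9.3(ii)]{KLSW-relor}, and then invoking the unramifiedness of the degree (\cite[Sections 2.4, 5.2]{degree}) to pass from the generic to the special fiber. Your proposal does not address this case, so as written it proves the statement only in characteristic $0$.
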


\begin{proof}
First assume that the characteristic of $k$ is $0$. Let $x,y,z$ denote homogeneous coordinates on the projective space $\P^2_{k[[t]]}$. Let $\X \to \Spec k[[t]]$ be the blow-up of $\P^2_{k[[t]]}$ along the disjoint union of the closed subschemes determined by the ideals $(y,x^2-tz^2)$ and $(z,x-y)$. Then $\X$ is a $1$-nodal Lefschetz fibration of del Pezzo surfaces satisfying the hypotheses of \cite[Corollary 5.33]{Brugalle-WickelgrenABQ}. For any $a$ in $k^*$, let $\Sigma(a)$ denote the $k((t))$-scheme given by generic fiber of the pullback of $\X$ by $t \mapsto a t^2$. By the quadratic Abramovich--Bertram formula \cite[Corollary 5.33]{Brugalle-WickelgrenABQ}, we have 
\[
Q_{\Sigma(\delta),(d_0,d_1,d_1,d_3)} = Q_{\Sigma(1),(d_0,d_1,d_1,d_3)} + (2\lra{1}-\Tr(\E_\delta))\sum_{\ell\ge 1}(-1)^{\ell}Q_{\Sigma(1),(d_0,d_1-\ell,d_1+\ell,d_3)}.
\] Changing variables $z \mapsto tz$ shows that $\Sigma(\delta)$ is the basechange of $X_{\E_\delta\times k}$ and $\Sigma(1)$ is the basechange of $X_{\E_1 \times k}$, showing the lemma.

For $k$ perfect of positive characteristic, choose a complete discrete valuation ring $R$, with fraction field $K$. Since $\Sq{R} = \Sq(k)$, we may view $\delta$ as an element $\tilde{\delta}$ of $\Sq(R)$. As in the proof of \cref{propQuadraticUnramified}, we can choose an $\E_{\tilde{\delta}} \times R$ point of $\P^2_R$ lifting the $\E_{\delta} \times k$ point of $\P^2_k$ determining the blow-up $X_{\E_\delta\times k}$. The blow-up $\Bl_{\E_{\tilde{\delta}} \times R} \P^2_R$ is a del Pezzo surface with generic fiber $X_{\E_{\tilde{\delta}} \times K}$. By \cite[Lemma 9.3(ii)]{KLSW-relor}, we can lift $D$ to a relative Cartier divisor on $\Bl_{\E_{\tilde{\delta}} \times R} \P^2_R$, which thus determines a Cartier divisor $D_K$ on the general fiber $X_{\E_{\tilde{\delta}} \times K}$. We have
\[
n_0 = -K_{X_{\E_\delta\times k}} \cdot D -1 = -K_{X_{\E_{\tilde{\delta}}\times K}} \cdot D_K -1.
\] Choose $A_0$ in $\Et_{n_0}(k)$. The map $\Et_{n_0}(k) \cong \Et_{n_0}(R) \to  \Et_{n_0}(K) $ sends $A_0$ to an \'etale $K$ algebra we will denote by $A_0^K$. By \cite[Section 5.2]{degree} and the degree of \cite[Section 2.4]{degree}, we have 
\[
\widehat Q_{X_{\E_\delta\times k},D}(A_0) = \widehat Q_{X_{\E_{\tilde{\delta}} \times K},D_K}(A^K_0).
\] Since the lemma holds in characteristic $0$, it follows that the lemma holds for $k$ perfect of positive characteristic as well.
\end{proof}

We now prove \cref{thm:dP6}. By \cref{lemdelPezzo6} and \cref{thmQuadraticWitt}, $Q_{\n, \dbar}$ is a well-defined Witt invariant over any field of characteristic $0$ for $|\n| \leq 3$.

 We first will prove that $Q_{\n, \dbar,\Q}$ is $\beta$-integral, from which the analogous claim follows over any field of characteristic $0$,  since the Witt invariant $Q_{\n, \dbar,\Q}$ over $\Q$ determines the corresponding Witt invariants over any field of characteristic $0$ by base change. Since $Q_{\n, \dbar}$ is invariant under adding an extra $1$ to $\n$ and an extra $0$ (if $|\n| \leq 2$), we can restrict our attention to the case $|\n| = 3$. There are $3$ possibilities: $\n = (1,1,1)$, $\n = (2,1)$, $\n = (3)$. By \cref{remOddN}, the map $\Inv(n, 3) \to \Inv(n,2,1)$ is injective and preserves $\beta$-integrality. Hence, it only remains to show that $Q_{(2,1),\dbar,\Q}$ is $\beta$-integral. Choose now $\n=(2,1)$ and $\dbar=(d_0,d_1,d_2)$. By the quadratic Abramovich--Bertram formula, as in Lemma~\ref{lm:generalizeABQ6.6}
\[
Q_{X_{\E_\delta\times k},(d_0,d_1, d_1,d_3)} = Q_{X_{k^3},(d_0,d_1,d_1,d_3)} + (2\lra{1}-\Tr(\E_\delta))\sum_{\ell\ge 1}(-1)^{\ell}Q_{X_{k^3},(d_0,d_1-\ell,d_1+\ell,d_3)}.
\]
In other words
\[
  Q_{(2,1),(d_0,d_1,d_3)} = Q_{(1,1,1),(d_0,d_1,d_1,d_3)} + (2\lra{1}-\beta_1(\E_\delta))\sum_{\ell\ge 1}(-1)^{\ell}Q_{(1,1,1),(d_0,d_1-\ell,d_1+\ell,d_3)}
\]
over $\Q$. Hence, since $Q_{(1,1,1),\dbar'}$ is $\beta$-integral for any $\dbar'$, so is  $Q_{(2,1),\dbar}$ over $\Q$.

We thus have shown that $Q_{\n, \dbar,\Q}$ is $\beta$-integral. By \cref{thmUnramifiedQuasiIntegral}, the invariant $Q_{\n, \dbar,\Q}$ thus extends to a unique well-defined Witt invariant, unramified away from $S= \{2,3\}$ for $|\n| \leq 3$, which by \cref{thmWelschingerUniversal} must equal $\V_{\n,\d}$. It remains to show that whenever $Q_{\n, \dbar}$ is defined, it is equal to $\V_{\n, \dbar}$. The value of $Q_{\n, \dbar}(A_1,\ldots,A_r)$ is defined when $A_i \cong A'_i \otimes_{\kappa_0} \kappa$ for some $A_i'$ over a perfect field $\kappa_0$ of characteristic not $2$ or $3$. By \cref{lemdelPezzo6}, we may choose a general configuration of points $\Spec \prod_i A_i' \to \P^2$. By \cite[Proposition 3.1.4]{Kedlaya-Swan-conductorsI}, we may choose $R_0 \to R$ a map of complete discrete valuation rings whose associated map of residue fields is $\kappa_0 \to \kappa$ and 
whose fraction fields $K_0 \to K$ 
are of characteristic $0$. 
Since $\Et_{\n}(R_0) \cong \Et_{\n}(\kappa_0)$, we may choose finite \'etale exentions $\tilde{A}'_i$ of $R_0$ pulling back to $A'_i$. Because $\P^2$ is smooth, we may choose a general configuration $\Spec \prod_i \tilde{A}'_i \to \P^2_{R_0}$ lifting the general configuration over $\kappa_0$, as in the proof of \cref{propQuadraticUnramified}. In particular, the blow-up $X=\Bl_{\Spec \prod_i \tilde{A}'_i }\P^2_{R_0}$ of $ \P^2_{R_0}$ at $\Spec \prod_i \tilde{A}'_i $ is a del Pezzo surface. 
Let $D$ be the Cartier divisor on $X$ corresponding to $\dbar$. Note that since $n_1 + \ldots + n_r \leq 3$, Hypothesis~\ref{deghyp1-part} is satisfied. By \cref{propQuadraticUnramified} it follows that we have the commutative diagram
\[\begin{tikzcd}
	{\Et_n(K)} & {\Et_n(R)} & {\Et_n(\kappa)} \\
	{W(K)} & {W(R)} & {W(\kappa).}
	\arrow["{Q_{X_K,D_K}}"', from=1-1, to=2-1]
	\arrow[from=1-2, to=1-1]
	\arrow["\cong"{marking, allow upside down}, draw=none, from=1-2, to=1-3]
	\arrow["{Q_{X_\kappa,D_\kappa}}", from=1-3, to=2-3]
	\arrow[from=2-2, to=2-1]
	\arrow["\cong"{marking, allow upside down}, draw=none, from=2-2, to=2-3]
\end{tikzcd}\] Since $K$ is characteristic $0$, we have that $Q_{X_K,D_K} = \V_{\n, \dbar, K}$. Since $\W(R) \to \W(K)$ is injective and $ \V_{\n, \dbar}$ is unramified, it follows that $Q_{X_{\kappa},D_{\kappa}} = \V_{\n, \dbar, \kappa}$. This proves the theorem. 
\hfill$\square$

\appendix
\crefalias{section}{appendix}
\section{Change of the basis between  $\lambda$ and $\beta$}\label{sec:basechange}

In this section, we discuss in more detail the change of basis for the bases  $(\lambda_i)$ and $(\beta)_i$ introduced 
in \cref{secWittInvariants}.

We denote the multichoose coefficients (choosing $k$ among $n$ with repetitions allowed) by
\[
  \mbinom{n}{k} = \binom{n+k-1}{k}
\]
and define the two following sequences of integers.
\begin{align*}
  \gamma_{i,k}^m &=
	  \sum_{j=k}^{i}(-1)^{j-k} \mbinom{m}{j-k}  {m-j \choose i-j}, &
  \delta_{i,k}^m &= 
	  \sum_{j=k}^{i}(-1)^{j-k} {m-k \choose j-k} {m\choose i-j} . 
\end{align*}

\begin{prop}\label{prop:beta basis}
  Let $n\ge 1$ and  $m=\lfloor \frac{n}{2}\rfloor$. 
	The bases $(\beta_i^n)_{0\le i\le  m}$ and $(\lambda_i^n)_{0\le i\le  m}$ of 
	$\Inv(n)$ are related as follows. 
	\begin{align*}
		n = 2m &&
		  \beta_i^n &= \sum_{k=0}^i \lra{2^{i-k}} \gamma_{i,k}^m  \lambda_k^n, &
			\lambda_i^n &= \sum_{k=0}^i \lra{2^{i-k}} \delta_{i,k}^m  \beta_k^n, \\
		n = 2m+1 &&
		  \beta_i^n &= \sum_{k=0}^i \left(\sum_{u=k}^i (-1)^{u-k} \lra{2^{i-u}} \gamma_{i,u}^m  \right)\lambda_k^n, &
			\lambda_i^n &= \sum_{k=0}^i \lra{2^{i-k}} (\delta_{i,k}^m + \lra{2} \delta_{i-1,k}^m)  \beta_k^n.
	\end{align*}
\end{prop}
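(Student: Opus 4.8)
The plan is to reduce all four identities to identities in the Witt ring by means of the injective restriction map to multiquadratic algebras, and then to read them off a single generating-function factorization. By \cref{thmMultiquadraticFree} and \cref{thmBetaBasis1var} the restriction map $\Inv(n)\to\Inv(\Sq_m)$ is injective, so it suffices to verify each asserted equality after restriction to multiquadratic algebras. Fix therefore $K\in\Fields$ and $(a_1,\dots,a_m)\in\Sq_m(K)$, write $c=\lra 2$ and $x_j=\lra{2a_j}$ in $\W(K)$, and set $\epsilon=n-2m\in\{0,1\}$. Using $\Tr(\E_a)=\lra{2,2a}$, $\Tr(K)=\lra 1$, additivity of $\Tr$, and the description of $\bigwedge^i$ of a diagonal form, the formulas for $\beta_i^n|_{\Sq_m}$ and $\lambda_i^n|_{\Sq_m}$ in \cref{thmBetaBasis1var} become the identities
\[
  B(z):=\sum_{i\ge 0}\beta_i^n z^i=\prod_{j=1}^m\bigl(1+(c+x_j)z\bigr),\qquad
  L(z):=\sum_{i\ge 0}\lambda_i^n z^i=(1+cz)^m(1+z)^{\epsilon}\prod_{j=1}^m(1+x_jz)
\]
in $\W(K)[z]$. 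Everything hinges on the factorization $1+(c+x_j)z=(1+cz)\bigl(1+x_j\tfrac{z}{1+cz}\bigr)$, which, after the substitution $w=z/(1+cz)$ in $\W(K)[[z]]$, gives
\[
  B(z)=\sum_{b\ge 0}P_b^m(x_1,\dots,x_m)\,z^b(1+cz)^{m-b},\qquad
  \prod_{j=1}^m(1+x_jw)=\sum_{i\ge 0}\beta_i^n\,w^i(1-cw)^{m-i},
\]
where $P_b^m$ is the $b$-th elementary symmetric polynomial.

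First I would treat the even case $n=2m$, where $\epsilon=0$, by coefficient extraction. Expanding $(1+cz)^{-m}=\sum_j(-1)^j\mbinom mj c^jz^j$ in $\prod_j(1+x_jz)=(1+cz)^{-m}L(z)$ gives $P_b^m(x)=\sum_j(-1)^j\mbinom mj c^j\lambda_{b-j}^n$; substituting this into the first display above and reading off the coefficient of $z^i$ — the factor $(1+cz)^{m-b}$ contributing $\binom{m-b}{i-b}c^{i-b}$, and then setting $k=b-j$ — produces precisely $\beta_i^n=\sum_k\lra{2^{i-k}}\gamma_{i,k}^m\lambda_k^n$. Dually, expanding the second display gives $P_b^m(x)=\sum_i\binom{m-i}{b-i}(-c)^{b-i}\beta_i^n$, and feeding this into $\lambda_i^n=\sum_a\binom ma c^aP_{i-a}^m(x)$ (the coefficient of $z^i$ in $(1+cz)^m\prod_j(1+x_jz)$) and setting $j=i-a$ yields $\lambda_i^n=\sum_k\lra{2^{i-k}}\delta_{i,k}^m\beta_k^n$. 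These are routine but slightly tedious manipulations with binomial and multichoose coefficients; one only needs the integer identity $\binom{-m}{j}=(-1)^j\mbinom mj$ and the convention that binomials vanish outside the natural range.

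For the odd case $n=2m+1$, where $\epsilon=1$, write $L_0(z):=(1+cz)^m\prod_j(1+x_jz)$ and $\lambda_i^{0}:=[z^i]L_0(z)$, so that $L(z)=(1+z)L_0(z)$ and hence $\lambda_i^n=\lambda_i^{0}+\lambda_{i-1}^{0}$. The even-case computation applies verbatim to the symmetric polynomials $\beta_i^n|_{\Sq_m}$ and $\lambda_i^{0}$ (note $\beta_i^n|_{\Sq_m}$ and $\beta_i^{2m}|_{\Sq_m}$ are literally the same element $P_i^m(\Tr\E_{a_1},\dots,\Tr\E_{a_m})$ of $\Inv(\Sq_m)$, since it depends only on $m$; cf.\ \cref{remOddN}), giving $\beta_i^n=\sum_k\lra{2^{i-k}}\gamma_{i,k}^m\lambda_k^{0}$ and $\lambda_i^{0}=\sum_k\lra{2^{i-k}}\delta_{i,k}^m\beta_k^n$. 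Inverting the Pascal relation, $\lambda_i^{0}=\sum_{j\ge 0}(-1)^j\lambda_{i-j}^n$, and substituting into the first of these gives $\beta_i^n=\sum_k\bigl(\sum_{u=k}^i(-1)^{u-k}\lra{2^{i-u}}\gamma_{i,u}^m\bigr)\lambda_k^n$; substituting the second into $\lambda_i^n=\lambda_i^{0}+\lambda_{i-1}^{0}$ gives $\lambda_i^n=\sum_k\bigl(\lra{2^{i-k}}\delta_{i,k}^m+\lra{2^{i-1-k}}\delta_{i-1,k}^m\bigr)\beta_k^n$, which is the claimed formula once we invoke the ring relation $\lra 2^2=\lra 1$ in $\W(K)$ to rewrite $\lra{2^{i-1-k}}=\lra 2\cdot\lra{2^{i-k}}$.

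The only genuine obstacle I anticipate is the index bookkeeping in the even case — matching the coefficients coming out of the generating-function computation with the exact definitions of $\gamma_{i,k}^m$ and $\delta_{i,k}^m$. Everything else is formal: the reduction to $\Sq_m$ is \cref{thmMultiquadraticFree}, the generating-function identities are immediate from \cref{thmBetaBasis1var}, and the substitution $w=z/(1+cz)$ reduces the whole problem to extracting coefficients of finite polynomials, which legitimizes the intermediate formal power series steps.
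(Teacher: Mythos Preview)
Your proposal is correct and follows essentially the same route as the paper: both restrict to multiquadratic algebras, set up generating functions, pass through the elementary symmetric polynomials to relate $\beta_i$ and $\lambda_i$, and reduce the odd case to the even one via the Pascal relation $\lambda_i^{2m+1}\mapsto\lambda_i^{2m}+\lambda_{i-1}^{2m}$. The only organizational difference is that you use the substitution $w=z/(1+cz)$ to invert the $B\leftrightarrow P$ relation, whereas the paper uses the variable change $x_j\mapsto -1-x_j$; these are equivalent bookkeeping devices for the same computation.
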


\begin{proof}
  We first treat the even case. 
	Recall from \cref{thmMultiquadraticFree}
	that it is sufficient to check the
	formulas on multiquadratic algebras or,
	in other words, for the restrictions to $\Inv(\Sq_m)$
	which by abuse of notation we denote by the same letter here.
	Setting 
	\begin{align*}
		P_i & = P^m_i = \sum_{|K|= i} \prod_{k \in K} x_k, & P &= \sum_{i=0}^m P_i t^i, \\
		L_i &= P^{2m}_i(1,\ldots, 1, x_1, \ldots, x_m), & L &= \sum_{i=0}^{2m} L_i t^i, \\
		B_i &= P_i(1+x_1, \ldots, 1+x_m), & B &= \sum_{i=0}^m B_i t^i, \\
				&\in \Z[x_1, \ldots, x_m]^{S_m}, & &\in \Z[x_1, \ldots, x_m, t]^{S_m},
	\end{align*}
	we have by definition that $\alpha_i = P_i$, $\lambda_i = \lra{2^i} L_i$
	and $\beta_i = \lra{2^i}    B_i$. It is therefore sufficient to prove the identities
	\begin{align*} 
		B_i &= \sum_{k=0}^i \gamma_{i,k}^m  L_k, &
	  L_i &= \sum_{k=0}^i \delta_{i,k}^m  B_k,		
	\end{align*}
	in $\Z[x_1, \ldots, x_m]$.
	To see this, we first note that $P = (1 + x_1 t) \cdots (1 + x_m t)$,
	and that analogously 
	\[
		L = (1+t)^m (1 + x_1 t) \cdots (1 + x_m t) = (1+t)^m P.
	\]
	Since
	\begin{align*} 
		(1+t)^m &= \sum_i \binom{m}{i} t^i, \\
		(1+t)^{-m} &= (1 - t + t^2 -t^3 \cdots )^m = \sum_i (-1)^i \mbinom{m}{i} t^i && \in \Z[[t]],
	\end{align*}
	we get 
	\begin{align*}
		L_i &= \sum_{j=0}^i \binom{m}{i-j} P_j, &
		P_i &= \sum_{j=0}^i (-1)^{i-j} \mbinom{m}{i-j} L_j.
	\end{align*}
	For $B_i$, we find
	\begin{align*} 
		B_i &= \sum_{j=0}^i \binom{m-j}{i-j} P_j, &
		P_i &= \sum_{j=0}^i (-1)^{i-j} \binom{m-j}{i-j} B_j.
	\end{align*}
	Here, the first equation follows from the definition and 
	the second equation via the change of variables
	$x_i \mapsto -1 - x_i$, this since this yields
	$B_i \mapsto (-1)^i P_i$ and $P_j \mapsto (-1)^j B_j$.
  The proof is then finished by combining the formulas. 
	
	For the odd case, we recall from \cref{remOddN} that $\Inv(2m+1) \cong \Inv(2m)$
	via the map induced by $A \mapsto A \oplus K$. 
	Furthermore under this map we have
	\begin{align*} 
		\alpha_i^{2m+1} &\mapsto \alpha_i^{2m}, & 
		\beta_i^{2m+1} &\mapsto \beta_i^{2m}, &
		\lambda_i^{2m+1} &\mapsto \lambda_i^{2m} + \lambda_{i-1}^{2m}.
	\end{align*}
	Here, the last equation follows easily from
  $P^{2m+1}(\mathbf{x}, 1) = P^{2m}_i(\mathbf{x}) +
	P^{2m}_{i-1}(\mathbf{x})$.
	This immediately proves the expression for $\lambda^n_i$ 
	 Denoting $\beta_i^{n}=\sum_{k=0}^{i}\mu_{i,k}^{n}\lambda_k^n$, we have $\mu_{i,k}^{2m}=\mu_{i,k}^{2m+1}+\mu_{i,k+1}^{2m+1}$. Hence we deduce
	 \[
		\mu_{i,k}^{2m+1}=\mu_{i,k}^{2m} - \mu_{i,k+1}^{2m}
+ \mu_{i,k+2}^{2m}	- \ldots 
\]
which proves the expression for $\beta^n_i$.
\end{proof}

\begin{remark} 
  We finish by mentioning that of course one might investigate other bases of $\Inv(n)$.
It seems that one way to produce interesting bases is the concept of power structures, 
see \cite[Definition 2.1]{PP-PowerStructuresGrothendieck}.
On $\WG(K)$, or on the semiring $\QF(K)$, we have two canonical power structures
given by symmetric and exterior powers of quadratic forms.
Pre-composing them with the trace invariant $\Tr$ produces new Witt invariants
which contain subcollections that form bases. The $\lambda$-basis is constructed
in this way using exterior powers. 

Conversely, on the semiring $\Et_*(K) = \bigcup_{n\geq 0} \Et_n(K)$,
under direct sum and tensor product,
we have analogous power structures given by symmetric and exterior powers.
Note that these operations do not commute with the trace invariant,
for example, in general 
$\Tr(\bigwedge^2 \E_a) = \Tr(K) = \lra{1} \neq \lra{a} = \bigwedge^2 \lra{2,2a} =
\bigwedge^2 \Tr(\E_a)$.
Therefore post-composing them with the trace invariant again 
produces new Witt invariants which are priori are not related to the ones from above
and again can give rise to bases for $\Inv(n)$. 
In particular, let us define $\chi^n_i = \chi_i \in \Inv(n)$ by setting
$\chi_i(A) = \Tr(\Sym^i A)$.
Note that $\Sym^i E_a \cong j E_a (\oplus K)$ with $j = \lfloor{\frac{i+1}{2}}\rfloor$
and the summand $(\oplus K)$ only appears when $i$ is even. 
Since $\Tr$ is a (semi-)ring homomorphism such that $\Tr(\E_\delta)^2 = 2 \Tr(\E_\delta)$,  
we have 
\[
  \chi_i = \beta_i + \Z\langle \beta_{i-1}, \ldots, \beta_0\rangle.
\]
Therefore, $\chi_0, \ldots, \chi_m$ form a basis of
$\Inv(n)$ and the base change to the $\beta$-basis
is defined over $\Z$. (However, it seems tricky to write down closed formulas
for the coefficients of this base change.)
In particular, a given Witt invariant $\alpha \in \Inv(n)$ 
is $\beta$-integral if and only if it is $\chi$-integral. 
In \cite[Section 11]{PMPR-QuadraticallyEnrichedPlane},
the last row of each table (labelled by $\sigma$) gives the decomposition
of $Q_{X,D}$ in terms of the $\chi$-basis (denoted $a_0, \ldots, a_m$ in \cite[Section 11]{PMPR-QuadraticallyEnrichedPlane}). 
In \cref{tab:sigmabasisP2}, \cref{tab:sigmabasisP1P1}, and \cref{tab:sigmabasisPbp}, we list the coefficients of several Welschinger--Witt invariants
with respect to the $\chi$-basis.
\begin{table}[!ht]
	\[
	  {\setlength{\extrarowheight}{3pt}
	  \begin{array}{|c|c|}
	   \hline  d & \V_{d}
	  \\  \hhline{|=|=|} 
	  4& -2\chi_0 +2\chi_1 - \chi_2+\chi_3
	  \\ \hline  5& -118\chi_0 -18\chi_1+ 18\chi_2+ \chi_3-\chi_4-\chi_5+\chi_6 
	  \\ \hline  6& -4474\chi_0 -9460\chi_1 + 6644\chi_2 + 828\chi_3-  2680\chi_4 + 836\chi_5 + 284\chi_6-236  \chi_7   +46\chi_8 
  
	  \\ \hline  \multirow{2}{*}{7}& -4519048\chi_0+  6205190 \chi_1  -448528\chi_2+  -2536404 \chi_3+  1184776\chi_4+  350017 \chi_5
	  \\ &  -437346 \chi_6+  76967\chi_7+   46664\chi_8 -23786  \chi_9+ 3336\chi{10}
	  \\ \hline  \multirow{2}{*}{8}&3.05 \cdot 10^9 \chi_0  -1.08\cdot 10^{10} \chi_1 +  3.59 \cdot 10^9\chi_2 +  2.94\cdot 10^9 \chi_3   -2.56 \cdot 10^9\chi_4 + 1.31 \cdot 10^8\chi_5 
	  \\ & +  6.25 \cdot 10^8 \chi_6  -2.64e\cdot 10^8 \chi_7 +  -1.48\cdot 10^7 \chi_8 + 4.16 \cdot 10^7\chi_9 -1.24 \cdot 10^7\chi_{10} +  1.29\cdot 10^6 \chi_{11}
	  \\ \hline  
	\end{array}
	  }
	\]
	\caption{Welschinger--Witt invariants of $\P^2$ in the $\chi$-basis. The coeffecients of $\V_8$ are only approximate.}
	\label{tab:sigmabasisP2}
  \end{table}
  \begin{table}[!h]
	\[
	{\setlength{\extrarowheight}{3pt}
	\begin{array}{|c|c|}
	\hline  (a,b) & \V_{\P^1\times \P^1,(1,1),(a,b)} 
		\\ \hhline{|=|=|}  
  (2,4) &  14 \chi_0 + 2 \chi_1 - \chi_2 +  \chi_3 
  \\  \hline  
  (3,4) &  42 \chi_0   +   18\chi_2- \chi_4  + \chi_6
  \\  \hline  
  (3,5)& -105 \chi_0  + 512\chi_1 -86  \chi_2 -168 \chi_3 + 126\chi_4 -24 \chi_6+ 8\chi_7 
\\  \hline  
		  \multirow{2}{*}{ (4,5)} &  -77528 \chi_0   -264496 \chi_1 + 163008  \chi_2 +   36272\chi_3 
		  \\& -69240 \chi_4 +  17152\chi_5 + 8128 \chi_6-5376 \chi_7 + 912 \chi_8
		  \\  \hline 
	\end{array}
	}
	\]
	\caption{
	Welschinger--Witt invariants of $\P^1 \times \P^1$  in the $\chi$-basis
	}
	\label{tab:sigmabasisP1P1}
\end{table}
\begin{table}[!ht]
	\[
	  {\setlength{\extrarowheight}{3pt}
	  \begin{array}{|c|c|}
	   \hline  (\n,\dbar) & \V_{\n,\dbar}
	  \\  \hhline{|=|=|} 
	   ((1),(3,1)) & \chi_0+ \chi_1 
	  \\ \hline  ((1),(4,2))& 3\chi_0 +  \chi_2
	  \\ \hline  ((1,1),(4,1,1))&  3\chi_0+ 3\chi_1  +  \chi_2 +  \chi_3
	  \\ \hline  ((1,1),(4,2,2))&  \chi_0+ \chi_1 
	  \\ \hline  ((1,1,1),(4,1,1,2))&  6\chi_0+ 2\chi_1 +\chi_2
	  \\ \hline  							
	\end{array}
	  }
	\]
	\caption{Welschinger--Witt invariants $\V_{\n,\dbar}$ in $\chi$-basis}
	\label{tab:sigmabasisPbp}
  \end{table}  
Note that while for small degrees the $\chi$-coefficients are often simpler than
the $\beta$-coefficients, this seems to turn around for large degrees where the former ones seem to explode.
\end{remark}

\section{Enumerative description of Witt invariance}
\label{WittInvarianceQuadratic}

In the following, we give a proof of \cref{thmQuadraticWitt}
for perfect fields $k\in\Fields_{2,3}$ using the enumerative description of $Q_{X,D}$ (whenever the latter applies).

Recall that the linear system $|D|$ is isomorphic to projective space $\P^N_k$. 
Given a curve $C \subset X$ with class $D$, we denote
by $[C] \in |D|$ the associated (scheme-theoretic closed) point. 
We define the \emph{field of moduli} 
$k_C$ of $C$ as the residue field of the point
$[C] \in |D|$. The extension $k \to k_C$ is finite. 
We define $\widetilde C \subset X \otimes_k k_C$
as the preimage of the canonical $k_C$-point associated to $[C]$ 
with respect 
to the universal curve $\mathcal{U} \to |D|$, 
that is, given by the fibre product diagram
\[\begin{tikzcd}
	{\widetilde C} & {\P^2_{k_C}} & {\Spec(k_C)} \\
	{\mathcal{U}} & {\P^2_k \times |D|} & {|D|}
	\arrow[hook, from=1-1, to=1-2]
	\arrow[from=1-1, to=2-1]
	\arrow[two heads, from=1-2, to=1-3]
	\arrow[from=1-2, to=2-2]
	\arrow[from=1-3, to=2-3]
	\arrow[hook, from=2-1, to=2-2]
	\arrow[two heads, from=2-2, to=2-3]
\end{tikzcd}\]
Note that $k_{\widetilde C} = k_C$. 
We say that $C$ is \emph{rational} and \emph{nodal}, respectively, 
if $\widetilde C$ is geometrically rational (including geometrically integral)
or geometrically nodal
(we warn the reader that this may be slightly non-standard notation). 
For a point $p \in \widetilde C$, we denote by $k_p$
the residue field of $p$ (again, the extension $k_C \to k_p$ is finite). 
If $p$ is a node of $\widetilde C$, 
the projective tangent cone $\P T_p \widetilde C$ consists of two points,
so its coordinate ring $A_p = A(\P T_p \widetilde C)$ lies in $\Et_2(k_p)$. 
We denote by $\delta_p \in \Sq(k_p) = k_p^*/(k_p^*)^2$ the unique element
such that $A_p \cong \E_{\delta_p}$. 

Recall that the norm map $\Nm_{L/K} \colon L^* \to K^*$
for a finite field extension $K \to L$ is multiplicative and hence
descends to $\Nm_{L/K} \colon \Sq(L) \to \Sq(K)$.
We define the \emph{quadratic weight} of $\widetilde C$ as
\[
  \omega(\widetilde C) = 
	  \prod_{p \in \widetilde C  \text{ node}}  \Nm_{k_p/k_C}(\delta_p) 
		\in \Sq(k_C).
\]
Next, let us denote by $\tr_{L/K} \colon L \to K$ the trace map of a finite separable
extension $K \to L$.
This gives rise to a so-called \emph{trace transfer map} 
\begin{align*}
  (\tr_{L/K})_* \colon \Sq(L) &\to \QF(K), \\
	  [a] &\mapsto (q_a \colon L \to K, x \mapsto \tr_{L/K}(ax^2)).
\end{align*}
Since $k$ is perfect, the extension $k \to k_C$ is separable
and we define the \emph{quadratic weight} of $C$ as 
\[
  \omega(C) = (\tr_{k_C/k})_* (\omega(\widetilde C)) \in \QF(K). 
\]
We summarize the construction in the following diagram.
\[\begin{tikzcd}
	k & {k_C} & {k_p} & {A_p} \\
	{\omega(C) \in \QF(k)} & {\omega(\widetilde C)  \in \Sq(k_C)} & {\delta_p \in \Sq(k_p)}
	\arrow[from=1-1, to=1-2]
	\arrow[from=1-2, to=1-3]
	\arrow[from=1-3, to=1-4]
	\arrow["{\Sq = \Et_2}"{description}, dotted, from=1-4, to=2-3]
	\arrow["{\tr_*}", dotted, from=2-2, to=2-1]
	\arrow["\Nm", dotted, from=2-3, to=2-2]
\end{tikzcd}\]

Let $n=-K_X\cdot D-1$, and
fix $A \in \Et_n(K)$. 
Let us assume that  
there exists a generic point configuration $\PP \subset X$ such that $\PP = \Spec(A)$
and for which \cite[Theorem 3]{degree} applies.
Any rational curve $C$ with class $D$ and such that  $\PP \subset C \subset X$ is nodal
and the enumerative definition of $\widehat Q_{X,D}(A)$ is given by
\begin{equation} \label{eq:EnumerativeDefinition} 
  \widehat Q_{X,D}(A) = \sum_{\substack{\PP \subset C \subset X \\ C \text{ rational} \\ C \in |D|}} \omega(C),
\end{equation}
see \cite[Theorem 3]{degree}.

\begin{example} \label{exRealQuadratic}
  Let us assume $k = \R$ (cf.\ \cite[Remark 2.5]{Levine-Welschinger}). 
	Since $\R$ and $\C$ are the only finite extensions
	of $\R$, any étale algebra $A \in \Et_n(\R)$ is of the form
	$A = R_s$ for some $0 \leq s \leq m$. 
	A generic point configuration $\PP \subset X$ satisfies $\PP = \Spec(R_s)$
	if it contains $n-s$ real points and $s$ pairs of complex conjugated points. 
	Given $\PP \subset C \subset X$, there 
	are two options. 
	\begin{itemize}
		\item 
		If $k_C = \C$, then $\omega(\widetilde C) = 1 \in \{1\} = \Sq(\C)$. 
		  Since $(\tr_{\C/\R})_*(1) = \Tr_\R(\C) = h \in \QF(\R)$, hence
			$\omega(C) = h$. 
		\item 
		If $k_C = \R$, we have $C = \widetilde C$ and the trace step is trivial.
		For any node $p \in C$, there are two options:
		\begin{itemize}
			\item 
			  If $k_p = \C$, we have $\delta_p = 1 \in \{1\} = \Sq(\C)$
				and therefore $\nm_{\C/\R}(\delta_P) = 1 \in \{\pm 1\} = \Sq(\R)$. 
				Hence such a node can be disregarded. 
			\item 
			  If $k_p = \R$, there are again two options: Either
				$\delta_p = 1 \in \Sq(\R)$, a \emph{hyperbolic} node with two 
				real tangent lines, or $\delta_p = -1 \in \Sq(\R)$, an \emph{elliptic}
				node with a pair of complex conjugated tangent lines. 
		\end{itemize}
		We conclude that if $k_C = \R$ we have $\omega(C) = \lra{(-1)^{m(C)}} \in \QF(\R)$ where
		$m(C)$ denotes the number of real elliptic nodes in $C$. 
	\end{itemize}
	In summary, only looking at the class $Q_{X,D}(A) \in \Z = \W(\R)$,
	we see from \cref{eq:EnumerativeDefinition}
	that $Q_{X,D}( A)$ is equal to the \emph{signed} sum of real (that is, $k_C = \R$) rational curves $C$
	through $\PP$, where the sign of $C$ is given by $(-1)^{m(C)}$. 
	In other words  $Q_{X,D}( A) = \Wel_X(D;s)$, the Welschinger invariant defined
	in \cref{secWelschinger}.
\end{example}

We summarize a few elementary facts about traces and norms
that will be used below (we leave the proofs to the reader).

\begin{lemma} \label{lemNormTraceProps}
  Let $A$ be a  étale algebra over $K$ of finite degree,
	and fix $a \in A$. 
	\begin{enumerate}
		\item 
		Given a field extension $K \to L$, 
		we have 
		\begin{gather*} 
			\nm_{(A\otimes L)/L}(a \otimes 1) = \nm_{A/K}(a), \;\;\;
			\tr_{(A\otimes L)/L}(a \otimes 1) = \tr_{A/K}(a), \\
			(\tr_{(A\otimes L)/L})_*(a \otimes 1) = (\tr_{A/K})_*(a) \otimes L 
			\in \WG(L). 
		\end{gather*}
		\item 
		Given a (algebra) decomposition $A = A_1 \times \ldots \times A_l$ 
		such that $a = (a_1, \ldots, a_l)$, 
		we have 
		\begin{gather*} 
			\nm_{A/K}(a) = \prod_{i=1}^l \nm_{A_i/K}(a_i), \;\;\;
			\tr_{A/K}(a) = \sum_{i=1}^l \tr_{A_i/K}(a_i), \\
			(\tr_{A/K})_*(a) = \sum_{i=1}^l (\tr_{A_i/K})_*(a_i)
			\in \WG(K). 
		\end{gather*}
	\end{enumerate}
\end{lemma}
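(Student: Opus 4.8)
The plan is to reduce every assertion to the description of norm and trace through the regular representation. For $a \in A$ let $m_a \colon A \to A$ denote the $K$-linear endomorphism given by multiplication by $a$; recall that $\nm_{A/K}(a) = \det(m_a)$, that $\tr_{A/K}(a) = \tr(m_a)$, and that $(\tr_{A/K})_*(a)$ is the quadratic form $x \mapsto \tr_{A/K}(ax^2)$ on the finite-dimensional $K$-vector space $A$. With this description in hand, all six identities follow from the elementary compatibility of determinants, traces and Gram matrices with base change and with direct products of rings; I will not carry out the routine verifications in detail.

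For item (1), note that under the canonical $L$-algebra structure on $A \otimes_K L$ the endomorphism ``multiplication by $a \otimes 1$'' is exactly $m_a \otimes_K \mathrm{id}_L$. Since the determinant and trace of an endomorphism of a finite free module are unchanged by base change --- once the structure map $K \to L$ is applied to the scalars $\det(m_a)$ and $\tr(m_a)$ --- the first two equalities follow. For the transfer form I would fix a $K$-basis $e_1, \dots, e_n$ of $A$, which is simultaneously an $L$-basis of $A \otimes_K L$; the base-change identity for the trace gives $\tr_{(A \otimes L)/L}\big((a e_i e_j) \otimes 1\big) = \tr_{A/K}(a e_i e_j)$, so the Gram matrix of $(\tr_{(A\otimes L)/L})_*(a \otimes 1)$ with respect to the basis $(e_i \otimes 1)_i$ is the image under $K \to L$ of the Gram matrix of $(\tr_{A/K})_*(a)$ with respect to $(e_i)_i$. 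Hence the former quadratic form is the scalar extension of the latter, which is the asserted equality in $\WG(L)$.

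For item (2), write $a = (a_1, \dots, a_l)$ under the decomposition $A = A_1 \times \dots \times A_l$. Then $m_a$ is block diagonal with blocks $m_{a_i}$ on $A_i$, so $\det(m_a) = \prod_i \det(m_{a_i})$ and $\tr(m_a) = \sum_i \tr(m_{a_i})$, which are the norm- and trace-identities. For the transfer form I would observe that the $K$-vector space decomposition $A = A_1 \oplus \dots \oplus A_l$ is orthogonal for the symmetric bilinear form $(x,y) \mapsto \tr_{A/K}(axy)$ attached to $(\tr_{A/K})_*(a)$, since $xy = 0$ in $A$ whenever $x \in A_i$ and $y \in A_j$ with $i \neq j$. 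On the summand $A_i$ the form restricts to $x_i \mapsto \tr_{A/K}(a x_i^2) = \tr_{A_i/K}(a_i x_i^2)$, the last equality being the trace-additivity just established, applied to the element of $A$ whose $i$-th component is $a_i x_i^2$ and whose other components vanish. Therefore $(\tr_{A/K})_*(a) = \bigoplus_i (\tr_{A_i/K})_*(a_i)$ in $\WG(K)$.

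There is no genuine obstacle here: the whole proof is a short chain of compatibility statements for the determinant, the trace, and the trace pairing under base change and under products. The only mildly delicate point is to read the equalities of item (1) consistently across the ring map $K \to L$, i.e.\ the left-hand sides a priori live over $L$ and one implicitly applies $K \to L$ to the right-hand sides. It is also worth recording that when $a \in A^*$ the norm identities of both items descend to the square-class groups $\Sq(\cdot)$, which is the form in which they are invoked in the enumerative discussion (for instance in the definitions of $\omega(\widetilde C)$ and $\omega(C)$).
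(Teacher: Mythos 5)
Your proof is correct, and it is essentially the only reasonable route: reduce norm, trace, and the transfer form to the regular representation $m_a$, then use that determinants, traces, and Gram matrices behave well under base change and block decomposition. The paper itself explicitly states ``we leave the proofs to the reader,'' so there is no authorial proof to compare against; your argument fills that gap with the standard verification. The two points you flag as mildly delicate are indeed the ones worth stating carefully: reading the identities of item (1) across the structure map $K \to L$, and (for the transfer form in item (2)) the orthogonality of the summands $A_i$ coming from $A_iA_j = 0$ for $i \ne j$. Your closing remark that the norm identities descend to $\Sq(\cdot)$ when $a \in A^*$ is a useful observation, since that is precisely the form used in \cref{lemBaseCase} and in the definitions of $\omega(\widetilde C)$ and $\omega(C)$.
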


  Suppose that 
	for any perfect extension
	$k \to K$ and $A \in \Et_n(K)$ there exists
	a generic point configuration $\PP \subset X_K$
	such that $\PP \cong \Spec(A)$.
	This ensures that $Q_{X,D}$ can be computed
	using the enumerative definition from 
	\cref{eq:EnumerativeDefinition}.
	This is the case for example 
	if $k$ is infinite 
	and $X$ is $k$-rational.

\begin{proof}[Proof of \cref{thmQuadraticWitt} under the above assumption]
	Given $k \to K \to L$ and $A \in \Et_n(K)$, 
	we need to show $Q_{X_K,D_K}( A) \otimes L = Q_{X_L,D_L}(A \otimes L)$.
	Without loss of generality we may assume $k=K$, which saves us writing
	a few indices. 
	To use \cref{eq:EnumerativeDefinition}, 
	we fix a generic point configuration $\PP \subset X$ such that $\PP = \Spec(A)$. 
	Let $C$ be a rational curve with class $D$ such that $\PP \subset C \subset X$.
	Then the irreducible decomposition $C \otimes L = C_1 \cup \ldots \cup C_\ell$
	consists of rational curves in $X_L$ containing $\PP_L$, and all such 
	curves occur in this way for a unique $C$. Therefore it remains
	to prove
	$\omega(C) \otimes L = \omega(C_1) + \ldots + \omega(C_\ell) \in \WG(L)$.
	We first consider the case $k_C = k$ (and hence $\ell = 1$). 
		
	\begin{lemma} \label{lemBaseCase}
		Let $C \subset \P^2_K$ be a rational nodal curve such that $k_C = K$ 
		and $K \to L$	a field extension. Then the induced map 
		$\Sq(K) \to \Sq(L)$ satisfies
		\[
		  \omega(C) \mapsto \omega(C \otimes L). 
		\]
	\end{lemma}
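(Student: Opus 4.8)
The plan is to check that every ingredient in the definition of the quadratic weight $\omega$ --- the universal curve, the singular subscheme, the projective tangent cones, and the norm map --- is compatible with the base change $K\to L$.

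First I would observe that, since $C$ is rational, it is geometrically integral over $K$; hence $C\otimes_K L$ is integral, $k_{C\otimes_K L}=L$, and from the fibre product description of the functor $C\mapsto\widetilde C$ one gets $\widetilde{C\otimes_K L}=\widetilde C\otimes_K L=C\otimes_K L$. In particular the trace-transfer step is vacuous on both sides, so it suffices to prove
\[
\prod_{p}\Nm_{k_p/K}(\delta_p)\ \longmapsto\ \prod_{p'}\Nm_{k_{p'}/L}(\delta_{p'})
\]
under $\Sq(K)\to\Sq(L)$, where $p$ runs over the nodes of $C$ and $p'$ over those of $C\otimes_K L$.

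Next I would describe how nodes base change. The singular subscheme $\Sigma_C\subset C$ is étale over $K$: geometrically the singular subscheme of an ordinary double point is a single reduced point, so $\Sigma_C\otimes_K\overline{K}$ is finite and reduced. Consequently each $k_p$ is separable over $K$, and $k_p\otimes_K L\cong\prod_i L_{p,i}$ is a finite product of separable field extensions of $L$. Since formation of the singular subscheme commutes with the flat base change $C\otimes_K L\to C$, the nodes of $C\otimes_K L$ are exactly the points of $\coprod_p\Spec(k_p\otimes_K L)$, with residue fields the $L_{p,i}$. Moreover the projective tangent cone commutes with base change (it is computed from the quadratic leading form of a local equation of the node), so for the node $p'_{p,i}$ lying over $p$ one has $A_{p'_{p,i}}\cong A_p\otimes_{k_p}L_{p,i}\cong\E_{\delta_p}\otimes_{k_p}L_{p,i}=L_{p,i}[x]/(x^2-\delta_p)$; hence $\delta_{p'_{p,i}}$ is the image of $\delta_p$ under $\Sq(k_p)\to\Sq(L_{p,i})$.

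Finally, for each node $p$ of $C$, Lemma~\ref{lemNormTraceProps} gives
\[
\Nm_{k_p/K}(\delta_p)\otimes_K L=\Nm_{(k_p\otimes_K L)/L}(\delta_p\otimes 1)=\prod_i\Nm_{L_{p,i}/L}(\delta_{p'_{p,i}}),
\]
by compatibility of the norm with base change and with algebra decompositions. Multiplying over all nodes $p$ of $C$ and regrouping the factors yields $\omega(C)\otimes_K L=\prod_{p'}\Nm_{k_{p'}/L}(\delta_{p'})=\omega(C\otimes_K L)$, as desired. The only points that require genuine care are that the singular subscheme of a geometrically nodal curve is étale (so that it stays étale after base change and the product-of-fields form of Lemma~\ref{lemNormTraceProps} applies) and the base-change compatibility of the projective tangent cone; both are local statements around a node and follow from the geometric normal form $xy=0$.
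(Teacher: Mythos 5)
Your proof is correct and follows essentially the same route as the paper's: reduce (via $\widetilde C = C$ when $k_C = K$) to showing the product of norms over nodes is compatible with base change, identify the nodes of $C\otimes_K L$ over a node $p$ with $\Spec(k_p\otimes_K L)$, compute $A_{p_i} = A_p\otimes_{k_p}L_i$, and invoke \cref{lemNormTraceProps}. The only difference is that you spell out two background facts the paper leaves implicit --- that the singular subscheme of a geometrically nodal curve is finite étale (so $k_p/K$ is separable and $k_p\otimes_K L$ is a product of fields) and that the projective tangent cone commutes with base change --- which is a welcome addition but does not change the argument.
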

	
	\begin{proof}
		Let $p \in C$ be a node. We proceed similar as above:
		Given the irreducible decomposition $p \otimes L = p_1 \cup \ldots \cup p_\ell$, 
		it suffices to show that 
		\[
		  \Sq(k) \ni \nm(\delta_p) \mapsto \prod_{j=1}^\ell\nm(\delta_{p_j}) 
		  \in \Sq(L). 
		\]
		We have $k_p \otimes_K L = L_1 \times \ldots \times L_\ell$  such that
		$k_{p_i} = L_i$ for all $i$. Given $\delta \in k_p^*$ such that $[\delta] = \delta_p$
		and $\delta \otimes 1 = (\delta_1, \ldots, \delta_\ell)$,
		we have 
		\[
		  A_{p_i} = A_p \otimes_{k_p} L_i = E_\delta \otimes_{k_p} L_i = E_{\delta_i}, 
		\]
		hence $[\delta_i] = \delta_{p_i}$ for all $i$. 
		Then the statement follows from the norm formulas of 
		\cref{lemNormTraceProps}.
	\end{proof}
	
	We return to the general case, which we finish in a similar way.
	We have $k_C \otimes L = L_1 \times \ldots \times L_\ell$
	such that $L_i$ is the field of moduli of $C_i$. 
	One checks directly that $\widetilde{C} \otimes_{k_C} L_i = \widetilde{C_i}$
	in a canonical way. 
	Fix $w \in k_C$ such that $\omega(\widetilde{C}) = [w]  \in \Sq(k_C)$ and assume 
	$w \otimes 1 = (w_1, \ldots w_l)$. 
	By \cref{lemBaseCase}, we have 
	$\omega(\widetilde{C_i}) = [w_i] \in \Sq(L_i)$ for all $i$.
	Now the statement follows from the trace formulas of 
	\cref{lemNormTraceProps}.
\end{proof}

 \bibliographystyle{alpha}

 \bibliography{Bibli}

\end{document}